\providecommand{\tabularnewline}{\\}
\providecommand{\algorithmname}{Algorithm}
\numberwithin{equation}{section}
\numberwithin{figure}{section}
\theoremstyle{plain}
\newtheorem{thm}{\protect\theoremname}
\theoremstyle{definition}
\newtheorem{defn}[thm]{\protect\definitionname}
\theoremstyle{plain}
\newtheorem{assumption}[thm]{\protect\assumptionname}
\theoremstyle{remark}
\newtheorem{rem}[thm]{\protect\remarkname}
\theoremstyle{plain}
\newtheorem{lemma}[thm]{\protect\lemmaname}
\newtheorem{corollary}[thm]{\protect\corollaryname}
\providecommand{\lemmaname}{Lemma}
\providecommand{\assumptionname}{Assumption}
\providecommand{\definitionname}{Definition}
\providecommand{\remarkname}{Remark}
\providecommand{\theoremname}{Theorem}
\providecommand{\corollaryname}{Corollary}
\begin{document}
\selectlanguage{english}%

\newcommand{\barak}[1]{\textcolor{blue}{[Barak: #1]}}
\newcommand{\haim}[1]{\textcolor{red}{[Haim: #1]}}
\newcommand{\boris}[1]{\textcolor{red}{[Boris: #1]}}

\global\long\def\R{\mathbb{R}}%

\global\long\def\e{{\mathbf{e}}}%

\global\long\def\et#1{{\e(#1)}}%

\global\long\def\ef{{\mathbf{\et{\cdot}}}}%

\global\long\def\x{{\mathbf{x}}}%

\global\long\def\w{{\mathbf{w}}}%

\global\long\def\m{{\mathbf{m}}}%

\global\long\def\xt#1{{\x(#1)}}%

\global\long\def\xf{{\mathbf{\xt{\cdot}}}}%

\global\long\def\d{{\mathbf{d}}}%

\global\long\def\b{{\mathbf{b}}}%

\global\long\def\k{{\mathbf{k}}}%

\global\long\def\a{{\mathbf{a}}}%

\global\long\def\q{{\mathbf{q}}}%

\global\long\def\u{{\mathbf{u}}}%

\global\long\def\y{{\mathbf{y}}}%

\global\long\def\0{{\mathbf{0}}}%

\global\long\def\yt#1{{\y(#1)}}%

\global\long\def\yf{{\mathbf{\yt{\cdot}}}}%

\global\long\def\z{{\mathbf{z}}}%

\global\long\def\w{{\mathbf{w}}}%

\global\long\def\v{{\mathbf{v}}}%

\global\long\def\h{{\mathbf{h}}}%

\global\long\def\s{{\mathbf{s}}}%

\global\long\def\c{{\mathbf{c}}}%

\global\long\def\p{{\mathbf{p}}}%

\global\long\def\f{{\mathbf{f}}}%

\global\long\def\rb{{\mathbf{r}}}%

\global\long\def\rt#1{{\rb(#1)}}%

\global\long\def\rf{{\mathbf{\rt{\cdot}}}}%

\global\long\def\mat#1{{\ensuremath{\bm{\mathrm{#1}}}}}%

\global\long\def\matN{\ensuremath{{\bm{\mathrm{N}}}}}%

\global\long\def\matA{\ensuremath{{\bm{\mathrm{A}}}}}%

\global\long\def\matB{\ensuremath{{\bm{\mathrm{B}}}}}%

\global\long\def\matC{\ensuremath{{\bm{\mathrm{C}}}}}%

\global\long\def\matD{\ensuremath{{\bm{\mathrm{D}}}}}%

\global\long\def\matG{\ensuremath{{\bm{\mathrm{G}}}}}%

\global\long\def\matP{\ensuremath{{\bm{\mathrm{P}}}}}%

\global\long\def\matU{\ensuremath{{\bm{\mathrm{U}}}}}%

\global\long\def\matV{\ensuremath{{\bm{\mathrm{V}}}}}%

\global\long\def\matW{\ensuremath{{\bm{\mathrm{W}}}}}%

\global\long\def\matM{\ensuremath{{\bm{\mathrm{M}}}}}%

\global\long\def\matZ{\ensuremath{{\bm{\mathrm{Z}}}}}%

\global\long\def\matR{\mat R}%

\global\long\def\matQ{\mat Q}%

\global\long\def\matS{\mat S}%

\global\long\def\matSb{\mat{S_{B}}}%

\global\long\def\matSw{\mat{S_{w}}}%

\global\long\def\matY{\mat Y}%

\global\long\def\matX{\mat X}%

\global\long\def\matYhat{\hat{\mat Y}}%

\global\long\def\matXhat{\hat{\mat X}}%

\global\long\def\matI{\mat I}%

\global\long\def\matzero{\mat 0}%

\global\long\def\matJ{\mat J}%

\global\long\def\matZ{\mat Z}%

\global\long\def\matL{\mat L}%

\global\long\def\S#1{{\mathbb{S}_{N}[#1]}}%

\global\long\def\IS#1{{\mathbb{S}_{N}^{-1}[#1]}}%

\global\long\def\PN{\mathbb{P}_{N}}%

\global\long\def\TNormS#1{\|#1\|_{2}^{2}}%

\global\long\def\TNorm#1{\|#1\|_{2}}%

\global\long\def\InfNorm#1{\|#1\|_{\infty}}%

\global\long\def\FNorm#1{\|#1\|_{F}}%

\global\long\def\FNormS#1{\|#1\|_{F}^{2}}%

\global\long\def\UNorm#1{\|#1\|_{\matU}}%

\global\long\def\UNormS#1{\|#1\|_{\matU}^{2}}%

\global\long\def\UINormS#1{\|#1\|_{\matU^{-1}}^{2}}%

\global\long\def\ANorm#1{\|#1\|_{\matA}}%

\global\long\def\BNorm#1{\|#1\|_{\mat B}}%

\global\long\def\ANormS#1{\|#1\|_{\matA}^{2}}%

\global\long\def\AINormS#1{\|#1\|_{\matA^{-1}}^{2}}%

\global\long\def\BINormS#1{\|#1\|_{\matB^{-1}}^{2}}%

\global\long\def\BINorm#1{\|#1\|_{\matB^{-1}}}%

\global\long\def\ONorm#1#2{\|#1\|_{#2}}%

\global\long\def\T{\textsc{T}}%

\global\long\def\pinv{\textsc{+}}%

\global\long\def\Expect#1{{\mathbb{E}}\left[#1\right]}%

\global\long\def\ExpectC#1#2{{\mathbb{E}}_{#1}\left[#2\right]}%

\global\long\def\dotprod#1#2{\left\langle #1,#2 \right\rangle}%

\global\long\def\dotprodX#1#2#3{(#1,#2)_{#3}}%

\global\long\def\dotprodM#1#2{(#1,#2)_{\matM}}%

\global\long\def\dotprodsqr#1#2{(#1,#2)^{2}}%

\global\long\def\Trace#1{{\bf Tr}\left(#1\right)}%

\global\long\def\Range#1{{\bf Range}\left(#1\right)}%

\global\long\def\dist#1{{\bf dist}\left(#1\right)}%

\global\long\def\vectorization#1{{\bf vec}\left(#1\right)}%

\global\long\def\vecskew#1{{\bf vec_{skew}}\left(#1\right)}%

\global\long\def\nnz#1{{\bf nnz}\left(#1\right)}%

\global\long\def\blockdiag#1{{\bf blkdiag}\left(#1\right)}%

\global\long\def\vol#1{{\bf vol}\left(#1\right)}%

\global\long\def\rank#1{{\bf rank}\left(#1\right)}%

\global\long\def\reach#1{{\bf rch}\left(#1\right)}

\global\long\def\span#1{{\bf Span}#1}%

\global\long\def\diag#1{{\bf diag}\left(#1\right)}%

\global\long\def\grad#1{{\bf grad}#1}%
\global\long\def\gradRD#1{{\bf grad_{\R^{D}}}#1}%
\global\long\def\gradM#1{{\bf grad_{\mathcal{M}}}#1}%
\global\long\def\gradtil#1{{\bf \widetilde{grad}}#1}%
\global\long\def\gradMtil#1{{\bf grad_{\widetilde{\mathcal{M}}}}#1}%
\global\long\def\gradhat#1{{\bf \widehat{grad}}#1}%
\global\long\def\gradMhat#1{{\bf grad_{\widehat{\mathcal{M}}}}#1}%

\global\long\def\hess#1{{\bf Hess}#1}%

\global\long\def\sym#1{{\bf sym}#1}%

\global\long\def\skew#1{{\bf skew}#1}%

\global\long\def\st{\,\,\,\text{s.t.}\,\,\,}%

\global\long\def\elp{\mathbb{S}^{\matB}}%

\global\long\def\elpCCA{\mathbb{S}_{\x\y}}%

\global\long\def\elpa{\mathbb{S}^{\mat A}}%

\global\long\def\elplad{\mathbb{S}^{\mat{\mat{S_{w}}+\lambda\matI_{d}}}}%

\global\long\def\Stiefellda{{\bf St}_{(\mat{S_{w}}+\lambda\matI_{d})}(p,d)}%

\global\long\def\ldaB{\mat{S_{w}}+\lambda\matI_{d}}%

\global\long\def\elpsigx{\mathbb{S}^{\mat{\Sigma_{\x\x}}}}%

\global\long\def\elpsigy{\mathbb{S}^{\mat{\Sigma_{\y\y}}}}%

\global\long\def\elpparam#1{\mathbb{S}^{#1}}%

\global\long\def\poly#1{{\bf poly}\left(#1\right)}%

\global\long\def\id{{\bf id}}%

\global\long\def\stiefel{{\bf St}}%

\global\long\def\stiefelB{{\bf St}_{\matB}}%

\global\long\def\qf#1{{\bf qf}\left(#1\right)}%

\global\long\def\qfm#1#2{{\bf qf}_{#2}\left(#1\right)}%

\global\long\def\qfmsmall#1#2{{\bf qf}_{#2}(#1)}%

\global\long\def\fcca{f_{{\bf CCA}}(\matZ)}%

\global\long\def\justfcca{f_{{\bf CCA}}}%

\global\long\def\sigmacca{\Sigma_{\nabla^{2}f_{{\bf CCA}}}}%

\global\long\def\flda{f_{{\bf LDA}}(\matW)}%

\global\long\def\justflda{f_{{\bf LDA}}}%

\global\long\def\nicehalf{\nicefrac{1}{2}}%

\global\long\def\tmmls{T_{\text{MMLS}}}%

\global\long\def\mmls{\text{MMLS}}%

\selectlanguage{american}%

\title{Manifold Free Riemannian Optimization}
\author{Boris Shustin\thanks{Tel-Aviv University (borisshy@mail.tau.ac.il, haimav@tauex.tau.ac.il).}, Haim Avron\footnotemark[1], and Barak Sober\thanks{The Hebrew University of Jerusalem (barak.sober@mail.huji.ac.il)}}
\maketitle
\begin{abstract}
Riemannian optimization is a principled framework for solving optimization problems where the desired optimum is constrained to a smooth manifold $\mathcal{M}$. Algorithms designed in this framework usually require some geometrical description of the manifold, which typically includes tangent spaces, retractions, and gradients of the cost function. However, in many cases, only a subset (or none at all) of these elements can be accessed due to lack of information or intractability. In this paper, we propose a novel approach that can perform approximate Riemannian optimization in such cases, where the constraining manifold is a submanifold of $\R^{D}$. At the bare minimum, our method requires only a noiseless sample set of the cost function $(\x_{i}, y_{i})\in {\mathcal{M}} \times \mathbb{R}$ and the intrinsic dimension of the manifold $\mathcal{M}$. Using the samples, and utilizing the Manifold-MLS framework \cite{sober2020manifold}, we construct approximations of the missing components entertaining provable guarantees and analyze their computational costs. In case some of the components are given analytically (e.g., if the cost function and its gradient are given explicitly, or if the tangent spaces can be computed), the algorithm can be easily adapted to use the accurate expressions instead of the approximations. We analyze the global convergence of Riemannian gradient-based methods using our approach, and we demonstrate empirically the strength of this method, together with a conjugate-gradients type method based upon similar principles.
\end{abstract}

\section{Introduction}

Non-convex constrained optimization problems are prevalent across multiple areas in science, physics, economics, climate modeling and many other fields. Through history, various methods and vast literature dealt with proposing algorithm for solving constrained optimization problems, e.g., projected gradient method; sequential quadratic programming; proximal point method; penalty, barrier, and
augmented Lagrangian
methods, and many others \cite{NoceWrig06}. In many applications, the constraint set is a low dimensional manifold, e.g., eigenvalue problems, principal component analysis, low-rank matrix completion, pose estimation and motion recovery. Explicitly, consider an optimization problem of the form 
\begin{equation}
\min_{\x\in{\mathcal{M}}}f(\x)\label{eq:general_problem}
\end{equation}
where ${\mathcal{M}}$ is a compact and boundaryless smooth $d$-dimensional
submanifold of $\R^{D}$, and $f:\R^{D}\to\R$ is a sufficiently
smooth cost function (i.e., gradient Lipschitz). Note that, essentially the minimization is of the restriction of $f$ on $\mathcal{M}$, i.e., $f|_{\mathcal{M}}$. Several approaches were developed over
the years to solve Problem~\eqref{eq:general_problem}. Most relevant to our work is the extensive literature, e.g.~\cite{luenberger1972gradient,smith1994optimization,EAS98}, which developed the so-called Riemannian optimization framework for solving Problem\eqref{eq:general_problem} for various constraining manifolds.
For recent surveys, see \cite{AMS09,boumal2022intromanifolds}.
Key to the success of this framework are advances in numerical linear algebra (e.g., matrix factorization),
and the introduction of tractable geometric components, 
which enabled progress in this field and allowed the development of effective algorithms.
The Riemannian optimization framework proved especially effective for matrix manifolds (i.e., manifolds constructed from $\R^{D_{1}\times D_{2}}$ as embedded submanifolds or quotient manifolds), such as the Stiefel manifold, the Grassmann
manifold of subspaces, the cone of positive definite matrices, and even
the Euclidean space.

The main idea in the Riemannian optimization framework is as follows. Given a
constraining manifold, use Riemannian geometry to develop geometrical
components which allow modifying iterative methods for solving unconstrained
optimization problems to solve the constrained case by viewing it as an unconstrained problem with a manifold geometry. Much effort
was invested in developing these components for frequently arising
manifolds and finding ways to efficiently compute them (see Subsection \ref{subsec:Riemannian-Optimization}).

In the aforementioned line of research either the manifold ${\mathcal{M}}$
is explicitly available, or the computation of the components
is possible directly from the given constraints. However, a much less investigated problem setting is in which the given constraints form a manifold ${\mathcal{M}}$ for which the geometric components are intractable or cannot be formed explicitly, then the manifold ${\mathcal{M}}$, or at the very least the relevant components, need to be approximated. 
In this paper, we tackle three specific scenarios for Problem \eqref{eq:general_problem} in which the manifold ${\mathcal{M}}$ is unknown, in the sense that it is given only implicitly. The first two are the main focus of this paper and we demonstrate both of them empirically. The last scenario is mentioned as a remark of a possible application of our method, and we do not demonstrate it, but our analysis is valid for it as well. 

In the strictest scenario, at the bare minimum, we require only a noiseless {\em quasi-uniform sample set} (see Definition \ref{def:Quasi-uniform-sample-set}) with respect to the domain $\mathcal{M}$ of the cost function and $\mathcal{M}$ (see assumption \ref{assu:MMLS_Samples} and \ref{assu:MMLS_func_Samples}), and knowledge of the intrinsic dimension of the constraining manifold. In a simpler scenario, we assume in addition to the previous requirements that we have access to the cost function and its gradient in $\R^{D}$. Finally, in the simplest scenario we also assume that we have access to the tangent spaces of the constraining manifold at least at the points of the sample set. Note that the last scenario exists in real world applications such as in digital imaging (e.g., \cite{hug2017voronoi,klette2004digital}), where for a finite sample set it is possible to determine precisely tangent spaces using high resolution images or $3$D-scans.

Our method builds upon a recently developed method
for approximating manifolds, ``Manifold Moving Least-Squares (MMLS) Projection''
\cite{sober2020manifold}. Given a set of samples of ${\mathcal{M}}$ which forms a quasi-uniform sample set with respect to the domain $\mathcal{M}$, the dimension
of the manifold $d$, and a point $\rb\in\R^{D}$ close enough to the
manifold (see Assumption \ref{assu:Uniqueness-domain}), MMLS
algorithm performs two stages of approximation resulting in an approximate
``projection'' of $\rb$ to the manifold ${\mathcal{M}}$ (see Subsection
\ref{subsec:Manifold-Moving-Least-Squares}). It is shown in \cite[Theorem 4.21]{sober2020manifold}
that the set of all MMLS projections of the points of ${\mathcal{M}}$,
denoted by $\widetilde{{\mathcal{M}}}$, is in itself almost everywhere\footnote{We say $\widetilde{{\mathcal{M}}}$ is almost everywhere a smooth $d$-dimensional
manifold, when for all $\rb\in \widetilde{{\mathcal{M}}}$ but a set of measure $0$ there exists a local diffeomorphism around $\rb$ to another smooth $d$-dimensional
manifold.} a smooth $d$-dimensional
manifold approximating ${\mathcal{M}}$ and MMLS projection of $\rb$
belongs to $\widetilde{{\mathcal{M}}}$. 

MMLS provides a local $d$-dimensional coordinate system and an
origin, from which a polynomial approximation of a parametrization
of ${\mathcal{M}}$ is constructed, viewing the manifold locally as a graph
of a function from a $d$-dimensional space to a $(D-d)$-dimensional
space. MMLS can be utilized to define an approximate tangent space and some of
the geometric components required for optimization. We present
our proposed approximations in Section \ref{sec:The-Proposed-Algorithms}. Thus, our proposed algorithm can be utilized for previously unsolved problems, zeroth-order optimization in the sense of constraint set accessibility.

Furthermore, we also provide a fully zeroth-order optimization method with respect to the cost function accessibility. Suppose that the cost function $f$ is only given
via samples of the values of $f$ on ${\mathcal{M}}$ (or its gradient is unknown, while samples of $f$ are available), the extension of
MMLS for approximating functions over manifolds \cite{sober2021approximation}
is utilized for approximating $f$ (if required) and its Euclidean gradient\footnote{Note that the gradient approximation is an extension we present here, see Lemma \ref{lem:fderiv_approx} in Appendix \ref{subsec:background_MMLS}.}.

We theoretically analyze the global convergence of a Riemannian gradient
method based on MMLS in Section \ref{sec:Convergence-Analysis-of}, following a similar analysis as in \cite{boumal2019global}
and using the results presented in \cite{sober2020approximating,aizenbud2021non}. In our analysis we
do not assume convexity in the usual or the Riemannian sense \cite[Section 11]{boumal2022intromanifolds}
in order to not restrict our analysis, and also since continuous and
convex (in the Riemannian sense) functions over connected and compact manifolds (as we assume for $\mathcal{M}$ in this paper) are constant functions \cite[Corollary 11.10]{boumal2022intromanifolds}.
Finally, we demonstrate empirically our proposed MMLS based geometrical components
both for a Riemannian gradient method based on MMLS, and also for a
Riemannian conjugate-gradients (CG) method based on MMLS.

\begin{rem}[Clean samples]\label{rem:clean_samples}
Our analysis assumes a noiseless (i.e., clean) sample set of the cost function and the constraining manifold. In practice, this assumption can be relaxed since MMLS \cite{sober2020manifold} and its function approximation extention \cite{sober2021approximation} can also be applied on a noisy sample set, but some results in \cite{sober2020manifold,sober2021approximation,sober2020approximating}, e.g., on the approximation order, require clean samples. Thus for the sake of theoretical analysis, we restrict ourselves in this paper to the case in which we have clean samples. 
\end{rem}
\begin{rem}[MMLS alternatives]
Our proposed method builds upon MMLS. Note that other methods for manifold learning which provide similar tools as MMLS (see Assumption \ref{assu:list_of_req}) can also be used to produce approximations of the various geometric components we propose here (see Table \ref{tab:riemannian-approx}). Moreover, the analysis we perform in Subsection \ref{subsec:Convergence-Analysis-of1} is general and can be extended for the use of other manifold learning methods satisfying Assumption \ref{assu:list_of_req}. 
\end{rem}

\subsection{Contributions}
The main contributions in this paper are:
\begin{itemize}
    \item For the case where only a quasi-uniform sample set a with respect to the domain $\mathcal{M}$ of the cost function and $\mathcal{M}$ is given, and the intrinsic dimension of the constraining manifold is known, i.e., zeroth-order optimization where both the $f$ and $\mathcal{M}$ are accessed only via samples, we provide approximations using MMLS for the following geometric components (see Section \ref{sec:The-Proposed-Algorithms}): tangent spaces, retraction map, orthogonal projection on the tangent spaces, Riemannian gradient (including an approximation of the Euclidean gradient, see Lemma \ref{lem:fderiv_approx} in Appendix \ref{subsec:background_MMLS}), and vector transport. The aforementioned components allow us to apply any standard first-order Riemannian optimization algorithm. 
    \item For the case where a quasi-uniform sample set with respect to the domain $\mathcal{M}$ is given, its intrinsic dimension is known, and $f$ and its Euclidean gradient are explicitly known, i.e., zeroth-order optimization in the sense that the constraint $\mathcal{M}$ is only accessed via samples but first-order with respect to the cost function, we provide approximations using MMLS for the following geometric components (see Section \ref{sec:The-Proposed-Algorithms}): tangent spaces, retraction map, orthogonal projection on the tangent spaces, Riemannian gradient, and vector transport. As in the previous case, the aforementioned components allow us to apply any standard first-order Riemannian optimization algorithm.
    \item For the case where in addition the tangent spaces are given at the sample set points of $\mathcal{M}$ we provide a simpler algorithm, based on the fact that at these points the first step of MMLS is unnecessary, since these tangent spaces provide a local coordinate system over which the second step of MMLS can be performed. Thus, the second step of MMLS projection can be used as an approximation of a retraction map. 
\end{itemize}
For all the cases above, we provide computational costs of the approximated components. In addition, our analysis in Subsection \ref{subsec:Convergence-Analysis-of2} of a Riemannian gradient algorithm based on MMLS is valid. Moreover, we exemplify a gradient algorithm based on MMLS and a Riemannian CG algorithm based on MMLS empirically for the first two cases. 

\subsection{Related Work}
Our work combines Riemannian optimization and MMLS, and more generally methods for manifold learning which are similar to MMLS (in the sense of Assumption \ref{assu:list_of_req}). Here, we briefly summarize the most relevant
prior work on each of these subjects. 

\paragraph{Riemannian Optimization.} Riemannian optimization is a framework aimed at solving problems of the form of Problem~\eqref{eq:general_problem}. There exists an extensive literature on Riemannian optimization, starting from the early works in \cite{luenberger1972gradient,smith1994optimization,EAS98}, and more recently the surveys \cite{AMS09,boumal2022intromanifolds}. In particular, in this work we focus on optimization problems where there is limited information both on the constraining manifold $\mathcal{M}$ and possibly on the cost function as well, i.e., accessed only via samples. Most works in the field of Riemannian optimization related to zeroth-order optimization are generalizations of unconstrained zeroth-order optimization methods, i.e., the manifold is explicitly available but the cost function is accessed only via samples, see for example \cite{li2020zeroth,chattopadhyay2015derivative,maass2020online,fong2019framework,fong2019extended,yao2021riemannian}. Unlike the aforementioned works, our main contribution is in tackling problems where the constraining manifold $\mathcal{M}$ and its geometric components cannot be explicitly formed, forcing their approximation via samples of $\mathcal{M}$. To the best of our knowledge our work is the first which tackles such case, with the aim of applying the framework of Riemannian optimization. 

\paragraph{Manifold Learning and MMLS.} Manifold learning is a thoroughly studied problem, with widespread applications. The goal of manifold learning is to find an embedding of high dimensional data in a low dimensional space, where it is assumed that the data resides in an underlying low dimensional manifold. Some well-known algorithms include: Isomap \cite{tenenbaum2000global}, Local Linear Embedding \cite{roweis2000nonlinear,saul2003think}, Laplacian Eigenmaps \cite{belkin2003laplacian}, Diffusion maps \cite{coifman2006diffusion}, and $t$-distributed stochastic neighbor embedding \cite{van2008visualizing}. Unlike the aforementioned methods which aim at finding a global embedding of the data, in this paper we utilize MMLS algorithm \cite{sober2020manifold} which provides local approximation of the manifold in the ambient space. This property is important in our proposed algorithm since it allows the approximation of local geometrical structure of the underlying manifold such as the tangent spaces. MMLS method is a generalization of \cite{levin2004mesh}, where surfaces approximation using moving least-squares (MLS) was presented. MMLS was extended to approximation of functions over manifolds \cite{sober2021approximation}. Also, the properties of the differential of the approximation were studied in \cite{sober2020approximating}. An extension to MMLS was presented in \cite{aizenbud2021non}, where full analysis of the method was performed under noisy data assumption.

Other works close in spirit to MMLS, i.e., form a local coordinate system over which some regression over some unknown underlying manifold is solved locally, are \cite{bickel2007local,cheng2013local,lin2021functional}, where regression problems under manifold constraints are solved in a finite ambient dimension \cite{bickel2007local,cheng2013local}, and in an infinite ambient dimension \cite{lin2021functional}. Another recent work is \cite{dunson2021inferring}, which aims at preprocessing noisy data via a denoising process before forming a local coordinates system, over which a local Gaussian process regression is performed to approximate the manifold. 

\section{Preliminaries}
In this section we recall some relevant basic notions from Riemannian optimization, and from MMLS method for approximating manifolds and functions over manifolds. Throughout the paper, we denote the standard Euclidean norm and the corresponding induced matrix norm (the spectral norm), i.e., $\left\|\cdot\right\|_{2}$, by $\left\|\cdot\right\|$ for short. For any other norm, we explicitly write its type, e.g., $\left\|\cdot\right\|_{\infty}$. We denote an open ball with a radius $\delta>0$ around $\x$ by $B_{\delta}(\x)$. Given some matrix $\matA\in\R^{n\times d}$, we denote its range (column space) by $\Range{\matA}$, and we denote its corresponding Gram matrix by $\matG_{\matA} \coloneqq \matA^{\T} \matA \in \R^{d\times d}$.

\subsection{\label{subsec:Riemannian-Optimization}Riemannian Optimization}

In this subsection we recall some basic definitions of Riemannian
geometry and Riemannian optimization. A \emph{Riemannian manifold} ${\mathcal{M}}$
is a real differentiable manifold ${\mathcal{M}}$ with a smoothly varying
inner product $g_{\x}(\cdot,\cdot):T_{\x}{\mathcal{M}} \times T_{\x}{\mathcal{M}} \to \R$ on its tangent spaces $T_{\x}{\mathcal{M}}$ where
$\x\in{\mathcal{M}}$, denoted by $(\mathcal{M},g)$. The tangent bundle is defined by \cite[Definition 3.42]{boumal2022intromanifolds}:
\begin{equation}\label{eq:tan_bun}
    T{\mathcal{M}} \coloneqq \left\{(\x, \v)\ :\ \x\in {\mathcal{M}}\ \cap \ \v \in T_{\x}{\mathcal{M}}\right\}.
\end{equation}

Next, we recall the definition of a Riemannian submanifold. ${\mathcal{M}}$ is a \emph{Riemannian submanifold} of $\bar{\mathcal{M}}$ (called the \emph{embedding manifold} or the \emph{ambient space}) if it is an embedded submanifold of $\bar{{\mathcal{M}}}$, $\bar{{\mathcal{M}}}$ is a Riemannian manifold $(\bar{{\mathcal{M}}},\bar{g})$, and the
Riemannian metric $g$ on ${\mathcal{M}}$ is induces by the Riemannian metric $\bar{g}$ on $\bar{{\mathcal{M}}}$: $$g_{\x}(\eta_{\x},\xi_{\x})\coloneqq\bar{g}_{\x}(\eta_{\x},\xi_{\x}),$$
for $\eta_{\x},\xi_{\x}\in T_{\x}{\mathcal{M}}$ where in the right-side
$\eta_{\x}$ and $\xi_{\x}$ are viewed as elements in $T_{\x}\bar{{\mathcal{M}}}$,
and $T_{\x}{\mathcal{M}}$ is viewed as a subspace of $T_{\x}\bar{{\mathcal{M}}}$ \cite[Section 3.6.1]{AMS09}. The notion of Riemannian submanifolds
is central in this paper since our proposed method is aimed at Riemannian submanifolds of Euclidean spaces.

A general form of an iterative Riemannian optimization algorithm on a Riemannian submanifold
${\mathcal{M}}$ of a Euclidean space appears in Algorithm \ref{alg:general_R_opt}  (see \cite{AMS09,boumal2022intromanifolds}). Algorithm \ref{alg:general_R_opt} fits the form of various iterative Riemannian optimization methods such as Riemannian gradient
methods (illustrated in Fig. \ref{fig:illustration}) and Riemannian Newton methods.

\begin{algorithm}[tb]
\caption{\label{alg:general_R_opt}General form of an iterative Riemannian optimization method.}

\begin{algorithmic}[1]

\STATE\textbf{Input: }$f:\mathcal{M}\to\R$ a smooth function and a Riemannian manifold $({\mathcal{M}},g)$.

\STATE Choose an initial point $\x_{0}\in{\mathcal{M}}$.

\STATE Locally approximate the manifold ${\mathcal{M}}$ by its tangent space
at $\x_{0}$, i.e., $T_{\x_{0}}{\mathcal{M}}$.

\STATE Pick a search direction on the tangent space $\xi_{\x_{0}}\in T_{\x_{0}}{\mathcal{M}}$,
and a step-size, $\alpha_{0}>0$, satisfying some criteria depending
on the algorithm.

\STATE Retract $\alpha_{0}\xi_{\x_{0}}\in T_{\x_{0}}{\mathcal{M}}$ to the manifold,
via a \emph{retraction mapping}, $R_{\x_{0}}(\cdot):T_{\x_{0}}{\mathcal{M}}\to{\mathcal{M}}$,
i.e., $\x_{1}=R_{\x_{0}}(\alpha_{0}\xi_{\x_{0}})$ (see Definition \ref{def:retraction}). 

\STATE Repeat for $\x_{i}$, $i=1,2,...$ until some stopping criteria is satisfied.

\end{algorithmic}
\end{algorithm}
\begin{figure}[t]
\centering{}\includegraphics[scale=0.35]{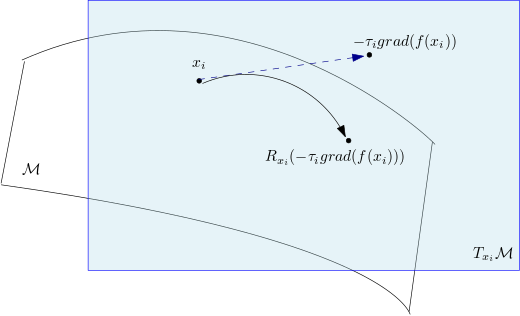}\caption{Illustration of Riemannian gradient-descent.\label{fig:illustration}}
\end{figure}

Next, recall the definition of a retraction map \cite[Definition 4.1.1]{AMS09}:
\begin{defn}[Retraction]\label{def:retraction}
A \textbf{retraction}
$R_{(\cdot)}(\cdot):T{\mathcal{M}}\to {\mathcal{M}}$ on a manifold $\mathcal{M}$ is a smooth mapping from the tangent bundle $T{\mathcal{M}}$ onto
$\mathcal{M}$, such that its restriction $R_{\x}$ to $T_{\x}{\mathcal{M}}$
satisfies the following conditions:
\begin{enumerate}
    \item $R_{\x}(\0_{\x})=\x$ where $\0_{\x}\in T_{\x}{\mathcal{M}}$
is the zero element of $T_{\x}{\mathcal{M}}$.
    \item $\text{D}R_{\x}(\0_{\x})=\text{Id}_{T_{\x}{\mathcal{M}}}$ where $\text{Id}_{T_{\x}{\mathcal{M}}}$
is the identity mapping on $T_{\x}{\mathcal{M}}$. 
\end{enumerate}
\end{defn}
The conditions in Definition \ref{def:retraction} ensure that a retraction is at least a first-order approximation of the exponential
mapping \cite[Section 5.4]{AMS09}, which is in itself also a retraction moving along geodesics (see \cite[Proposition 10.17]{boumal2022intromanifolds}). 

Additional important components for Riemannian optimization are: the Riemannian gradient\footnote{In the paper we use additional notations for the Riemannian gradient on different manifolds.} $\gradM{f(\x)}\in T_{\x}{\mathcal{M}}$
\cite[Section 3.6]{AMS09}, the Riemannian connection \cite[Section 5.3]{AMS09}
and the Riemannian Hessian $\hess{f(\x)}:T_{\x}{\mathcal{M}}\to T_{\x}{\mathcal{M}}$
\cite[Section 5.5]{AMS09}. These notions can be made explicit in a simple way if $\mathcal{M}$ is a submanifold of a Euclidean space (e.g., $\R^{D}$) and $f$ is given
in ambient coordinates. In this paper, we focus on the Riemannian gradient, since we present first-order algorithms. Thus, we show its derivation explicitly. Suppose we define the Riemannian metric via
the standard inner product on $\R^{D}$, i.e., 
\[
\bar{g}_{\x}(\eta_{\x},\xi_{\x})=\eta_{\x}^{\T}\xi_{\x}\ .
\]
Given a smooth function $f:\R^{D}\to\R$, its Euclidean gradient, $\nabla f(\x)$, and its Riemannian gradient on $\R^{D}$, i.e., $\gradRD{f(\x)}$, are equal. Then, the Riemannian gradient of $f$ on ${\mathcal{M}}$ is simply the orthogonal
projection of $\gradRD{f(\x)}$ on $T_{\x}{\mathcal{M}}$, i.e., 
\begin{equation}
\gradM{f(\x)} = \Pi_{\x}(\gradRD{f(\x)}) = \Pi_{\x}(\nabla f(\x)),\label{eq:R_grad}
\end{equation}
where $\Pi_{\x}(\cdot)$ denotes the orthogonal projection on
$T_{\x}{\mathcal{M}}$ \cite[Eq. 3.37]{AMS09}. 
\if0
In a similar manner, the
Riemannian connection, which generalizes the notion of directional
derivative of vector fields, is simply the projection of the standard
directional derivative on $\R^{D}$ projected orthogonally on $T_{\x}{\mathcal{M}}$
\cite[Eq. 5.15]{AMS09}. Finally, the Riemannian Hessian applied on
$\eta_{\x}\in T_{\x}{\mathcal{M}}$ can be found by applying the Riemannian
connection on the Riemannian gradient, i.e., take the directional
derivative of $\gradM{f(\x)}$ in the direction $\eta_{\x}\in T_{\x}{\mathcal{M}}$
and orthogonally project the result on $T_{\x}{\mathcal{M}}$.
\fi

Some optimization algorithms require manipulating tangent vectors
from different tangent spaces, e.g., finite difference approximations
and Riemannian CG. To that purpose, the notion of vector transport
$\tau_{\eta_{\x}}\xi_{\x}\in T_{R_{\x}(\eta_{\x})}{\mathcal{M}}$ \cite[Section 8.1]{AMS09}
is used. The notion of vector transport is a relaxation of the noition of parallel transport \cite[Section 5.4]{AMS09}, which is based on movement along geodesics. For a
Riemannian submanifold, a common vector transport is  simply to take
the orthogonal projection on the tangent space with the foot at the
retraction of the first tangent vector of the desirable tangent vector
(see \cite[Section 8.1.3]{AMS09}).
\if0
Usually, to obtain
favorable convergence guarantees such algorithms require that the
vector transport is isometric with respect to the Riemannain metric.
Unfortunately, not all vector transports have this property. For a
Riemannian submanifold, a common vector transport is  simply to take
the orthogonal projection on the tangent space with the foot at the
retraction of the first tangent vector of the desirable tangent vector
(see \cite[Section 8.1.3]{AMS09}). But, this vector transport is not necessarily isometric.
Another approach is to define the vector
transport by parallelization,
which is isometric whenever the basis of the tangent space is orthogonal
with respect to the Riemannian metric; (see \cite[Section 2.3.1]{huang2015riemannian}
\fi

One particularly useful property of the retraction
mapping is that the Euclidean gradient of the pull-back function ($f\circ R_{\x}$)  at the origin of $T_{\x}{\mathcal{M}}$ equals to the
Riemannian gradient of $f$ at $\x$~\cite[Proposition 3.59]{boumal2022intromanifolds}. The aforementioned property plays a crucial role in the convergence analysis of Riemannian optimization algorithms. For example, in the context of this paper, using this property it is shown in \cite{boumal2019global} that if $f$ is bounded below on $\mathcal{M}$ and $f\circ R_{\x}$ has Lipschitz gradient $L_{g}$, then Riemannian gradient-descent with a constant step-size $1/L_{g}$, or with backtracking Armijo line-search, returns a point $\x$ such that $\left\|\gradM{f(\x)}\right\|\leq \varepsilon$ for some $\varepsilon>0$ in $O(1/\varepsilon^{2})$ or $O(1/\varepsilon)$ iterations, depending on the size of the domain of the retraction mapping. In this work, we show a parallel property (Lemma \ref{lem:retraction_2_property}) which leads to a similar analysis (Section \ref{sec:Convergence-Analysis-of}).    
\if0
In some cases it is possible to achieve even a second-order approximation
of the exponential mapping. Such retractions are termed {\em second-order retractions}. For second-order retractions, the Euclidean Hessian of $f\circ R_{\x}$ at the origin of $T_{\x}{\mathcal{M}}$ equals to the Riemannian
Hessian of $f$ at $\x$ \cite[Proposition 5.5.5]{AMS09}.
Orthogonally projecting
a given point onto the manifold ${\mathcal{M}}$
defines a second-order retraction~\cite{absil2012projection}.
\fi

\subsection{\label{subsec:Manifold-Moving-Least-Squares}Manifold Moving Least-Squares
(MMLS) Projection}
In this subsection we recall MMLS algorithm \cite{sober2020manifold} and some results from \cite{sober2020manifold,sober2020approximating}. We also recall the extension of MMLS for function approximations \cite{sober2021approximation}. More details can be found in Appendix \ref{subsec:background_MMLS}.

\subsubsection{\label{subsec:MMLS_the_algorithm}MMLS - The Algorithm}
We begin
with the following definition of the conditions on the sample set of ${\mathcal{M}}$ and possibly $f:\R^{D}\to\R$ on $\mathcal{M}$ \cite[Definition 1]{sober2020approximating}:
\begin{defn}[Quasi-uniform sample
set]\label{def:Quasi-uniform-sample-set} A set of data sites $X=\{\x_{1},...,\x_{n}\}$ is said to be {\em quasi-uniform}
with respect to a domain $\Omega$ and a constant $c_{qu}>0$ if 
\[
\delta_{X}\leq h_{X,\Omega}\leq c_{qu}\delta_{X}\ ,
\]
where $h_{X,\Omega}$ is the {\em fill distance} defined by
\[
h_{X,\Omega}\coloneqq\sup_{\x\in\Omega}\min_{\x_{i}\in X}\|\x-\x_{i}\|\ ,
\]
and $\delta_{X}$ is the {\em separation radius} defined by
\[
\delta_{X}\coloneqq\frac{1}{2}\min_{i\neq j}\|\x_{i}-\x_{j}\|\ .
\]
To keep notation concise, we omit the subscripts from $h$, i.e., $h\coloneqq h_{X,\Omega}$.
\end{defn}

Next we recall the required assumptions for MMLS algorithm, when applied to noiseless data:
\begin{assumption}[Manifold approximation assumptions]
\label{assu:MMLS_Samples}${\mathcal{M}}\in C^{2}$ is a closed (i.e.,
compact and boundaryless) $d$-dimensional submanifold of $\R^{D}$.
The sample set $S=\{\rb_{i}\}_{i=1}^{n}\subset{\mathcal{M}}$ is a quasi-unifom
sample set with respect to the domain ${\mathcal{M}}$, with fill distance
$h$.
\end{assumption}

With these assumption, it is possible to perform MMLS approximation
of ${\mathcal{M}}$. In \cite{sober2020manifold}, Sober and Levin proposed
using the technique of MLS for approximating submanifolds
in $\R^{D}$. Given a point $\rb\in\R^{D}$ close enough to ${\mathcal{M}}$
(see Assumption \ref{assu:Uniqueness-domain}), MMLS projection
is performed in two steps: 
\begin{enumerate}
\item Approximate the sampled points via a local $d$-dimensional affine
space $(\q(\rb),H(\rb))$, where $H(\rb)$ is a linear space and the origin
is set to $\q(\rb)$. Explicitly, $H(\rb)=\span\{\e_{k}\}_{k=1}^{d}$
(an element in the $d$-dimensional Grassmanian of $\R^{D}$, denoted
by $\text{Gr}(d,D)$) where $\{\e_{k}\}_{k=1}^{d}$ is an orthonormal
basis of $H(\rb)$, and the affine space is $\{\q(\rb)+\h\,|\,\h\in H(\rb)\}$. The
affine space is used as a local coordinates system for the second
step.
\item Define the projection $\rb$ via a local polynomial approximation $g:H(\rb)\backsimeq\R^{d}\to\R^{D}$
(of total degree $m$) of ${\mathcal{M}}$ over the new coordinate system
spanning $H(\rb)$, i.e., the projection is defined by ${\mathcal P}_{m}^{h}(\rb)\coloneqq g(\0)$.
The approximation is achieved as follows: denote by $\q_{i}$ the orthogonal
projections of $\rb_{i}-\q(\rb)$ onto $H(\rb)$, then $g$ is a polynomial
approximation of the vector valued function $\varphi_{\rb}(\cdot):H(\rb)\to{\mathcal{M}}$
with the samples $\varphi_{\rb}(\q_{i})=\rb_{i}$, which is found by solving a weighted
least-squares problem. Note that $\varphi_{\rb}(\cdot)$ is a function which takes as an input orthogonal projections of points on ${\mathcal{M}}$ deflected by $\q(\rb)$ on $H(\rb)$, and returns the pre-projected points. Thus, in a small neighborhood of $\q(\rb)$, the function $\varphi_{\rb}(\cdot)$ is unique. To keep notation concise, we omit the subscripts from $\varphi_{\rb}(\cdot)$, i.e., $\varphi(\cdot)$, where the subscript can be concluded from the domain of $\varphi(\cdot)$. 
\end{enumerate}
This approach approximates the manifold ${\mathcal{M}}$ via an approximation
of some local parametrization $\varphi$ of it, i.e., an inverse map of a
coordinate chart. Another view of the aforementioned approach is as follows. A smooth $d$-dimensional manifold
${\mathcal{M}}$ can be viewed locally as a graph of a function from a
$d$-dimensional space to a $(D-d)$-dimensional space; see \cite[Lemma A.12]{aizenbud2021non}
for the local existence of such a representation of ${\mathcal{M}}$ as
a graph of a function from $H(\rb)$ to $H^{\perp}(\rb)$. In that view, as mentioned in
\cite[Remark 3.8]{sober2020manifold} and explicitly performed in
\cite[Algorithm 2]{aizenbud2021non}, the aforementioned approximation is equivalent to finding a local polynomial approximation $g:H(\rb)\backsimeq\R^{d}\to H^{\perp}(\rb)\backsimeq\R^{D-d}$
approximating a representation of ${\mathcal{M}}$ as a graph of a function. 

We now present, the explicit steps of MMLS projection (see
\cite[Section 3.2]{sober2020manifold} for the implementation details
of MMLS projection):

\paragraph{Step 1 - the local coordinate system.}

\begin{equation}\label{eq:MMLS_step1}
\q(\rb),H(\rb)=\arg\min_{\q\in\R^{D},H}\sum_{i=1}^{n}d(\rb_{i}-\q,H)^{2}\theta_{1}(\|\rb_{i}-\q\|),
\end{equation}
where $d(\rb_{i}-\q, H)$ is the Euclidean distance between the point
$\rb_{i}-\q$ and the linear subspace $H$, $\theta_{1}(t)$ is a non-negative
weight function (locally supported or rapidly decreasing
as $t\to\infty$, e.g., a Gaussian). The goal is to find a $d$-dimensional linear subspace
$H(\rb)$, and a point $\q(\rb)\in\R^{d}$
%to be the origin of the affine space $\{\q(\rb)+h|h\in H(\rb)\}$, 
that satisfies Eq. \eqref{eq:MMLS_step1}
under the constraints 
\begin{enumerate}
\item \label{item:MMLS_step1_1}$\rb-\q(\rb)\perp H(\rb)$,
\item \label{item:MMLS_step1_2}$\q\in B_{\mu}(\rb)$,
\item \label{item:MMLS_step1_3}$\#(S\cap B_{h}(\q))\neq0$,
\end{enumerate}
where $B_{h}(\q)$ is an open ball of radius $h$ around $q$, and $h$ is the
fill distance, and $B_{\mu}(\rb)$ from Constraint \ref{item:MMLS_step1_2} is an open ball of radius $\mu$ (later
defined) around $\rb$ limiting the Region Of Interest (ROI). In practice, Constraint \ref{item:MMLS_step1_2} is fulfilled heuristically (see Subsection \ref{subsec:implement_details}).  Constraint \ref{item:MMLS_step1_3} makes sure that there are sample points in the support of $\theta_{1}$, thus avoiding trivial zero solution to Problem \eqref{eq:MMLS_step1}. In practice, Constraint \ref{item:MMLS_step1_3} can be manually checked (see the end of Subsection \ref{subsubsec:manopt_experiments}). According to \cite[Section 3.2]{sober2020manifold}, the cost of this step is $O(Dd^{m})$. The radius
$\mu$ must be limited by the manifold's reach: 
%(see definitions in \cite{sober2020manifold,sober2021approximation})
\begin{defn}[Reach]
The {\em reach} of a subset $A$ of $\R^{D}$, is the largest
$\tau$ (possibly $\infty$) such that if $\x\in\R^{D}$ and the distance,
$\dist{A,\x}$, from $\x$ to $A$ is smaller than $\tau$, then $A$
contains a unique point, $P_{A}(\x)\in A$, nearest to $\x$. Then the
reach of $A$ is denoted by $\reach{A}$.
\end{defn}

\begin{defn}[Reach neighborhood of a manifold ${\mathcal{M}}$] The {\em reach neighborhood}
of a manifold ${\mathcal{M}}$ is defined by 
\[
U_{\text{reach}}\coloneqq\{\x\in\R^{D}\,|\,\dist{\x,{\mathcal{M}}}<\reach{\mathcal{M}}\}.
\]
%where $\text{rch}({\mathcal{M}})$ is the reach of the manifold ${\mathcal{M}}$.
\end{defn}

We assume that ${\mathcal{M}}$ is a manifold with non-zero (positive) reach. Recall
that a closed manifold has a positive reach if and only if it is
differentiable and locally Lipschitz \cite{scholtes2013hypersurfaces},
which falls under our assumptions. Moreover, the region in $H(\rb)$ where ${\mathcal{M}}$ can be viewed as
a graph of a function from $H(\rb)$ to $H^{\perp}(\rb)$ depends on $\reach{\mathcal{M}}$ (see \cite[Lemma A.12]{aizenbud2021non}). To generalize the concept of a
reach neighborhood for a domain where MMLS approximation is unique,
the following assumption is made in \cite[Assumption 3.6]{sober2020manifold}:
\begin{assumption}[Uniqueness domain - Assumption 3.6 from \cite{sober2020manifold}] \label{assu:Uniqueness-domain}
\if0
It is assumed that there exists an $\epsilon$-neighborhood
of the manifold
\[
U_{\mathrm{unique}}\coloneqq\{\x\in\R^{D}\,|\,\dist{\x,{\mathcal{M}}}<\epsilon<\reach{\mathcal{M}}\},
\]
such that for any $\rb\in U_{\mathrm{unique}}$ the minimization problem
\eqref{eq:MMLS_step1} has a unique local minimum $\q(\rb)\in B_{\mu}(\rb)$,
for some constant $\mu<\reach{\mathcal{M}}/2$.
\fi
It is assumed that there exists a uniqueness domain, $U_{\mathrm{unique}}$, defined by the largest $\epsilon$-neighborhood
of the manifold ${\mathcal{M}}$ (possibly $\infty$) such that if $\x\in\R^{D}$ and the distance,
$\dist{\x,{\mathcal{M}}}$, from $\x$ to ${\mathcal{M}}$ is smaller than $\epsilon$, then the minimization problem
\eqref{eq:MMLS_step1} has a unique local minimum $\q(\x)\in B_{\mu}(\x)$,
for some constant $\mu<\reach{\mathcal{M}}/2$ which does not depend on $\x$.
\end{assumption}

Indeed, in the limit case where $h\to0$ it is shown in \cite[Lemma 4.4]{sober2020manifold}
that such a uniqueness domain exists for all closed manifolds, where $\epsilon < \reach{\mathcal{M}}/4$ and $\mu = \reach{\mathcal{M}}/2$. Thus, for $h$ small enough, $\mu$ can be approximately $\reach{\mathcal{M}}/2$. Moreover,
for any $\widetilde{\rb}\in U_{\mathrm{unique}}$ that is also $\|\widetilde{\rb}-\q(\rb)\|<\mu$
and $\widetilde{\rb}-q(\rb)\perp H(\rb)$ the problem \eqref{eq:MMLS_step1} for
$\widetilde{\rb}$ yields the same local minimum \cite[Lemma 4.7]{sober2020manifold},
i.e., $\q(\widetilde{\rb})=\q(\rb)$ and $H(\widetilde{\rb})=H(\rb)$. In addition,
$\q(\rb)$ and $H(\rb)$ are smoothly varying functions of $\rb$ when $\theta_{1}(\cdot)\in C^{\infty}$ \cite[Theorem 4.12]{sober2020manifold},
making them a smoothly varying coordinate system.
\if0
\boris{We want to add a lemma saying that in high probability, for small enough $h$ the uniqueness domain exists, i.e., $\mu>0$?} \textcolor{red}{Next, we prove a Lemma which justifies Assumption \ref{assu:Uniqueness-domain} for $h$ small enough.
\begin{lemma}
For
\end{lemma}
\begin{proof}
We
\end{proof}}
\fi

\paragraph{Step 2 - the MLS projection ${\mathcal P}_{m}^{h}$.}

Now the manifold ${\mathcal{M}}$ is approximated using the local coordinate
system in $B_{\mu}(\rb)$, by approximating the function $\varphi:H(\rb)\to{\mathcal{M}}\subseteq\R^{D}$ (a parametrization of $\mathcal{M}$).
Let $\{\e_{k}\}_{k=1}^{d}$ be an orthogonal basis of $H(\rb)$. Let
$\q_{i}$ be the orthogonal projections of $\rb_{i}-\q(\rb)$ onto $H(\rb)$,
i.e., $\q_{i}=\sum_{k=1}^{d}\left\langle \rb_{i}-\q(\rb),\e_{k}\right\rangle \e_{k}$.
Note that $\rb-\q(\rb)$ is projected to the origin of $H(\rb)$. Next, $\varphi$
is approximated via a polynomial $g:H(\rb)\backsimeq\R^{d}\to\R^{D}$ of
total degree $m$ for $1\leq k\leq D$ (denoted by $g\in\Pi_{m}^{d}$),
using the data points $\varphi_{i}=\varphi(\q_{i})=\rb_{i}$ and MLS.
Explicitly, we have
\begin{equation}\label{eq:MMLS_step2}
g^{\star}(\cdot\ |\ \rb)=\arg\min_{g\in\Pi_{m}^{d}}\sum_{i=1}^{n}\|g(\q_{i})-\varphi_{i}\|^{2}\theta_{2}(\rb_{i}-\q(\rb)),
\end{equation}
where $\theta_{2}(\cdot)$ is
a fast decaying radial weight function consistent across scales, i.e., $\theta_{2}(\rb_{i}-\q(\rb)) = \theta_{h}(\|\rb_{i}-\q(\rb)\|)$ and $\theta_{h}(th)=\Phi(t)$. The projection ${\mathcal P}_{m}^{h}(\rb)$ is then defined as:
\[
{\mathcal P}_{m}^{h}(\rb)\coloneqq g^{\star}(\0\ |\ \rb).
\]
According to \cite[Section 3.2]{sober2020manifold}, the cost of this step is $O(Dd^{m} + d^{3m})$. Thus, the total cost of performing MMLS projection on a given point $\rb$ is $O(Dd^{m} + d^{3m})$ \cite[Corollary 3.11]{sober2020manifold}. For convenience we denote it by $\tmmls \coloneqq O(Dd^{m} + d^{3m})$.

Using MMLS, the approximating manifold of ${\mathcal{M}}$ denoted by $\widetilde{{\mathcal{M}}}$
is then defined by 
\begin{equation}\label{eq:deftildeM}
    \widetilde{{\mathcal{M}}}=\{{\mathcal P}_{m}^{h}(\p)\ |\ \p\in{\mathcal{M}}\}.
\end{equation}

In \cite[theorems 4.21 and 4.22]{sober2020manifold}, it is shown that if $\theta_{1}$ and $\theta_{2}$ are monotonically decaying and compactly supported, for $\rb\in U_{\mathrm{unique}}$, and small enough $h$,
we have that ${\mathcal P}_{m}^{h}(\rb)\in\widetilde{{\mathcal{M}}}$ and $\widetilde{{\mathcal{M}}}$
is almost everywhere a $C^{\infty}$ $d$-dimensional manifold that approximates ${\mathcal{M}}$
with the approximation order $O(h^{m+1})$ in Hausdorff norm, i.e.
\begin{equation}\label{eq:Hausdorffdistapprox}
    \|\widetilde{{\mathcal{M}}}-{\mathcal{M}}\|_{\text{Hausdorff}}\coloneqq\max\{\max_{\s\in\widetilde{{\mathcal{M}}}}d(\s,{\mathcal{M}}),\!\max_{\x\in{\mathcal{M}}}d(\x,\widetilde{{\mathcal{M}}})\}\leq M\cdot h^{m+1},
\end{equation}
for some constant $M>0$ independent of $h$. Together with \cite[Eq. (24)]{sober2020approximating}, we have that given $\p\in\mathcal{M}$ such that ${\mathcal P}_{m}^{h}(\p) = \rb$, then $\rb\in \widetilde{\mathcal{M}}$ and 
\begin{equation}\label{eq:distbetween_p_and_r}
    \|\rb - \p\| \leq c_{\mmls}\sqrt{D}h^{m+1}, 
\end{equation}
for some constant $c_{\mmls}>0$ independent of $\p$ and $\rb$. In addition, in \cite[Theorem 4.21]{sober2020manifold} it is shown that MMLS projection from $\mathcal{M}$ to $\widetilde{\mathcal{M}}$ is smooth, thus $\widetilde{\mathcal{M}}$ is a compact set as the image of the compact manifold $\mathcal{M}$. Furthermore, \cite[Theorem 4.19]{sober2020manifold} states that the aforementioned projection is an injective mapping. Thus, together with the definition of $\widetilde{\mathcal{M}}$ from Eq. \eqref{eq:deftildeM}, we have that MMLS projection from $\mathcal{M}$ to $\widetilde{\mathcal{M}}$ is bijective. 

Moreover, under the
same conditions, in \cite[Lemma 3]{sober2020approximating} (see Lemma \ref{lem:Sober_app_lemma3} in Appendix \ref{subsec:background_MMLS}), it is shown that
a parametrization $\varphi:H(\p)\to{\mathcal{M}}\subseteq\R^{D}$ of ${\mathcal{M}}$
exists such that $\p\in{\mathcal{M}}$, $\varphi(\0)=\p$, and ${\mathcal P}_{m}^{h}(\p)\in\widetilde{{\mathcal{M}}}$, and also a parametrization $\widetilde{\varphi}:H(\p)\to\widetilde{{\mathcal{M}}}\subseteq\R^{D}$ of $\widetilde{{\mathcal{M}}}$
exists such that $\widetilde{\varphi}(\0)={\mathcal P}_{m}^{h}(\p)$. It is further shown in \cite[Lemma 4]{sober2020approximating} (see Lemma \ref{lem:Sober_app_lemma4} in Appendix \ref{subsec:background_MMLS}),
that if in addition $\lim_{t \to 0}\theta_{h}(t)=\infty$, i.e., $\widetilde{{\mathcal{M}}}$ interpolates ${\mathcal{M}}$ at
$\rb_{i}$, then in a small vicinity of $\0\in H(\p)$
in $H(\p)$ the directional derivatives of $\varphi(\x)$
and $g^{\star}(\x\ |\ \p)$ (with respect to the first input) at any direction $\v\in\R^{d}$ satisfy
\begin{equation}\label{eq:MMLS_der}\frac{1}{\sqrt{D}}\|\text{D}g^{\star}(\x\ |\ \p)[\v]-\text{D}\varphi(\x)[\v]\|\leq
    \|\text{D}g^{\star}(\x\ |\ \p)[\v]-\text{D}\varphi(\x)[\v]\|_{\infty}\leq c_{\mathcal{M}} h^{m},
\end{equation}
for some constant $c_{\mathcal{M}} > 0$ independent of $\v$ or $\p$ (see Appendix \ref{subsec:background_MMLS}). Note that Eq. \eqref{eq:MMLS_der} is also true for the directional derivatives of $g^{\star}$ (with respect to the first input) and of $\widetilde{\varphi}(\cdot)$, with another constant $c_{\widetilde{\mathcal{M}}} > 0$, and also for the directional derivatives of $\varphi$ and $\widetilde{\varphi}$ with $c_{\mathcal{M}, \widetilde{\mathcal{M}}} > 0$, both constants are independent of $\v$ or $\p$, i.e.,
\begin{equation}\label{eq:MMLS_der1}\frac{1}{\sqrt{D}}\|\text{D}g^{\star}(\x\ |\ \p)[\v]-\text{D}\widetilde{\varphi}(\x)[\v]\|\leq
    \|\text{D}g^{\star}(\x\ |\ \p)[\v]-\text{D}\widetilde{\varphi}(\x)[\v]\|_{\infty}\leq c_{\widetilde{\mathcal{M}}} h^{m},
\end{equation}
and 
\begin{equation}\label{eq:MMLS_der2}\frac{1}{\sqrt{D}}\|\text{D}\varphi(\x)[\v]-\text{D}\widetilde{\varphi}(\x)[\v]\|\leq
    \|\text{D}\varphi(\x)[\v]-\text{D}\widetilde{\varphi}(\x)[\v]\|_{\infty}\leq c_{\mathcal{M}, \widetilde{\mathcal{M}}} h^{m}.
\end{equation}

Note that \cite[Lemma 3]{sober2020approximating} and \cite[Lemma 4]{sober2020approximating} can be extended in the following way. Given $\rb\in\widetilde{\mathcal{M}}$, there exists a $\p\in \mathcal{M}$ such that ${\mathcal P}_{m}^{h}(\p)=\rb$ since MMLS projection from $\mathcal{M}$ to $\widetilde{\mathcal{M}}$ is bijective. Thus, from the uniqueness property \cite[Lemma 4.7]{sober2020manifold}, we have that $\q(\rb) = \q (\p)$, $H(\rb) = H(\p)$, and ${\mathcal P}_{m}^{h}(\p)={\mathcal P}_{m}^{h}(\rb)=\rb$. Thus, for such $\rb\in\widetilde{\mathcal{M}}$ the same parametrizations of $\widetilde{\mathcal{M}}$ and $\mathcal{M}$ from $H(\p)$ which are guaranteed by \cite[Lemma 3]{sober2020approximating}, i.e., $\varphi:H(\p)\to{\mathcal{M}}$ and $\widetilde{\varphi}:H(\p)\to\widetilde{{\mathcal{M}}}$ correspondingly, are also parametrizations from $H(\rb)$. Moreover, Eq. \eqref{eq:MMLS_der}, Eq. \eqref{eq:MMLS_der1}, and Eq. \eqref{eq:MMLS_der2}, which hold due to \cite[Lemma 4]{sober2020approximating}, hold also for such $\rb\in\widetilde{\mathcal{M}}$ and its corresponding MMLS polynomial $g^{\star}(\x\ |\ \rb)$ (since $g^{\star}(\x\ |\ \rb) = g^{\star}(\x\ |\ \p)$). 

Finally, it can be shown from Eq. \eqref{eq:MMLS_der}, Eq. \eqref{eq:MMLS_der1}, and Eq. \eqref{eq:MMLS_der2}, and the structure of MMLS projection that the orthogonal projections operators on the ranges of $\text{D}g^{\star}(\x\ |\ \p)$, $\text{D}\varphi(\x)$, and $\text{D}\widetilde{\varphi}(\x)$, differ in $O(\sqrt{D} h^{m})$ from each other in $L_{2}$ norm (see Lemma \ref{lem:our_proj_prop} in Appendix \ref{subsec:background_MMLS}).
\if0
\begin{rem}
In \cite{aizenbud2021non}, two main modifications are done for the
second step of MMLS. The first, turns the polynomial $g$ to be $g:H(\rb)\backsimeq\R^{d}\to H^{\perp}\backsimeq\R^{n-d}$
such that the least-squares cost function becomes $\sum_{i=1}^{I}\|(\x_{i},g(\x_{i}))-\varphi_{i}\|^{2}$
where $(\x_{i},g(\x_{i}))$ are the coefficients of $\x_{i}$ in the
basis of $H(\rb)$ and $g(\x_{i})$ in the basis of $H^{\perp}(\rb)$.
The second modification is reiterate the previous optimization problem,
by taking a new $q$ to be the projection of the previous step, i.e.,
$\widetilde{q}=\q(\rb)+(0,g^{\star}(\0\ |\ \rb))$, and taking a new $H$ to
be the subspace coinciding with the column space of the differential
of $(\text{Id},g^{\star}(\0\ |\ \rb)):H\backsimeq\R^{d}\to H^{\perp}\backsimeq\R^{D-d}$
at $0$ viewing the manifold locally as a graph of a function (see
\cite[Section 2.3 ]{aizenbud2021non} for the implementation details).
\end{rem}
\fi
\begin{rem}[Changing the inner product]
\label{rem:other_metrics}MMLS projection is presented here using
the standard inner product in $\R^{D}$, but it is possible to generalize
this procedure for any other inner product of the form $\dotprodM{\u}{\v}=\sqrt{\u^{\T}\matM\v}$
for $\matM$ symmetric positive-definite (SPD) matrix \cite[Remark 4.23]{sober2020manifold}.
\end{rem}

\subsubsection{\label{subsec:MMLS_func}Extension of MMLS for Function Approximations}

In this subsection we recall the extension of MMLS to function approximation \cite{sober2021approximation}. The assumption of the algorithm for a noiseless data are:
\begin{assumption}[Function approximation assumptions]
\label{assu:MMLS_func_Samples}${\mathcal{M}}\in C^{2}$ is a closed (i.e.,
compact and boundaryless) $d$-dimensional submanifold of $\R^{D}$. $f$ is a function from $\R^{D}$ to $\R$, and we look at its restriction on $\mathcal{M}$.
The sample set $S=\left\{\rb_{i}\right\}_{i=1}^{n}\subset{\mathcal{M}}$ is a quasi-uniform
sample set with respect to the domain ${\mathcal{M}}$, with fill distance
$h$. At each point of $S$ we also have a sample of $f$, i.e., $f_{i}=f(\rb_{i})$ for $1\leq i\leq n$. Thus, the sample-set at hand is $S_{f}=\left\{\left(\rb_{i},f(\rb_{i})\right)\right\}_{i=1}^{n}\subset {\mathcal{M}} \times \mathbb{R}$
\end{assumption}

The goal is given a
point $\rb$ close to ${\mathcal{M}}$, i.e., $\rb=\widehat{\rb}+\epsilon$ such
that $\widehat{\rb}\in{\mathcal{M}}$ and $\epsilon\in\R^{D}$, approximate $f(\widehat{\rb})$.
Using the moving coordinate system obtained in first step of MMLS
projection, the goal in the second step is modified so that in essence the function 
\begin{equation}\label{eq:MMLS_func_to approx}
   \widehat{f}\coloneqq f\circ\varphi 
\end{equation}
is approximated (where $\varphi:H(\rb)\to \mathcal{M}$ is some local parametrization of $\mathcal{M}$ as in the previous subsection), by modifying the second step of MMLS to be 
\begin{equation}\label{eq:step2-func_approx}
p_{\rb}^{f}(\cdot)=\arg\min_{p\in\Pi_{m}^{d}}\sum_{i=1}^{n}\|p(\x_{i})-\widehat{f}(\x_{i})\|^{2}\theta_{3}(\|\rb_{i}-q(\rb)\|),
\end{equation}
where $\theta_{3}(\cdot)$ is a
non-negative weight function (locally supported or rapidly decreasing as $t \to \infty$) consistent across scales, i.e., $\theta_{3}(th)=\Psi(t)$. We take $p_{\rb}^{f}(\0)$ to be the approximation of $f(\widehat{\rb})$,
i.e.,
\[
f(\widehat{\rb})\approx\widetilde{f}(\rb)\coloneqq p_{\rb}^{f}(\0),
\]
where $\widetilde{f}(\rb)$ denotes MMLS approximation of $f$ on $\mathcal{M}$. Similarly to the reasoning in \cite[Section 3.2]{sober2020manifold}, solving problem \eqref{eq:step2-func_approx} costs $O(Dd^{m} + d^{3m})$, and together with the first step of MMLS algorithm, the cost stays $O(Dd^{m} + d^{3m})$, i.e., $O(\tmmls)$.

In \cite[Theorem 3.1]{sober2021approximation}, it
is shown that if $\theta_{1}(\cdot), \theta_{2}(\cdot)\in C^{\infty}$ then the resulting approximation $\widetilde{f}(\rb)$ is a $C^{\infty}$
function for $\rb\in U_{\mathrm{unique}}$, and for any $\rb_{0},\rb_{1}\in U_{\mathrm{unique}}$
such that $\rb_{1}-q(\rb_{0})\perp H(\rb_{0})$ we have $\widetilde{f}(\rb_{0})=\widetilde{f}(\rb_{1})$
(uniqueness). Moreover, for $h$ small enough and $\rb_{2}\in{\mathcal{M}}$ we have that $|f(\rb_{2})-\widetilde{f}(\rb_{2})|=O(h^{m+1})$ \cite[Theorem 3.2]{sober2021approximation}. Thus, for $\rb_{0}\in U_{\mathrm{unique}}$
such that $\rb_{2}-q(\rb_{0})\perp H(\rb_{0})$ and ${\mathcal P}_{m}^{h}(\rb_{0})={\mathcal P}_{m}^{h}(\rb_{2}) \in \widetilde{{\mathcal{M}}}$ we have that $|f(\rb_{2})-\widetilde{f}(\rb_{0})|=O(h^{m+1})$. Moreover, using \cite[Lemma 1]{sober2020approximating} it is possible
to show that the order of approximation of the first derivative (gradient) in infinity norm
of $\widehat{f}(\x)$ by $p_{\rb}^{f}(\x)$ around $0\in H(\rb)$ is $O(h^{m})$ (see Lemma \ref{lem:fderiv_approx} in Appendix \ref{subsec:background_MMLS}).

\section{\label{sec:The-Proposed-Algorithms}MMLS Riemannian Optimization
(MMLS-RO)}

In this section, we present our proposed methods for for approximating the solution of Problem \eqref{eq:general_problem}. Our method is based on performing Riemannian
optimization where the various geometric components are implemented using MMLS projection, which was presented in Subsection \ref{subsec:Manifold-Moving-Least-Squares}.

The problem setting is as follows. The constraint set is a $d$-dimensional
submanifold of $\R^{D}$, denoted by ${\mathcal{M}}$. Its dimension is known in advance or estimated, so we have $d$ at our hand. We assume that some of ${\mathcal{M}}$'s geometric components which are required for Riemannian optimization (e.g., tangent spaces, retraction) are intractable explicitly, but it is possible to sample ${\mathcal{M}}$ according to Assumption
\ref{assu:MMLS_Samples}. Note that if the tangent spaces at the sampled point are given, then at least at these points this information can be utilized to achieve better approximations of the other components (e.g., retraction). We assume that the cost function $f:\R^{D}\to\R$ is at least a Lipschitz-gradient function, with a bounded Lipschitz constant, as to (informally) ensure that our approximate solution has a close cost value to the cost value on the closest points on $\mathcal{M}$. We assume that either $f$ and its Euclidean gradient are known, either only $f$ is known, or that at most, only samples of $f$ are available according to Assumption
\ref{assu:MMLS_func_Samples}. 
\if0
In the latter case, we assume that it is possible to sample $f$ on ${\mathcal{M}}$ to approximate $f$ on $\mathcal{M}$ and the Euclidean gradient of $ f$ on $U$.
\fi

We propose to perform the optimization on the approximating
manifold of ${\mathcal{M}}$ obtained from MMLS procedure, i.e., $\widetilde{{\mathcal{M}}}$. In order to do so, we use MMLS algorithm (Section \ref{subsec:MMLS_the_algorithm}) to approximate the tangent spaces of $\mathcal{M}$ (and simultaneously the tangent spaces of $\widetilde{{\mathcal{M}}}$), and define an approximate retraction on $\widetilde{{\mathcal{M}}}$. In addition, orthogonal projections on the approximated tangent spaces allow us to
approximate the Riemannian gradient of $f$ and a vector transport. Note that for the case where $f$ (if required) is also approximated (Section \ref{subsec:MMLS_func}), we first approximate the Euclidean gradient of $\widehat{f}$ before turning it into a Riemannian gradient (Subsection \ref{subsec:Approximate-ROMMLS}).

Explicitly, Problem \eqref{eq:general_problem} is reformulated
in the following way: given an initial point $\p_0 \in U_{\mathrm{unique}}$
our goal is to build iterative methods to solve
\begin{equation}
\min_{\x\in\widetilde{{\mathcal{M}}}}f(\x). \label{eq:general_problem_for_ROMMLS}
\end{equation}
\if0
where we assume that $f$ and the Euclidean gradient of $f$ are given, and $f$ is defined on $\widetilde{\mathcal{M}}$ in a unique way using the definition of $f$ in ambient coordinates. If $f$ itself is approximated, then in Eq. \eqref{eq:general_problem_for_ROMMLS} we essentially minimize its approximation $\widetilde{f}$ on $\widetilde{\mathcal{M}}$. To achieve the aforementioned goals, we propose to approximate the geometrical
components required for Riemannian optimization using MMLS, and also utilize MMLS approximation of functions to approximate $f$ and its Riemannian gradient, as detailed in the next subsection.
\fi
Our proposed Algorithms \ref{alg:MMLS-for-Riemannian} and \ref{alg:MMLS-RO_CG}, are first-order algorithms, thus potentially achieve points on $\widetilde{\mathcal{M}}$ which satisfy first-order criteria (bounded Riemannian gradient norm). We relate the achieved points on $\widetilde{\mathcal{M}}$ to their corresponding points on $\mathcal{M}$, and we bound their Riemannian gradient norm (Subsection \ref{subsec:Convergence-Analysis-of2}).

% \subsection{\label{subsec:Quasi-ROMMLS}Quasi-ROMMLS}
\if0
Our first approach, which we term \emph{quasi-ROMMLS}, is based on
using an alternative for the tangent space, which is the moving coordinate
system obtained by the first step of MMLS projection which we
term by \emph{quasi-tangent space}. Explicitly, for some iterate,
$\rb\in\widetilde{{\mathcal{M}}}$, the alternative tangent space we use is $H(\rb)$
at the point $\rb$, and we denote it by $\widetilde{T}_{\rb}\widetilde{\mathcal{M}}$. The
Riemannian metric we define on $\widetilde{T}_{\rb}\widetilde{\mathcal{M}}$ is the metric
inherited from the ambient space $\R^{n}$. For simplicity we use
the standard inner product, though other choices of Riemannian metric
are possible but require the modification of MMLS procedure (Remark
\ref{rem:other_metrics}). Recall that the first step of MMLS provides
an orthonormal (with respect to the Riemannian metric) basis of $H(\rb)$
we denote by $\{e_{k}\}_{k=1}^{d}$. Then, the orthogonal projection
on $\widetilde{T}_{\rb}\widetilde{\mathcal{M}}$ in ambient coordinates with respect to
the Riemannian metric is 
\[
\forall\xi\in\R^{n},\ \widetilde{\Pi}_{\rb}(\xi)\coloneqq\mat E_{\rb}\mat E_{\rb}^{\T}\xi,
\]
where $\mat E_{\rb}\in\R^{n\times d}$ is a matrix such that its columns
are $e_{1},...,e_{d}$. The orthogonal projection on $\widetilde{T}_{\rb}\widetilde{\mathcal{M}}$
allows us to define the quasi-Riemannian gradient, denoted by $\widetilde{\grad{f(\rb)}}$,
by applying the projection on the Euclidean gradient of some smooth
extension, $f$, of $f$ on $\R^{n}$ (or some vicinity of $\widetilde{{\mathcal{M}}}\subseteq\R^{n}$)
such that on ${\cal \widetilde{{\mathcal{M}}}}$ that function agrees with
$f$, i.e., $\widetilde{\grad{f(\rb)}}\coloneqq\widetilde{\Pi}_{\rb}(\nablaf(\rb))$.\textcolor{red}{{}
For the case where only samples of $f$ are available at the sample
points of ${\mathcal{M}}$, we can use the Euclidean gradient of the polynomial
approximation at $0$ transformed to the ambient coordinates, i.e.,
take $\mat E_{\rb}\nabla p_{\rb}^{f}(\0)\in\R^{n}$ which is the approximation
of $\nabla(f\circ\varphi)(\rb)$. Note that in this case no projection
on $H(\rb)$ is required, since $\mat E_{\rb}\nabla p_{\rb}^{f}(\0)\in H(\rb)$
(is this correct?). }

A quasi-vector transport can be defined via the orthogonal projection
on $\widetilde{T}_{\rb}\widetilde{\mathcal{M}}$ (see \cite[Section 8.1.3]{AMS09}), but
first a quasi-retraction should be defined. We denote it by $\widetilde{R}:\widetilde{T}\widetilde{\mathcal{M}}\to{\cal \widetilde{{\mathcal{M}}}}$.
A natural candidate is MMLS projection itself performed on $\rb+\xi$
where $\xi\in\widetilde{T}_{\rb}\widetilde{\mathcal{M}}$ (restricted in size such that
$\rb+\xi\in U_{\mathrm{unique}}$ to ensure MMLS projection is defined)
is the step we take on the quasi-tangent space, i.e., 
\begin{equation}
\widetilde{R}_{\rb}(\xi)\coloneqq {\mathcal P}_{m}^{h}(\rb+\xi):\widetilde{T}_{\rb}\widetilde{\mathcal{M}}\to\widetilde{{\mathcal{M}}}.\label{eq:quasi-retraction_V1}
\end{equation}
Recall that for $p\in U_{\mathrm{unique}}$ we have that ${\mathcal P}_{m}^{h}(p)\in\widetilde{{\mathcal{M}}}$
and MMLS projection is smooth (\textcolor{red}{is it true also
for points not on ${\mathcal{M}}$?}) \cite[Theorem 4.21]{sober2020manifold}.
Thus, if we limit the step-size on $\widetilde{T}_{\rb}\widetilde{\mathcal{M}}$ to ensure
that $p=r+\xi\in U_{\mathrm{unique}}$ (\textcolor{red}{the step-size
should also satisfy \cite[Lemma A.12]{aizenbud2021non}, i.e., a step-size
of $c_{\pi/4}\tau$ where $c_{\pi/4}$ is some constant and $\tau$
is the reach of ${\mathcal{M}}$?}), then MMLS projection of $\rb+\xi$
on ${\cal \widetilde{{\mathcal{M}}}}$ is well defined. Next, we want to ensure
that the quasi-retraction we define satisfies similar conditions as
in \cite[Definition 4.1.1]{AMS09} modified for $\widetilde{T}_{\rb}\widetilde{\mathcal{M}}$
and ${\cal \widetilde{{\mathcal{M}}}}$, i.e., $\widetilde{R}_{\rb}(\0_{\rb})=r$ and
$\text{D}\widetilde{R}_{\rb}(\0_{\rb})=\text{Id}_{\widetilde{T}_{\rb}\widetilde{\mathcal{M}}}$
where $\0_{\rb}\in\widetilde{T}_{\rb}\widetilde{\mathcal{M}}$ is the zero vector in $\widetilde{T}_{\rb}\widetilde{\mathcal{M}}$
and $\text{Id}_{\widetilde{T}_{\rb}\widetilde{\mathcal{M}}}$ is the identity mapping on
$\widetilde{T}_{\rb}\widetilde{\mathcal{M}}$. These conditions ensure the following property
$\widetilde{\grad{f(\rb)}}=\grad{(f\circ\widetilde{R}_{\rb})(0_{\rb})}$ (see \cite[Proposition 3.59]{boumal2022intromanifolds})
which is particularly important for the analysis of convergence of
the various iterative methods. Note that \cite[Lemma 4.7]{sober2020manifold}
ensures that given $\rb\in U_{\mathrm{unique}}$ such that ${\mathcal P}_{m}^{h}(\rb)\in\widetilde{{\mathcal{M}}}$,
we have that $\widetilde{R}_{\rb}(\0_{\rb})={\mathcal P}_{m}^{h}({\mathcal P}_{m}^{h}(\rb))={\mathcal P}_{m}^{h}(\rb)$
(\textcolor{red}{should we limit the polynomial in step 2 of MMLS
to be $H\to H^{\perp}$?}). Moreover, if $\rb\in\widetilde{{\mathcal{M}}}$,
then ${\mathcal P}_{m}^{h}(\rb)=r,$making $\widetilde{R}_{\rb}(\0_{\rb})=r$. Thus, if
we begin the iterative process with an MMLS projection on $\widetilde{{\mathcal{M}}}$
and all other iterations are also projected on $\widetilde{{\mathcal{M}}}$
(while constraining step-sizes such that in each iterate the projection
on $\widetilde{{\mathcal{M}}}$ is performed from a point which belongs to
$U_{\mathrm{unique}}$) we ensure that the first condition of a retraction
for $\widetilde{R}$ is satisfied. Unfortunately, the second condition
of a quasi-retraction is not satisfied exactly. Recall that $\text{Id}_{\widetilde{T}_{\rb}\widetilde{\mathcal{M}}}=\mat E_{\rb}\mat E_{\rb}^{\T}$
since the orthogonal projection on $\widetilde{T}_{\rb}\widetilde{\mathcal{M}}$ restricted
on $\widetilde{T}_{\rb}\widetilde{\mathcal{M}}$ is the identity map. We can write the
MMLS projection in the following way for $p\in U_{\mathrm{unique}}$
\[
{\mathcal P}_{m}^{h}(p)=g^{\star}(\mat E_{p}^{\T}(p)(p-q(p))\ |\ p)=g^{\star}(\0\ |\ p),
\]
thus, rewriting the quasi-retraction (Eq. \eqref{eq:quasi-retraction_V1})
\[
\widetilde{R}_{\rb}(\xi)={\mathcal P}_{m}^{h}(\rb+\xi)=g^{\star}(\mat E_{r+\xi}^{\T}(r+\xi)(r+\xi-q(r+\xi))\ |\ r+\xi)=g^{\star}(\0\ |\ r+\xi)
\]
Taking the derivative with respect to $\xi$ and assuming $\mat E_{r+\xi}^{\T}(r+\xi),q(r+\xi)$
and the polynomial coefficients are constants around $\rb$ so that
$g^{\star}$ only depends on its first component, we get
\[
\text{D}\widetilde{R}_{\rb}(\0_{\rb})=\text{D}{\mathcal P}_{m}^{h}(\rb+\xi)=\text{D}g^{\star}(\0\ |\ \rb)\mat E_{\rb}^{\T}.
\]
We know that both $\mat E_{\rb}$ and $\text{D}g^{\star}(0\!|\!r)$
are matrices that their range approximates the tangent space of ${\mathcal{M}}$
(\textcolor{red}{is there a way to quantify how close are these matrices,
or the products $\text{D}g^{\star}(\0\ |\ \rb)\mat E_{\rb}^{\T}$ and $\mat E_{\rb}\mat E_{\rb}^{\T}$
close as operators in $\widetilde{T}_{\rb}\widetilde{\mathcal{M}}$}). 

In addition, retraction can be defined via local parameterization
(see \cite[Section 4.1.3]{AMS09}). A retraction is defined via $\mu_{\rb}:\R^{d}\to\widetilde{{\mathcal{M}}}$
a given local parameterization of $\widetilde{{\mathcal{M}}}$ around $\rb$
in the following way 
\begin{equation}
R_{\rb}(\xi)\coloneqq\mu_{\rb}(\text{D}\mu_{\rb}^{-1}(\rb)[\xi]).\label{eq:local_coord_ret}
\end{equation}
In our case, even though we do not have an actual parametrization
of $\widetilde{{\mathcal{M}}}$ around $\rb$, the polynomial approximation performed
in the second step of MMLS $g^{\star}(\x\ |\ \rb):H\backsimeq\R^{d}\to\R^{n}$,
approximates locally some parametrization $\mu_{\rb}$ and more importantly
its derivative \cite[Lemma 4]{sober2020approximating}. Following
a similar structure to Eq. \eqref{eq:local_coord_ret}, we redefine
quasi-retraction (Eq. \eqref{eq:quasi-retraction_V1}) using the Moore-Penrose
inverse \cite[Chapter 5.5.2]{golub2013matrix} of $\text{D}g^{\star}(\0\ |\ \rb)$
denoted by $\text{D}g^{\star}(\0\ |\ \rb)^{\pinv}$ in the following
way (assuming $\text{D}g(0)$ is a full rank matrix)
\begin{equation}
\widetilde{R}_{\rb}(\xi)\coloneqq g^{\star}(\text{D}g^{\star}(\0\ |\ \rb)^{\pinv}\mat E_{r+\xi}(r+\xi)\mat E_{r+\xi}^{\T}(r+\xi)(r+\xi-q(r+\xi))\ |\ r+\xi)=g^{\star}(\0\ |\ r+\xi).\label{eq:quasi-retraction_V2}
\end{equation}
Note that the resulting point on $\widetilde{{\mathcal{M}}}$ using Eq. (\ref{eq:quasi-retraction_V2})
is exactly the same as using Eq. \eqref{eq:quasi-retraction_V1} since
$\mat E_{p}^{\T}(r+\xi)\perp r+\xi-q(r+\xi)$, thus Eq. \eqref{eq:quasi-retraction_V2}
is only written for the sake of analysis. Again, $\widetilde{R}_{\rb}(\0_{\rb})={\mathcal P}_{m}^{h}({\mathcal P}_{m}^{h}(\rb))={\mathcal P}_{m}^{h}(\rb)$.
In Addition, the derivative of $\widetilde{R}_{\rb}(\xi)$ from Eq. \eqref{eq:quasi-retraction_V2}
assuming $\mat E_{r+\xi}^{\T}(r+\xi),q(r+\xi)$ and the polynomial
coefficients are constants around $\rb$ so that $g^{\star}$ only depends
on its first component is 
\[
\text{D}\widetilde{R}_{\rb}(\0_{\rb})=\text{D}(g^{\star}(\text{D}g^{\star}(\0\ |\ \rb)^{\pinv}\mat E_{r+\xi}(r+\xi)\mat E_{r+\xi}^{\T}(r+\xi)(r+\xi-q(r+\xi)))\ |\ r+\xi)=\text{D}g^{\star}(\0\ |\ \rb)\text{D}g^{\star}(\0\ |\ \rb)^{\pinv}\mat E_{\rb}\mat E_{\rb}^{\T}.
\]
Note that $\text{D}g^{\star}(\0\ |\ \rb)\text{D}g^{\star}(\0\ |\ \rb)^{\pinv}$
is the identity map on the (column space) range of $\text{D}g^{\star}(\0\ |\ \rb)$
(since $\text{D}g^{\star}(\0\ |\ \rb)\text{D}g^{\star}(\0\ |\ \rb)^{\pinv}$
is the orthogonal projection on $\Range{\text{D}g^{\star}(\0\ |\ \rb)}$)
and $\mat E_{\rb}\mat E_{\rb}^{\T}$ is the identity mapping on $\widetilde{T}_{\rb}\widetilde{\mathcal{M}}$.
Given $\xi\in\widetilde{T}_{\rb}\widetilde{\mathcal{M}}\subseteq\R^{n}$, we have 
\[
\text{D}\widetilde{R}_{\rb}(\0_{\rb})[\xi]=\text{D}g^{\star}(\0\ |\ \rb)\text{D}g^{\star}(\0\ |\ \rb)^{\pinv}\mat E_{\rb}\mat E_{\rb}^{\T}[\xi]=\text{D}g^{\star}(\0\ |\ \rb)\text{D}g^{\star}(\0\ |\ \rb)^{\pinv}[\xi].
\]
Since $\xi$ is given in ambient coordinates we can decompose it uniquely
to 
\[
\xi=\xi_{\text{D}g^{\star}(\0\ |\ \rb)}+\xi_{\text{D}g^{\star}(\0\ |\ \rb)^{\perp}}\ ,
\]
where $\xi_{\text{D}g^{\star}(\0\ |\ \rb)}\in\Range{\text{D}g^{\star}(\0\ |\ \rb)}$
and $\xi_{\text{D}g^{\star}(\0\ |\ \rb)^{\perp}}\in\Range{\text{D}g^{\star}(\0\ |\ \rb)}^{\perp}$.
Therefore,
\[
\text{D}\widetilde{R}_{\rb}(\0_{\rb})[\xi]=\text{D}g^{\star}(\0\ |\ \rb)\text{D}g^{\star}(\0\ |\ \rb)^{\pinv}[\xi_{\text{D}g^{\star}(\0\ |\ \rb)}+\xi_{\text{D}g^{\star}(\0\ |\ \rb)^{\perp}}]=\xi_{\text{D}g^{\star}(\0\ |\ \rb)},
\]
where the deviation of $\text{D}\widetilde{R}_{\rb}(\0_{\rb})$ from being
$\text{Id}_{\widetilde{T}_{\rb}\widetilde{\mathcal{M}}}$ is $\xi_{\text{D}g^{\star}(\0\ |\ \rb)^{\perp}}$
which depends on $\sin(\angle_{\max}(\widetilde{T}_{\rb}\widetilde{\mathcal{M}},\Range{\text{D}g^{\star}(\0\ |\ \rb)}))$,
i.e., the angle between $\widetilde{T}_{\rb}\widetilde{\mathcal{M}}$ and $\Range{\text{D}g^{\star}(\0\ |\ \rb)}$. 

Using the retraction defined in Eq. \eqref{eq:quasi-retraction_V2},
we can show the following property
\begin{align}
\forall\xi\in\widetilde{T}_{\rb}\widetilde{\mathcal{M}},\ \dotprod{\grad{(f\circ\widetilde{R}_{\rb})(0_{\rb})}}{\xi}= & \text{D}(f\circ\widetilde{R}_{\rb})(0_{\rb})[\xi]=\text{D}f(\rb)[\text{D}\widetilde{R}_{\rb}(\0_{\rb})[\xi]]\nonumber \\
=\text{D}f(\rb)[\xi_{\text{D}g^{\star}(\0\ |\ \rb)}]=\dotprod{\nablaf(\rb)}{\xi_{\text{D}g^{\star}(\0\ |\ \rb)}} & =\dotprod{\widetilde{\grad{f(\rb)}}}{\xi}-\dotprod{\nablaf(\rb)}{\xi_{\text{D}g^{\star}(\0\ |\ \rb)^{\perp}}}.\label{eq:quasi-ret_identity}
\end{align}
The previous property (Eq. \eqref{eq:quasi-ret_identity}) is crucial
for the analysis of convergence, in particular for a gradient method
typically the step at each iteration is of the form $\xi=-c\widetilde{\grad{f(\rb)}}$
for some constant $c>0$. Thus, we have
\begin{align*}
\dotprod{\grad{(f\circ\widetilde{R}_{\rb})(0_{\rb})}}{-c\widetilde{\grad{f(\rb)}}}= & -c\|\widetilde{\grad{f(\rb)}}\|^{2}+c\dotprod{\widetilde{\Pi}_{\rb}((\matI-\text{D}g^{\star}(\0\ |\ \rb)\text{D}g^{\star}(\0\ |\ \rb)^{\pinv})\nablaf(\rb))}{\widetilde{\grad{f(\rb)}}}\\
= & -c\dotprod{\widetilde{\Pi}_{\rb}(\text{D}g^{\star}(\0\ |\ \rb)\text{D}g^{\star}(\0\ |\ \rb)^{\pinv}\nablaf(\rb))}{\widetilde{\grad{f(\rb)}}},
\end{align*}
where $\text{D}g^{\star}(\0\ |\ \rb)\text{D}g^{\star}(\0\ |\ \rb)^{\pinv}$
is the orthogonal projection on $\Range{\text{D}g^{\star}(\0\ |\ \rb)}$,
and $\matI-\text{D}g^{\star}(\0\ |\ \rb)\text{D}g^{\star}(\0\ |\ \rb)^{\pinv}$
is the projection on $\Range{\text{D}g^{\star}(\0\ |\ \rb)}^{\perp}$.
Note that $\widetilde{\Pi}_{\rb}(\text{D}g^{\star}(\0\ |\ \rb)\text{D}g^{\star}(\0\ |\ \rb)^{\pinv}\nabla f(\rb))$
depends on $\cos(\angle_{\max}(\widetilde{T}_{\rb}\widetilde{\mathcal{M}},\Range{\text{D}g^{\star}(\0\ |\ \rb)}))$,
the closer the angle to zero, the closer $\dotprod{\widetilde{\Pi}_{\rb}(\text{D}g^{\star}(\0\ |\ \rb)\text{D}g^{\star}(\0\ |\ \rb)^{\pinv}\nabla f(\rb))}{\widetilde{\grad{f(\rb)}}}$
to $\|\widetilde{\grad{f(\rb)}}\|^{2}$ (\textcolor{red}{we need to make
a lemma for this equation}).

Finally, the quasi-vector transport can be defined in the following
way 
\[
\forall\eta,\xi\in\widetilde{T}_{\rb}\widetilde{\mathcal{M}},\ \widetilde{\tau}_{\eta}\xi\coloneqq\widetilde{\Pi}_{\widetilde{R}_{\rb}(\eta)}(\xi)\in\widetilde{T}_{\widetilde{R}_{\rb}(\eta)}{\mathcal{M}}.
\]
The geometrical components for Quasi-ROMMLS are summarized in Table
\ref{tab:quasi-riemannian}. With these components it is possible
to adapt first-order Riemannian algorithms, e.g., Riemannian gradient
method and Riemannian CG, to our setting based on Riemannian optimization
\cite{AMS09,boumal2022intromanifolds}. An example Quasi-ROMMLS gradient
algorithm is described in Algorithm \ref{alg:(Quasi-)Riemannian-gradient}
(based on \cite[Algorithm 4.1]{boumal2022intromanifolds}) where the
step-size can be chosen in any standard way, i.e., fixed, optimal,
backtracking constrained to satisfy that each iteration belongs to
$U_{\mathrm{unique}}$. 
\fi
\if0
\begin{table}
\caption{\label{tab:quasi-riemannian}Riemannian components for Quasi-ROMMLS}

\centering{}{\scriptsize{}}%
\begin{tabular}{|>{\centering}p{8cm}|>{\centering}p{5cm}|}
\hline 
\textbf{\scriptsize{}Riemannian quasi components} & \textbf{\scriptsize{}Explicit formulas}\tabularnewline
\hline 
\hline 
{\small{}Quasi-tangent space at $p$ } & {\small{}quasi-tangent space $H(\rb)\coloneqq\widetilde{T}_{p}{\mathcal{M}}$ }\tabularnewline
\hline 
{\small{}Quasi-retraction of $\xi_{p}\in\widetilde{T}_{p}{\mathcal{M}}$} & {\small{}$\widetilde{R}_{\rb}(\xi)\coloneqq {\mathcal P}_{m}^{h}(\rb+\xi)$ }\tabularnewline
\hline 
{\small{}Orthogonal projection of $\xi$ on $\widetilde{T}_{p}{\mathcal{M}}$} & {\small{}$\widetilde{\Pi}_{\rb}(\xi)\coloneqq\mat E_{\rb}\mat E_{\rb}^{\T}\xi$}\tabularnewline
\hline 
{\small{}Quasi-Riemannian gradient of $f$ at $p$} & {\small{}$\widetilde{\grad{f(\rb)}}\coloneqq\widetilde{\Pi}_{\rb}(\nablaf(\rb))$}\tabularnewline
\hline 
{\small{}Quasi-Vector transport, $\widetilde{\tau}_{\eta}\xi$, of $\xi\in\widetilde{T}_{\rb}\widetilde{\mathcal{M}}$
to $\widetilde{T}_{\widetilde{R}_{\rb}(\eta)}{\mathcal{M}}$} & {\small{}$\widetilde{\tau}_{\eta}\xi\coloneqq\widetilde{\Pi}_{\widetilde{R}_{\rb}(\eta)}(\xi)$ }\tabularnewline
\hline 
\end{tabular}{\scriptsize\par}
\end{table}

\begin{algorithm}[t]
\caption{\label{alg:(Quasi-)Riemannian-gradient}Quasi-ROMMLS gradient algorithm}

\begin{algorithmic}[1]

\STATE\textbf{ Input: }$f$ a smooth function defined in the ambient
space, $R=\{\rb_{i}\}_{i=1}^{I}\subset{\mathcal{M}}$ is a quasi-unifom sample
set. Tolerance $\epsilon>0$. 

\STATE \textbf{Choose an initial point:} $p_{0}$ (can be a point
on the given point cloud, or close to it).

\STATE \textbf{Use} MMLS to form $(q(p_{0}),H(p_{0}))$ and $\x_{0}={\mathcal P}_{m}^{h}(p_{0})\in\widetilde{{\mathcal{M}}}$.

\STATE \textbf{Init} $i\leftarrow1$.

\STATE \textbf{While} $\|\widetilde{\grad{f(\x_{i})}}\|>\epsilon$:

\STATE $\;$ \textbf{Find a direction }$\xi_{\x_{i}}$ on $\widetilde{T}_{\x_{i}}{\mathcal{M}}$
($-\widetilde{\grad{f(\x_{i})}}$ or some approximation of it) and a step-size
$\tau_{i}>0$ such that $\x_{i}+\tau_{i}\xi_{\x_{i}}\in U_{\mathrm{unique}}$.

\STATE $\;$ \textbf{Define} $\x_{i+1}=\widetilde{R}_{\x_{i}}(\tau_{i}\xi_{\x_{i}})$.

\STATE $\;$$i\leftarrow i+1$.

\STATE \textbf{End while }

\STATE \textbf{Return $\x_{i}$ }

\end{algorithmic}
\end{algorithm}
\fi

\subsection{\label{subsec:Approximate-ROMMLS}The Riemannian Components and Example Algorithms}

In this subsection we elaborate on the construction of the various approximate Riemannian components. First, we propose to use an alternative for the unknown or intractable tangent space (at the points where it is unknown), based on the polynomial constructed at the second step of MMLS projection. Suppose we are at a point
$\rb\in\widetilde{{\mathcal{M}}}$ (otherwise, project it on $\widetilde{{\mathcal{M}}}$ using MMLS). Our proposed approximation of the tangent space is the range of the differential of the polynomial approximation $g^{\star}(\cdot\ |\ \rb):H(\rb)\to\R^{D}$ at $\x = \0$, i.e., $\Range{\text{D}g^{\star}(\0\ |\ \rb)}$. Recall that $g^{\star}(\cdot\ |\ \rb)$ can be viewed as an approximation of a local parametrization $\varphi:H(\rb)\to\mathcal{M}$ of $\mathcal{M}$. Thus, we can view $\Range{\text{D}g^{\star}(\0\ |\ \rb)}$ as an approximation of the tangent space of $\mathcal{M}$ (at the corresponding point on $\mathcal{M}$ to $\rb$ via MMLS approximation, i.e., $\p\in\mathcal{M}$ such that ${\mathcal P}_{m}^{h}(\p) = \rb$, and also of the tangent space of $\widetilde{{\mathcal{M}}}$ at the point $\rb$. We denote the \emph{approximate-tangent space} by
\begin{equation}\label{eq:aprox_tan_def}
    \widetilde{T}_{\rb}\widetilde{\mathcal{M}} \coloneqq \Range{\text{D}g^{\star}(\0\ |\ \rb)}.
\end{equation} 
Note that the output of the second step of MMLS, i.e., $g^{\star}(\x\ |\ \rb):H(\rb)\to\R^{D}$, can be equivalently viewed as $g^{\star}(\x\ |\ \rb):H(\rb)\to H^{\perp}(\rb)$, i.e., an approximation of $\mathcal{M}$ as a graph of a function (see Subsection \ref{subsec:MMLS_the_algorithm}). Thus, $\text{D}g^{\star}(\0\ |\ \rb)$ is full-rank. Moreover, even if $\rb\in\widetilde{{\mathcal{M}}}$ is a point where $\widetilde{\mathcal{M}}$ is not a manifold, $\widetilde{T}_{\rb}\widetilde{\mathcal{M}}$ is still defined. 

Here we have an abuse of notation, we denote by $\text{D}g^{\star}(\0\ |\ \rb)$ the matrix which represent the linear transformation $\text{D}g^{\star}(\0\ |\ \rb)$, such that its columns (assumed independent) are a basis for $\Range{\text{D}g^{\star}(\0\ |\ \rb)}$. The cost of finding the basis for $\Range{\text{D}g^{\star}(\0\ |\ \rb)}$, is equal to the cost of forming the matrix $\text{D}g^{\star}(\0\ |\ \rb)$. This amounts to first performing MMLS projection of $\rb\in\widetilde{{\mathcal{M}}}$, i.e., $\tmmls$, and then find the coefficients of each of the $d$ first-order monomials at each of the $D$ coordinates of MMLS projection which takes $O(Dd)$ operations. Thus, the total cost is $O(\tmmls + Dd)$.  
Next, we define the approximate-tangent bundle in a manner similar to Eq.~\eqref{eq:tan_bun}:
\begin{equation}\label{eq:approx_tan_bun}
     \widetilde{T}\widetilde{\mathcal{M}} \coloneqq \left\{(\rb, \xi)\ :\ \rb\in \widetilde{\mathcal{M}}\ \cap \ \xi \in \widetilde{T}_{\rb}\widetilde{\mathcal{M}}\right\}.
\end{equation}

Recall from \cite[Lemma 4]{sober2020approximating}, that if $h$ is sufficiently small and $\lim_{t\to0}\theta_{h}(t)=\infty$, then $\Range{\text{D}g^{\star}(\0\ |\ \rb)}$, $\Range{\text{D}\varphi(\0)}$, and $\Range{\text{D}\widetilde{\varphi}(\0)}$ are $O(h^{m})$ approximations at infinity norm of each other. Thus, the spaces $\widetilde{T}_{\rb}\widetilde{\mathcal{M}}$, $T_{\varphi(\0) = \p}{\mathcal{M}}$, and $T_{\widetilde{\varphi}(\0)=\rb}\widetilde{\mathcal{M}}$ are also $O(h^{m})$ approximations at infinity norm of each other, i.e., the orthogonal projections operators on each of these spaces differ in $O(\sqrt{D} h^{m})$ (in $L_{2}$ norm, see Lemma \ref{lem:our_proj_prop} in Appendix \ref{subsec:background_MMLS}). 

The Riemannian metric we define on $\widetilde{T}_{\rb}\widetilde{\mathcal{M}}$ is the standard inner product
on the ambient space $\R^{D}$ restricted on $\Range{\text{D}g^{\star}(\0\ |\ \rb)}$
(for different choices of Riemannian metric see Remark \ref{rem:other_metrics}).
The orthogonal projection on $\widetilde{T}_{\rb}\widetilde{\mathcal{M}}$ with respect to the Riemannian metric can be defined via the Moore-Penrose inverse 
\cite[Chapter 5.5.2]{golub2013matrix} of $\text{D}g^{\star}(\0\ |\ \rb)$, i.e.,
\begin{equation*}
    \text{D}g^{\star}(\0\ |\ \rb)^{\pinv} = \matG_{\text{D}g^{\star}(\0\ |\ \rb)}^{-1} \text{D}g^{\star}(\0\ |\ \rb)^{\T},
\end{equation*}
where $\matG_{\text{D}g^{\star}(\0\ |\ \rb)}$ denotes the Gram matrix of $\text{D}g^{\star}(\0\ |\ \rb)\in \R^{D \times d}$, in the following way 
\begin{equation*}
    \forall\xi\in\R^{D},\ \widetilde{\Pi}_{\rb}(\xi)\coloneqq\text{D}g^{\star}(\0\ |\ \rb)\text{D}g^{\star}(\0\ |\ \rb)^{\pinv}\xi.
\end{equation*}
The cost of applying the orthogonal projection consists of computing $\text{D}g^{\star}(\0\ |\ \rb)$, which takes $O(\tmmls + Dd)$, computing $\text{D}g^{\star}(\0\ |\ \rb)^{\pinv}$ - inverting the Gram matrix and applying it on $\text{D}g^{\star}(\0\ |\ \rb)^{\T}$ applied on some $\xi\in \R^{D}$, and applying $\text{D}g^{\star}(\0\ |\ \rb)$ on the result, which together take $O(Dd^{2})$. Thus, the total cost is $O(\tmmls + Dd^{2})$.

The orthogonal projection on $\widetilde{T}_{\rb}\widetilde{\mathcal{M}}$ allows us to define both an approximate-Riemannian gradient, denoted by ${\gradtil{f(\rb)}}$,
and an approximate-vector transport denoted by $\widetilde{\tau}_{\eta}\xi$.
We begin with an approximate-Riemannian gradient. There are two different cases we deal with, depending on the available information on $f$, thus, we define a different approximation for each case, but to keep the notation simple, we use the same notation for both cases. 

In the first case, $f$ and its Euclidean gradient are known. Thus, we define an approximate-Riemannian gradient  in the following way:
\begin{equation}\label{eq:f_Rgrad}
    {\gradtil{f(\rb)}}\coloneqq\widetilde{\Pi}_{\rb}(\nabla{f}(\rb)),
\end{equation}
i.e., an orthogonal projection of the Euclidean gradient on the corresponding approximate-tangent space. Note, that this definition is equivalent to defining the gradient via some local parametrization. The cost of computing the approximate-Riemannian gradient in this case amounts to $O(\tmmls +Dd^{2} + T_{\nabla f})$, where $T_{\nabla f}$ denotes the maximal cost of computing $\nabla f$.

In the second case, the Euclidean gradient of $f$ (and possibly $f$ itself) is approximated. Recall that $\nabla p_{\rb}^{f}(\0)$ is an $O(h^{m})$ approximation of $\nabla \widehat{f}(\0)$ in infinity norm (Lemma \ref{lem:fderiv_approx} in Appendix \ref{subsec:background_MMLS}). From Eq. \eqref{eq:MMLS_func_to approx}, we have that
\begin{equation}\label{eq:approx_grad_form}
    \nabla \widehat{f}(\0) = \text{D}\varphi(\0)^{\T} \nabla f(\p),
\end{equation}
where  $\rb\in \widetilde{{\mathcal{M}}}$ and $\varphi (\0) = \p\in{\mathcal{M}}$ such that ${\mathcal P}_{m}^{h}(\p) = \rb$. Recall that the exact Riemannian gradient on $\mathcal{M}$, which is denoted by $\gradM$, is
\begin{equation}\label{eq:exact_Rgrad}
    \gradM{f(\p)} = \text{D}\varphi(\0)\text{D}\varphi(\0)^{\pinv} \nabla f(\p) = \text{D}\varphi(\0)\matG_{\text{D}\varphi(\0)}^{-1} \text{D}\varphi(\0)^{\T}\nabla f(\p) = \text{D}\varphi(\0)\matG_{\text{D}\varphi(\0)}^{-1}\nabla \widehat{f}(\0).
\end{equation}
Thus, to approximate Eq. \eqref{eq:exact_Rgrad} using $\nabla p_{\rb}^{f}(\0)$, we define
\begin{equation}\label{eq:approximate_f_Rgrad}
    \gradM{f(\p)} \approx {\gradtil{f(\rb)}} \coloneqq \text{D}g^{\star}(\0\ |\ \rb)\matG_{\text{D}g^{\star}(\0\ |\ \rb)}^{-1}\nabla p_{\rb}^{f}(\0),
\end{equation}
as the approximate-Riemannian gradient. The cost of computing the approximate-Riemannian gradient in this case consists of computing $\text{D}g^{\star}(\0\ |\ \rb)$, computing the inverse of its Gram matrix, compute $\nabla p_{\rb}^{f}(\x)$ at $\x = \0$ (which requires performing MMLS algorithm for function approximation, $O(\tmmls)$, and finding the coefficients of each of the $d$ first-order monomials, $O(d)$), and performing the matrix-vector multiplications. Thus, the total cost is $O(\tmmls + Dd^{2} + d)$. 

Denote by $\gradMtil$ the Riemannian gradient on ${\widetilde{\mathcal{M}}}$. The following lemma establishes a relation between $\gradtil{f(\rb)}$, $\gradMtil{f(\rb)}$, and $\gradM{f(\p)}$ for a given $\rb\in\widetilde{\mathcal{M}}$ in all cases. However, we first need the following standard (in the analysis of optimization methods) assumption.

\begin{assumption}[$f$ is gradient $L$-Lipschitz in the convex hull of $\widetilde{\mathcal{M}} \cup {\mathcal{M}}$]
\label{assu:Lipschitz-gradient} Denote by $\text{Conv}(\widetilde{\mathcal{M}} \cup {\mathcal{M}})$ the convex hull of $\widetilde{\mathcal{M}} \cup {\mathcal{M}}$. There exists $L\geq0$ such that for all
$\x,\y\in\text{Conv}(\widetilde{\mathcal{M}} \cup {\mathcal{M}})$, we have that $f$ is (Euclidean) gradient $L$-Lipschitz, i.e.,  
\[
\|\nabla f(\y) - \nabla f(\x)\|\leq L \|\y-\x\|,
\]
or equivalently
\[
|f(\y)-[f(\x)+\dotprod{\nabla f(\x)}{\y-\x}]|\leq\frac{L}{2}\|\y-\x\|^{2}.
\]
\end{assumption}

\begin{lemma}[Riemannian gradient approximation order]\label{lem:approx_Rim_grad}
If $f$ and its Euclidean gradient are known, then
\begin{equation}\label{eq:approx_Rim_grad1}
    \left\|{\gradtil{f(\rb)}}-{\gradMtil{f(\rb)}}\right\| \leq c_{\widetilde{\mathcal{M}}} \sqrt{D}\left\| \nabla f(\rb) \right\|h^{m}.
\end{equation}
If in addition, $f$ is gradient $L$-Lipschitz (Assumption \ref{assu:Lipschitz-gradient}), $h$ is small enough such that $h\leq L^{-1}$, and let $\p\in{\mathcal{M}}$ satisfy ${\mathcal P}_{m}^{h}(\p)=\rb\in\widetilde{\mathcal{M}}$, then
\begin{equation}\label{eq:approx_Rim_grad2}
    \left\|{\gradtil{f(\rb)}}-{\gradM{f(\p)}}\right\| \leq \left(c_{\mathcal{M}} \left\| \nabla f(\rb) \right\| + c_{\mmls}\right)\sqrt{D}h^{m}.
\end{equation}

If the Euclidean gradient of $\widehat{f}$ (and possibly $f$ itself) is approximated via MMLS procedure extension (Subsection \ref{subsec:MMLS_func}), let $\p\in{\mathcal{M}}$ satisfy ${\mathcal P}_{m}^{h}(\p)=\rb\in\widetilde{\mathcal{M}}$, and define the approximate-Riemannian gradient according to Eq. \eqref{eq:approximate_f_Rgrad}, then
\begin{equation}\label{eq:approx_Rim_grad3}
    \left\|{\gradtil{f(\rb)}}-{\gradM{f(\p)}}\right\| \leq \left( c_{f} + 2 c_{\mathcal{M}}  \left\| \nabla f(\p) \right\| \right) \sqrt{D} h^{m}.
\end{equation}
If in addition, $f$ is gradient $L$-Lipschitz (Assumption \ref{assu:Lipschitz-gradient}), $h$ is small enough such that $h\leq L^{-1}$, then
\begin{equation}\label{eq:approx_Rim_grad4}
    \left\|{\gradtil{f(\rb)}}-{\gradMtil{f(\rb)}}\right\| \leq \left( 2 c_{\mathcal{M}} \left\| \nabla f(\p) \right\| + c_{\widetilde{\mathcal{M}}} \left\| \nabla f(\p) \right\| + c_{f} + c_{\mmls} \right)\sqrt{D}h^{m}.
\end{equation}
\end{lemma}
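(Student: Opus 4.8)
The plan is to write every object in the statement as an orthogonal projection (or, in the sampled case, a near-projection) of a Euclidean gradient onto a tangent or approximate-tangent space, and then bound the four differences by repeatedly inserting hybrid quantities and invoking three independent error sources: the projection-operator estimates of Lemma~\ref{lem:our_proj_prop} (with constants $c_{\mathcal{M}}$, $c_{\widetilde{\mathcal{M}}}$, $c_{\mathcal{M},\widetilde{\mathcal{M}}}$), the Lipschitz bound of Assumption~\ref{assu:Lipschitz-gradient} combined with the point displacement $\|\rb-\p\|\leq c_{\mmls}\sqrt{D}h^{m+1}$ from Eq.~\eqref{eq:distbetween_p_and_r}, and, in the sampled case, the derivative estimate $\|\nabla p^{f}_{\rb}(\0)-\nabla\widehat{f}(\0)\|\leq c_{f}\sqrt{D}h^{m}$ from Lemma~\ref{lem:fderiv_approx}. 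Throughout I abbreviate $A\coloneqq\text{D}g^{\star}(\0\ |\ \rb)$ and $B\coloneqq\text{D}\varphi(\0)$, and write $\widetilde{\Pi}_{\rb}=A\matG_{A}^{-1}A^{\T}$, $\Pi^{\mathcal{M}}_{\p}=B\matG_{B}^{-1}B^{\T}$, and $\Pi^{\widetilde{\mathcal{M}}}_{\rb}=\text{D}\widetilde{\varphi}(\0)\matG_{\text{D}\widetilde{\varphi}(\0)}^{-1}\text{D}\widetilde{\varphi}(\0)^{\T}$ for the three orthogonal projections onto $\widetilde{T}_{\rb}\widetilde{\mathcal{M}}$, $T_{\p}\mathcal{M}$, and $T_{\rb}\widetilde{\mathcal{M}}$, respectively; these are exactly the maps whose pairwise $L_2$ distances are $O(\sqrt{D}h^{m})$ by Lemma~\ref{lem:our_proj_prop}.

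The first two inequalities are immediate. For Eq.~\eqref{eq:approx_Rim_grad1}, since $\gradtil{f(\rb)}=\widetilde{\Pi}_{\rb}(\nabla f(\rb))$ and $\gradMtil{f(\rb)}=\Pi^{\widetilde{\mathcal{M}}}_{\rb}(\nabla f(\rb))$ share the argument $\nabla f(\rb)$, I factor it out and apply $\|\widetilde{\Pi}_{\rb}-\Pi^{\widetilde{\mathcal{M}}}_{\rb}\|\leq c_{\widetilde{\mathcal{M}}}\sqrt{D}h^{m}$. For Eq.~\eqref{eq:approx_Rim_grad2} I insert the hybrid $\Pi^{\mathcal{M}}_{\p}(\nabla f(\rb))$ between $\widetilde{\Pi}_{\rb}(\nabla f(\rb))$ and $\Pi^{\mathcal{M}}_{\p}(\nabla f(\p))$: the first gap is controlled by $\|\widetilde{\Pi}_{\rb}-\Pi^{\mathcal{M}}_{\p}\|\leq c_{\mathcal{M}}\sqrt{D}h^{m}$, and the second by $\|\nabla f(\rb)-\nabla f(\p)\|\leq L\|\rb-\p\|\leq Lc_{\mmls}\sqrt{D}h^{m+1}\leq c_{\mmls}\sqrt{D}h^{m}$, where $h\leq L^{-1}$ makes the extra factor $Lh$ at most $1$.

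For the sampled-gradient inequalities I first record $\|A\matG_{A}^{-1}\|\leq 1$: viewing $g^{\star}$ as a graph $H(\rb)\to H^{\perp}(\rb)$ gives $A$ the block form $\bigl(\begin{smallmatrix}I_{d}\\ \ast\end{smallmatrix}\bigr)$, so $\matG_{A}=A^{\T}A\succeq I_{d}$, $\sigma_{\min}(A)\geq 1$, and hence $\|A\matG_{A}^{-1}\|=\|A^{\pinv}\|=\sigma_{\min}(A)^{-1}\leq 1$; I also use $\|A-B\|\leq c_{\mathcal{M}}\sqrt{D}h^{m}$, read off directly from Eq.~\eqref{eq:MMLS_der}. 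To prove Eq.~\eqref{eq:approx_Rim_grad3} I start from $\gradtil{f(\rb)}=A\matG_{A}^{-1}\nabla p^{f}_{\rb}(\0)$ and $\gradM{f(\p)}=B\matG_{B}^{-1}\nabla\widehat{f}(\0)$ (Eq.~\eqref{eq:exact_Rgrad}) and walk the chain $A\matG_{A}^{-1}\nabla p^{f}_{\rb}(\0)\to A\matG_{A}^{-1}\nabla\widehat{f}(\0)=A\matG_{A}^{-1}B^{\T}\nabla f(\p)\to\widetilde{\Pi}_{\rb}(\nabla f(\p))\to\Pi^{\mathcal{M}}_{\p}(\nabla f(\p))$: the first link costs $c_{f}\sqrt{D}h^{m}$ (Lemma~\ref{lem:fderiv_approx} with $\|A\matG_{A}^{-1}\|\leq 1$, using $\nabla\widehat{f}(\0)=B^{\T}\nabla f(\p)$ from Eq.~\eqref{eq:approx_grad_form}), the second replaces $B^{\T}$ by $A^{\T}$ at cost $\|A\matG_{A}^{-1}\|\,\|A-B\|\,\|\nabla f(\p)\|\leq c_{\mathcal{M}}\sqrt{D}h^{m}\|\nabla f(\p)\|$, and the third applies $\|\widetilde{\Pi}_{\rb}-\Pi^{\mathcal{M}}_{\p}\|\leq c_{\mathcal{M}}\sqrt{D}h^{m}$; summing the two $c_{\mathcal{M}}$ contributions yields $2c_{\mathcal{M}}\|\nabla f(\p)\|$. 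Eq.~\eqref{eq:approx_Rim_grad4} uses the analogous chain whose last link ends at $\Pi^{\widetilde{\mathcal{M}}}_{\rb}(\nabla f(\rb))=\gradMtil{f(\rb)}$: after reaching $\widetilde{\Pi}_{\rb}(\nabla f(\p))$ one passes to $\Pi^{\widetilde{\mathcal{M}}}_{\rb}(\nabla f(\p))$ via $\|\widetilde{\Pi}_{\rb}-\Pi^{\widetilde{\mathcal{M}}}_{\rb}\|\leq c_{\widetilde{\mathcal{M}}}\sqrt{D}h^{m}$ and then to $\Pi^{\widetilde{\mathcal{M}}}_{\rb}(\nabla f(\rb))$ via the Lipschitz displacement ($c_{\mmls}$ term); overestimating the single $c_{\mathcal{M}}$ link by $2c_{\mathcal{M}}$ (equivalently, routing through $\gradM{f(\p)}$ and invoking \eqref{eq:approx_Rim_grad3}) reproduces the displayed constants.

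I expect the sampled-gradient cases \eqref{eq:approx_Rim_grad3}--\eqref{eq:approx_Rim_grad4} to be the main obstacle, since there the error splits into three entangled pieces that must be separated cleanly by the insertion order above: the function-derivative error $\nabla p^{f}_{\rb}(\0)-\nabla\widehat{f}(\0)$, the mismatch between the $g^{\star}$-based operators $(A,\matG_{A})$ and the true-parametrization operators $(B,\matG_{B})$, and the tangent-space discrepancies. The delicate steps are verifying $\|A\matG_{A}^{-1}\|\leq 1$ (without it the leading error would be amplified by an uncontrolled $\sigma_{\min}(A)^{-1}$) and choosing the hybrids so that each difference is hit by exactly one of Lemma~\ref{lem:our_proj_prop}, Eq.~\eqref{eq:MMLS_der}, Lemma~\ref{lem:fderiv_approx}, or the Lipschitz bound, thereby landing on precisely the constants stated and, in particular, avoiding any $c_{\mathcal{M},\widetilde{\mathcal{M}}}$ term in \eqref{eq:approx_Rim_grad4}.
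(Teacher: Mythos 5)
Your proposal is correct and follows essentially the same route as the paper's proof: the same hybrid insertions, the same three error sources (Lemma \ref{lem:our_proj_prop}, the Lipschitz bound combined with Eq.~\eqref{eq:distbetween_p_and_r}, and Lemma \ref{lem:fderiv_approx}), and the same norm bounds on $\text{D}g^{\star}(\0\ |\ \rb)\matG_{\text{D}g^{\star}(\0\ |\ \rb)}^{-1}$ obtained from the graph-of-function block structure. The only deviation is your chain for Eq.~\eqref{eq:approx_Rim_grad4}, which skips the paper's intermediate passage through $\Pi_{T_{\p}\mathcal{M}}$ and thus yields a marginally tighter constant (one $c_{\mathcal{M}}$ instead of two), which you correctly note still implies the stated bound.
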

\begin{proof}
We begin with the case where $f$ and its Euclidean gradient are known. To show Eq.~\eqref{eq:approx_Rim_grad1}, we use the fact that Lemma \ref{lem:our_proj_prop} (Appendix \ref{subsec:background_MMLS}) shows that the orthogonal projection of $\nabla f(\rb)$ on $\widetilde{T}_{\rb}\widetilde{\mathcal{M}}$, i.e., ${\gradtil{f(\rb)}}$, and the orthogonal projection of $\nabla f(\rb)$ on $T_{\rb}{\widetilde{\mathcal{M}}}$, i.e., ${\gradMtil{f(\rb)}}$, satisfy 
\begin{eqnarray*}
    \left\|{\gradtil{f(\rb)}}-{\gradMtil{f(\rb)}}\right\|&=&\left\|\Pi_{\widetilde{T}_{\rb}\widetilde{\mathcal{M}}}(\nabla f(\rb)) - \Pi_{T_{\rb}\widetilde{{\mathcal{M}}}}(\nabla f(\rb))\right\|\\&\leq& c_{\widetilde{\mathcal{M}}} \sqrt{D} \left\| \nabla f(\rb) \right\|h^{m}.
\end{eqnarray*} 
Next, to show Eq. \eqref{eq:approx_Rim_grad2}, let $\p\in{\mathcal{M}}$ such that ${\mathcal P}_{m}^{h}(\p)=\rb$, then for sufficiently small $h$, Eq. \eqref{eq:distbetween_p_and_r} holds. Thus, from Assumption \ref{assu:Lipschitz-gradient} and $h$ small enough such that $h\leq L^{-1}$, we get
\begin{equation}\label{eq:Lip_outer_grad}
    \left\| \nabla f(\rb)-\nabla f(\p) \right\| \leq L\|\rb - \p\| \leq L c_{\mmls}\sqrt{D}h^{m+1}\leq c_{\mmls}\sqrt{D}h^{m}.
\end{equation} 
From Lemma \ref{lem:our_proj_prop} (Appendix \ref{subsec:background_MMLS}), for the orthogonal projection of $\nabla f(\rb)$ on $\widetilde{T}_{\rb}\widetilde{\mathcal{M}}$, i.e., ${\gradtil{f(\rb)}}$, and the orthogonal projection of $\nabla f(\rb)$ on $T_{\p}{\mathcal{M}}$, i.e., $\Pi_{T_{\p}{\mathcal{M}}}(\nabla f(\rb))$, we have
\begin{eqnarray}\label{eq:infinity_grad}
    \|{\gradtil{f(\rb)}}-\Pi_{T_{\p}{\mathcal{M}}}(\nabla f(\rb))\|&=& \|\Pi_{\widetilde{T}_{\rb}\widetilde{\mathcal{M}}}(\nabla f(\rb)) - \Pi_{T_{\p}{\mathcal{M}}}(\nabla f(\rb))\|\nonumber\\&\leq&  c_{\mathcal{M}} \sqrt{D} \left\| \nabla f(\rb) \right\|h^{m}.
\end{eqnarray} Finally using Eq. \eqref{eq:Lip_outer_grad} and Eq. \eqref{eq:infinity_grad}, we get
\begin{eqnarray*}
    \|{\gradtil{f(\rb)}}-{\gradM{f(\p)}}\|&\leq&\\ \|{\gradtil{f(\rb)}}-\Pi_{T_{\p}{\mathcal{M}}}(\nabla f(\rb))\|+ \|\Pi_{T_{\p}{\mathcal{M}}}(\nabla f(\rb))-{\gradM{f(\p)}}\| &\leq&\\ c_{\mathcal{M}} \sqrt{D}\left\| \nabla f(\rb) \right\|h^{m} + \|\Pi_{T_{\p}{\mathcal{M}}}(\nabla f(\rb))-\Pi_{T_{\p}{\mathcal{M}}}(\nabla f(\p))\| &\leq&\\ c_{\mathcal{M}} \sqrt{D}\left\| \nabla f(\rb) \right\|h^{m} + \|\nabla f(\rb)-\nabla f(\p)\| &\leq& \left(c_{\mathcal{M}} \left\| \nabla f(\rb) \right\| + c_{\mmls}\right)\sqrt{D}h^{m}.
\end{eqnarray*}

Now, we address the case where the Euclidean gradient of $\widehat{f}$ (and possibly $f$ itself) is approximated via MMLS procedure extension. To show Eq. \eqref{eq:approx_Rim_grad3}, recall that $\nabla p_{\rb}^{f}(\0)$ is an $O(h^{m})$ approximation of $\nabla \widehat{f}(\0)$ in infinity norm (Lemma \ref{lem:fderiv_approx} in Appendix \ref{subsec:background_MMLS}), i.e.,
\begin{eqnarray*}\label{eq:infinity_grad_approx}
    \frac{1}{\sqrt{D}}\|\nabla p_{\rb}^{f}(\0)-\nabla \widehat{f}(\0)\|&\leq&\|\nabla p_{\rb}^{f}(\0)-\nabla \widehat{f}(\0)\|_{\infty}\\&\leq& c_{f} h^{m}.
\end{eqnarray*}
We write explicitly the left-hand side of Eq. \eqref{eq:approx_Rim_grad3} using Eq. \eqref{eq:exact_Rgrad}
\begin{eqnarray}\label{eq:approx_Rim_grad3_1}
\left\|{\gradtil{f(\rb)}}-{\gradM{f(\p)}}\right\| &=&  \left\|\text{D}g^{\star}(\0\ |\ \rb)\matG_{\text{D}g^{\star}(\0\ |\ \rb)}^{-1}\nabla p_{\rb}^{f}(\0)-\text{D}\varphi(\0)\matG_{\text{D}\varphi(\0)}^{-1}\nabla \widehat{f}(\0)\right\|\\
&\leq& \left\|\text{D}g^{\star}(\0\ |\ \rb)\matG_{\text{D}g^{\star}(\0\ |\ \rb)}^{-1}\nabla p_{\rb}^{f}(\0)-\text{D}g^{\star}(\0\ |\ \rb)\matG_{\text{D}g^{\star}(\0\ |\ \rb)}^{-1}\nabla \widehat{f}(\0)\right\|\nonumber\\
&+& \left\|\text{D}g^{\star}(\0\ |\ \rb)\matG_{\text{D}g^{\star}(\0\ |\ \rb)}^{-1}\nabla \widehat{f}(\0)-\text{D}\varphi(\0)\matG_{\text{D}\varphi(\0)}^{-1}\nabla \widehat{f}(\0)\right\|.\nonumber
\end{eqnarray}
To bound Eq. \eqref{eq:approx_Rim_grad3_1}, we bound each of the two terms separately. For the first term we have,
\begin{eqnarray}
\left\|\text{D}g^{\star}(\0\ |\ \rb)\matG_{\text{D}g^{\star}(\0\ |\ \rb)}^{-1}\nabla p_{\rb}^{f}(\0)-\text{D}g^{\star}(\0\ |\ \rb)\matG_{\text{D}g^{\star}(\0\ |\ \rb)}^{-1}\nabla \widehat{f}(\0)\right\|\nonumber\\
\leq \left\| \text{D}g^{\star}(\0\ |\ \rb)\matG_{\text{D}g^{\star}(\0\ |\ \rb)}^{-\nicehalf} \matG_{\text{D}g^{\star}(\0\ |\ \rb)}^{-\nicehalf} \left(\nabla p_{\rb}^{f}(\0)-\nabla \widehat{f}(\0) \right) \right\|\nonumber\\
\leq \left\| \text{D}g^{\star}(\0\ |\ \rb)\matG_{\text{D}g^{\star}(\0\ |\ \rb)}^{-\nicehalf} \right\| \cdot \left\| \matG_{\text{D}g^{\star}(\0\ |\ \rb)}^{-\nicehalf} \right\| \cdot \left\| \nabla p_{\rb}^{f}(\0)-\nabla \widehat{f}(\0) \right\|\nonumber\\
\leq \left\| \text{D}g^{\star}(\0\ |\ \rb)\matG_{\text{D}g^{\star}(\0\ |\ \rb)}^{-\nicehalf} \right\| \cdot \left\| \matG_{\text{D}g^{\star}(\0\ |\ \rb)}^{-\nicehalf} \right\| c_{f} \sqrt{D} h^{m}. \label{eq:approx_Rim_grad3_2}
\end{eqnarray}
For the second term
\begin{eqnarray}
\left\|\text{D}g^{\star}(\0\ |\ \rb)\matG_{\text{D}g^{\star}(\0\ |\ \rb)}^{-1}\nabla \widehat{f}(\0)-\text{D}\varphi(\0)\matG_{\text{D}\varphi(\0)}^{-1}\nabla \widehat{f}(\0)\right\|\nonumber\\
= \left\| \text{D}g^{\star}(\0\ |\ \rb)\matG_{\text{D}g^{\star}(\0\ |\ \rb)}^{-1}\text{D}\varphi(\0)^{\T} \nabla f(\p) - \text{D}\varphi(\0)\matG_{\text{D}\varphi(\0)}^{-1}\text{D}\varphi(\0)^{\T} \nabla f(\p) \right\| \nonumber\\
\leq \left\| \text{D}g^{\star}(\0\ |\ \rb)\matG_{\text{D}g^{\star}(\0\ |\ \rb)}^{-1}\text{D}\varphi(\0)^{\T} \nabla f(\p) - \text{D}g^{\star}(\0\ |\ \rb)\matG_{\text{D}g^{\star}(\0\ |\ \rb)}^{-1}\text{D}g^{\star}(\0\ |\ \rb)^{\T} \nabla f(\p) \right\|\nonumber\\
+ \left\| \text{D}g^{\star}(\0\ |\ \rb)\matG_{\text{D}g^{\star}(\0\ |\ \rb)}^{-1}\text{D}g^{\star}(\0\ |\ \rb)^{\T} \nabla f(\p) - \text{D}\varphi(\0)\matG_{\text{D}\varphi(\0)}^{-1}\text{D}\varphi(\0)^{\T} \nabla f(\p) \right\| \nonumber\\
\leq \left\| \text{D}g^{\star}(\0\ |\ \rb)\matG_{\text{D}g^{\star}(\0\ |\ \rb)}^{-\nicehalf} \right\| \cdot \left\| \matG_{\text{D}g^{\star}(\0\ |\ \rb)}^{-\nicehalf} \right\| \cdot \left\| \text{D}\varphi(\0)^{\T} - \text{D}g^{\star}(\0\ |\ \rb)^{\T} \right\| \cdot \left\| \nabla f(\p) \right\|\nonumber\\
+ \left\| \Pi_{\Range{\text{D}g^{\star}(\0\ |\ \rb)}}(\nabla f(\p)) - \Pi_{\Range{\text{D}\varphi(\0)}}(\nabla f(\p)) \right\| \nonumber\\
\leq \left\| \text{D}g^{\star}(\0\ |\ \rb)\matG_{\text{D}g^{\star}(\0\ |\ \rb)}^{-\nicehalf} \right\| \cdot \left\| \matG_{\text{D}g^{\star}(\0\ |\ \rb)}^{-\nicehalf} \right\| c_{\mathcal{M}} \sqrt{D} h^{m} \left\| \nabla f(\p) \right\| \label{eq:approx_Rim_grad3_3}\\
+ c_{\mathcal{M}} \sqrt{D} \left\| \nabla f(\p) \right\| h^{m},\nonumber
\end{eqnarray}
where the last inequality above arise from Lemma \ref{lem:our_proj_prop} (Appendix \ref{subsec:background_MMLS}), the following equality
$$ \left\| \text{D}\varphi(\0)^{\T} - \text{D}g^{\star}(\0\ |\ \rb)^{\T}\right\|  = \left\| \text{D}\varphi(\0) - \text{D}g^{\star}(\0\ |\ \rb) \right\|, $$
and from \cite[Lemma 4]{sober2020approximating} followed by the definition of the spectral matrix norm, i.e., taking the maximum over $\v\in \R^{d}$ such that $\left\| \v  \right\| = 1$. 

Finally, to bound Eq. \eqref{eq:approx_Rim_grad3_2} and Eq. \eqref{eq:approx_Rim_grad3_3}, we use a similar reasoning as in Lemma \ref{lem:our_proj_prop} (Appendix \ref{subsec:background_MMLS}). Recall that the output of the second step of MMLS, i.e., $g^{\star}(\x\ |\ \rb):H(\rb)\to\R^{D}$, can be equivalently viewed as $g^{\star}(\x\ |\ \rb):H(\rb)\to H^{\perp}(\rb)$, i.e., an approximation of $\mathcal{M}$ as a graph of a function (see Subsection \ref{subsec:MMLS_the_algorithm}). Now, take a basis of $\R^{D}$ to be a union of some orthogonal bases of $H(\rb)$ and $H^{\perp}(\rb)$, then the differential of $g^{\star}(\cdot\ |\ \rb)$ is of the form of Eq. \eqref{eq:our_proj_prop_mat} from Appendix \ref{subsec:background_MMLS}. In particular, $\text{D}g^{\star}(\0\ |\ \rb)$ is of the form of Eq. \eqref{eq:our_proj_prop_mat}, making the eigenvalues of $\matG_{\text{D}g^{\star}(\0\ |\ \rb)}$ be larger than $1$, leading to 
\begin{equation}\label{eq:approx_Rim_grad3_4}
    \left\| \matG_{\text{D}g^{\star}(\0\ |\ \rb)}^{-\nicehalf} \right\| \leq 1.
\end{equation}
In addition, the matrix  
$$ \text{D}g^{\star}(\0\ |\ \rb)\matG_{\text{D}g^{\star}(\0\ |\ \rb)}^{-1}\text{D}g^{\star}(\0\ |\ \rb)^{\T}, $$
is an orthogonal projection matrix. Its eigenvalues are either $0$
or $1$, bounding the following spectral norm
\begin{equation}\label{eq:approx_Rim_grad3_5}
    \left\| \text{D}g^{\star}(\0\ |\ \rb)\matG_{\text{D}g^{\star}(\0\ |\ \rb)}^{-\nicehalf} \right\| \leq 1.
\end{equation}
Plugging Eq. \eqref{eq:approx_Rim_grad3_4} and Eq. \eqref{eq:approx_Rim_grad3_5} into Eq. \eqref{eq:approx_Rim_grad3_2} and Eq. \eqref{eq:approx_Rim_grad3_3} leads to Eq. \eqref{eq:approx_Rim_grad3}.

To show Eq. \eqref{eq:approx_Rim_grad4}, we write explicitly the left-hand side of Eq. \eqref{eq:approx_Rim_grad4} using Eq. \eqref{eq:approx_grad_form} and the orthogonal projection on $T_{\rb}\widetilde{\mathcal{M}}$ defined via $\text{D}\widetilde{\varphi}(\0)$
\begin{eqnarray}\label{eq:approx_Rim_grad4_1}
\left\|{\gradtil{f(\rb)}}-{\gradMtil{f(\rb)}}\right\| = \left\| \text{D}g^{\star}(\0\ |\ \rb)\matG_{\text{D}g^{\star}(\0\ |\ \rb)}^{-1}\nabla p_{\rb}^{f}(\0) - \text{D}\widetilde{\varphi}(\0)\matG_{\text{D}\widetilde{\varphi}(\0)}^{-1}\text{D}\widetilde{\varphi}(\0)^{\T} \nabla f (\rb) \right\| \\
\leq \left\| \text{D}g^{\star}(\0\ |\ \rb)\matG_{\text{D}g^{\star}(\0\ |\ \rb)}^{-1}\nabla p_{\rb}^{f}(\0) - \text{D}g^{\star}(\0\ |\ \rb)\matG_{\text{D}g^{\star}(\0\ |\ \rb)}^{-1} \nabla \widehat{f} (\0) \right\| \nonumber\\
+ \left\| \text{D}g^{\star}(\0\ |\ \rb)\matG_{\text{D}g^{\star}(\0\ |\ \rb)}^{-1} \nabla \widehat{f} (\0) - \text{D}\widetilde{\varphi}(\0)\matG_{\text{D}\widetilde{\varphi}(\0)}^{-1}\text{D}\widetilde{\varphi}(\0)^{\T} \nabla f (\rb) \right\|. \nonumber
\end{eqnarray}
As before, to bound Eq. \eqref{eq:approx_Rim_grad4_1}, we bound each of the two terms in the inequality above. For the first term, we have already seen in Eq. \eqref{eq:approx_Rim_grad3_2} (plugging in Eq. \eqref{eq:approx_Rim_grad3_4} and Eq. \eqref{eq:approx_Rim_grad3_5}) that it is bounded by $c_{f} \sqrt{D} h^{m}$. For the second term, we use Eq. \eqref{eq:Lip_outer_grad}, Eq. \eqref{eq:approx_Rim_grad3_4}, and Eq. \eqref{eq:approx_Rim_grad3_5}, to have
\begin{eqnarray}
\left\| \text{D}g^{\star}(\0\ |\ \rb)\matG_{\text{D}g^{\star}(\0\ |\ \rb)}^{-1} \nabla \widehat{f} (\0) - \text{D}\widetilde{\varphi}(\0)\matG_{\text{D}\widetilde{\varphi}(\0)}^{-1}\text{D}\widetilde{\varphi}(\0)^{\T} \nabla f (\rb) \right\| \nonumber\\
= \left\| \text{D}g^{\star}(\0\ |\ \rb)\matG_{\text{D}g^{\star}(\0\ |\ \rb)}^{-1} \text{D}\varphi(\0)^{\T} \nabla f (\p) - \text{D}\widetilde{\varphi}(\0)\matG_{\text{D}\widetilde{\varphi}(\0)}^{-1}\text{D}\widetilde{\varphi}(\0)^{\T} \nabla f (\rb) \right\| \nonumber\\
\leq \left\| \text{D}g^{\star}(\0\ |\ \rb)\matG_{\text{D}g^{\star}(\0\ |\ \rb)}^{-1} \text{D}\varphi(\0)^{\T} \nabla f (\p) - \text{D}g^{\star}(\0\ |\ \rb)\matG_{\text{D}g^{\star}(\0\ |\ \rb)}^{-1} \text{D}g^{\star}(\0\ |\ \rb)^{\T} \nabla f (\p) \right\| \nonumber\\
+ \left\| \text{D}g^{\star}(\0\ |\ \rb)\matG_{\text{D}g^{\star}(\0\ |\ \rb)}^{-1} \text{D}g^{\star}(\0\ |\ \rb)^{\T} \nabla f (\p) - \text{D}{\varphi}(\0)\matG_{\text{D}{\varphi}(\0)}^{-1}\text{D}{\varphi}(\0)^{\T} \nabla f (\p) \right\| \nonumber\\
+ \left\| \text{D}{\varphi}(\0)\matG_{\text{D}{\varphi}(\0)}^{-1}\text{D}{\varphi}(\0)^{\T} \nabla f (\p) - \text{D}\widetilde{\varphi}(\0)\matG_{\text{D}\widetilde{\varphi}(\0)}^{-1}\text{D}\widetilde{\varphi}(\0)^{\T} \nabla f (\p) \right\|\nonumber\\
+ \left\| \text{D}\widetilde{\varphi}(\0)\matG_{\text{D}\widetilde{\varphi}(\0)}^{-1}\text{D}\widetilde{\varphi}(\0)^{\T} \nabla f (\p) - \text{D}\widetilde{\varphi}(\0)\matG_{\text{D}\widetilde{\varphi}(\0)}^{-1}\text{D}\widetilde{\varphi}(\0)^{\T} \nabla f (\rb) \right\|\nonumber\\
\leq \left\| \text{D}g^{\star}(\0\ |\ \rb) \matG_{\text{D}g^{\star}(\0\ |\ \rb)}^{-\nicehalf} \right\| \cdot \left\| \matG_{\text{D}g^{\star}(\0\ |\ \rb)}^{-\nicehalf} \right\| \cdot \left\| \text{D}\varphi(\0)^{\T} - \text{D}g^{\star}(\0\ |\ \rb)^{\T} \right\| \cdot \left\| \nabla f (\p) \right\| \nonumber\\
+ \left\| \text{D}g^{\star}(\0\ |\ \rb)\matG_{\text{D}g^{\star}(\0\ |\ \rb)}^{-1} \text{D}g^{\star}(\0\ |\ \rb)^{\T} - \text{D}{\varphi}(\0)\matG_{\text{D}{\varphi}(\0)}^{-1}\text{D}{\varphi}(\0)^{\T} \right\| \left\| \nabla f (\p) \right\| \nonumber\\
+ \left\| \text{D}{\varphi}(\0)\matG_{\text{D}{\varphi}(\0)}^{-1}\text{D}{\varphi}(\0)^{\T} - \text{D}\widetilde{\varphi}(\0)\matG_{\text{D}\widetilde{\varphi}(\0)}^{-1}\text{D}\widetilde{\varphi}(\0)^{\T} \right\| \left\| \nabla f (\p) \right\| + \left\| \nabla f(\rb)-\nabla f(\p) \right\| \nonumber\\
\leq \left( 2 c_{\mathcal{M}} \left\| \nabla f(\p) \right\| + c_{\widetilde{\mathcal{M}}} \left\| \nabla f(\p) \right\| + c_{\mmls} \right)\sqrt{D}h^{m}.\label{eq:approx_Rim_grad4_2}
\end{eqnarray}
Finally, using the bound for the first term and the second term in Eq. \eqref{eq:approx_Rim_grad4_1}, yields Eq. \eqref{eq:approx_Rim_grad4}.
\if0
\begin{eqnarray*}
\left\|{\gradtil{f(\rb)}}-{\gradMtil{f(\rb)}}\right\| \leq \left\|{\gradtil{f(\rb)}}-{\gradM{f(\p)}}\right\| + \left\|{\gradM{f(\p)}}-{\gradMtil{f(\rb)}}\right\|\\
\leq \left(c_{f}  + 5 c_{\mathcal{M}} \left\| \nabla f(\p) \right\| + 4 c_{\widetilde{\mathcal{M}}} \left\| \nabla f(\rb) \right\| + 4c_{\mathcal{M}} \left\| \nabla f(\rb) \right\| + c_{\mmls}\right)\sqrt{D}h^{m},
\end{eqnarray*}
where the last inequality arise from Eq. \eqref{eq:approx_Rim_grad3}, Eq. \eqref{eq:approx_Rim_grad1}, and Eq. \eqref{eq:approx_Rim_grad2}.

Similarly to the first part of the proof, from Lemma \ref{lem:our_proj_prop} (Appendix \ref{subsec:background_MMLS}), the orthogonal projection of $E_{\rb}\nabla p_{\rb}^{f}(\0)$ on $\widetilde{T}_{\rb}\widetilde{\mathcal{M}}$, i.e., ${\gradtil{f(\rb)}}$, and the orthogonal projection of $E_{\rb}\nabla p_{\rb}^{f}(\0)$ on $T_{\rb}{\widetilde{\mathcal{M}}}$, i.e., $\Pi_{T_{\rb}{\widetilde{\mathcal{M}}}}(\mat E_{\rb}\nabla p_{\rb}^{f}(\0))$, satisfy,
\begin{eqnarray*}
    \frac{1}{\sqrt{D}}\|{\gradtil{f(\rb)}}-\Pi_{T_{\rb}{\widetilde{\mathcal{M}}}}(\mat E_{\rb}\nabla p_{\rb}^{f}(\0))\|&\leq&\\ \leq\|{\gradtil{f(\rb)}}-\Pi_{T_{\rb}{\widetilde{\mathcal{M}}}}(\mat E_{\rb}\nabla p_{\rb}^{f}(\0))\|_{\infty}&\leq& 4 c_{\widetilde{\mathcal{M}}} \left\| E_{\rb}\nabla p_{\rb}^{f}(\0)) \right\|h^{m}.\nonumber
\end{eqnarray*}

Again, similarly to the first part of the proof, From Lemma \ref{lem:our_proj_prop} (Appendix \ref{subsec:background_MMLS}), the orthogonal projection of $E_{\rb}\nabla p_{\rb}^{f}(\0)$ on $\widetilde{T}_{\rb}\widetilde{\mathcal{M}}$, i.e., ${\gradtil{f(\rb)}}$, and the orthogonal projection of $E_{\rb}\nabla p_{\rb}^{f}(\0)$ on $T_{\p}{\mathcal{M}}$, i.e., $\Pi_{T_{\p}{\mathcal{M}}}(E_{\rb}\nabla p_{\rb}^{f}(\0)))$, we have
\begin{eqnarray}\label{eq:infinity_grad_approx2}
    \frac{1}{\sqrt{D}}\|{\gradtil{f(\rb)}}-\Pi_{T_{\p}{\mathcal{M}}}(\mat E_{\rb}\nabla p_{\rb}^{f}(\0))\|&\leq&\\ \leq\|{\gradtil{f(\rb)}}-\Pi_{T_{\p}{\mathcal{M}}}(\mat E_{\rb}\nabla p_{\rb}^{f}(\0))\|_{\infty}&\leq& 4 c_{\mathcal{M}} \left\| \nabla f(\rb) \right\|h^{m}.\nonumber
\end{eqnarray}
Finally using Eq. \eqref{eq:infinity_grad_approx} and Eq. \eqref{eq:infinity_grad_approx2}, we get
\begin{eqnarray*}
    \|{\gradtil{f(\rb)}}-\Pi_{T_{\p}{\mathcal{M}}}(\mat E_{\rb}\nabla\widehat{f}(\0))\|&\leq&\\ \|{\gradtil{f(\rb)}}-\Pi_{T_{\p}{\mathcal{M}}}(\mat E_{\rb}\nabla p_{\rb}^{f}(\0))\|+ \|\Pi_{T_{\p}{\mathcal{M}}}(\mat E_{\rb}(\nabla p_{\rb}^{f}(\0)-\nabla\widehat{f}(\0)))\| &\leq&\\ 4 c_{\mathcal{M}} \sqrt{D}\left\| \nabla f(\rb) \right\|h^{m} + \|\mat E_{\rb}(\nabla p_{\rb}^{f}(\0)-\nabla\widehat{f}(\0))\| &=&\\ 4 c_{\mathcal{M}} \sqrt{D}\left\| \nabla f(\rb) \right\|h^{m} + \|\nabla p_{\rb}^{f}(\0)-\nabla\widehat{f}(\0)\| &\leq& \left(4 c_{\mathcal{M}} \left\| \nabla f(\rb) \right\| + c_{f} \right) \sqrt{D} h^{m}.
\end{eqnarray*}
\fi
\end{proof}

Next, we define an approximate-retraction via MMLS projection itself, thus requires $O(\tmmls)$ operations, performed
on $\rb+\xi$ where $\rb\in {\widetilde{\mathcal{M}}}$ and $\xi\in\widetilde{T}_{\rb}\widetilde{\mathcal{M}}$ is the step we take on the approximate-tangent space, constrained such that $\rb+\xi\in U_{\mathrm{unique}}$ to ensure MMLS projection
is defined.
Explicitly,
\begin{equation}
\widetilde{R}_{\rb}(\xi)\coloneqq {\mathcal P}_{m}^{h}(\rb+\xi)\quad (\widetilde{R}_{\rb}:\widetilde{T}_{\rb}\widetilde{\mathcal{M}}\to\widetilde{{\mathcal{M}}}).\label{eq:approximate-retraction_V1}
\end{equation}

Finally, we define an approximate-vector
transport using the orthogonal projection on $\widetilde{T}_{\rb}\widetilde{\mathcal{M}}$ (see \cite[Section 8.1.3]{AMS09}),
once we define an approximate-retraction $\widetilde{R}_{\rb}(\cdot):\widetilde{T}_{\rb}\widetilde{\mathcal{M}}\to\widetilde{{\mathcal{M}}}$, via
the following formula 
\[
\forall\eta,\xi\in\widetilde{T}_{\rb}\widetilde{\mathcal{M}},\ \widetilde{\tau}_{\eta}\xi\coloneqq\widetilde{\Pi}_{\widetilde{R}_{\rb}(\eta)}(\xi)\in\widetilde{T}_{\widetilde{R}_{\rb}(\eta)}{\mathcal{M}}.
\]
The computational cost of approximate-vector
transport is $(\tmmls + Dd^{2})$, since it consists of computing an approximate-retraction and applying an orthogonal projection on a vector.

Recall that for $\p\in U_{\mathrm{unique}}$, we have that ${\mathcal P}_{m}^{h}(\p)\in\widetilde{{\mathcal{M}}}$
and MMLS projection is smooth \cite[Theorem 4.21]{sober2020manifold}.
Thus, if we limit the step-size on $\widetilde{T}_{\rb}\widetilde{\mathcal{M}}$ to ensure
that $\p = \rb + \xi\in U_{\mathrm{unique}}$ ($U_{\mathrm{unique}}$ typically depends on the reach of the manifold $\mathcal{M}$, thus limiting the step-size $\xi$), then MMLS projection of $\rb+\xi$
on ${\cal \widetilde{{\mathcal{M}}}}$ is well defined. We assume that for all $\rb\in\widetilde{\mathcal{M}}$ it is possible to move along every $\xi\in \widetilde{T}_{\rb}\widetilde{\mathcal{M}}$ of limited size:
\begin{assumption}[Approximate-retraction domain]
\label{assu:ret_well_defined}For each $\rb\in\widetilde{\mathcal{M}}$, there exists a ball of radius $Q_{\rb}>0$ around $\rb$ such that for all $\xi\in \widetilde{T}_{\rb}\widetilde{\mathcal{M}}$, $\|\xi\|\leq Q_{\rb}$ we have that $\rb+\xi\in U_{\mathrm{unique}}$. Moreover, it is assumed that $Q\coloneqq\inf_{r\in\widetilde{\mathcal{M}}} Q_{\rb} > 0$. In other words, we assume that the approximate-retraction is defined (at least) in a compact subset of the approximate-tangent bundle 
\begin{equation}\label{eq:ret_def_in_tan_bun}
K \coloneqq \left\{(\rb, \xi)\in \widetilde{T}\widetilde{\mathcal{M}} \ : \  \|\xi\|\leq Q\right\} \subset \widetilde{T}\widetilde{\mathcal{M}}.
\end{equation}
\end{assumption}

Next, we want to ensure that the approximate-retraction we define satisfies (approximately) similar conditions to the two conditions in Definition \ref{def:retraction} modified for $\widetilde{T}_{\rb}\widetilde{\mathcal{M}}$
and ${\cal \widetilde{{\mathcal{M}}}}$. Explicitly, we want $\widetilde{R}_{\rb}(\0_{\rb}) = \rb$ and
$\text{D}\widetilde{R}_{\rb}(\0_{\rb})\approx \text{Id}_{\widetilde{T}_{\rb}\widetilde{\mathcal{M}}}$
where $\0_{\rb}\in\widetilde{T}_{\rb}\widetilde{\mathcal{M}}$ is the zero vector in $\widetilde{T}_{\rb}\widetilde{\mathcal{M}}$
and $\text{Id}_{\widetilde{T}_{\rb}\widetilde{\mathcal{M}}}$ is the identity mapping on
$\widetilde{T}_{\rb}\widetilde{\mathcal{M}}$. For $\text{D}\widetilde{R}_{\rb}(\0_{\rb})\approx \text{Id}_{\widetilde{T}_{\rb}\widetilde{\mathcal{M}}}$, recall that it can equivalently be shown that for all $\xi\in \widetilde{T}_{\rb}\widetilde{\mathcal{M}}$ we have $\frac{d}{dt}\widetilde{R}_{\rb}(t\xi)|_{t=0}\approx \xi$. These conditions are particularly important for the analysis of the global convergence of the Riemannian optimization methods (see for example \cite{boumal2019global}). We begin by showing that if we had access to the exact tangent spaces of $\widetilde{\mathcal{M}}$, then MMLS projection is indeed a retraction on $\widetilde{\mathcal{M}}$.

\begin{lemma}[A retraction on $\widetilde{\mathcal{M}}$]\label{lem:retraction_on_exact_tangent_property}
Let $\rb\in \widetilde{\mathcal{M}}$ and $\u\in T_{\rb}\widetilde{\mathcal{M}}$ such that $\rb + \u \in U_{\mathrm{unique}}$. Then the mapping
\begin{equation}
R_{\rb}^{\widetilde{\mathcal{M}}}(\xi)\coloneqq {\mathcal P}_{m}^{h}(\rb+\u)\quad (R_{\rb}^{\widetilde{\mathcal{M}}}:{T}_{\rb}\widetilde{{\mathcal{M}}}\to\widetilde{{\mathcal{M}}}),\label{eq:approximate-retraction_onMtil}
\end{equation}
is a retraction on $\widetilde{\mathcal{M}}$.
\end{lemma}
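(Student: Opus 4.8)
The plan is to verify directly the two defining properties of a retraction from Definition~\ref{def:retraction} for the restriction $R_{\rb}^{\widetilde{\mathcal{M}}}$ to $T_{\rb}\widetilde{\mathcal{M}}$, namely $R_{\rb}^{\widetilde{\mathcal{M}}}(\0_{\rb})=\rb$ and $\text{D}R_{\rb}^{\widetilde{\mathcal{M}}}(\0_{\rb})=\text{Id}_{T_{\rb}\widetilde{\mathcal{M}}}$, together with smoothness. The core observation I would exploit is that MMLS projection acts as the identity on $\widetilde{\mathcal{M}}$: this is exactly the content of the discussion following Eq.~\eqref{eq:MMLS_der2}, where for $\rb\in\widetilde{\mathcal{M}}$ one picks $\p\in\mathcal{M}$ with ${\mathcal P}_{m}^{h}(\p)=\rb$ and the uniqueness property \cite[Lemma 4.7]{sober2020manifold} yields $\q(\rb)=\q(\p)$, $H(\rb)=H(\p)$, whence ${\mathcal P}_{m}^{h}(\rb)={\mathcal P}_{m}^{h}(\p)=\rb$. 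Thus ${\mathcal P}_{m}^{h}|_{\widetilde{\mathcal{M}}}=\mathrm{id}_{\widetilde{\mathcal{M}}}$, which is the key ingredient for both conditions.

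First I would establish that $R_{\rb}^{\widetilde{\mathcal{M}}}$ is well defined and smooth. Since $\rb\in\widetilde{\mathcal{M}}$ lies $O(h^{m+1})$-close to $\mathcal{M}$ by Eq.~\eqref{eq:distbetween_p_and_r}, for $h$ small enough $\rb\in U_{\mathrm{unique}}$, so a small ball of tangent steps keeps $\rb+\u\in U_{\mathrm{unique}}$ (the hypothesis $\rb+\u\in U_{\mathrm{unique}}$ guarantees this at the prescribed step). On $U_{\mathrm{unique}}$ the maps $\q(\cdot)$ and $H(\cdot)$ vary smoothly \cite[Theorem 4.12]{sober2020manifold} and the second-step least-squares solution depends smoothly on them, so ${\mathcal P}_{m}^{h}(\cdot)$ is smooth \cite[Theorem 4.21]{sober2020manifold}; composing with the affine map $\u\mapsto\rb+\u$ shows $R_{\rb}^{\widetilde{\mathcal{M}}}$ is smooth. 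The centering condition is then immediate: $R_{\rb}^{\widetilde{\mathcal{M}}}(\0_{\rb})={\mathcal P}_{m}^{h}(\rb)=\rb$ by the identity-on-$\widetilde{\mathcal{M}}$ property.

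The substantive step is the local-rigidity condition. I would view ${\mathcal P}_{m}^{h}$ as a smooth retraction of the ambient neighborhood $U_{\mathrm{unique}}$ onto the embedded submanifold $\widetilde{\mathcal{M}}$, working at a point $\rb$ where $\widetilde{\mathcal{M}}$ is a genuine $C^{\infty}$ manifold (the a.e.\ regime of \cite[Theorem 4.21]{sober2020manifold} in which $T_{\rb}\widetilde{\mathcal{M}}$ is defined). Writing $\iota:\widetilde{\mathcal{M}}\hookrightarrow U_{\mathrm{unique}}$ for the inclusion, the identity ${\mathcal P}_{m}^{h}\circ\iota=\mathrm{id}_{\widetilde{\mathcal{M}}}$ differentiates, via the chain rule, to $\text{D}{\mathcal P}_{m}^{h}(\rb)\circ\text{D}\iota(\rb)=\text{Id}_{T_{\rb}\widetilde{\mathcal{M}}}$; since $\text{D}\iota(\rb)$ is simply the inclusion $T_{\rb}\widetilde{\mathcal{M}}\hookrightarrow\R^{D}$, this says $\text{D}{\mathcal P}_{m}^{h}(\rb)$ restricts to the identity on $T_{\rb}\widetilde{\mathcal{M}}$. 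Finally, applying the chain rule to $t\mapsto{\mathcal P}_{m}^{h}(\rb+t\u)$, for every $\u\in T_{\rb}\widetilde{\mathcal{M}}$ we obtain $\text{D}R_{\rb}^{\widetilde{\mathcal{M}}}(\0_{\rb})[\u]=\text{D}{\mathcal P}_{m}^{h}(\rb)[\u]=\u$, i.e.\ $\text{D}R_{\rb}^{\widetilde{\mathcal{M}}}(\0_{\rb})=\text{Id}_{T_{\rb}\widetilde{\mathcal{M}}}$, completing the verification.

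I expect the main obstacle to be the bookkeeping of smoothness rather than the derivative computation itself: one must ensure ${\mathcal P}_{m}^{h}$ is $C^{\infty}$ on a full ambient neighborhood of $\rb$ (not merely along $\mathcal{M}$) so that the differentiation of ${\mathcal P}_{m}^{h}\circ\iota$ is legitimate, and one must restrict to the a.e.\ set where $\widetilde{\mathcal{M}}$ is a bona fide smooth manifold so that $T_{\rb}\widetilde{\mathcal{M}}$ and the inclusion $\iota$ are meaningful. Both are supplied, respectively, by the smoothness of $\q(\cdot)$, $H(\cdot)$ and the least-squares step on $U_{\mathrm{unique}}$ and by the a.e.-manifold conclusion of \cite[Theorem 4.21]{sober2020manifold}.
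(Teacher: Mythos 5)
Your proof is correct, but it takes a genuinely different route from the paper. The paper does not verify the two axioms of Definition \ref{def:retraction} directly; instead it invokes the projection-like-retraction machinery of \cite{absil2012projection}: it defines the map $A:(\rb,\u)\mapsto H^{\perp}(\rb+\u)$, checks that $A$ is a \emph{retractor} (smoothness of $H(\cdot)$ via \cite[Theorem 4.12]{sober2020manifold}, a domain containing a neighborhood of the zero section via \cite[Corollary 4.18]{sober2020manifold}, and trivial intersection of $H^{\perp}(\rb)$ with $T_{\rb}\widetilde{\mathcal{M}}$ via the uniqueness result \cite[Lemma 4.7]{sober2020manifold}), and then applies \cite[Theorem 15]{absil2012projection} after identifying ${\mathcal P}_{m}^{h}(\rb+\u)$ with the nearest point of $\widetilde{\mathcal{M}}\cap(\rb+\u+A(\rb,\u))$ to $\rb+\u$. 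You instead exploit ${\mathcal P}_{m}^{h}|_{\widetilde{\mathcal{M}}}=\mathrm{id}$ together with ambient smoothness of ${\mathcal P}_{m}^{h}$ on $U_{\mathrm{unique}}$, and obtain $\text{D}{\mathcal P}_{m}^{h}(\rb)[\u]=\u$ for $\u\in T_{\rb}\widetilde{\mathcal{M}}$ by differentiating ${\mathcal P}_{m}^{h}\circ\iota=\mathrm{id}$ and noting that the differential along the straight line $t\mapsto\rb+t\u$ depends only on the initial velocity. This is more elementary and self-contained, and in fact independently establishes the identity $\text{D}{\mathcal P}_{m}^{h}(\rb)[\u]=\u$ that the paper later extracts from this lemma in the proof of Lemma \ref{lem:retraction_2_property}. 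What the paper's route buys is that it never needs to differentiate ${\mathcal P}_{m}^{h}$ as an ambient map: the retractor theorem packages the variational (nearest-point-on-an-affine-slice) structure of the second MMLS step and delivers local rigidity for free. Your two caveats are exactly the right ones to flag: you need smoothness of ${\mathcal P}_{m}^{h}$ on a full open neighborhood of $\rb$ (which follows from \cite[Theorem 4.12]{sober2020manifold} plus smooth dependence of the weighted least-squares solution, the same facts the paper uses to show $A$ is smooth), and you must work at points where $\widetilde{\mathcal{M}}$ is genuinely a smooth manifold, consistent with the almost-everywhere statement of \cite[Theorem 4.21]{sober2020manifold}.
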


\begin{proof}
In order to prove it, we use Theorem 15 and Definition 14 from \cite{absil2012projection} (see Appendix \ref{subsec:background_proj-like}). In other words, we show that MMLS procedure applied on some $\u\in T_{\rb}\widetilde{\mathcal{M}}$ such that $\rb + \u \in U_{\mathrm{unique}}$ defines a retraction. 

To that end, first define the mapping $A$ from the tangent bundle of $\widetilde{\mathcal{M}}$ into $\text{Gr}(D-d,D)$ in the following way 
$$A: (\rb, \u) \mapsto H^{\perp}(\rb + \u).$$
This mapping $A$ is a retractor \cite[Definition 14]{absil2012projection}. Indeed, this mapping is smooth as a composition of the sum function and the first step of MMLS which provides a linear space $H(\cdot)$ which varies smoothly when $\theta_{1}(\cdot), \theta_{2}(\cdot)\in C^{\infty}$ \cite[Theorem 4.12]{sober2020manifold} (and thus its orthogonal compliment $H^{\perp} (\cdot)$ varies smoothly as well). In addition, the domain of $A$ contains a neighborhood of the zero section of $T\widetilde{\mathcal{M}}$, since that applying MMLS in $U_{\mathrm{unique}}$ result is also in $U_{\mathrm{unique}}$ \cite[Corollary 4.18]{sober2020manifold}. Finally, $A(\rb, 0_{\rb}) = H^{\perp}(\rb)$ has a trivial intersection with $T_{\rb}\widetilde{\mathcal{M}}$ since that for $\rb\in \widetilde{\mathcal{M}}$ MMLS projection (which returns a point on $H^{\perp}(\rb)$ that is the closest to $\q(\rb)+H(\rb)$ on $\widetilde{\mathcal{M}}$) returns a unique point on $\widetilde{\mathcal{M}}$ which is $\rb$ \cite[Lemma 4.7]{sober2020manifold}. 

Thus, from \cite[Theorem 15]{absil2012projection} $\widetilde{R}_{\rb}^{\widetilde{\mathcal{M}}}(\xi)$ is indeed a retraction on $\widetilde{\mathcal{M}}$, since that MMLS projection of $\rb+\u$ provides the closest points on $\widetilde{\mathcal{M}}\cap (r + \u + A(\rb, \u))$ to $\rb + \u$.
\end{proof}

\if0 These conditions ensure the following property
$\widetilde{\grad{f(\rb)}}=\grad{(f\circ\widetilde{R}_{\rb})(0_{\rb})}$ (see \cite[Proposition 3.59]{boumal2022intromanifolds})
which is particularly important for the analysis of convergence of
the various iterative methods. 
\fi

Now, using the previous lemma we show the condition which the approximate-retraction satisfies.

\begin{lemma}[Approximate-retraction properties]\label{lem:retraction_2_property}
The approximate-retraction defined in Eq. \eqref{eq:approximate-retraction_V1} satisfies the following properties for $(\rb, \xi)\in K$, assuming $h$ is small enough:
\begin{enumerate}
    \item $\widetilde{R}_{\rb}(\0_{\rb}) = \rb$.
    \item We have
\begin{equation}\label{eq:retraction_2_property}
    \text{D}\widetilde{R}_{\rb}(\0_{\rb})[\xi] = \frac{d}{dt}\widetilde{R}_{\rb}(t\xi) |_{t=0} = \xi + \v'_{\rb, \xi}(0)\in T_{\rb}{\widetilde{\mathcal{M}}},
\end{equation}
where $\v'_{\rb,\xi}(0)$ is the derivative at $0$ of $\v_{\rb, \xi}(t):\R \to H^{\perp}(\rb+t\xi)$, which is a smooth vector-function such that $t\xi + \v_{\rb, \xi}(t)\in T_{\rb}{\widetilde{\mathcal{M}}}$. In other words, the approximate-retraction satisfies the second condition of Definition \ref{def:retraction}, with some correction $\v'_{\rb,\xi}(0)$ which depends on the proximity between $H(\rb)$ and $T_{\rb}{\widetilde{\mathcal{M}}}$, and the derivative of $H^{\perp}(\rb+t\xi)$ with respect to $t$.
\end{enumerate}

\end{lemma}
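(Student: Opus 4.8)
The plan is to dispatch the two claims separately: the first is immediate from uniqueness, and the second rests on a single structural observation about where MMLS places its output. For Part~1 I would invoke the extended uniqueness discussion given after Eq.~\eqref{eq:MMLS_der2}: since MMLS projection from $\mathcal{M}$ to $\widetilde{\mathcal{M}}$ is bijective, for $\rb\in\widetilde{\mathcal{M}}$ there is a $\p\in\mathcal{M}$ with ${\mathcal P}_{m}^{h}(\p)=\rb$, and then \cite[Lemma 4.7]{sober2020manifold} yields $\q(\rb)=\q(\p)$ and $H(\rb)=H(\p)$, so that ${\mathcal P}_{m}^{h}(\rb)={\mathcal P}_{m}^{h}(\p)=\rb$. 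Hence $\widetilde{R}_{\rb}(\0_{\rb})={\mathcal P}_{m}^{h}(\rb+\0)=\rb$.

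The crux of Part~2 is the claim that, for every admissible $t$, both $\rb+t\xi$ and its image ${\mathcal P}_{m}^{h}(\rb+t\xi)$ lie in the same affine normal slice $\q(\rb+t\xi)+H^{\perp}(\rb+t\xi)$. First, by Constraint~\ref{item:MMLS_step1_1} of the first MMLS step (Eq.~\eqref{eq:MMLS_step1}) the foot point satisfies $(\rb+t\xi)-\q(\rb+t\xi)\perp H(\rb+t\xi)$, i.e. $\rb+t\xi\in\q(\rb+t\xi)+H^{\perp}(\rb+t\xi)$. Second, viewing $g^{\star}(\cdot\ |\ \rb+t\xi)$ in its graph form $H(\rb+t\xi)\to H^{\perp}(\rb+t\xi)$ (as used in the proof of Lemma~\ref{lem:approx_Rim_grad}), the evaluation point $\0\in H(\rb+t\xi)$ is exactly the $H$-coordinate of $\rb+t\xi$, so ${\mathcal P}_{m}^{h}(\rb+t\xi)=g^{\star}(\0\ |\ \rb+t\xi)\in\q(\rb+t\xi)+H^{\perp}(\rb+t\xi)$. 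Subtracting shows that $\v_{\rb,\xi}(t)\coloneqq{\mathcal P}_{m}^{h}(\rb+t\xi)-(\rb+t\xi)\in H^{\perp}(\rb+t\xi)$, so that
\[
\widetilde{R}_{\rb}(t\xi)={\mathcal P}_{m}^{h}(\rb+t\xi)=\rb+t\xi+\v_{\rb,\xi}(t)\in\widetilde{\mathcal{M}}.
\]
By Assumption~\ref{assu:ret_well_defined} the segment $\rb+t\xi$ stays in $U_{\mathrm{unique}}$ for $t\in[0,1]$, so smoothness of MMLS projection \cite[Theorem 4.21]{sober2020manifold} together with smoothness of $H^{\perp}(\cdot)$ \cite[Theorem 4.12]{sober2020manifold} makes $t\mapsto\widetilde{R}_{\rb}(t\xi)$ and hence $\v_{\rb,\xi}$ smooth, while Part~1 gives $\v_{\rb,\xi}(0)={\mathcal P}_{m}^{h}(\rb)-\rb=\0$.

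With this curve in hand I would differentiate at $t=0$: by definition of the differential along the curve $t\mapsto t\xi$,
\[
\text{D}\widetilde{R}_{\rb}(\0_{\rb})[\xi]=\tfrac{d}{dt}\widetilde{R}_{\rb}(t\xi)\big|_{t=0}=\xi+\v'_{\rb,\xi}(0).
\]
To place this velocity in $T_{\rb}\widetilde{\mathcal{M}}$ I would use the local parametrization $\widetilde{\varphi}:H(\rb)\to\widetilde{\mathcal{M}}$ with $\widetilde{\varphi}(\0)=\rb$ supplied by the extension of \cite[Lemma 3]{sober2020approximating}: the smooth curve $\widetilde{R}_{\rb}(t\xi)$ stays on $\widetilde{\mathcal{M}}$ and passes through $\rb$, so writing it as $\widetilde{\varphi}(\beta(t))$ shows its velocity equals $\text{D}\widetilde{\varphi}(\0)[\beta'(0)]\in\Range{\text{D}\widetilde{\varphi}(\0)}=T_{\rb}\widetilde{\mathcal{M}}$. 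The main obstacle is the structural slice identity of the second paragraph: it is the one step that genuinely depends on reading the MMLS output through the graph representation and on the orthogonality constraint of the first step, rather than on the generic smoothness facts. Some care is also needed to ensure $T_{\rb}\widetilde{\mathcal{M}}$ is defined at $\rb$ — handled by restricting to points where $\widetilde{\mathcal{M}}$ carries the parametrization $\widetilde{\varphi}$, since $\widetilde{\mathcal{M}}$ is only almost everywhere a manifold.
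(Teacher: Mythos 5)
Your Part 1 matches the paper's argument (projection idempotence via \cite[Lemma 4.7]{sober2020manifold} plus ${\mathcal P}_{m}^{h}(\rb)=\rb$ on $\widetilde{\mathcal{M}}$). For Part 2, however, there is a genuine gap: the vector-function you construct is not the one the lemma makes its claim about. You set $\v_{\rb,\xi}(t)={\mathcal P}_{m}^{h}(\rb+t\xi)-(\rb+t\xi)$. Your normal-slice observation is correct --- both $\rb+t\xi$ and its MMLS image lie in $\q(\rb+t\xi)+H^{\perp}(\rb+t\xi)$, which is the same structural fact the paper exploits through the retractor in Lemma \ref{lem:retraction_on_exact_tangent_property} --- so your $\v_{\rb,\xi}(t)$ does land in $H^{\perp}(\rb+t\xi)$. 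But its second defining property fails: $t\xi+\v_{\rb,\xi}(t)={\mathcal P}_{m}^{h}(\rb+t\xi)-\rb$ is a chord of $\widetilde{\mathcal{M}}$ emanating from $\rb$, which lies on $\widetilde{\mathcal{M}}-\rb$ and not in $T_{\rb}\widetilde{\mathcal{M}}$ as the statement requires. With your choice the identity $\tfrac{d}{dt}\widetilde{R}_{\rb}(t\xi)\big|_{t=0}=\xi+\v'_{\rb,\xi}(0)$ is a tautology (it is the definition of $\v_{\rb,\xi}$ differentiated), so the only nontrivial content you establish is that the velocity is tangent to $\widetilde{\mathcal{M}}$ and that $\v'_{\rb,\xi}(0)\in H^{\perp}(\rb)$; the characterization of the correction term that the lemma asserts, and that Remark \ref{rem:v_bound} and the convergence analysis lean on, is not proved.

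The paper's route is different in exactly the place you shortcut it: $\v_{\rb,\xi}(t)$ is defined as the unique element of $H^{\perp}(\rb+t\xi)$ for which $\rb+t\xi+\v_{\rb,\xi}(t)$ lies on the affine tangent space $\rb+T_{\rb}\widetilde{\mathcal{M}}$; the uniqueness of the first MMLS step (\cite[Lemma 4.7]{sober2020manifold}) guarantees that moving inside the normal slice does not change the projection, so ${\mathcal P}_{m}^{h}(\rb+t\xi)={\mathcal P}_{m}^{h}(\rb+\u(t))$ with $\u(t)=t\xi+\v_{\rb,\xi}(t)\in T_{\rb}\widetilde{\mathcal{M}}$, and then Lemma \ref{lem:retraction_on_exact_tangent_property} --- which you cite but never use at the level of differentials --- gives $\text{D}{\mathcal P}_{m}^{h}(\rb)[\u'(0)]=\u'(0)=\xi+\v'_{\rb,\xi}(0)$. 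That reduction to a genuine tangent direction is the step carrying the content. Your argument could be repaired by adding that your curve on $\widetilde{\mathcal{M}}$ and the paper's curve on $\rb+T_{\rb}\widetilde{\mathcal{M}}$ are cut out of the same moving normal slice by sets having first-order contact at $\rb$, hence their velocities at $t=0$ coincide; as written, though, the lemma as stated is not established.
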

\begin{proof}
To show $\widetilde{R}_{\rb}(\0_{\rb}) = \rb$, recall that \cite[Lemma 4.7]{sober2020manifold}
ensures that given $\rb\in U_{\mathrm{unique}}$ such that ${\mathcal P}_{m}^{h}(\rb)\in\widetilde{{\mathcal{M}}}$,
we have that $\widetilde{R}_{\rb}(\0_{\rb})={\mathcal P}_{m}^{h}({\mathcal P}_{m}^{h}(\rb))={\mathcal P}_{m}^{h}(\rb)$. Moreover, for $h$ small enough and $\rb\in\widetilde{{\mathcal{M}}}$,
we have that ${\mathcal P}_{m}^{h}(\rb) = \rb$ (see \cite[Corollary 4.18]{sober2020manifold}), making $\widetilde{R}_{\rb}(\0_{\rb}) = \rb$. Thus, if $(\rb, \xi)\in K$, we ensure $\widetilde{R}_{\rb}(\0_{\rb}) = \rb$.

To show Eq. \eqref{eq:retraction_2_property}, we use Lemma \ref{lem:retraction_on_exact_tangent_property}. Since Eq. \eqref{eq:approximate-retraction_onMtil} is indeed a retraction, then for all $\u\in T_{\rb}\widetilde{\mathcal{M}}$
\begin{equation}\label{eq:ret_proof1}
\text{D} R_{\rb}^{\widetilde{\mathcal{M}}}(0_{\rb})[\u] = \text{D} {\mathcal P}_{m}^{h}(\rb) [\u] = \u.
\end{equation}

Next, using the uniqueness property on MMLS procedure \cite[Lemma 4.7]{sober2020manifold}, given $\rb+t\xi\in U_{\mathrm{unique}}$ where $\xi\in{\widetilde{T}_{\rb}\widetilde{\mathcal{M}}}$ and $t\in \R$, for all $\p\in U_{\mathrm{unique}}$ such that $\p$ is close enough to $\q(\rb+t\xi)$, and $\p - \q(\rb+t\xi) \in H^{\perp}(\rb+t\xi)$, we have $\q(\rb+t\xi)=\q(\p)$ and $H^{\perp}(\rb+t\xi) = H^{\perp}(\p)$. In particular, using the smoothness of $H^{\perp}(\cdot)$, for $t$ small enough there exists a unique $\p$ that is close enough to $\q(\rb+t\xi)$, i.e., $\p-\q(\rb+t\xi) \in H^{\perp}(\rb+t\xi)$, and $\p=\rb+\u$ where $\u\in T_{\rb}\widetilde{\mathcal{M}}$. Thus, in the domain of uniqueness of $\p$ we can define a smooth vector-function $\v_{\rb, \xi} (t):\R \to H^{\perp}(\rb+t\xi)$ such that $\u(t) \coloneqq t\xi + \v_{\rb, \xi} (t) \in T_{\rb}\widetilde{\mathcal{M}}$, $\v'_{\rb, \xi}(0) = 0$ and 
\begin{equation}\label{eq:ret_proof2}
     {\mathcal P}_{m}^{h}(\rb+t\xi) = {\mathcal P}_{m}^{h}(\rb+t\xi + \v_{\rb, \xi}(t)) = {\mathcal P}_{m}^{h}(\rb+\u(t)).
\end{equation}

Finally, using Eq. \eqref{eq:ret_proof1} and Eq. \eqref{eq:ret_proof2} we conclude the proof:
\begin{eqnarray*}
    \frac{d}{dt}\widetilde{R}_{\rb}(t\xi) |_{t=0} &=& \frac{d}{dt}{\mathcal P}_{m}^{h}(\rb+t\xi)|_{t=0} = \frac{d}{dt}{\mathcal P}_{m}^{h}(\rb+t\xi + \v_{\rb, \xi}(t)) |_{t=0}=\\
    &=& \frac{d}{dt}{\mathcal P}_{m}^{h} (r+\u(t)) |_{t=0} = \text{D} {\mathcal P}_{m}^{h}(\rb) \left[\u'(0) \right] = \xi + \v'_{\rb,\xi}(0),
\end{eqnarray*}
where the last equality is true since $\xi + \v'_{\rb,\xi}(0)\in T_{\rb}\widetilde{\mathcal{M}}$, which in itself arise from the definition of $\u(t)$ $$\u(t):\R \to T_{\rb}\widetilde{\mathcal{M}},$$ making $\u'(t) \in T_{\rb}\widetilde{\mathcal{M}}$.

\if0
Recall that $\widetilde{R}_{\rb}(\xi)\coloneqq {\mathcal P}_{m}^{h}(\rb+\xi)$ for $\xi \in {\widetilde{T}_{\rb}\widetilde{\mathcal{M}}}$, and that MMLS projection is a composition of two mappings each consists of each of the steps of MMLS. Explicitly, the first mapping is $\rb+\xi\mapsto q(r+\xi),H(\rb+\xi)$, and the second mapping is $0\in H(\rb+\xi)\mapsto g^{\star}(\0\ |\ r+\xi) $. For $\rb+\xi\in U_{\mathrm{unique}}$, Lemma 4.7 in \cite{sober2020manifold} guarantees that if $s\in{\mathcal{M}}$ satisfies $\|s-q(r+\xi)\| < \mu$ and $s-q(r+\xi)\perp H(\rb+\xi)$ then $q(s)=q(r+\xi)$ and $H(s)=H(\rb+\xi)$ (i.e., uniqueness of the first step of MMLS). Define the set of all possible outputs of step 1 of MMLS projection for all $s\in {\mathcal{M}}$ \begin{equation}
    {\cal{S}}_{q,H}\coloneqq\left\{q(s),\ H(s)|\ \forall  s\in {\mathcal{M}}\right\},
\end{equation} 

Recall from \cite[Lemma 4.14]{sober2020manifold} that the mapping $${\mathcal{M}}\to{\cal{S}}_{q,H}:\ s\mapsto q(s),H(s)$$ is injective. Moreover, the aforementioned mapping is smooth when $\theta(\cdot)\in C^{\infty}$ (see \cite[Theorem 4.12]{sober2020manifold}), thus, ${\cal{S}}_{q,H}$ is a smooth family of affine spaces. \boris{ We want to say that ${\cal{S}}_{q,H}$ is locally a tangent bundle of some smooth manifold $\mathcal{M}$. Also, ${\cal{S}}_{q,H}$ is a submanifold of $\R^{D} \times {\R^{D\times D}}$. Now, recall from \cite[Theorem 4.1 part 2]{dudek1994nonlinear} and \cite[Theorem 3.1]{absil2012projection} that the projection operator on an embedded submanifold in a Euclidean space, such that in a small neighborhood the manifold is $C^{k}$ and the projection is well defined, then the projection operation is $C^{k-1}$ and its differential is the orthogonal projection on the tangent space. Since the first step of MMLS is an orthogonal projection on ${\cal{S}}_{q,H}$ (or on $\mathcal{M}$), then its differential is the orthogonal projection on the tangent space, which is $H(s)$. The differential of the second step of MMLS is simply the differential of the polynomial approximation. Since we are only interested at the derivative of the polynomial at $0\in H(s)$, we can rewrite the second step of MMLS before substituting $0$ at the polynomial $\text{D}g^{\star}(\x\ |\ \rb)$: $$g^{\star}(\x\ |\ \rb)\to g^{\star}(\text{D}g^{\star}(\0\ |\ \rb)^{\pinv}x\ |\ r)g, $$ and differentiating with respect to this polynomial at $0$, to have the differential $\text{D}g^{\star}(\0\ |\ \rb)\text{D}g^{\star}(\0\ |\ \rb)^{\pinv}$ which is the orthogonal projection on the approximate-tangent space (maybe it is enough to say that without loss of generality we take the polynomial $g^{\star}$ such that the columns of its differential are orthogonal?).   }
\fi
\end{proof}

\if0
Unfortunately, the second condition
of a quasi-retraction is not satisfied exactly. Recall that $\text{Id}_{\widetilde{T}_{\rb}\widetilde{\mathcal{M}}}=\mat E_{\rb}\mat E_{\rb}^{\T}$
since the orthogonal projection on $\widetilde{T}_{\rb}\widetilde{\mathcal{M}}$ restricted
on $\widetilde{T}_{\rb}\widetilde{\mathcal{M}}$ is the identity map. We can write the
MMLS projection in the following way for $p\in U_{\mathrm{unique}}$
\[
{\mathcal P}_{m}^{h}(p)=g^{\star}(\mat E_{p}^{\T}(p)(p-q(p))\ |\ p)=g^{\star}(\0\ |\ p),
\]

In addition, retraction can be defined via local parameterization
(see \cite[Section 4.1.3]{AMS09}). A retraction is defined via $\mu_{\rb}:\R^{d}\to\widetilde{{\mathcal{M}}}$
a given local parameterization of $\widetilde{{\mathcal{M}}}$ around $\rb$
in the following way 
\begin{equation}
R_{\rb}(\xi)\coloneqq\mu_{\rb}(\text{D}\mu_{\rb}^{-1}(\rb)[\xi]).\label{eq:local_coord_ret}
\end{equation}
In our case, even though we do not have an actual parametrization
of $\widetilde{{\mathcal{M}}}$ around $\rb$, the polynomial approximation performed
in the second step of MMLS $g^{\star}(\x\ |\ \rb):H\backsimeq\R^{d}\to\R^{n}$,
approximates locally some parametrization $\mu_{\rb}$ and more importantly
its derivative \cite[Lemma 4]{sober2020approximating}. Following
a similar structure to Eq. \eqref{eq:local_coord_ret}, we redefine
quasi-retraction (Eq. \eqref{eq:quasi-retraction_V1}) using the Moore-Penrose
inverse \cite[Chapter 5.5.2]{golub2013matrix} of $\text{D}g^{\star}(\0\ |\ \rb)$
denoted by $\text{D}g^{\star}(\0\ |\ \rb)^{\pinv}$ in the following
way (assuming $\text{D}g(0)$ is a full rank matrix)
\begin{equation}
\widetilde{R}_{\rb}(\xi)\coloneqq g^{\star}(\text{D}g^{\star}(\0\ |\ \rb)^{\pinv}\mat E_{r+\xi}(r+\xi)\mat E_{r+\xi}^{\T}(r+\xi)(r+\xi-q(r+\xi))\ |\ r+\xi)=g^{\star}(\0\ |\ r+\xi).\label{eq:quasi-retraction_V2}
\end{equation}
Note that the resulting point on $\widetilde{{\mathcal{M}}}$ using Eq. (\ref{eq:quasi-retraction_V2})
is exactly the same as using Eq. (\ref{eq:quasi-retraction_V1}) since
$\mat E_{p}^{\T}(r+\xi)\perp r+\xi-q(r+\xi)$, thus Eq. (\ref{eq:quasi-retraction_V2})
is only written for the sake of analysis. Again, $\widetilde{R}_{\rb}(0_{\rb})={\mathcal P}_{m}^{h}({\mathcal P}_{m}^{h}(\rb))={\mathcal P}_{m}^{h}(\rb)$.
In Addition, the derivative of $\widetilde{R}_{\rb}(\xi)$ from Eq. (\ref{eq:quasi-retraction_V2})
assuming $\mat E_{r+\xi}^{\T}(r+\xi),q(r+\xi)$ and the polynomial
coefficients are constants around $\rb$ so that $g^{\star}$ only depends
on its first component is 
\[
\text{D}\widetilde{R}_{\rb}(0_{\rb})=\text{D}(g^{\star}(\text{D}g^{\star}(\0\ |\ \rb)^{\pinv}\mat E_{r+\xi}(r+\xi)\mat E_{r+\xi}^{\T}(r+\xi)(r+\xi-q(r+\xi)))\ |\ r+\xi)=\text{D}g^{\star}(\0\ |\ \rb)\text{D}g^{\star}(\0\ |\ \rb)^{\pinv}\mat E_{\rb}\mat E_{\rb}^{\T}.
\]
Note that $\text{D}g^{\star}(\0\ |\ \rb)\text{D}g^{\star}(\0\ |\ \rb)^{\pinv}$
is the identity map on the (column space) range of $\text{D}g^{\star}(\0\ |\ \rb)$
(since $\text{D}g^{\star}(\0\ |\ \rb)\text{D}g^{\star}(\0\ |\ \rb)^{\pinv}$
is the orthogonal projection on $\Range{\text{D}g^{\star}(\0\ |\ \rb)}$)
and $\mat E_{\rb}\mat E_{\rb}^{\T}$ is the identity mapping on $\widetilde{T}_{\rb}\widetilde{\mathcal{M}}$.
Given $\xi\in\widetilde{T}_{\rb}\widetilde{\mathcal{M}}\subseteq\R^{n}$, we have 
\[
\text{D}\widetilde{R}_{\rb}(0_{\rb})[\xi]=\text{D}g^{\star}(\0\ |\ \rb)\text{D}g^{\star}(\0\ |\ \rb)^{\pinv}\mat E_{\rb}\mat E_{\rb}^{\T}[\xi]=\text{D}g^{\star}(\0\ |\ \rb)\text{D}g^{\star}(\0\ |\ \rb)^{\pinv}[\xi].
\]
Since $\xi$ is given in ambient coordinates we can decompose it uniquely
to 
\[
\xi=\xi_{\text{D}g^{\star}(\0\ |\ \rb)}+\xi_{\text{D}g^{\star}(\0\ |\ \rb)^{\perp}}\ ,
\]
where $\xi_{\text{D}g^{\star}(\0\ |\ \rb)}\in\Range{\text{D}g^{\star}(\0\ |\ \rb)}$
and $\xi_{\text{D}g^{\star}(\0\ |\ \rb)^{\perp}}\in\Range{\text{D}g^{\star}(\0\ |\ \rb)}^{\perp}$.
Therefore,
\[
\text{D}\widetilde{R}_{\rb}(0_{\rb})[\xi]=\text{D}g^{\star}(\0\ |\ \rb)\text{D}g^{\star}(\0\ |\ \rb)^{\pinv}[\xi_{\text{D}g^{\star}(\0\ |\ \rb)}+\xi_{\text{D}g^{\star}(\0\ |\ \rb)^{\perp}}]=\xi_{\text{D}g^{\star}(\0\ |\ \rb)},
\]
where the deviation of $\text{D}\widetilde{R}_{\rb}(0_{\rb})$ from being
$\text{Id}_{\widetilde{T}_{\rb}\widetilde{\mathcal{M}}}$ is $\xi_{\text{D}g^{\star}(\0\ |\ \rb)^{\perp}}$
which depends on $\sin(\angle_{\max}(\widetilde{T}_{\rb}\widetilde{\mathcal{M}},\Range{\text{D}g^{\star}(\0\ |\ \rb)}))$,
i.e., the angle between $\widetilde{T}_{\rb}\widetilde{\mathcal{M}}$ and $\Range{\text{D}g^{\star}(\0\ |\ \rb)}$. 

Using the retraction defined in Eq. (\ref{eq:quasi-retraction_V2}),
we can show the following property
\begin{align}
\forall\xi\in\widetilde{T}_{\rb}\widetilde{\mathcal{M}},\ \dotprod{\grad{(f\circ\widetilde{R}_{\rb})(0_{\rb})}}{\xi}= & \text{D}(f\circ\widetilde{R}_{\rb})(0_{\rb})[\xi]=\text{D}f(\rb)[\text{D}\widetilde{R}_{\rb}(0_{\rb})[\xi]]\nonumber \\
=\text{D}f(\rb)[\xi_{\text{D}g^{\star}(\0\ |\ \rb)}]=\dotprod{\nablaf(\rb)}{\xi_{\text{D}g^{\star}(\0\ |\ \rb)}} & =\dotprod{\widetilde{\grad{f(\rb)}}}{\xi}-\dotprod{\nablaf(\rb)}{\xi_{\text{D}g^{\star}(\0\ |\ \rb)^{\perp}}}.\label{eq:quasi-ret_identity}
\end{align}
The previous property (Eq. (\ref{eq:quasi-ret_identity})) is crucial
for the analysis of convergence, in particular for a gradient method
typically the step at each iteration is of the form $\xi=-c\widetilde{\grad{f(\rb)}}$
for some constant $c>0$. Thus, we have
\begin{align*}
\dotprod{\grad{(f\circ\widetilde{R}_{\rb})(0_{\rb})}}{-c\widetilde{\grad{f(\rb)}}}= & -c\|\widetilde{\grad{f(\rb)}}\|^{2}+c\dotprod{\widetilde{\Pi}_{\rb}((\matI-\text{D}g^{\star}(\0\ |\ \rb)\text{D}g^{\star}(\0\ |\ \rb)^{\pinv})\nablaf(\rb))}{\widetilde{\grad{f(\rb)}}}\\
= & -c\dotprod{\widetilde{\Pi}_{\rb}(\text{D}g^{\star}(\0\ |\ \rb)\text{D}g^{\star}(\0\ |\ \rb)^{\pinv}\nablaf(\rb))}{\widetilde{\grad{f(\rb)}}},
\end{align*}
where $\text{D}g^{\star}(\0\ |\ \rb)\text{D}g^{\star}(\0\ |\ \rb)^{\pinv}$
is the orthogonal projection on $\Range{\text{D}g^{\star}(\0\ |\ \rb)}$,
and $\matI-\text{D}g^{\star}(\0\ |\ \rb)\text{D}g^{\star}(\0\ |\ \rb)^{\pinv}$
is the projection on $\Range{\text{D}g^{\star}(\0\ |\ \rb)}^{\perp}$.
Note that $\widetilde{\Pi}_{\rb}(\text{D}g^{\star}(\0\ |\ \rb)\text{D}g^{\star}(\0\ |\ \rb)^{\pinv}\nabla f(\rb))$
depends on $\cos(\angle_{\max}(\widetilde{T}_{\rb}\widetilde{\mathcal{M}},\Range{\text{D}g^{\star}(\0\ |\ \rb)}))$,
the closer the angle to zero, the closer $\dotprod{\widetilde{\Pi}_{\rb}(\text{D}g^{\star}(\0\ |\ \rb)\text{D}g^{\star}(\0\ |\ \rb)^{\pinv}\nabla f(\rb))}{{\gradtil{f(\rb)}}}$
to $\|{\gradtil{f(\rb)}}\|^{2}$ (\textcolor{red}{we need to make
a lemma for this equation}).

For the second
condition, we can rewrite $\widehat{R}$ as in Eq. (\ref{eq:quasi-retraction_V2})
in the following way 
\begin{equation}
\widehat{R}_{\rb}(\xi)\coloneqq g^{\star}(\text{D}g^{\star}(\0\ |\ \rb)^{\pinv}\mat E_{r+\xi}(r+\xi)\mat E_{r+\xi}^{\T}(r+\xi)(r+\xi-q(r+\xi))\ |\ r+\xi)=g^{\star}(\0\ |\ r+\xi).\label{eq:approximate-retraction_V2}
\end{equation}
Taking the derivative of $\widehat{R}_{\rb}(\xi)$ with respect to $\xi\in\widehat{T}_{\rb}\widehat{\mathcal{M}}$
assuming that $\mat E_{r+\xi}^{\T}(r+\xi),q(r+\xi)$ and the polynomial
coefficients are constants around $\rb$ so that $g^{\star}$ only depends
on its first component, we get
\[
\text{D}\widehat{R}_{\rb}(0_{\rb})=\text{D}(g^{\star}(\text{D}g^{\star}(\0\ |\ \rb)^{\pinv}\mat E_{r+\xi}(r+\xi)\mat E_{r+\xi}^{\T}(r+\xi)(r+\xi-q(r+\xi)))\ |\ r+\xi)=\text{D}g^{\star}(\0\ |\ \rb)\text{D}g^{\star}(\0\ |\ \rb)^{\pinv}\mat E_{\rb}\mat E_{\rb}^{\T}.
\]
Recall that $\text{D}g^{\star}(\0\ |\ \rb)\text{D}g^{\star}(\0\ |\ \rb)^{\pinv}=\text{Id}_{\Range{\text{D}g^{\star}(\0\ |\ \rb)}}=\text{Id}_{\widehat{T}_{\rb}\widehat{\mathcal{M}}}$
and $\mat E_{\rb}\mat E_{\rb}^{\T}=\text{Id}_{H(\rb)}$. Since $\xi$ is
given in ambient coordinates we can decompose it uniquely to 
\[
\xi=\xi_{H(\rb)}+\xi_{H(\rb)^{\perp}}\ ,
\]
where $\xi_{H(\rb)}\in H(\rb)$ and $\xi_{H(\rb)^{\perp}}\in H(\rb)^{\perp}$.
Thus,
\[
\text{D}\widehat{R}_{\rb}(0_{\rb})[\xi]=\text{D}g^{\star}(\0\ |\ \rb)\text{D}g^{\star}(\0\ |\ \rb)^{\pinv}\mat E_{\rb}\mat E_{\rb}^{\T}[\xi]=\text{D}g^{\star}(\0\ |\ \rb)\text{D}g^{\star}(\0\ |\ \rb)^{\pinv}[\xi_{H(\rb)}]=\widehat{\Pi}_{\rb}(\xi_{H(\rb)}).
\]

Using the retraction defined in Eq. (\ref{eq:approximate-retraction_V2}),
the following holds

\begin{align}
\forall\xi\in\widehat{T}_{\rb}\widehat{\mathcal{M}},\ \dotprod{\grad{(f\circ\widehat{R}_{\rb})(0_{\rb})}}{\xi} & =\text{D}(f\circ\widehat{R}_{\rb})(0_{\rb})[\xi]=\text{D}f(\rb)[\text{D}\widehat{R}_{\rb}(0_{\rb})[\xi]]\nonumber \\
=\text{D}f(\rb)[\widehat{\Pi}_{\rb}(\xi_{H(\rb)})] & =\dotprod{\nablaf(\rb)}{\widehat{\Pi}_{\rb}(\xi_{H(\rb)})}=\dotprod{\widehat{\grad{f(\rb)}}}{\xi_{H(\rb)}}.\label{eq:eq:approximate-ret_identity}
\end{align}
The previous property (Eq. (\ref{eq:eq:approximate-ret_identity}))
is crucial for the analysis of convergence, in particular for a gradient
method typically the step at each iteration is of the form $\xi=-c\widehat{\grad{f(\rb)}}$
for some constant $c>0$. Thus, we have
\[
\dotprod{\grad{(f\circ\widehat{R}_{\rb})(0_{\rb})}}{-c\widehat{\grad{f(\rb)}}}=-c\dotprod{\widehat{\grad{f(\rb)}}}{\mat E_{\rb}\mat E_{\rb}^{\T}\widehat{\grad{f(\rb)}}}.
\]
Note that $\dotprod{\widehat{\grad{f(\rb)}}}{\mat E_{\rb}\mat E_{\rb}^{\T}\widehat{\grad{f(\rb)}}}$
is as close to $\|\widehat{\grad{f(\rb)}}\|^{2}$ as $\cos(\angle_{\max}(H(\rb),\widehat{T}_{\rb}\widehat{\mathcal{M}}))$
closer to one (\textcolor{red}{we need to make a lemma for this equation}).
\fi
\begin{rem}[Bound on $\|\v'_{\rb,\xi}(0) \|$]\label{rem:v_bound}
Note that in Lemma \ref{lem:retraction_2_property}, the function $\v_{\rb, \xi}(t)$ is smooth in the compact set $K$, thus there exist some constant $L_{\v}>0$ such that $\|\v'_{\rb,\xi}(0) \| \leq L_{\v}$. Moreover, at the limit case $h\to 0$, the aforementioned bound goes to $0$, i.e., $L_{\v} \to 0$ (as MMLS converges). 
\end{rem}

Both Assumption \ref{assu:Lipschitz-gradient} and Lemma \ref{lem:retraction_2_property} allow us to conclude that the \emph{pullback} function $ f \circ \widetilde{R}: \widetilde{T}_{\rb}\widetilde{\mathcal{M}}\to \R$ satisfies a Lipschitz-type
gradient property in a similar manner to \cite[Lemma 4]{boumal2019global}. We show it in the following lemma (the proof is in Appendix \ref{subsec:proof_lipschitz}).

\begin{lemma}[Lipschitz-type gradient for pullbacks]\label{lem:pullback_Lip} Let $f:\R^{D} \to \R$ satisfy Assumption \ref{assu:Lipschitz-gradient}, and let Assumption \ref{assu:ret_well_defined} hold. Then, for all $\rb\in\widetilde{\mathcal{M}}$ and for all $\xi\in \widetilde{T}_{\rb}\widetilde{\mathcal{M}}$ such that $\|\xi\|\leq Q$, the pullback function $f \circ \widetilde{R}$ satisfies a Lipschitz-type gradient property with some $\widetilde{L}>0$ independent of $\rb$ and $\xi$:
\begin{equation*}
    |f(\widetilde{R}_{\rb}(\xi))-[f(\rb)+\text{D}f(\rb)[\text{D}\widetilde{R}_{\rb}(0_{\rb})[\xi]]|\leq\frac{\widetilde{L}}{2}\|\xi\|^{2},
\end{equation*}
where $$\text{D}f(\rb)[\text{D}\widetilde{R}_{\rb}(0_{\rb})[\xi]]=\dotprod{\gradMtil{f(\rb)}}{\xi+\v'_{\rb,\xi}(0)},$$ and $$\gradMtil{f(\rb)}=\Pi_{T_{\rb}{\widetilde{{\mathcal{M}}}}}(\nabla f(\rb)).$$
\end{lemma}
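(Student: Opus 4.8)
The plan is to mirror the structure of \cite[Lemma 4]{boumal2019global}, reducing the statement to uniform bounds on the velocity and acceleration of the retraction curve combined with the Euclidean Lipschitz-gradient property of $f$. First I would dispatch the asserted identity. By Lemma \ref{lem:retraction_2_property} we have $\text{D}\widetilde{R}_{\rb}(\0_{\rb})[\xi] = \xi + \v'_{\rb,\xi}(0) \in T_{\rb}\widetilde{\mathcal{M}}$, so since this vector lies in $T_{\rb}\widetilde{\mathcal{M}}$ its Euclidean inner product against $\nabla f(\rb)$ is unchanged by orthogonal projection onto $T_{\rb}\widetilde{\mathcal{M}}$, giving
\begin{equation*}
\text{D}f(\rb)[\text{D}\widetilde{R}_{\rb}(\0_{\rb})[\xi]] = \dotprod{\nabla f(\rb)}{\xi + \v'_{\rb,\xi}(0)} = \dotprod{\gradMtil{f(\rb)}}{\xi + \v'_{\rb,\xi}(0)},
\end{equation*}
with $\gradMtil{f(\rb)} = \Pi_{T_{\rb}\widetilde{\mathcal{M}}}(\nabla f(\rb))$. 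Then I would introduce the curve $\gamma(t) \coloneqq \widetilde{R}_{\rb}(t\xi) = {\mathcal P}_{m}^{h}(\rb + t\xi)$ for $t \in [0,1]$ and the pullback $g(t) \coloneqq f(\gamma(t))$. Because $(\rb,\xi)\in K$ forces $\|t\xi\|\leq Q$ for all $t\in[0,1]$, Assumption \ref{assu:ret_well_defined} ensures $\rb + t\xi \in U_{\mathrm{unique}}$ throughout, so $\gamma$ is well defined, lands in $\widetilde{\mathcal{M}}$, and is smooth since ${\mathcal P}_{m}^{h}$ is smooth on $U_{\mathrm{unique}}$ \cite[Theorem 4.21]{sober2020manifold}. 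Noting $g'(0) = \text{D}f(\rb)[\text{D}\widetilde{R}_{\rb}(\0_{\rb})[\xi]]$ and $g'(t) = \dotprod{\nabla f(\gamma(t))}{\gamma'(t)}$, the fundamental theorem of calculus reduces the claim to bounding $g(1)-g(0)-g'(0) = \int_0^1 (g'(t)-g'(0))\,dt$.

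The key is to produce constants independent of $\rb$ and $\xi$. Since $\gamma'(t) = \text{D}{\mathcal P}_{m}^{h}(\rb+t\xi)[\xi]$ and $\gamma''(t) = \text{D}^2{\mathcal P}_{m}^{h}(\rb+t\xi)[\xi,\xi]$, and the map $(\rb,\xi,t)\mapsto \rb+t\xi$ sends the compact set $K\times[0,1]$ into $U_{\mathrm{unique}}$, the continuity of the first two derivatives of ${\mathcal P}_{m}^{h}$ on this compact image yields operator-norm bounds $C_1,C_2$ with $\|\gamma'(t)\|\leq C_1\|\xi\|$ and $\|\gamma''(t)\|\leq C_2\|\xi\|^2$; compactness of $\widetilde{\mathcal{M}}$ together with continuity of $\nabla f$ gives $G \coloneqq \max_{\rb\in\widetilde{\mathcal{M}}}\|\nabla f(\rb)\| < \infty$. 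I would then split the integrand as
\begin{equation*}
g'(t)-g'(0) = \dotprod{\nabla f(\gamma(t)) - \nabla f(\gamma(0))}{\gamma'(t)} + \dotprod{\nabla f(\gamma(0))}{\gamma'(t) - \gamma'(0)}.
\end{equation*}
Since $\gamma(t),\gamma(0)\in\widetilde{\mathcal{M}}\subseteq\text{Conv}(\widetilde{\mathcal{M}}\cup{\mathcal{M}})$, Assumption \ref{assu:Lipschitz-gradient} bounds the first term by $L\|\gamma(t)-\gamma(0)\|\,\|\gamma'(t)\| \leq L(tC_1\|\xi\|)(C_1\|\xi\|)$, and the second by $G\|\gamma'(t)-\gamma'(0)\| \leq G\,tC_2\|\xi\|^2$, using $\|\gamma(t)-\gamma(0)\|\leq\int_0^t\|\gamma'\|\leq tC_1\|\xi\|$ and $\|\gamma'(t)-\gamma'(0)\|\leq\int_0^t\|\gamma''\|\leq tC_2\|\xi\|^2$. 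Integrating $\int_0^1 (LC_1^2 + GC_2)\,t\,\|\xi\|^2\,dt$ yields the claim with $\widetilde{L} = LC_1^2 + GC_2$.

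The main obstacle is exactly the uniformity of $C_1$ and $C_2$: I must verify that $\{\rb + t\xi : (\rb,\xi)\in K,\ t\in[0,1]\}$ is a compact subset of the open set $U_{\mathrm{unique}}$ (hence bounded away from its boundary), so that the continuous maps $\|\text{D}{\mathcal P}_{m}^{h}\|$ and $\|\text{D}^2{\mathcal P}_{m}^{h}\|$ attain finite maxima there; this is where compactness of $K$ (Assumption \ref{assu:ret_well_defined}) and the smoothness of MMLS projection are both essential, and it is what guarantees $\widetilde{L}$ does not depend on the particular $\rb$ or $\xi$. A minor point to confirm along the way is that the linear/quadratic scaling in $\xi$ factors cleanly out of the derivatives, which it does because $\gamma'(t)$ and $\gamma''(t)$ are respectively linear and bilinear in $\xi$.
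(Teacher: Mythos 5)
Your proof is correct and takes essentially the same route as the paper's: both reduce the claim to the Euclidean gradient-Lipschitz property of $f$ on $\text{Conv}(\widetilde{\mathcal{M}}\cup\mathcal{M})$, the identity $\text{D}\widetilde{R}_{\rb}(\0_{\rb})[\xi]=\xi+\v'_{\rb,\xi}(0)\in T_{\rb}\widetilde{\mathcal{M}}$ from Lemma \ref{lem:retraction_2_property}, and uniform bounds on $\|\text{D}{\mathcal P}_{m}^{h}\|$ and $\|\text{D}^{2}{\mathcal P}_{m}^{h}\|$ obtained from smoothness over the compact set $K$ together with $G=\max_{\widetilde{\mathcal{M}}}\|\nabla f\|$. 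The paper applies the quadratic Taylor form of Assumption \ref{assu:Lipschitz-gradient} once at the endpoints and then bounds $\|\widetilde{R}_{\rb}(\xi)-\rb\|$ and $\|\widetilde{R}_{\rb}(\xi)-\rb-\xi-\v'_{\rb,\xi}(0)\|$ by integrating along $t\mapsto\widetilde{R}_{\rb}(t\xi)$, whereas you integrate $g'(t)-g'(0)$ directly; these are equivalent bookkeeping choices yielding the same constant up to relabeling.
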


The proposed geometrical components in this section and their computational costs are summarized in
Table \ref{tab:riemannian-approx}. With these components
it is possible to adapt first-order Riemannian algorithms, e.g., Riemannian
gradient method and Riemannian CG, to our setting based on Riemannian
optimization \cite{AMS09,boumal2022intromanifolds}. An example MMLS-RO
gradient descent algorithm is described in Algorithm \ref{alg:MMLS-for-Riemannian}
(based on \cite[Algorithm 4.1]{boumal2022intromanifolds}) where the
step-size can be chosen in any standard way, i.e., fixed, optimal,
backtracking (e.g., Algorithm \ref{alg:MMLS-RO_backtrack}) constrained to satisfy that each iteration belongs to
$U_{\mathrm{unique}}$. Its global convergence (with a fixed step-size, and backtracking Armijo line-search) is analyzed in Section \ref{sec:Convergence-Analysis-of}. Also an example MMLS-RO CG algorithm is described in Algorithm \ref{alg:MMLS-RO_CG} (based on \cite[Algorithm 13]{AMS09}). Note that in all the proposed algorithms, we require $Q$ to be given. However, in most cases $Q$ is unknown in advance. We explain how to approximately have a step-size smaller than $Q$ in Subsection \ref{subsec:implement_details}. The effectiveness of all the presented algorithms is demonstrated empirically in Section \ref{sec:Numerical-experiments}.  

\begin{table}
\caption{\label{tab:riemannian-approx}Riemannian components for MMLS-RO}

\centering{}{\scriptsize{}}%
\begin{tabular}{|>{\centering}p{8.5cm}|>{\centering}p{4.3 cm}|>{\centering}p{4 cm}|}
\hline 
\textbf{\scriptsize{}Riemannian approximate components} & \textbf{\scriptsize{}Explicit formulas} & \textbf{\scriptsize{}Cost}\tabularnewline
\hline 
\hline
{\small{}MMLS projection of $\rb\in R^{D}$ on $\widetilde{\mathcal{M}}$ \cite{sober2020manifold}} & {\small{}${\mathcal P}_{m}^{h}(\rb)$} & {\small{}$\tmmls \coloneqq O(Dd^{m} + d^{3m})$}\tabularnewline
\hline
{\small{}Approximate-tangent space $\widetilde{T}_{\rb}\widetilde{\mathcal{M}}$ } & {\small{}$\Range{\text{D}g^{\star}(\0\ |\ \rb)}$} & {\small{}$O(\tmmls + Dd)$}\tabularnewline
\hline 
{\small{}Approximate-tangent bundle $\widetilde{T}\widetilde{\mathcal{M}}$ } & {\scriptsize{}$\left\{(\rb, \xi)\ :\ \rb\in \widetilde{\mathcal{M}}\ \cap \ \xi \in \widetilde{T}_{\rb}\widetilde{\mathcal{M}}\right\}$} & {-}\tabularnewline
\hline 
{\small{}Approximate-retraction of $\widetilde{R}_{\rb}(\xi)$} & {\small{}${\mathcal P}_{m}^{h}(\rb+\xi)$} & {\small{}$O(\tmmls)$}\tabularnewline
\hline 
{\small{}Orthogonal projection of $\widetilde{\Pi}_{\rb}(\xi)$} & {\small{}$\text{D}g^{\star}(\0\ |\ \rb)\text{D}g^{\star}(\0\ |\ \rb)^{\pinv}\xi$} & {\small{}$O(\tmmls + Dd^{2})$}\tabularnewline
\hline 
{\small{}Approximate-Riemannian gradient of a given $\nabla f(\rb)$} & {\small{}$\widetilde{\Pi}_{\rb}(\nabla f(\rb))$} & {\small{}$O(\tmmls + Dd^{2} + T_{\nabla  f})$}\tabularnewline
\hline 
{\small{}Approximate-Riemannian gradient when approximating $\nabla \widehat{f}$} & {\scriptsize{}$\text{D}g^{\star}(\0\ |\ \rb)\matG_{\text{D}g^{\star}(\0\ |\ \rb)}^{-1}\nabla p_{\rb}^{f}(\0)$} & {\small{}$O(\tmmls + Dd^{2} + d)$}\tabularnewline
\hline
{\small{}Approximate-vector transport, $\widetilde{\tau}_{\eta}\xi$, of
$\xi\in\widetilde{T}_{\rb}\widetilde{\mathcal{M}}$ to $\widetilde{T}_{\widetilde{R}_{\rb}(\eta)}{\mathcal{M}}$} & {\scriptsize{}$\widetilde{\tau}_{\eta}\xi\coloneqq\widetilde{\Pi}_{\widetilde{R}_{\rb}(\eta)}(\xi)$} & {\small{}$O(\tmmls + Dd^{2})$}\tabularnewline
\hline 
\end{tabular}{\scriptsize\par}
\end{table}

\begin{algorithm}[tb]
\caption{\label{alg:MMLS-for-Riemannian}MMLS-RO gradient descent algorithm (based on \cite[Algorithm 4.1]{boumal2022intromanifolds})}

\begin{algorithmic}[1]

\STATE\textbf{ Input: }$f$ a gradient Lipschitz function defined on $\R^{D}$ fully known or only given by samples at the points $S=\{\rb_{i}\}_{i=1}^{I}\subset{\mathcal{M}}$, where $S$ is a quasi-uniform sample
set. $Q$ from Assumption \ref{assu:ret_well_defined}. A tolerance $\epsilon>0$. 

\STATE \textbf{Choose an initial point:} $\p_{0}$ (a point
from the given point cloud, or a point close to it).

\STATE \textbf{Use} MMLS to form $(\q(\p_{0}),H(\p_{0}))$ and $\x_{0}={\mathcal P}_{m}^{h}(\p_{0})\in\widetilde{{\mathcal{M}}}$.

\STATE \textbf{Init} $i\leftarrow0$.

\STATE \textbf{While} $\left \|{\gradtil{f(\x_{i})}}\right \|>\epsilon$:

\STATE $\ $ \textbf{Compute} $\text{D}g^{\star}(\0\ |\ \x_{i})$ and
$\text{D}g^{\star}(\0\ |\ \x_{i})^{\pinv}$.

\STATE $\;$ \textbf{Take a search direction} $\xi_{\x_{i}}=-{\gradtil{f(\x_{i})}}$ on $\widetilde{T}_{\x_{i}}{\mathcal{M}}$, and a step-size $\alpha_{i}>0$ (fixed or via backtracking, e.g., Algorithm \ref{alg:MMLS-RO_backtrack}) such that $\left \|\alpha_{i}\xi_{\x_{i}}\right \|\leq Q$.

\STATE $\;$ \textbf{Set} $\x_{i+1}=\widetilde{R}_{\x_{i}}(\alpha_{i}\xi_{\x_{i}})$.

\STATE $\;$ $i\leftarrow i+1$.

\STATE \textbf{End while }

\STATE \textbf{Return $\x_{i}$}

\end{algorithmic}
\end{algorithm}

\begin{algorithm}[tb]
\caption{\label{alg:MMLS-RO_backtrack}Backtracking Armijo line-search (based on \cite[Algorithm 4.2]{boumal2022intromanifolds})}

\begin{algorithmic}[1]

\STATE\textbf{Input: }$\x_{i}\in \widetilde{\mathcal{M}}$, $\bar{\alpha}_{i} = \min \left\{\bar{\alpha}\ ,\ Q/\left\| \gradMtil{f(\x_{i})} \right\| \right\} >0$, $\gamma\in (0,1)$, $\delta\in (0,1)$. 

\STATE \textbf{Init:} $\alpha\leftarrow \bar{\alpha}_{i}$.

\STATE \textbf{While} $f(\x_{i})-f(\widetilde{R}_{\x_{i}}(-\alpha\gradtil{f(\x_{i})}) < \delta \alpha \left \|{\gradtil{f(\x_{i})}}\right \|^{2}$:

\STATE $\;$ \textbf{do} $\alpha\leftarrow \gamma t$.

\STATE \textbf{End while }

\STATE \textbf{Return} $\alpha$.

\end{algorithmic}
\end{algorithm}

\begin{algorithm}[tb]
\caption{\label{alg:MMLS-RO_CG}MMLS-RO CG algorithm (based on \cite[Algorithm 13]{AMS09})}

\begin{algorithmic}[1]

\STATE\textbf{Input: }$f$ a gradient Lipschitz function defined on $\R^{D}$ fully known or only given by samples at the points $S=\{\rb_{i}\}_{i=1}^{I}\subset{\mathcal{M}}$, where $S$ is a quasi-uniform sample
set. $Q$ from Assumption \ref{assu:ret_well_defined}. A tolerance $\epsilon>0$. 

\STATE \textbf{Choose an initial point:} $\p_{0}$ (a point
from the given point cloud, or a point close to it).

\STATE \textbf{Use} MMLS to form $(\q(\p_{0}),H(\p_{0}))$ and $\x_{0}={\mathcal P}_{m}^{h}(\p_{0})\in\widetilde{{\mathcal{M}}}$.

\STATE \textbf{Set} $\xi_{\x_{0}} = - {\gradtil{f(\x_{0})}}$.

\STATE \textbf{Init} $i\leftarrow0$.

\STATE \textbf{While} $\left \|{\gradtil{f(\x_{i})}}\right \|>\epsilon$:

\STATE $\;$ \textbf{Compute a step-size} $\alpha_{i}>0$ using a line-search backtracking procedure (e.g., Algorithm \ref{alg:MMLS-RO_backtrack}) such that $\left \|\alpha_{i}\xi_{\x_{i}}\right \|\leq Q$.

\STATE $\;$ \textbf{Set} $\x_{i+1}=\widetilde{R}_{\x_{i}}(\tau_{i}\xi_{\x_{i}})$.

\STATE $\;$ \textbf{Compute} $\beta_{i+1}$ via e.g., \cite[Eq. (8.28) or (8.29)]{AMS09}.

\STATE $\;$ \textbf{Set} $\xi_{\x_{i+1}} = - {\gradtil{f(\x_{i+1})}} + \beta_{i+1}\widetilde{\tau}_{\alpha_{i} \xi_{\x_{i}}}\xi_{\x_{i}}$.

\STATE $\;$ $i\leftarrow i+1$.

\STATE \textbf{End while }

\STATE \textbf{Return $\x_{i}$}

\end{algorithmic}
\end{algorithm}

\section{\label{sec:Convergence-Analysis-of}Convergence Analysis of MMLS-RO Gradient Algorithm}

In this section we analyze the global convergence of our
propose MMLS-RO gradient algorithm (Algorithm \ref{alg:MMLS-for-Riemannian}) with a fixed step-size and with backtracking (Algorithm \ref{alg:MMLS-RO_backtrack}) in a similar manner
to the analysis in \cite{boumal2019global}. First, we perform the analysis for a generic manifold learning method which provides us similar tools as MMLS (see Subsection \ref{subsec:Convergence-Analysis-of1}), and then we conclude the results for our proposed method (see Subsection \ref{subsec:Convergence-Analysis-of2}).

\subsection{\label{subsec:Convergence-Analysis-of1}Convergence Analysis for a Generic Manifold Learning Method}
In this subsection, we analyze the global convergence of a method which approximates the solution of 
$$\min _{\x\in{\mathcal{M}}} f(\x), $$
via some generic method which approximates $\mathcal{M}$, a $d$-dimensional smooth and closed manifold (possibly also approximating the Euclidean gradient of $ f$, and $f$ itself if required), using a quasi-uniform sample set with a fill distance $h$ (Definition \ref{def:Quasi-uniform-sample-set}). We begin by listing our method's assumptions regarding ${\mathcal M}$ (similar to the tools MMLS provides).
\begin{assumption}[Manifold learning method properties]\label{assu:list_of_req}
Assume that the method we use for approximating $\mathcal{M}$ provides:
\begin{enumerate}
    \item An approximating manifold $\widehat{\mathcal{M}}$ of $\mathcal{M}$, which is also $d$-dimensional smooth and closed.
    \item An approximation of the tangent spaces at $\rb\in \widehat{\mathcal{M}}$, denoted by $\widehat{T}_{\rb}\widehat{\mathcal{M}}$, that are $O(\sqrt{D}h^{m})$ approximations of the tangent spaces in the Euclidean norm, in the sense that the orthogonal projection of some $\u\in \R^{D}$ on $\widehat{T}_{\rb}\widehat{\mathcal{M}}$, $\Pi_{\widehat{T}_{\rb}\widehat{\mathcal{M}}}(\u)$, and on $T_{\rb}\widehat{\mathcal{M}}$, $\Pi_{T_{\rb}\widehat{\mathcal{M}}}(\u)$, satisfy
    \begin{equation*}
        \left\| \Pi_{\widehat{T}_{\rb}\widehat{\mathcal{M}}}(\u) - \Pi_{T_{\rb}\widehat{\mathcal{M}}}(\u) \right\| \leq c \left\| \u \right\| \sqrt{D} h^{m},
    \end{equation*}
    for some constant $c>0$ independent of $\rb$ and of $\u$.
    \item An approximation of the tangent bundle $\widehat{T}\widehat{\mathcal{M}}$.
    \item An approximation of the retraction map, $\widehat{R}_{(\cdot)}(\cdot):\widehat{T}\widehat{\mathcal{M}}\to \widehat{\mathcal{M}}$, defined over the set
    $$\widehat{K} \coloneqq \left\{(\rb, \xi)\in \widehat{T}\widehat{\mathcal{M}}\ :\ \|\xi \| \leq Q  \right\}, $$
    for some $Q>0$. It also satisfies $\widehat{R}_{\rb}(0_{\rb})=\rb$ and $\text{D}\widehat{R}_{\rb}(0_{\rb})[\xi]= \xi + \v_{\xi} \in T_{\rb}\widehat{\mathcal{M}}$, with $\|\v_{\xi}\|\leq L_{\v}$ such that $L_{\v}\to 0$ when $h\to 0$. Thus, recall that the Riemannian gradient on $\widehat{\mathcal{M}}$ (denoted by $\gradMhat{f(\cdot)}$) is the orthogonal projection on $T_{\rb}\widehat{\mathcal{M}}$, then 
    $$\text{D}f(\rb)[\text{D}\widehat{R}_{\rb}(0_{\rb})[\xi]] =  \dotprod{\gradMhat{f(\rb)}}{\xi+\v_{\xi}}.$$
    \item \label{item:list_of_req_grad}An approximation of the Riemannian gradient on $\widehat{\mathcal{M}}$, i.e., $\gradhat{f(\rb)}$, which satisfies 
    $$\|{\gradhat{f(\rb)}}-{\gradMhat{f(\rb)}}\| \leq (c_{0} + c_{1} \left\| \nabla  f (\rb) \right\|)\sqrt{D} h^{m},$$
    for some constants $c_{0}, c_{1} >0$ independent of $\rb$ and of $\nabla  f$, where $\gradMhat$ denotes the Riemannian gradient on $\widehat{\mathcal{M}}$.
\end{enumerate} 
\end{assumption}

We also state some general assumptions which are standard in analyzing global convergence of gradient methods (e.g., \cite{boumal2019global}). 

\begin{assumption}[Lower bound on $f$]
\label{assu:lower_bound} There exists a \textbf{lower bound} $f^{\star}>-\infty$ for $f$ on ${\mathcal{M}}\cup \widehat{\mathcal{M}}$,
i.e., $f(\x)\geq f^{\star}$ for all $\x\in{\mathcal{M}}\cup \widehat{\mathcal{M}}$.
\end{assumption}

\begin{assumption}[Restricted Lipschitz-type
gradient for pullbacks]
\label{assu:Lipschitz-type-gradient} There exists $\widehat{L}\geq0$ such that, for all
$\x_{k}\in\widehat{\mathcal{M}}$ among $\x_{0},\x_{1},...\in\widehat{\mathcal{M}}$ generated by a specified algorithm,
the compositions $f\circ\widehat{R}_{\x_{i}}$
satisfies that for all $(\x_{i},\xi)\in \widehat{K}$
\begin{equation*}
    |f(\widehat{R}_{\x_{i}}(\xi))-[f(\x_{i}) + \text{D}f(\x_{i})[\text{D}\widehat{R}_{\x_{i}}(\0_{\x_{i}})[\xi]]]|\leq\frac{\widehat{L}}{2}\|\xi\|^{2}.
\end{equation*}
\end{assumption}

\if0
Next, we assume that we h

In addition to Assumptions \ref{assu:Lipschitz-gradient}, we list additional standard assumptions for the convergence analysis analogous to the assumptions
in \cite{boumal2019global}.

\boris{The theorems should be general, corollaries will connect them to our algorithms}

\begin{assumption}
\label{assu:unique}We assume that all operations within the optimization
algorithm are performed in $U_{\mathrm{unique}}$, and that each iteration
ends with an MMLS projection on $\widetilde{{\mathcal{M}}}$.
\end{assumption}

\begin{assumption}
\label{assu:ret_defined}We assume for all $\x_{i}$ among $\x_{0},x_{1},...$
generated by a specified algorithm, both the quasi-retraction and
the approximate-retraction are defined (i.e., moving in $\widetilde{T}_{x_{i}}{\mathcal{M}}$
and $\widehat{T}_{x_{i}}{\mathcal{M}}$ correspondingly keeps us in $U_{\mathrm{unique}}$)
at least in a ball of radius $Q_{i}>0$ around $\x_{i}$ in $\widetilde{T}_{x_{i}}{\mathcal{M}}$
and $\widehat{T}_{x_{i}}{\mathcal{M}}$ correspondingly. 
\end{assumption}

{\color{red} Boris: the next assumption is probably redundant given the assumption above.}
\begin{assumption}
\label{assu:Lipschitz-type-gradient}\textbf{Restricted Lipschitz-type
gradient for pullbacks}. There exists $L\geq0$ such that, for all
$\x_{k}$ among $\x_{0},x_{1},...$ generated by a specified algorithm,
the compositions $f\circ\widetilde{R}_{x_{i}}$ and $f\circ\widehat{R}_{\x_{i}}$
satisfy 
\[
|(f\circ\widetilde{R}_{x_{i}})(\eta)-[f(\x_{i})+\dotprod{\grad{(f\circ\widetilde{R}_{x_{i}})(0_{x_{i}})}}{\eta}]|\leq\frac{L}{2}\|\eta\|^{2},
\]
and
\[
|(f\circ\widehat{R}_{\x_{i}})(\eta)-[f(\x_{i})+\dotprod{\grad{(f\circ\widehat{R}_{\x_{i}})(0_{x_{i}})}}{\eta}]|\leq\frac{L}{2}\|\eta\|^{2},
\]
for all $\eta\in\widetilde{T}_{x_{i}}{\mathcal{M}}$ or $\eta\in\widehat{T}_{x_{i}}{\mathcal{M}}$
correspondingly such that ${\mathcal P}_{m}^{h}(\x_{i})+\eta\in U_{\mathrm{unique}}$.
\end{assumption}
\fi

Using the assumptions above (\ref{assu:list_of_req}, \ref{assu:lower_bound}, and \ref{assu:Lipschitz-type-gradient}), we can state the following theorems regarding a Riemannian gradient type-algorithm (e.g., Algorithm \ref{alg:MMLS-for-Riemannian}) with a fixed-step and with backtracking (i.e., Algorithm \ref{alg:MMLS-RO_backtrack}). Theorem \ref{thm:main_fixed_step} and Corollary \ref{cor:main_fixed_step} are for the fixed-step case (similar to \cite[theorems 3,5]{boumal2019global}), and Theorem \ref{thm:main_backtrack_step} and Corollary \ref{cor:main_backtrack_step} are for the backtracking procedure from Algorithm \ref{alg:MMLS-RO_backtrack} (similar to \cite[Theorems 3,7,8]{boumal2019global}).

\begin{thm}[Fixed-step gradient-descent decrease]\label{thm:main_fixed_step}
Under Assumptions \ref{assu:list_of_req} and \ref{assu:Lipschitz-type-gradient}, provided all the iterations are performed on $\widehat{\mathcal{M}}$, $h$ is small enough such that
\begin{equation}\label{eq:grad_cond0}
    8 L_{\v} \leq Q,
\end{equation}
and
\begin{equation}\label{eq:grad_cond}
    \max \left\{16\widehat{L}L_{\v}\ ,\  2 (c_{0} + c_{1} \left\| \nabla  f (\x_{i}) \right\|)\sqrt{D} h^{m} \right\} \leq \left\|\gradhat{f(\x_{i})}\right\|,
\end{equation}
holds for all the iterations, a Riemannian gradient algorithm, i.e., $\x_{i+1}\coloneqq \widehat{R}_{\x_{i}}(\alpha_{i}\xi_{\x_{i}})$, with the following search direction
\begin{equation}\label{eq:thm_step}
    \xi_{\x_{i}} \coloneqq - \gradhat{f(\x_{i})},
\end{equation}
and
\begin{equation}\label{eq:thm_step_size}
    \alpha_{i} \coloneqq \min \left\{\frac{Q}{\left\|\gradhat{f(\x_{i})} \right\|}\ ,\  \frac{1}{\widehat{L}} \beta_{i} \right\} > 0,
\end{equation}
where
\begin{equation}\label{eq:main_fixed_step_beta}
    \beta_{i} \coloneqq  \left(1 - \frac{(c_{0} + c_{1} \left\| \nabla  f (\x_{i}) \right\|)\sqrt{D} h^{m}}{\left\|\gradhat{f(\x_{i})} \right\|} \right) > 0.
\end{equation}
achieves the following decrease between two iterations
\begin{equation}\label{eq:ROgrad_dec}
    f(\x_{i})-f(\x_{i+1}) \geq \frac{1}{16}\min \left\{ \frac{\left\|\gradhat{f(\x_{i})} \right\|}{2\widehat{L}}, Q \right\} \left\|\gradhat{f(\x_{i})} \right\|.
\end{equation}
\end{thm}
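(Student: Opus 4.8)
The plan is to follow the one-step descent analysis of \cite{boumal2019global}, adapted to absorb the two approximation errors present here: the gradient-approximation error from Assumption \ref{assu:list_of_req}, item \ref{item:list_of_req_grad}, and the retraction-curvature correction $\v$ from item 4 of the same assumption. Throughout write $\epsilon_i \coloneqq (c_{0} + c_{1}\|\nabla f(\x_{i})\|)\sqrt{D}h^{m}$ for the gradient-approximation bound, and abbreviate $g \coloneqq \gradhat{f(\x_{i})}$ and $G \coloneqq \gradMhat{f(\x_{i})}$, so that the search direction \eqref{eq:thm_step} is $\xi_{\x_{i}} = -g$ with $\|\xi_{\x_{i}}\| = \|g\|$, while item \ref{item:list_of_req_grad} gives $\|g-G\|\leq \epsilon_{i}$ and $\beta_{i} = 1 - \epsilon_{i}/\|g\|$. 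Condition \eqref{eq:grad_cond} in the form $2\epsilon_{i}\leq\|g\|$ immediately yields $\beta_{i}\geq\tfrac12>0$, which justifies that $\beta_i$ and $\alpha_i$ in \eqref{eq:thm_step_size} are positive.

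First I would invoke Assumption \ref{assu:Lipschitz-type-gradient} on the pullback at $\x_{i}$ with increment $\xi = \alpha_{i}\xi_{\x_{i}}$, which lies in $\widehat{K}$ since $\|\alpha_{i}\xi_{\x_{i}}\|\leq Q$ by the choice of $\alpha_i$, and then rewrite $\text{D}f(\x_{i})[\text{D}\widehat{R}_{\x_{i}}(\0_{\x_{i}})[\alpha_{i}\xi_{\x_{i}}]]$ via item 4 of Assumption \ref{assu:list_of_req} as $\dotprod{G}{-\alpha_{i}g + \v}$ with $\|\v\|\leq L_{\v}$. This produces the master inequality
\[
f(\x_{i})-f(\x_{i+1}) \geq \alpha_{i}\dotprod{G}{g} - \dotprod{G}{\v} - \frac{\widehat{L}}{2}\alpha_{i}^{2}\|g\|^{2}.
\]
I then control both inner products. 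For the cross term, $\dotprod{G}{g} = \|g\|^{2} + \dotprod{G-g}{g} \geq \|g\|^{2} - \epsilon_{i}\|g\| = \beta_{i}\|g\|^{2}$. For the curvature term, $|\dotprod{G}{\v}| \leq \|G\|L_{\v} \leq (\|g\|+\epsilon_{i})L_{\v} \leq \tfrac32\|g\|L_{\v}$, using $\epsilon_i\leq\|g\|/2$. Substituting gives
\[
f(\x_{i})-f(\x_{i+1}) \geq \alpha_{i}\beta_{i}\|g\|^{2} - \tfrac32\|g\|L_{\v} - \frac{\widehat{L}}{2}\alpha_{i}^{2}\|g\|^{2}.
\]

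The remaining work is a two-case analysis according to which argument attains the minimum in \eqref{eq:thm_step_size}. When $\alpha_{i} = \beta_{i}/\widehat{L}$, the first and third terms combine to $\tfrac{\beta_{i}^{2}}{2\widehat{L}}\|g\|^{2}\geq \tfrac{1}{8\widehat{L}}\|g\|^{2}$ using $\beta_{i}\geq\tfrac12$; moreover $\beta_{i}\|g\|\leq\widehat{L}Q$ forces $\|g\|\leq 2\widehat{L}Q$, so $\tfrac{\|g\|}{2\widehat{L}}\leq Q$ and the target equals $\tfrac{1}{32\widehat{L}}\|g\|^{2}$, while condition \eqref{eq:grad_cond} in the form $L_{\v}\leq\|g\|/(16\widehat{L})$ bounds the curvature term by $\tfrac{3}{32\widehat{L}}\|g\|^{2}$, leaving at least $\tfrac{1}{32\widehat{L}}\|g\|^{2}$. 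When $\alpha_{i} = Q/\|g\|$, the first and third terms combine to at least $\tfrac12 Q\beta_{i}\|g\|\geq\tfrac14 Q\|g\|$ using $\widehat{L}Q\leq\beta_{i}\|g\|$ and $\beta_{i}\geq\tfrac12$, while condition \eqref{eq:grad_cond0}, $L_{\v}\leq Q/8$, bounds the curvature term by $\tfrac{3}{16}Q\|g\|$, leaving at least $\tfrac{1}{16}Q\|g\|\geq\tfrac{1}{16}\min\{\tfrac{\|g\|}{2\widehat{L}},Q\}\|g\|$. In both regimes the guaranteed decrease dominates $\tfrac{1}{16}\min\{\|g\|/(2\widehat{L}),Q\}\|g\|$, which is \eqref{eq:ROgrad_dec}.

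The main obstacle I anticipate is the bookkeeping of the two error channels so that both are dominated by the leading descent term with exactly the constants producing the clean factor $\tfrac{1}{16}$: conditions \eqref{eq:grad_cond0} and \eqref{eq:grad_cond} are tuned precisely so that $\beta_{i}\geq\tfrac12$ neutralizes the gradient-approximation loss, while $L_{\v}\leq\min\{Q/8,\|g\|/(16\widehat{L})\}$ keeps the retraction-curvature loss below one quarter of the descent in each regime. The only genuinely delicate point, beyond routine algebra, is checking that the step-size regime boundaries (which carry the $\beta_i$ factor) align with the $\min$ in the target so that the large-gradient case never undershoots; here the slack from $\beta_i\leq 1$ is what makes the $Q/\|g\|$ branch safely cover the portion where $\tfrac{\|g\|}{2\widehat{L}}<Q\leq\tfrac{\|g\|}{\widehat{L}}$.
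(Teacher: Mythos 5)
Your proposal is correct and follows essentially the same route as the paper's proof: the same Lipschitz pullback inequality, the same decomposition of $\text{D}f(\x_i)[\text{D}\widehat{R}_{\x_i}(\0_{\x_i})[\xi]]$ into the gradient-approximation and retraction-correction channels, the same bounds $\dotprod{G}{g}\geq\beta_i\|g\|^2$ and $|\dotprod{G}{\v}|\leq\tfrac32 L_{\v}\|g\|$, and the same use of conditions \eqref{eq:grad_cond0}--\eqref{eq:grad_cond} to absorb the error terms. The only difference is presentational: you split explicitly on which branch of the $\min$ in \eqref{eq:thm_step_size} is active, whereas the paper carries the $\min$ expressions through a single chain of inequalities; both land on the same $\tfrac{1}{16}$ constant.
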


\begin{proof}
From Assumption \ref{assu:Lipschitz-type-gradient} we have that for all $(\x_{i},\xi)\in \widehat{K}$ (where $\widehat{K}$ is from Assumption \ref{assu:list_of_req}),
\begin{eqnarray*}
     f(\widehat{R}_{\x_{i}}(\xi)) &\leq& f(\x_{i}) + \text{D}f(\x_{i})[\text{D}\widehat{R}_{\x_{i}}(\0_{x_{i}})[\xi]] + \frac{\widehat{L}}{2}\|\xi\|^{2}\\
     &=& f(\x_{i}) + \dotprod{\gradMhat{f(\x_{i})}}{\xi+\v_{\xi}} + \frac{\widehat{L}}{2}\|\xi\|^{2}\\
     &=& f(\x_{i}) + \dotprod{\gradMhat{f(\x_{i})} - \gradhat{f(\x_{i})}}{\xi} + \dotprod{\gradhat{f(\x_{i})}}{\xi}+\\
     &+& \dotprod{\gradMhat{f(\x_{i})} - \gradhat{f(\x_{i})}}{\v_{\xi}} + \dotprod{\gradhat{f(\x_{i})}}{\v_{\xi}} + \frac{\widehat{L}}{2}\|\xi\|^{2}.
\end{eqnarray*}
In particular, take $\xi = \alpha_{i}\xi_{\x_{i}}$, so that $f(\widehat{R}_{\x_{i}}(\xi))=f(\x_{i+1})$, and reorder the above inequality,
\begin{eqnarray*}
     f(\x_{i}) - f(\x_{i+1}) &\geq& - \dotprod{\gradMhat{f(\x_{i})} - \gradhat{f(\x_{i})}}{\alpha_{i}\xi_{\x_{i}}} - \dotprod{\gradhat{f(\x_{i})}}{\alpha_{i}\xi_{\x_{i}}}\\
     &-& \dotprod{\gradMhat{f(\x_{i})} - \gradhat{f(\x_{i})}}{\v_{\alpha_{i}\xi_{\x_{i}}}} - \dotprod{\gradhat{f(\x_{i})}}{\v_{\alpha_{i}\xi_{\x_{i}}}} - \frac{\widehat{L}}{2}\|\alpha_{i}\xi_{\x_{i}}\|^{2}.
\end{eqnarray*}

Substitute $\xi_{\x_{i}} = - \gradhat{f(\x_{i})}$, and use the Cauchy-Schwartz inequality together with the bound on the approximate Riemannian gradient (Assumption \ref{assu:list_of_req})
\begin{eqnarray*}
     f(\x_{i}) - f(\x_{i+1}) &\geq& \alpha_{i}\left\|\gradhat{f(\x_{i})} \right\|^{2} + \alpha_{i} \dotprod{\gradMhat{f(\x_{i})} - \gradhat{f(\x_{i})}}{\gradhat{f(\x_{i})}} - \frac{\widehat{L}\alpha_{i}^{2}}{2}\left\|\gradhat{f(\x_{i})} \right\|^{2}\\
     &-& \dotprod{\gradMhat{f(\x_{i})} - \gradhat{f(\x_{i})}}{\v_{\alpha_{i}\xi_{\x_{i}}}} - \dotprod{\gradhat{f(\x_{i})}}{\v_{\alpha_{i}\xi_{\x_{i}}}} \\
     &\geq& \alpha_{i}\left\|\gradhat{f(\x_{i})} \right\|^{2}\left(1 - \frac{(c_{0} + c_{1} \left\| \nabla  f (\x_{i}) \right\|)\sqrt{D} h^{m}}{\left\|\gradhat{f(\x_{i})} \right\|}- \frac{\widehat{L}\alpha_{i}}{2}\right)\\ &-& L_{\v} \left((c_{0} + c_{1} \left\| \nabla  f (\x_{i}) \right\|)\sqrt{D} h^{m} + \left\|\gradhat{f(\x_{i})} \right\| \right).
\end{eqnarray*}
The right-hand side of the above inequality is quadratic in $\alpha_{i}$, thus it is positive between its roots (if there exist two real roots). In particular, the maximal value is achieved for
\begin{equation*}
    \alpha_{i}^{\star} \coloneqq \frac{1}{\widehat{L}} \left(1 - \frac{(c_{0} + c_{1} \left\| \nabla  f (\x_{i}) \right\|)\sqrt{D} h^{m}}{\left\|\gradhat{f(\x_{i})} \right\|} \right),
\end{equation*}
which is positive if $\left\|\gradhat{f(\x_{i})} \right\| > (c_{0} + c_{1} \left\| \nabla  f (\x_{i}) \right\|)\sqrt{D} h^{m}$. Thus, we restrict the step $\alpha_{i}$ according to Eq. \eqref{eq:thm_step_size}, and using the conditions we impose in Eq. \eqref{eq:grad_cond0} and Eq. \eqref{eq:grad_cond} find a (positive) lower bound on $f(\x_{i}) - f(\x_{i+1})$.

With the step $\alpha_{i}$ according to Eq. \eqref{eq:thm_step_size}, we have
\begin{eqnarray*}
     f(\x_{i}) - f(\x_{i+1}) &\geq& \alpha_{i}\left\|\gradhat{f(\x_{i})} \right\|^{2}\left(\beta_{i} -  \frac{\widehat{L}\alpha_{i}}{2}\right) - L_{\v} \left((c_{0} + c_{1} \left\| \nabla  f (\x_{i}) \right\|)\sqrt{D} h^{m} + \left\|\gradhat{f(\x_{i})} \right\| \right)\\
     &=& \min \left\{Q\ ,\ \frac{\left\|\gradhat{f(\x_{i})} \right\| }{\widehat{L}}\beta_{i} \right\} \left (\beta_{i} - \frac{\widehat{L}}{2} \min \left\{\frac{Q}{\left\|\gradhat{f(\x_{i})} \right\|}\ ,\  \frac{1}{\widehat{L}} \beta_{i} \right\}\right ) \left\|\gradhat{f(\x_{i})} \right\| -\\
     &-& L_{\v} \left(\beta_{i} + \frac{2 (c_{0} + c_{1} \left\| \nabla  f (\x_{i}) \right\|)\sqrt{D} h^{m}}{\left\|\gradhat{f(\x_{i})} \right\|} \right) \left\|\gradhat{f(\x_{i})} \right\|\\
     &\geq& \left (\min \left\{Q\ ,\ \frac{\left\|\gradhat{f(\x_{i})} \right\| }{\widehat{L}}\beta_{i} \right\} \frac{1}{2} \beta_{i} - L_{\v} \left(\beta_{i} + \frac{2(c_{0} + c_{1} \left\| \nabla  f (\x_{i}) \right\|)\sqrt{D} h^{m}}{\left\|\gradhat{f(\x_{i})} \right\|} \right)\right )\left\|\gradhat{f(\x_{i})} \right\|.
\end{eqnarray*}
Constraint $\left\|\gradhat{f(\x_{i})} \right\| \geq 2(c_{0} + c_{1} \left\| \nabla  f (\x_{i}) \right\|)\sqrt{D} h^{m}$, leads to $\beta_{i} \geq 0.5$ or equivalently $-1 \geq - 2 \beta_{i}$, and we can conclude 
\begin{equation}\label{eq:thm_step_size_proof}
     f(\x_{i}) - f(\x_{i+1}) \geq \left (\min \left\{\frac{Q}{2}\ ,\ \frac{\left\|\gradhat{f(\x_{i})} \right\| }{4\widehat{L}} \right\} - 3 L_{\v} \right ) \beta_{i} \left\|\gradhat{f(\x_{i})} \right\|.
\end{equation}

Finally, adding the constraints $\left\|\gradhat{f(\x_{i})} \right\| \geq 16 \widehat{L}L_{\v}$ and $8 L_{\v} \leq Q$ (for $h$ small enough) lead to
\begin{eqnarray*}
     f(\x_{i}) - f(\x_{i+1}) &\geq& \left (\min \left\{4 L_{\v}\ ,\ 4 L_{\v} \right\} - 3 L_{\v} \right ) \beta_{i} \left\|\gradhat{f(\x_{i})} \right\| > 0.
\end{eqnarray*}
Thus, with the constraints $\left\|\gradhat{f(\x_{i})} \right\| \geq \max \left\{16\widehat{L}L_{\v}\ ,\  2 (c_{0} + c_{1} \left\| \nabla  f (\x_{i}) \right\|)\sqrt{D} h^{m}\right\}$ and $8 L_{\v} \leq Q$, i.e, 
$$L_{\v} \leq \frac{1}{2} \min \left\{\frac{Q}{4}\ ,\ \frac{\left\|\gradhat{f(\x_{i})} \right\| }{8\widehat{L}} \right\},$$
we can rewrite Eq. \eqref{eq:thm_step_size_proof} in the following way,
\begin{eqnarray*}
     f(\x_{i}) - f(\x_{i+1}) &\geq& \left (\min \left\{\frac{Q}{2}\ ,\ \frac{\left\|\gradhat{f(\x_{i})} \right\| }{4\widehat{L}} \right\} - 3 L_{\v} \right ) \beta_{i} \left\|\gradhat{f(\x_{i})} \right\| \\
     &\geq& \left (\min \left\{\frac{Q}{4}\ ,\ \frac{\left\|\gradhat{f(\x_{i})} \right\| }{8\widehat{L}} \right\} - \frac{3}{2} L_{\v} \right )  \left\|\gradhat{f(\x_{i})} \right\| \\
      &\geq& \left (1 - \frac{3}{4} \right ) \min \left\{\frac{Q}{4}\ ,\ \frac{\left\|\gradhat{f(\x_{i})} \right\| }{8\widehat{L}} \right\}  \left\|\gradhat{f(\x_{i})} \right\| \\
     &\geq& \frac{1}{16}\min \left\{ \frac{\left\|\gradhat{f(\x_{i})} \right\|}{2\widehat{L}}, Q \right\} \left\|\gradhat{f(\x_{i})} \right\|.
\end{eqnarray*}
\if0
First, suppose that 
\begin{equation}\label{eq:thm_step_size_proof1}
    \alpha_{i}= \min \left\{\frac{Q}{\left\|\gradhat{f(\x_{i})} \right\|}\ ,\  \alpha_{i}^{\star}\right\} = \alpha_{i}^{\star},
\end{equation}
and add the constraint $\left\|\gradhat{f(\x_{i})} \right\| > 2O(\sqrt{D}h^{m})$ (for example). Then
\begin{eqnarray*}
     f(\x_{i}) - f(\x_{i+1}) &\geq& \left(\alpha_{i}^{\star}\right)^{2}\frac{\widehat{L}}{2}\left\|\gradhat{f(\x_{i})} \right\|^{2} - L_{\v} \left(O(\sqrt{D}h^{m}) + \left\|\gradhat{f(\x_{i})} \right\| \right)\\
     &\geq& \left(\alpha_{i}^{\star}\right)^{2}\frac{\widehat{L}}{2}\left\|\gradhat{f(\x_{i})} \right\|^{2} - \frac{3}{2} L_{\v}  \left\|\gradhat{f(\x_{i})} \right\|.
\end{eqnarray*}
The right-hand side of the above inequality is larger than $1/(16\widehat{L})  \left\|\gradhat{f(\x_{i})} \right\|^{2}$ provided $\left\|\gradhat{f(\x_{i})} \right\| \geq 24 L_{\v} \widehat{L}$. Thus, we have that if Eq. \eqref{eq:thm_step_size_proof1} holds, and 
$$\max \left\{24\widehat{L}L_{\v}\ ,\  2 O(\sqrt{D}h^{m}) \right\} \leq \left \|\gradhat{f(\x)}\right\|,$$
then,
$$f(\x_{i}) - f(\x_{i+1}) \geq \frac{1}{16\widehat{L}}  \left\|\gradhat{f(\x_{i})} \right\|^{2}.$$

Next, suppose that 
\begin{equation}\label{eq:thm_step_size_proof2}
    \alpha_{i}= \min \left\{\frac{Q}{\left\|\gradhat{f(\x_{i})} \right\|}\ ,\  \alpha_{i}^{\star}\right\} = \frac{Q}{\left\|\gradhat{f(\x_{i})} \right\|},
\end{equation}
then,
\begin{eqnarray*}
     f(\x_{i}) - f(\x_{i+1}) &\geq& Q\left\|\gradhat{f(\x_{i})} \right\| - Q O(\sqrt{D}h^{m}) - \frac{\widehat{L}Q^{2}}{2} - L_{\v} \left(O(\sqrt{D}h^{m}) + \left\|\gradhat{f(\x_{i})} \right\| \right).
\end{eqnarray*}
The right-hand side of the above inequality is larger than $(Q/4)  \left\|\gradhat{f(\x_{i})} \right\|$ provided $Q \geq 2 L_{\v}$ and 
$$ 4(1 + \frac{L_{\v}}{Q}) O(\sqrt{D} h^{m}) + 2 \widehat{L} Q \leq \left\|\gradhat{f(\x_{i})} \right\|. $$ 

Thus, to conclude, we get that iterations of the form $\x_{i+1}\coloneqq \widehat{R}_{\x_{i}}(\alpha_{i}\xi_{\x_{i}})$ which is defined via Eq.  \eqref{eq:thm_step} and Eq. \eqref{eq:thm_step_size}, satisfy Eq. \eqref{eq:ROgrad_dec} provided Eq. \eqref{eq:grad_cond} and $Q \geq 2 L_{\v}$ hold.
\fi
\end{proof}

\begin{corollary}[Fixed-step gradient-descent iteration bound]\label{cor:main_fixed_step}
Under Assumptions \ref{assu:list_of_req}, \ref{assu:lower_bound} and \ref{assu:Lipschitz-type-gradient}, provided all the iterations are performed on $\widehat{\mathcal{M}}$, a Riemannian gradient algorithm, i.e., $\x_{i+1}\coloneqq \widehat{R}_{\x_{i}}(\alpha_{i}\xi_{\x_{i}})$, which is defined via Eqs. \eqref{eq:thm_step} and \eqref{eq:thm_step_size}, where Eq. \eqref{eq:grad_cond0} holds for $h$ small enough and assuming Eq. \eqref{eq:grad_cond} holds for all the iterations, then the algorithm returns a point $\x\in\widehat{\mathcal{M}}$ satisfying $f(\x)\leq f(\x_{0})$ and 
\begin{equation}\label{eq:grad_thm_bound}
   \left \|\gradhat{f(\x)} \right \| \leq \max \left\{16\widehat{L}L_{\v}\ ,\  2 (c_{0} + c_{1} \left\| \nabla  f (\x) \right\|)\sqrt{D} h^{m} \right\} + \varepsilon \coloneqq \varepsilon_{1} (\widehat{\mathcal{M}}).
\end{equation}
for any $\varepsilon>0$, provided we perform enough iterations. Moreover, if $\varepsilon_{1} (\widehat{\mathcal{M}}) > 2 Q \widehat{L} $, then the bound in Eq. \eqref{eq:grad_thm_bound} is achieved in at most
\begin{equation}\label{eq:it_num1}
    \left \lceil \frac{16(f(\x_0) - f^{\star})}{Q} \cdot \frac{1}{\varepsilon_{1} (\widehat{\mathcal{M}})} \right \rceil
\end{equation}
iterations. If $\varepsilon_{1} (\widehat{\mathcal{M}}) \leq 2 Q \widehat{L}$, then the bound in Eq. \eqref{eq:grad_thm_bound} is achieved in at most
\begin{equation}\label{eq:it_num2}
    \left \lceil 32(f(\x_0) - f^{\star})\widehat{L} \cdot \frac{1}{\varepsilon_{1} (\widehat{\mathcal{M}})^{2}} \right \rceil
\end{equation}
iterations.
Each iteration requires one cost and approximate-Riemannian
gradient evaluation, and one approximate-retraction computation.
\end{corollary}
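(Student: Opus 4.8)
The plan is to turn the one-step decrease of Theorem~\ref{thm:main_fixed_step} into a global iteration count by a standard telescoping-and-lower-bound argument, in the spirit of \cite[Theorems~3,5]{boumal2019global}. First I would observe that the hypotheses of Theorem~\ref{thm:main_fixed_step} are met at any iterate $\x_{i}$ whose approximate-gradient norm exceeds the target $\varepsilon_{1}(\widehat{\mathcal{M}})$: since $\varepsilon_{1}(\widehat{\mathcal{M}})$ is exactly the threshold appearing on the left of Eq.~\eqref{eq:grad_cond} plus the positive slack $\varepsilon$, the inequality $\|\gradhat{f(\x_{i})}\|>\varepsilon_{1}(\widehat{\mathcal{M}})$ forces Eq.~\eqref{eq:grad_cond} to hold, while Eq.~\eqref{eq:grad_cond0} holds for $h$ small enough by assumption. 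Hence Eq.~\eqref{eq:ROgrad_dec} applies and its right-hand side is strictly positive, which immediately gives $f(\x_{i+1})\le f(\x_{i})$ for every such step and therefore the claimed monotonicity $f(\x)\le f(\x_{0})$ for the returned point.

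Next I would let $K$ be the first index with $\|\gradhat{f(\x_{K})}\|\le\varepsilon_{1}(\widehat{\mathcal{M}})$, so that all earlier iterates $\x_{0},\dots,\x_{K-1}$ satisfy $\|\gradhat{f(\x_{i})}\|>\varepsilon_{1}(\widehat{\mathcal{M}})$ and thus enjoy the decrease Eq.~\eqref{eq:ROgrad_dec}. Summing these decreases telescopes to $f(\x_{0})-f(\x_{K})$, and Assumption~\ref{assu:lower_bound} caps this total by $f(\x_{0})-f^{\star}$. Rearranging yields an upper bound on $K$, which is precisely the statement that the target Eq.~\eqref{eq:grad_thm_bound} is reached within finitely many iterations for every $\varepsilon>0$.

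The two quantitative bounds come from estimating the per-step decrease $\tfrac{1}{16}\min\{\|\gradhat{f(\x_{i})}\|/(2\widehat{L}),Q\}\,\|\gradhat{f(\x_{i})}\|$ from below under the two regimes. In the regime $\varepsilon_{1}(\widehat{\mathcal{M}})>2Q\widehat{L}$, every pre-stopping iterate has $\|\gradhat{f(\x_{i})}\|>\varepsilon_{1}(\widehat{\mathcal{M}})>2Q\widehat{L}$, so the minimum is attained at $Q$ and each decrease is at least $\tfrac{1}{16}Q\,\varepsilon_{1}(\widehat{\mathcal{M}})$; telescoping gives $K\le 16(f(\x_{0})-f^{\star})/(Q\,\varepsilon_{1}(\widehat{\mathcal{M}}))$, i.e., Eq.~\eqref{eq:it_num1}. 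In the regime $\varepsilon_{1}(\widehat{\mathcal{M}})\le 2Q\widehat{L}$ (equivalently $Q\ge\varepsilon_{1}(\widehat{\mathcal{M}})/(2\widehat{L})$) I would show each decrease is at least $\varepsilon_{1}(\widehat{\mathcal{M}})^{2}/(32\widehat{L})$ regardless of which branch of the minimum is active: if $\|\gradhat{f(\x_{i})}\|\le 2Q\widehat{L}$ the minimum equals $\|\gradhat{f(\x_{i})}\|/(2\widehat{L})$ and the decrease is $\|\gradhat{f(\x_{i})}\|^{2}/(32\widehat{L})>\varepsilon_{1}(\widehat{\mathcal{M}})^{2}/(32\widehat{L})$, whereas if $\|\gradhat{f(\x_{i})}\|>2Q\widehat{L}$ the decrease is $\tfrac{1}{16}Q\,\|\gradhat{f(\x_{i})}\|>\tfrac{1}{16}Q\,\varepsilon_{1}(\widehat{\mathcal{M}})\ge\varepsilon_{1}(\widehat{\mathcal{M}})^{2}/(32\widehat{L})$, the last step using $Q\ge\varepsilon_{1}(\widehat{\mathcal{M}})/(2\widehat{L})$. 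Telescoping then gives Eq.~\eqref{eq:it_num2}. The per-iteration cost claim follows by reading off the loop of Algorithm~\ref{alg:MMLS-for-Riemannian}: one evaluation of $f$ (for the stopping test), one approximate-Riemannian-gradient evaluation, and one approximate-retraction.

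The main obstacle I expect is the bookkeeping in the second regime, namely extracting a single clean lower bound $\varepsilon_{1}(\widehat{\mathcal{M}})^{2}/(32\widehat{L})$ that holds uniformly across both branches of the $\min$, together with the fact that the threshold in Eq.~\eqref{eq:grad_cond} and the target $\varepsilon_{1}(\widehat{\mathcal{M}})$ both depend on the iterate through $\|\nabla f(\x_{i})\|$. To keep the counting argument clean I would either carry the point-dependent threshold through the telescoping unchanged, or first bound $\|\nabla f\|$ uniformly over the compact set $\widehat{\mathcal{M}}$ so that $\varepsilon_{1}(\widehat{\mathcal{M}})$ can be treated as a fixed effective tolerance in the summation.
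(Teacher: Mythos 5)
Your proposal is correct and follows essentially the same route as the paper's proof: invoke the one-step decrease of Theorem~\ref{thm:main_fixed_step}, telescope against the lower bound $f^{\star}$ from Assumption~\ref{assu:lower_bound}, and split the two iteration bounds according to which branch of the $\min$ in Eq.~\eqref{eq:ROgrad_dec} is active. Your closing remark about the iterate-dependence of $\varepsilon_{1}(\widehat{\mathcal{M}})$ through $\|\nabla f(\x_{i})\|$ (and the fix via a uniform bound on $\|\nabla f\|$ over the compact $\widehat{\mathcal{M}}$) is in fact slightly more careful than the paper, which simply asserts that $\|\gradhat{f(\x_{i})}\|>\varepsilon_{1}(\widehat{\mathcal{M}})$ implies Eq.~\eqref{eq:grad_cond}.
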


\begin{proof}
Using Assumptions \ref{assu:list_of_req}, \ref{assu:lower_bound} and \ref{assu:Lipschitz-type-gradient}, iterations of the form $\x_{i+1}\coloneqq \widehat{R}_{\x_{i}}(\alpha_{i}\xi_{\x_{i}})$ with Eq. \eqref{eq:thm_step} and Eq. \eqref{eq:thm_step_size}, and also assuming that Eq. \eqref{eq:grad_cond0} holds for $h$ small enough and Eq. \eqref{eq:grad_cond} holds, then according to Theorem \ref{thm:main_fixed_step} Eq. \eqref{eq:ROgrad_dec} holds, i.e., 
\begin{equation*}
    f(\x_{i})-f(\x_{i+1}) \geq \frac{1}{16}\min \left\{ \frac{\left\|\gradhat{f(\x_{i})} \right\|}{2\widehat{L}}, Q \right\} \left\|\gradhat{f(\x_{i})} \right\|.
\end{equation*}
Thus, at the stopping point of the algorithm ,$\x$, we have $f(\x)\leq f(\x_{0})$.

Suppose that the algorithms did not stop after $K-1$ iterations, i.e., $\left \|\gradhat{f(\x_{i})} \right \| > \varepsilon_{1} (\widehat{\mathcal{M}})$ (thus, Eq. \eqref{eq:grad_cond0} and Eq. \eqref{eq:grad_cond} hold) for all $i=0,...,K-1$. Thus, using Assumption \ref{assu:lower_bound}, Eq. \eqref{eq:ROgrad_dec}, and a telescopic sum argument, we have
\begin{eqnarray*}
     f(\x_{0}) - f^{\star} \geq f(\x_{0}) - f(\x_{K}) \geq \sum _{i=0} ^{K-1} f(\x_{i}) - f(\x_{i+1}) &\geq& \sum _{i=0} ^{K-1} \frac{1}{16}\min \left\{ \frac{\left\|\gradhat{f(\x_{i})} \right\|}{2\widehat{L}}, Q \right\} \left\|\gradhat{f(\x_{i})} \right\|\\
     &>& \frac{K}{16}\min \left\{ \frac{\varepsilon_{1} (\widehat{\mathcal{M}})}{2\widehat{L}}, Q \right\} \varepsilon_{1} (\widehat{\mathcal{M}}).
\end{eqnarray*}
Thus,
\begin{equation}\label{eq:iter_num_proof}
    K < \frac{16(f(\x_{0}) - f^{\star})}{\min \left\{ \frac{\varepsilon_{1} (\widehat{\mathcal{M}})}{2\widehat{L}}, Q \right\} \varepsilon_{1} (\widehat{\mathcal{M}})},
\end{equation}
and the algorithm stops after 
\begin{equation*}
    K \geq \frac{16(f(\x_{0}) - f^{\star})}{\min \left\{ \frac{\varepsilon_{1} (\widehat{\mathcal{M}})}{2\widehat{L}}, Q \right\} \varepsilon_{1} (\widehat{\mathcal{M}})}.
\end{equation*}
But, then we reach a contradiction $f(\x_{0}) - f^{\star} > f(\x_{0}) - f^{\star}$. Thus, the algorithm must stop after $K$ iterations which satisfy Eq. \eqref{eq:iter_num_proof}.
\end{proof}

\begin{thm}[Backtracking gradient-descent decrease]\label{thm:main_backtrack_step}
Under Assumptions \ref{assu:list_of_req} and \ref{assu:Lipschitz-type-gradient}, given $\x\in {\widehat{\mathcal{M}}}$, provided $h$ is small enough such that 
\begin{equation}\label{eq:grad_backtrack_cond0}
    6 L_{\v} \leq Q,
\end{equation}
and
\begin{equation}\label{eq:grad_backtrack_cond}
    \max \left\{\frac{6L_{\v}}{\bar{\alpha}}\ ,\  2 (c_{0} + c_{1} \left\| \nabla  f (\x) \right\|)\sqrt{D} h^{m} \right\} \leq \left\|\gradhat{f(\x)}\right\|,
\end{equation}
the backtracking procedure from Algorithm \ref{alg:MMLS-RO_backtrack}, with an initial step-size
\begin{equation}\label{eq:thm_step_size_backtrack}
    \widehat{\alpha} \coloneqq \min \left\{ \bar{\alpha}\ ,\ \frac{Q}{\left \| \gradhat{f(\x)} \right \|}\right\}>0,
\end{equation}
and parameters $\gamma\in (0,1)$, $\delta\in (0, 0.5 \cdot \beta)$, and some $\bar{\alpha} > 0$, outputs a step-size $\alpha$ such that
\begin{equation}\label{eq:ROgrad_backtrack_dec}
    f(\x)-f(\widehat{R}_{\x}(-\alpha \gradhat{f(\x)})) \geq \delta \min \left\{ \bar{\alpha}\ ,\ \frac{Q}{\left \| \gradhat{f(\x)} \right \|}\ ,\ \frac{2\gamma \left(\frac{\beta}{2} - \delta \right)}{\widehat{L}} \right\} \left\|\gradhat{f(\x)} \right\|^2,
\end{equation}
where 
$$\beta \coloneqq  \left(1 - \frac{(c_{0} + c_{1} \left\| \nabla  f (\x) \right\|)\sqrt{D} h^{m}}{\left\|\gradhat{f(\x)} \right\|} \right) > 0,$$
after computing at most
\begin{equation*}
    \max \left\{1\ ,\ 2 + \log_{\gamma^{-1}}{\left(\frac{\widehat{\alpha} \widehat{L}}{2\gamma\left(\frac{\beta}{2} - \delta \right)} \right)} \right\}
\end{equation*}
retractions and cost function evaluations (assuming $f(\x)$ and $\gradhat{f(\x)}$ were
already computed).
\end{thm}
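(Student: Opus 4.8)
The plan is to reproduce the one-step descent computation of the fixed-step proof (Theorem \ref{thm:main_fixed_step}) while keeping the step length a free variable, and then to wrap it in the standard geometric-backtracking bookkeeping of \cite{boumal2019global}. Writing $g := \gradhat{f(\x)}$ for brevity, I would first apply Assumption \ref{assu:Lipschitz-type-gradient} to $\xi = -\alpha g$ (legitimate as long as $\alpha\|g\|\le Q$, i.e.\ $\alpha\le\widehat{\alpha}$), expand $\text{D}f(\x)[\text{D}\widehat{R}_{\x}(\0_{\x})[\xi]] = \dotprod{\gradMhat{f(\x)}}{-\alpha g+\v_{-\alpha g}}$ from Assumption \ref{assu:list_of_req}, split $\gradMhat{f(\x)} = g + (\gradMhat{f(\x)}-g)$, and bound the cross terms by Cauchy--Schwarz together with the gradient-approximation estimate and $\|\gradMhat{f(\x)}\|\le\tfrac32\|g\|$. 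Using the second part of Eq. \eqref{eq:grad_backtrack_cond} (which gives both $(c_{0}+c_{1}\|\nabla f(\x)\|)\sqrt{D}h^{m}\le\tfrac12\|g\|$ and $\beta\ge\tfrac12$), this yields the quadratic-in-$\alpha$ lower bound
\[
f(\x)-f(\widehat{R}_{\x}(-\alpha g))\ \ge\ \alpha\beta\|g\|^{2}-\frac{\widehat{L}}{2}\alpha^{2}\|g\|^{2}-\frac{3}{2}L_{\v}\|g\|,
\]
which is exactly the expression already assembled inside the fixed-step argument.

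Next I would isolate the sufficient-decrease threshold. Set $\alpha^{\star}:=\frac{2(\beta/2-\delta)}{\widehat{L}}=\frac{\beta-2\delta}{\widehat{L}}>0$, which is positive precisely because $\delta\in(0,\tfrac12\beta)$. The idea is to \emph{reserve} half of the linear gain, $\tfrac{\alpha\beta}{2}\|g\|^{2}$, to dominate the retraction bias $\tfrac32 L_{\v}\|g\|$ (invoking the first part of Eq. \eqref{eq:grad_backtrack_cond}, $6L_{\v}\le\bar{\alpha}\|g\|$), and to spend the other half against the curvature term: for $\alpha\le\alpha^{\star}$ one has $\tfrac{\widehat L}{2}\alpha\le\tfrac{\beta-2\delta}{2}$, whence $\alpha(\beta-\delta)-\tfrac{\widehat L}{2}\alpha^{2}\ge\tfrac{\alpha\beta}{2}$. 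Carrying this through shows that the Armijo test $f(\x)-f(\widehat{R}_{\x}(-\alpha g))\ge\delta\alpha\|g\|^{2}$ is satisfied for every $\alpha$ in a window $[\alpha_{\min},\alpha^{\star}]$.

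With the window in hand, the remaining steps are the usual backtracking accounting. The search begins at $\widehat{\alpha}=\min\{\bar{\alpha},Q/\|g\|\}$ and is scaled by $\gamma$ each failure. Either $\widehat{\alpha}$ already passes Armijo (acceptance with no backtracking, explaining the leading $1$ in the $\max$), or the first iterate to fall at or below $\alpha^{\star}$ is accepted; in that case the preceding rejected iterate exceeded $\alpha^{\star}$, so the accepted step obeys $\alpha>\gamma\alpha^{\star}$. The two cases give $\alpha\ge\min\{\bar{\alpha},\,Q/\|g\|,\,\gamma\alpha^{\star}\}=\min\{\bar{\alpha},\,Q/\|g\|,\,\tfrac{2\gamma(\beta/2-\delta)}{\widehat{L}}\}$, and since the accepted $\alpha$ passes Armijo, substituting this lower bound into $\delta\alpha\|g\|^{2}$ produces Eq. \eqref{eq:ROgrad_backtrack_dec}. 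The evaluation count then comes from the fact that $\gamma^{k}\widehat{\alpha}\le\alpha^{\star}$ is forced once $k\ge\log_{\gamma^{-1}}(\widehat{\alpha}/\alpha^{\star})$, giving at most $\max\{1,\,2+\log_{\gamma^{-1}}(\widehat{\alpha}\widehat{L}/(2\gamma(\beta/2-\delta)))\}$ retraction/cost evaluations.

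The main obstacle, and the one genuinely new feature relative to \cite{boumal2019global}, is that under the uniform bound $\|\v_{\xi}\|\le L_{\v}$ the bias term $\tfrac32 L_{\v}\|g\|$ is \emph{independent of $\alpha$}, so Armijo holds only on $[\alpha_{\min},\alpha^{\star}]$ rather than on all of $(0,\alpha^{\star}]$. This breaks the textbook implication ``previous step rejected $\Rightarrow$ previous step $>\alpha^{\star}$,'' which is valid only while the iterates remain above $\alpha_{\min}$; I must therefore verify that the geometric sequence cannot leap across the window in a single contraction, i.e.\ that $\gamma\alpha^{\star}\ge\alpha_{\min}$, so that the first iterate dropping below $\alpha^{\star}$ is still admissible and gets accepted. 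Closing this gap quantitatively is exactly where the smallness of $h$ (hence $L_{\v}\to0$ by Remark \ref{rem:v_bound}) together with the two lower bounds on $\|\gradhat{f(\x)}\|$ in Eqs. \eqref{eq:grad_backtrack_cond0}--\eqref{eq:grad_backtrack_cond} enter; establishing it rigorously is the only nonroutine part of the argument, the rest being a transcription of the fixed-step estimate plus standard line-search combinatorics.
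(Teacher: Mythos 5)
Your proposal follows the paper's own proof almost line for line: the same application of Assumption \ref{assu:Lipschitz-type-gradient} to $\xi=-\alpha\gradhat{f(\x)}$, the same Cauchy--Schwarz splitting of the cross terms, the same quadratic-in-$\alpha$ lower bound with an $\alpha$-independent bias of order $L_{\v}\|\gradhat{f(\x)}\|$ (the paper bounds it by $3\beta L_{\v}\|\gradhat{f(\x)}\|$, you by the slightly tighter $\tfrac{3}{2} L_{\v}\|\gradhat{f(\x)}\|$; both rest on $\beta\ge\tfrac{1}{2}$), the same absorption of that bias into half of the linear term, and the same rejected-step/accepted-step combinatorics yielding $\alpha\ge\min\{\bar\alpha,\,Q/\|\gradhat{f(\x)}\|,\,2\gamma(\beta/2-\delta)/\widehat L\}$ and the evaluation count. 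The only organizational difference is that you argue the Armijo test holds directly on a window of step sizes, whereas the paper argues the contrapositive (a step that fails the test must exceed $\alpha^\star=2(\beta/2-\delta)/\widehat L$); these are equivalent.

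The step you leave open is, however, exactly the delicate point. Absorbing the bias requires $\alpha\|\gradhat{f(\x)}\|\ge 6L_{\v}$ (in the paper's normalization), and hypotheses \eqref{eq:grad_backtrack_cond0}--\eqref{eq:grad_backtrack_cond} verify this only for the initial trial step $\widehat\alpha=\min\{\bar\alpha,\,Q/\|\gradhat{f(\x)}\|\}$, not for the subsequent trials $\gamma^{k}\widehat\alpha$. Hence the implication ``rejected $\Rightarrow$ $\alpha>\alpha^\star$'' is justified only while the trial step stays above the window's lower endpoint $\alpha_{\min}\sim L_{\v}/\|\gradhat{f(\x)}\|$, and one must check $\gamma\alpha^\star\ge\alpha_{\min}$, i.e.\ roughly $\|\gradhat{f(\x)}\|\gtrsim L_{\v}\widehat L/\bigl(\gamma(\beta/2-\delta)\bigr)$ --- a condition not implied by the theorem's stated hypotheses (compare the explicit $16\widehat L L_{\v}\le\|\gradhat{f(\x_{i})}\|$ in the fixed-step Theorem \ref{thm:main_fixed_step}). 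You name this obstacle but do not close it. Be aware that the paper's proof does not close it either: it reduces the needed estimate to $\|\gradhat{f(\x)}\|\ge 6L_{\v}/\alpha$ and then chains this to $6L_{\v}\max\{1/\bar\alpha,\|\gradhat{f(\x)}\|/Q\}$, which the hypotheses only match when $\alpha=\widehat\alpha$. So your sketch reproduces the paper's argument in substance, and the step you flag as ``the only nonroutine part'' is a genuine gap in both; to repair it one must either add a hypothesis of the above form or invoke $L_{\v}\to0$ as $h\to0$ (Remark \ref{rem:v_bound}) and require $h$ small enough relative to $\widehat L$, $\gamma$, $\delta$, and $\|\gradhat{f(\x)}\|$.
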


\begin{proof}
From Assumption \ref{assu:Lipschitz-type-gradient}, denoting for simplicity $\v_{-\alpha \gradhat{f(\x)})} \coloneqq \v$, we have that for all $\left(\x, -\alpha \gradhat{f(\x)})\right)\in \widehat{K}$ (Assumption \ref{assu:list_of_req}), when the initial $\alpha$ from Algorithm \ref{alg:MMLS-RO_backtrack} satisfies Eq. \eqref{eq:thm_step_size_backtrack}:
\begin{eqnarray*}
     f(\widehat{R}_{\x}(-\alpha \gradhat{f(\x)})) &\leq& f(\x) + \dotprod{\gradMhat{f(\x)}}{-\alpha \gradhat{f(\x)}+\v} + \frac{\widehat{L}\alpha^{2}}{2}\left\|\gradhat{f(\x)}\right\|^{2}\\
     &=& f(\x) - \alpha \dotprod{\gradMhat{f(\x)} - \gradhat{f(\x)}}{\gradhat{f(\x)}} - \alpha \left\|\gradhat{f(\x)}\right\|^{2}+\\
     &+& \dotprod{\gradMhat{f(\x)} - \gradhat{f(\x)}}{\v} + \dotprod{\gradhat{f(\x)}}{\v} + \frac{\widehat{L}\alpha^{2}}{2}\left\|\gradhat{f(\x)}\right\|^{2}.
\end{eqnarray*}
Using the Cauchy-Schwartz inequality, the bound on the approximate Riemannian gradient, the definition of $\beta$, and the assumption that $\left\|\gradhat{f(\x)} \right\| \geq 2 (c_{0} + c_{1} \left\| \nabla  f (\x) \right\|)\sqrt{D} h^{m}$ which leads to $\beta \geq 0.5$, we have
\begin{eqnarray*}
     f(\x) - f(\widehat{R}_{\x}(-\alpha \gradhat{f(\x)})) &\geq& \alpha \dotprod{\gradMhat{f(\x)} - \gradhat{f(\x)}}{\gradhat{f(\x)}} + \alpha \left\|\gradhat{f(\x)}\right\|^{2}\\
     &-& \dotprod{\gradMhat{f(\x)} - \gradhat{f(\x)}}{\v} - \dotprod{\gradhat{f(\x)}}{\v} - \frac{\widehat{L}\alpha^{2}}{2}\left\|\gradhat{f(\x)}\right\|^{2} \\
     &\geq& \alpha\left\|\gradhat{f(\x)} \right\|^{2}\left(1 - \frac{(c_{0} + c_{1} \left\| \nabla  f (\x) \right\|)\sqrt{D} h^{m}}{\left\|\gradhat{f(\x)} \right\|} -  \frac{\widehat{L}\alpha}{2}\right) \\
     &-& L_{\v} \left((c_{0} + c_{1} \left\| \nabla  f (\x) \right\|)\sqrt{D} h^{m} + \left\|\gradhat{f(\x)} \right\| \right)\\
     &=& \alpha\left\|\gradhat{f(\x)} \right\|^{2}\left(\beta -  \frac{\widehat{L}\alpha}{2}\right) - L_{\v} \left(\beta + \frac{2 (c_{0} + c_{1} \left\| \nabla  f (\x) \right\|)\sqrt{D} h^{m}}{\left\|\gradhat{f(\x)} \right\|} \right)\left\|\gradhat{f(\x)} \right\|\\
     &\geq& \alpha\left\|\gradhat{f(\x)} \right\|^{2}\left(\beta -  \frac{\widehat{L}\alpha}{2}\right) - 3 \beta L_{\v} \left\|\gradhat{f(\x)} \right\|.
\end{eqnarray*}

Assuming in addition that $\left\|\gradhat{f(\x)} \right\| \geq \frac{6L_{\v}}{\bar{\alpha}}$ and $6 L_{\v} \leq Q$ (given $h$ is small enough), ensures that the right-hand side of the above inequality is larger than 
\begin{equation}\label{eq:thm_bound}
    \frac{\alpha}{2}\left\|\gradhat{f(\x)} \right\|^{2}\left(\beta -  \widehat{L}\alpha\right).
\end{equation}
Indeed, we have that
\begin{equation*}
     \alpha\left\|\gradhat{f(\x)} \right\|^{2}\left(\beta -  \frac{\widehat{L}\alpha}{2}\right) - 3 \beta L_{\v} \left\|\gradhat{f(\x)} \right\| \geq \frac{\alpha}{2}\left\|\gradhat{f(\x)} \right\|^{2}\left(\beta -  \widehat{L}\alpha\right),
\end{equation*}
leads to 
\begin{equation*}
     \left\|\gradhat{f(\x)} \right\| \geq \frac{6 L_{\v}}{\alpha} \geq 6 L_{\v} \max \left\{\frac{1}{\bar{\alpha}}\ ,\  \frac{\left\|\gradhat{f(\x)} \right\|}{Q}\right\}.
\end{equation*}

Thus, we have from Eq. \eqref{eq:thm_bound} that
\begin{equation}\label{eq:thm_bound2}
     f(\x) - f(\widehat{R}_{\x}(-\alpha \gradhat{f(\x)})) \geq \frac{\alpha}{2}\left\|\gradhat{f(\x)} \right\|^{2}\left(\beta -  \widehat{L}\alpha\right).
\end{equation}
On the other hand, if the algorithm does not terminate for a certain $\alpha$, then
\begin{equation}\label{eq:thm_bound3}
     f(\x) - f(\widehat{R}_{\x}(-\alpha \gradhat{f(\x)})) < \delta \alpha  \left\|\gradhat{f(\x)} \right\|^{2}.
\end{equation}
Taking both Eq. \eqref{eq:thm_bound2} and Eq. \eqref{eq:thm_bound3} to hold simultaneously, then
\begin{equation}\label{eq:thm_bound4}
    \alpha \geq \frac{2\left( \frac{\beta}{2} - \delta \right)}{\widehat{L}} > 0,
\end{equation}
where we require $\delta < 0.5 \beta$.

Finally, for $\alpha$ which are smaller than the bound in Eq. \eqref{eq:thm_bound4}, Algorithm \ref{alg:MMLS-RO_backtrack} stops. It happens either if the initial $\alpha$, i.e., $\widehat{\alpha}$ is smaller than the right-hand side of Eq. \eqref{eq:thm_bound4}, either after a reduction of $\alpha$ by a factor $\beta$. Thus, Algorithm \ref{alg:MMLS-RO_backtrack} returns $\alpha$ which satisfies
\begin{equation*}
    \alpha \geq \min \left\{ \widehat{\alpha}\ ,\ \frac{2\gamma \left(\frac{\beta}{2} - \delta \right)}{\widehat{L}} \right\} = \min \left\{ \bar{\alpha}\ ,\ \frac{Q}{\left \| \gradhat{f(\x)} \right \|}\ ,\ \frac{2\gamma \left(\frac{\beta}{2} - \delta \right)}{\widehat{L}} \right\}.
\end{equation*}
Accordingly, the output of Algorithm \ref{alg:MMLS-RO_backtrack} is $\alpha = \widehat{\alpha}\gamma^{k-1}$, where $k$ is the
number of retractions and cost function evaluations in Algorithm \ref{alg:MMLS-RO_backtrack}. Therefore,
\begin{equation*}
    k = 1 + \log_{\gamma^{-1}}{\left(\frac{\widehat{\alpha}}{\alpha} \right)} \leq 1 + \max \left\{0\ ,\  \log_{\gamma^{-1}}{\left(\frac{\widehat{\alpha} \widehat{L}}{2\gamma\left(\frac{\beta}{2} - \delta \right)} \right)}\right\}.
\end{equation*}
\end{proof}

\begin{corollary}[Backtracking gradient-descent iteration bound]\label{cor:main_backtrack_step} Under Assumptions \ref{assu:list_of_req}, \ref{assu:lower_bound}, and \ref{assu:Lipschitz-type-gradient}, provided all the iterations are performed on $\widehat{\mathcal{M}}$, a Riemannian gradient algorithm, i.e., $\x_{i+1}\coloneqq \widehat{R}_{\x_{i}}(\alpha_{i}\xi_{\x_{i}})$, with the backtracking procedure from Algorithm \ref{alg:MMLS-RO_backtrack} to determine $\alpha_{i}$, with an initial step-size
\begin{equation}\label{eq:cor_step_size_backtrack}
    \widehat{\alpha}_{i} \coloneqq \min \left\{ \bar{\alpha}\ ,\ \frac{Q}{\left \| \gradhat{f(\x_{i})} \right \|}\right\}>0,
\end{equation}
and parameters $\gamma\in (0,1)$, $\delta\in (0, 0.25)$, and some $\bar{\alpha} > 0$, Eq. \eqref{eq:grad_backtrack_cond0} holds for $h$ small enough and assuming Eq. \eqref{eq:grad_backtrack_cond} holds for any $\x = \x_{i}$, then the algorithm returns a point $\x\in\widehat{\mathcal{M}}$ satisfying $f(\x)\leq f(\x_{0})$ and 
\begin{equation}\label{eq:grad_cor_bound}
   \left \|\gradhat{f(\x)} \right \| \leq \max \left\{\frac{6L_{\v}}{\bar{\alpha}}\ ,\  2 (c_{0} + c_{1} \left\| \nabla  f (\x) \right\|)\sqrt{D} h^{m} \right\} + \varepsilon \coloneqq \varepsilon_{2} (\widehat{\mathcal{M}}).
\end{equation}
for any $\varepsilon>0$, provided enough iterations are performed. Moreover, if 
$$\varepsilon_{2} (\widehat{\mathcal{M}}) > \frac{Q}{\left ( \min \left\{ \bar{\alpha}\ ,\ \frac{2\gamma \left(0.25 - \delta \right)}{\widehat{L}} \right\} \right)} , $$ 
then the bound in Eq. \eqref{eq:grad_cor_bound} is achieved in at most
\begin{equation}\label{eq:it_num3}
    \left \lceil \frac{(f(\x_0) - f^{\star})}{Q \delta } \cdot \frac{1}{\varepsilon_{2} (\widehat{\mathcal{M}})} \right \rceil
\end{equation}
iterations. If 
$$\varepsilon_{2} (\widehat{\mathcal{M}}) \leq \frac{Q}{\left ( \min \left\{ \bar{\alpha}\ ,\ \frac{2\gamma \left(0.25 - \delta \right)}{\widehat{L}} \right\} \right)} , $$
then the bound in Eq. \eqref{eq:grad_cor_bound} is achieved in at most
\begin{equation}\label{eq:it_num4}
    \left \lceil \frac{f(\x_0) - f^{\star}}{\delta \min \left\{ \bar{\alpha}\ ,\ \frac{2\gamma \left(0.25 - \delta \right)}{\widehat{L}} \right\}} \cdot \frac{1}{\varepsilon_{2} (\widehat{\mathcal{M}})^{2}} \right \rceil
\end{equation}
iterations.
Each iteration, $i$, requires at most 
\begin{equation*}
    \max \left\{1\ ,\ 2 + \log_{\gamma^{-1}}{\left(\frac{\widehat{\alpha}_{i} \widehat{L}}{2\gamma\left(\frac{\beta_{i}}{2} - \delta \right)} \right)} \right\}
\end{equation*}
retractions and cost function evaluations (assuming $f(\x_{0})$ and $\gradhat{f(\x_{0})}$ were already computed), where $\beta_{i}$ is defined in Eq. \eqref{eq:main_fixed_step_beta}.
\end{corollary}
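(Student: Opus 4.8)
The plan is to mirror the structure of the proof of Corollary \ref{cor:main_fixed_step}, replacing the fixed-step per-iteration decrease by the backtracking decrease of Theorem \ref{thm:main_backtrack_step}. First I would observe that as long as the algorithm has not terminated we have $\|\gradhat{f(\x_i)}\| > \varepsilon_{2}(\widehat{\mathcal{M}})$, and by the very definition of $\varepsilon_{2}(\widehat{\mathcal{M}})$ in Eq.~\eqref{eq:grad_cor_bound} this strict inequality forces $\|\gradhat{f(\x_i)}\|$ to exceed the maximum appearing in Eq.~\eqref{eq:grad_backtrack_cond}, so the hypothesis of Theorem \ref{thm:main_backtrack_step} holds automatically at every such iterate (Eq.~\eqref{eq:grad_backtrack_cond0} holds for $h$ small enough by assumption). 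I would also verify that the restriction $\delta\in(0,0.25)$ is admissible: since $\|\gradhat{f(\x_i)}\|\geq 2(c_{0}+c_{1}\|\nabla f(\x_i)\|)\sqrt{D}h^{m}$, the quantity $\beta_i$ of Eq.~\eqref{eq:main_fixed_step_beta} satisfies $\beta_i\geq \tfrac{1}{2}$, hence $\delta<0.25\leq 0.5\,\beta_i$, placing $\delta$ in the interval $(0,0.5\beta_i)$ demanded by the theorem.

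With the hypotheses verified, Theorem \ref{thm:main_backtrack_step} yields the decrease Eq.~\eqref{eq:ROgrad_backtrack_dec} at every iterate. Using $\beta_i\geq\tfrac12$ gives $\tfrac{2\gamma(\beta_i/2-\delta)}{\widehat{L}}\geq \tfrac{2\gamma(0.25-\delta)}{\widehat{L}}$, so I can replace the gradient-dependent term by the constant $A:=\min\{\bar{\alpha},\tfrac{2\gamma(0.25-\delta)}{\widehat{L}}\}$ and, by associativity of $\min$, obtain the cleaner bound
$$f(\x_i)-f(\x_{i+1})\;\geq\;\delta\,\min\left\{A,\ \frac{Q}{\|\gradhat{f(\x_i)}\|}\right\}\,\|\gradhat{f(\x_i)}\|^{2}.$$
In particular each step strictly decreases $f$, giving $f(\x)\leq f(\x_0)$ at termination. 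I would then run the telescoping-plus-contradiction argument: assuming the algorithm has not stopped after $K-1$ steps, summing the decrease and invoking the lower bound $f^{\star}$ of Assumption \ref{assu:lower_bound} gives $f(\x_0)-f^{\star}\geq \sum_{i=0}^{K-1}\delta\,\min\{A,\,Q/\|\gradhat{f(\x_i)}\|\}\,\|\gradhat{f(\x_i)}\|^{2}$.

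The key remaining step, and the one needing the most care, is to lower-bound each summand uniformly from $\|\gradhat{f(\x_i)}\|>\varepsilon_{2}(\widehat{\mathcal{M}})$ alone. I would do this by noting that the scalar function $\phi(g):=\min\{A g^{2},\,Q g\}$ is nondecreasing in $g>0$ (it equals $A g^{2}$ for $g\leq Q/A$ and $Q g$ for $g\geq Q/A$, both increasing and agreeing at $g=Q/A$), whence $\phi(\|\gradhat{f(\x_i)}\|)>\phi(\varepsilon_{2}(\widehat{\mathcal{M}}))$. This is exactly what produces the two regimes in the statement: if $\varepsilon_{2}(\widehat{\mathcal{M}})>Q/A$ then $\phi(\varepsilon_{2})=Q\,\varepsilon_{2}$, each summand exceeds $\delta Q\,\varepsilon_{2}(\widehat{\mathcal{M}})$, and the contradiction yields Eq.~\eqref{eq:it_num3}; if instead $\varepsilon_{2}(\widehat{\mathcal{M}})\leq Q/A$ then $\phi(\varepsilon_{2})=A\,\varepsilon_{2}^{2}$, each summand exceeds $\delta A\,\varepsilon_{2}(\widehat{\mathcal{M}})^{2}$, and one recovers Eq.~\eqref{eq:it_num4} (with $A=\min\{\bar{\alpha},\tfrac{2\gamma(0.25-\delta)}{\widehat{L}}\}$ being precisely the denominator constant there, and $Q/A$ the threshold appearing in the case distinction). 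In each case $f(\x_0)-f^{\star}$ strictly exceeds $K$ times the per-step bound, so $K$ cannot reach the claimed ceiling without contradicting the lower bound on $f$; taking ceilings gives the stated iteration counts. Finally, the per-iteration count of retractions and cost evaluations is inherited verbatim from the last display of Theorem \ref{thm:main_backtrack_step}, with $\widehat{\alpha}$ replaced by $\widehat{\alpha}_i$ and $\beta$ by $\beta_i$. The main obstacle is purely bookkeeping: correctly threading the $\beta_i\geq\tfrac12$ reduction and the monotonicity of $\phi$ so that the two thresholds on $\varepsilon_{2}(\widehat{\mathcal{M}})$ align exactly with Eqs.~\eqref{eq:it_num3}--\eqref{eq:it_num4}.
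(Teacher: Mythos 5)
Your proposal is correct and follows essentially the same route as the paper's proof: invoke the per-iteration decrease of Theorem \ref{thm:main_backtrack_step}, use $\beta_i \geq \tfrac12$ to replace the gradient-dependent term by the constant $\min\{\bar{\alpha}, 2\gamma(0.25-\delta)/\widehat{L}\}$, and then telescope with the lower bound $f^{\star}$ to reach a contradiction; your monotone function $\phi(g)=\min\{Ag^{2},Qg\}$ is exactly the quantity the paper bounds below by its value at $\varepsilon_{2}(\widehat{\mathcal{M}})$, and the two regimes of $\phi$ reproduce the case split in Eqs.~\eqref{eq:it_num3}--\eqref{eq:it_num4}. The per-iteration retraction count is likewise inherited from the theorem exactly as in the paper.
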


\begin{proof}
The proof is similar to the proof of Corollary \ref{cor:main_fixed_step}. Using assumptions \ref{assu:list_of_req}, \ref{assu:lower_bound} and \ref{assu:Lipschitz-type-gradient}, iterations of the form $\x_{i+1}\coloneqq \widehat{R}_{\x_{i}}(\alpha_{i}\xi_{\x_{i}})$ with the backtracking procedure from Algorithm \ref{alg:MMLS-RO_backtrack} to determine $\alpha_{i}$, with an initial step-size defined in Eq. \eqref{eq:cor_step_size_backtrack}, and parameters $\gamma\in (0,1)$ and $\delta\in (0, 0.25)$, Eq. \eqref{eq:grad_backtrack_cond0} holds for $h$ small enough and Eq. \eqref{eq:grad_backtrack_cond} holds for any $\x = \x_{i}$, then according to Theorem \ref{thm:main_backtrack_step} Eq. \eqref{eq:ROgrad_backtrack_dec} holds for any iteration $i$, i.e., 
\begin{equation}\label{eq:ROgrad_dec_backtrack}
    f(\x_{i})-f(\x_{i+1}) \geq \delta \min \left\{ \bar{\alpha}\ ,\ \frac{Q}{\left \| \gradhat{f(\x_{i})} \right \|}\ ,\ \frac{2\gamma \left(\frac{\beta_{i}}{2} - \delta \right)}{\widehat{L}} \right\} \left\|\gradhat{f(\x_{i})} \right\|^2,
\end{equation}
after computing at most
\begin{equation*}
    \max \left\{1\ ,\ 2 + \log_{\gamma^{-1}}{\left(\frac{\widehat{\alpha}_{i} \widehat{L}}{2\gamma\left(\frac{\beta_{i}}{2} - \delta \right)} \right)} \right\}
\end{equation*}
retractions and cost function evaluations. Thus, at the stopping point of the algorithm , $\x$, we have $f(\x) \leq f(x_{0})$.

Suppose that the algorithms did not stop after $K-1$ iterations, i.e., $\left \|\gradhat{f(\x_{i})} \right \| > \varepsilon_{2} (\widehat{\mathcal{M}})$ (thus, Eq. \eqref{eq:grad_backtrack_cond0} and Eq. \eqref{eq:grad_backtrack_cond} hold) for all $i=0,...,K-1$. Thus, using Assumption \ref{assu:lower_bound}, Eq. \eqref{eq:ROgrad_dec_backtrack}, Eq. \eqref{eq:grad_backtrack_cond} for $\x = \x_{i}$ which implies $\beta_{i} > 0.5$ for any $i$, and a telescopic sum argument, we have
\begin{eqnarray*}
     f(\x_{0}) - f^{\star} \geq f(\x_{0}) - f(\x_{K}) \geq \sum _{i=0} ^{K-1} f(\x_{i}) - f(\x_{i+1}) \geq \\ \geq \sum _{i=0} ^{K-1} \delta \min \left\{ \bar{\alpha}\ ,\ \frac{Q}{\left \| \gradhat{f(\x_{i})} \right \|}\ ,\ \frac{2\gamma \left(\frac{\beta_{i}}{2} - \delta \right)}{\widehat{L}} \right\} \left\|\gradhat{f(\x_{i})} \right\|^2\\
     \geq \sum _{i=0} ^{K-1} \delta \min \left\{ \bar{\alpha}\left\|\gradhat{f(\x_{i})} \right\|\ ,\ Q\ ,\ \frac{2\gamma \left(0.25 - \delta \right)}{\widehat{L}}\left\|\gradhat{f(\x_{i})} \right\| \right\} \left\|\gradhat{f(\x_{i})} \right\|\\
     > K \delta \min \left\{ \bar{\alpha}\varepsilon_{2} (\widehat{\mathcal{M}})\ ,\ Q\ ,\ \frac{2\gamma \left(0.25 - \delta \right)}{\widehat{L}}\varepsilon_{2} (\widehat{\mathcal{M}}) \right\} \varepsilon_{2} (\widehat{\mathcal{M}}).
\end{eqnarray*}
Thus,
\begin{equation}\label{eq:iter_num_proof_cor}
    K < \frac{f(\x_{0}) - f^{\star}}{\delta \min \left\{ \bar{\alpha}\varepsilon_{2} (\widehat{\mathcal{M}})\ ,\ Q\ ,\ \frac{2\gamma \left(0.25 - \delta \right)}{\widehat{L}}\varepsilon_{2} (\widehat{\mathcal{M}}) \right\} \varepsilon_{2} (\widehat{\mathcal{M}})},
\end{equation}
and the algorithm stops after 
\begin{equation*}
    K \geq \frac{f(\x_{0}) - f^{\star}}{\delta \min \left\{ \bar{\alpha}\varepsilon_{2} (\widehat{\mathcal{M}})\ ,\ Q\ ,\ \frac{2\gamma \left(0.25 - \delta \right)}{\widehat{L}}\varepsilon_{2} (\widehat{\mathcal{M}}) \right\} \varepsilon_{2} (\widehat{\mathcal{M}})}.
\end{equation*}
But, then we reach a contradiction $f(\x_{0}) - f^{\star} > f(\x_{0}) - f^{\star}$. Thus, the algorithm must stop after $K$ iterations which satisfy Eq. \eqref{eq:iter_num_proof_cor}.
\end{proof}

\begin{rem}[Bounds on $\left \|\gradhat{f(\x)} \right \|$]
Corollary \ref{cor:main_fixed_step} and Corollary \ref{cor:main_backtrack_step} provide bounds for $\left \|\gradhat{f(\x)} \right \|$ at the end of the corresponding optimization algorithm in Eq. \eqref{eq:grad_thm_bound} and Eq. \eqref{eq:grad_cor_bound}. Note that both these bounds are as "good" as our manifold learning method. Explicitly, as $h \to 0$ also $L_{\v} \to 0$, thus $\varepsilon_{1} (\widehat{\mathcal{M}}) \to \varepsilon$ and $\varepsilon_{2} (\widehat{\mathcal{M}}) \to \varepsilon$, and correspondingly the bounds on $\left \|\gradhat{f(\x)} \right \|$ tighten.  
\end{rem}

\subsection{\label{subsec:Convergence-Analysis-of2}Consequences for MMLS-RO Gradient Algorithm}
In this subsection, we plug-in the results of the analysis of the previous subsection into our proposed components, i.e., Table \ref{tab:riemannian-approx}, using Algorithm \ref{alg:MMLS-for-Riemannian} with a fixed step-size or backtracking from Algorithm \ref{alg:MMLS-RO_backtrack}. We formulate it separately for the case of a cost function $f$ such that $f$ and its Euclidean gradient are given (Theorem \ref{thm:conc_fixed_step_fknown} and Theorem \ref{thm:conc_backtrack_fknown}), and the case where the Euclidean gradient of $\widehat{f}$ and possibly $f$ itself are approximated (Theorem \ref{thm:conc_fixed_step_funknown} and Theorem \ref{thm:conc_backtrack_funknown}).

\begin{thm}[Fixed-step MMLS-RO gradient-descent given $f$ and $\nabla  f$]\label{thm:conc_fixed_step_fknown}
Under Assumptions \ref{assu:MMLS_Samples}, \ref{assu:Lipschitz-gradient}, \ref{assu:ret_well_defined}, and \ref{assu:lower_bound}, given that $f$ and its Euclidean gradient are known, provided all the iterations are performed on points on $\widetilde{{\mathcal{M}}}$ where $\widetilde{{\mathcal{M}}}$ is a manifold, $h$ is small enough such that
\begin{equation}\label{eq:grad_cond_conc0}
    8 L_{\v} \leq Q,
\end{equation}
and
\begin{equation}\label{eq:grad_cond_conc}
    \max \left\{16\widetilde{L}L_{\v}\ ,\  2 c_{\widetilde{\mathcal{M}}} \left\| \nabla  f (\x_{i}) \right\|\sqrt{D} h^{m} \right\} \leq \left\|\gradtil{f(\x_{i})}\right\|,
\end{equation}
holds for all the iterations, where $\widetilde{L}$ and $L_{\v}$ are the constants from Lemma \ref{lem:pullback_Lip} and Remark \ref{rem:v_bound} correspondingly, a Riemannian gradient algorithm, i.e., Algorithm \ref{alg:MMLS-for-Riemannian} with a fixed step-size of the form
\begin{equation}\label{eq:conc_step}
    \xi_{\x_{i}} \coloneqq - \gradtil{f(\x_{i})},
\end{equation}
and
\begin{equation}\label{eq:conc_step_size}
    \alpha_{i} \coloneqq \min \left\{\frac{Q}{\left\|\gradtil{f(\x_{i})} \right\|}\ ,\  \frac{1}{\widetilde{L}} \beta_{i} \right\} > 0,
\end{equation}
where
\begin{equation}\label{eq:main_conc_fixed_step_beta}
    \beta_{i} \coloneqq  \left(1 - \frac{c_{\widetilde{\mathcal{M}}} \left\| \nabla  f (\x_{i}) \right\|\sqrt{D} h^{m}}{\left\|\gradtil{f(\x_{i})} \right\|} \right) > 0,
\end{equation}
returns a point $\x\in\widetilde{\mathcal{M}}$ satisfying $f(\x)\leq f(\x_{0})$ and 
\begin{equation}\label{eq:grad_conc_bound}
   \left \|\gradtil{f(\x)} \right \| \leq \max \left\{16\widetilde{L}L_{\v}\ ,\  2 c_{\widetilde{\mathcal{M}}} \left\| \nabla  f (\x) \right\|\sqrt{D} h^{m} \right\} + \varepsilon \coloneqq \varepsilon_{3} (\widetilde{\mathcal{M}}),
\end{equation}
for any $\varepsilon>0$, provided enough iterations are performed.

In addition, the following bound holds for the exact Riemannian gradient at a point $\p\in\mathcal{M}$ such that ${\mathcal P}_{m}^{h}(\p) = \x$ is the returned point
\begin{equation}\label{eq:true_grad_bound_fixed_step}
    \left\| \gradM{f(\p)}  \right\| \leq \left(c_{\mathcal{M}} G + c_{\mmls}\right)\sqrt{D}h^{m} + \varepsilon_{3} (\widetilde{\mathcal{M}}),
\end{equation}
where $\|\nabla  f(\x)\|\leq G$ and
\begin{equation}\label{eq:dist_p_and_x}
    \left\| \p - \x \right\| \leq c_{\mmls}\sqrt{D}h^{m+1}
\end{equation}
holds. Moreover, when $h\to 0$ then the bound in Eq. \eqref{eq:true_grad_bound_fixed_step} goes to $\varepsilon$, and the left-hand sides of Eq. \eqref{eq:grad_cond_conc0} and Eq. \eqref{eq:grad_cond_conc} go to $0$.

Finally, if $\varepsilon_{3} (\widetilde{\mathcal{M}}) > 2 Q \widetilde{L} $, then the bounds in Eqs. \eqref{eq:grad_conc_bound} and  \eqref{eq:true_grad_bound_fixed_step} are achieved in at most
\begin{equation}\label{eq:it_num5}
    \left \lceil \frac{16(f(\x_0) - f^{\star})}{Q} \cdot \frac{1}{\varepsilon_{3} (\widetilde{\mathcal{M}})} \right \rceil
\end{equation}
iterations. If $\varepsilon_{3} (\widetilde{\mathcal{M}}) \leq 2 Q \widetilde{L}$, then the bounds in Eq. \eqref{eq:grad_conc_bound} and Eq. \eqref{eq:true_grad_bound_fixed_step} are achieved in at most
\begin{equation}\label{eq:it_num6}
    \left \lceil 32(f(\x_0) - f^{\star})\widetilde{L} \cdot \frac{1}{\varepsilon_{3}(\widetilde{\mathcal{M}})^{2}} \right \rceil
\end{equation}
iterations.
Each iteration requires one cost and approximate-Riemannian
gradient evaluation, and one approximate-retraction computation.
\end{thm}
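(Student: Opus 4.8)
The plan is to recognize that this theorem is a \emph{specialization} of the generic analysis in Subsection \ref{subsec:Convergence-Analysis-of1}: the MMLS machinery is a particular instance of a manifold learning method satisfying Assumption \ref{assu:list_of_req}, with approximating manifold $\widehat{\mathcal{M}} = \widetilde{\mathcal{M}}$. First I would verify, item by item, that MMLS meets each requirement of Assumption \ref{assu:list_of_req}. Item 1 (that $\widetilde{\mathcal{M}}$ is a $d$-dimensional closed smooth manifold) follows from the results recalled in Subsection \ref{subsec:Manifold-Moving-Least-Squares}. Item 2 (the tangent-space approximation order) is exactly Lemma \ref{lem:our_proj_prop}, with the relevant constant being $c_{\widetilde{\mathcal{M}}}$; the tangent bundle of item 3 is Eq.~\eqref{eq:approx_tan_bun}. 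For item 4, the approximate-retraction $\widetilde{R}_{\rb}(\xi) = {\mathcal P}_{m}^{h}(\rb+\xi)$ is defined on the compact set $K$ from Assumption \ref{assu:ret_well_defined}, and Lemma \ref{lem:retraction_2_property} together with Remark \ref{rem:v_bound} supply $\widetilde{R}_{\rb}(\0_{\rb})=\rb$ and $\text{D}\widetilde{R}_{\rb}(\0_{\rb})[\xi]=\xi+\v'_{\rb,\xi}(0)$ with $\|\v'_{\rb,\xi}(0)\|\leq L_{\v}$ and $L_{\v}\to 0$ as $h\to 0$. For item 5, Eq.~\eqref{eq:approx_Rim_grad1} of Lemma \ref{lem:approx_Rim_grad} gives precisely the required gradient bound, with the identification $c_0 = 0$ and $c_1 = c_{\widetilde{\mathcal{M}}}$.

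Next I would discharge Assumption \ref{assu:Lipschitz-type-gradient}: under Assumptions \ref{assu:Lipschitz-gradient} and \ref{assu:ret_well_defined}, Lemma \ref{lem:pullback_Lip} establishes the restricted Lipschitz-type gradient property for the pullback $f\circ\widetilde{R}$ with a constant $\widetilde{L}$. With these identifications in hand ($\widehat{\mathcal{M}}=\widetilde{\mathcal{M}}$, $\widehat{L}=\widetilde{L}$, $c_0=0$, $c_1=c_{\widetilde{\mathcal{M}}}$), I would invoke Theorem \ref{thm:main_fixed_step} and Corollary \ref{cor:main_fixed_step} verbatim. Substituting $c_0=0$ and $c_1=c_{\widetilde{\mathcal{M}}}$ into the generic conditions Eqs.~\eqref{eq:grad_cond0} and \eqref{eq:grad_cond} and into the step rules Eqs.~\eqref{eq:thm_step} and \eqref{eq:main_fixed_step_beta} yields exactly the stated conditions Eqs.~\eqref{eq:grad_cond_conc0} and \eqref{eq:grad_cond_conc} and the step rules Eqs.~\eqref{eq:conc_step} and \eqref{eq:main_conc_fixed_step_beta}; the resulting gradient bound is Eq.~\eqref{eq:grad_conc_bound} with $\varepsilon_3(\widetilde{\mathcal{M}})$ in place of $\varepsilon_1(\widehat{\mathcal{M}})$, and the iteration counts Eqs.~\eqref{eq:it_num5} and \eqref{eq:it_num6} follow directly from Eqs.~\eqref{eq:it_num1} and \eqref{eq:it_num2}.

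The one genuinely new ingredient, and the step I expect to require the most care, is translating the bound on the \emph{approximate}-Riemannian gradient at the returned point $\x\in\widetilde{\mathcal{M}}$ into a bound on the \emph{exact} Riemannian gradient at the corresponding point $\p\in\mathcal{M}$ with ${\mathcal P}_{m}^{h}(\p)=\x$. Here I would use the triangle inequality
$$\|\gradM{f(\p)}\| \leq \|\gradM{f(\p)} - \gradtil{f(\x)}\| + \|\gradtil{f(\x)}\|,$$
bounding the second term by $\varepsilon_3(\widetilde{\mathcal{M}})$ via the just-established Eq.~\eqref{eq:grad_conc_bound}, and the first term by Eq.~\eqref{eq:approx_Rim_grad2} of Lemma \ref{lem:approx_Rim_grad}, namely $(c_{\mathcal{M}}\|\nabla f(\x)\| + c_{\mmls})\sqrt{D}h^m$. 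Using the uniform bound $\|\nabla f(\x)\|\leq G$ gives the claimed Eq.~\eqref{eq:true_grad_bound_fixed_step}, and since this holds at the returned point it is achieved within the same iteration counts as Eq.~\eqref{eq:grad_conc_bound}. The distance bound Eq.~\eqref{eq:dist_p_and_x} is simply Eq.~\eqref{eq:distbetween_p_and_r} specialized to $\rb=\x$, and the asserted limiting behavior follows because $L_{\v}\to 0$ and each $h^m$ factor vanishes as $h\to 0$. The subtlety to watch is ensuring $h$ is small enough for the hypotheses of Lemma \ref{lem:approx_Rim_grad} (in particular $h\leq L^{-1}$) and for the conditions of the generic corollary to hold simultaneously at the returned point.
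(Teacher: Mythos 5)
Your proposal is correct and follows essentially the same route as the paper's proof: verifying that the MMLS components satisfy Assumption \ref{assu:list_of_req} (with $c_{0}=0$, $c_{1}=c_{\widetilde{\mathcal{M}}}$) via Lemmas \ref{lem:our_proj_prop}, \ref{lem:approx_Rim_grad}, \ref{lem:retraction_2_property} and Remark \ref{rem:v_bound}, discharging Assumption \ref{assu:Lipschitz-type-gradient} via Lemma \ref{lem:pullback_Lip}, invoking Corollary \ref{cor:main_fixed_step}, and then obtaining Eq. \eqref{eq:true_grad_bound_fixed_step} by the triangle inequality combined with Eq. \eqref{eq:approx_Rim_grad2} and the uniform bound $\|\nabla f(\x)\|\leq G$. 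Your closing remark about ensuring $h\leq L^{-1}$ simultaneously with the other smallness conditions is a fair point that the paper leaves implicit, but it does not affect the validity of the argument.
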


\begin{proof}
Assumptions \ref{assu:MMLS_Samples}, \ref{assu:Lipschitz-gradient}, and \ref{assu:ret_well_defined}, together with Lemma \ref{lem:our_proj_prop} (Appendix \ref{subsec:background_MMLS}), Lemma \ref{lem:approx_Rim_grad}, Lemma \ref{lem:retraction_2_property}, and Remark \ref{rem:v_bound}, imply that the components in Table \ref{tab:riemannian-approx} (and MMLS) satisfy Assumption \ref{assu:list_of_req}. Lemma \ref{lem:pullback_Lip} implies Assumption \ref{assu:Lipschitz-type-gradient}. Together with Assumption \ref{assu:lower_bound}, provided $h$ is small enough such that Eq. \eqref{eq:grad_cond_conc0} holds and assuming Eq. \eqref{eq:grad_cond_conc} holds for all the iterations, Corollary \ref{cor:main_fixed_step} can be applied for Algorithm \ref{alg:MMLS-for-Riemannian} with a fixed step-size according to Eq. \eqref{eq:conc_step_size}, using the components from Table \ref{tab:riemannian-approx}. Thus, we can conclude that Eq. \eqref{eq:grad_conc_bound} and the bounds on the number of iterations, Eq. \eqref{eq:it_num5} and Eq. \eqref{eq:it_num6}, hold. 

Finally, to show that Eq. \eqref{eq:true_grad_bound_fixed_step} holds, we use Eq. \eqref{eq:approx_Rim_grad2} from Lemma \ref{lem:approx_Rim_grad} and
\begin{eqnarray*}
 \left\| \gradM{f(\p)}  \right\| \leq \left\| \gradM{f(\p)} - \gradtil{f(\x)} \right\| + \left \|\gradtil{f(\x)} \right \| \leq \\
 \leq \left(c_{\mathcal{M}} \|\nabla  f(\x)\| + c_{\mmls}\right)\sqrt{D}h^{m} + \varepsilon_{3} (\widetilde{\mathcal{M}}) \leq \left(c_{\mathcal{M}} G + c_{\mmls}\right)\sqrt{D}h^{m} + \varepsilon_{3} (\widetilde{\mathcal{M}}),
\end{eqnarray*}
where $\|\nabla  f(\x)\|\leq G$  is finite and exists since $\widetilde{\mathcal{M}}$ is a compact manifold and $\nabla  f(\x)$ is assumed to be Lipschitz continuous. Eq. \eqref{eq:dist_p_and_x} holds due to Eq. \eqref{eq:distbetween_p_and_r}. Using Remark \ref{rem:v_bound} we get that when $h\to 0$ then $L_{\v} \to 0$, and we can conclude that the bound in Eq. \eqref{eq:true_grad_bound_fixed_step} goes to $\varepsilon$, and that the left-hand sides of Eq. \eqref{eq:grad_cond_conc0} and Eq. \eqref{eq:grad_cond_conc} go to $0$. 
\end{proof}

\begin{thm}[Backtracking MMLS-RO gradient-descent given $f$ and $\nabla  f$]\label{thm:conc_backtrack_fknown}
Under Assumptions \ref{assu:MMLS_Samples}, \ref{assu:Lipschitz-gradient}, \ref{assu:ret_well_defined}, and \ref{assu:lower_bound}, given that $f$ and its Euclidean gradient are known, provided all the iterations are performed on points on $\widetilde{{\mathcal{M}}}$ where $\widetilde{{\mathcal{M}}}$ is a manifold, $h$ is small enough such that
\begin{equation}\label{eq:grad_cond_conc_backtrack0}
    6 L_{\v} \leq Q,
\end{equation}
and
\begin{equation}\label{eq:grad_cond_conc_backtrack}
    \max \left\{\frac{6L_{\v}}{\bar{\alpha}}\ ,\  2 c_{\widetilde{\mathcal{M}}} \left\| \nabla  f (\x_{i}) \right\|\sqrt{D} h^{m} \right\} \leq \left\|\gradtil{f(\x_{i})}\right\|, 
\end{equation}
holds for all the iterations, where $L_{\v}$ is the constant from  Remark \ref{rem:v_bound}, a Riemannian gradient algorithm, i.e., Algorithm \ref{alg:MMLS-for-Riemannian} with the backtracking procedure from Algorithm \ref{alg:MMLS-RO_backtrack} to determine $\alpha_{i}$, with an initial step-size
\begin{equation}\label{eq:conc_step_size_backtrack}
    \widehat{\alpha}_{i} \coloneqq \min \left\{ \bar{\alpha}\ ,\ \frac{Q}{\left \| \gradtil{f(\x_{i})} \right \|}\right\}>0,
\end{equation}
and parameters $\gamma\in (0,1)$, $\delta\in (0, 0.25)$, and some $\bar{\alpha} > 0$, returns a point $\x\in\widetilde{\mathcal{M}}$ satisfying $f(\x)\leq f(\x_{0})$ and 
\begin{equation}\label{eq:grad_conc_bound_backtrack}
   \left \|\gradtil{f(\x)} \right \| \leq \max \left\{\frac{6L_{\v}}{\bar{\alpha}}\ ,\  2 c_{\widetilde{\mathcal{M}}} \left\| \nabla  f (\x) \right\|\sqrt{D} h^{m} \right\} + \varepsilon \coloneqq \varepsilon_{4} (\widetilde{\mathcal{M}}),
\end{equation}
for any $\varepsilon>0$, provided enough iterations are performed.

In addition, the following bound holds for the exact Riemannian gradient at a point $\p\in\mathcal{M}$ such that ${\mathcal P}_{m}^{h}(\p) = \x$ is the returned point
\begin{equation}\label{eq:true_grad_bound_backtrack}
    \left\| \gradM{f(\p)}  \right\| \leq \left(c_{\mathcal{M}} G + c_{\mmls}\right)\sqrt{D}h^{m} + \varepsilon_{4} (\widetilde{\mathcal{M}}),
\end{equation}
where $\|\nabla  f(\x)\|\leq G$ and Eq. \eqref{eq:dist_p_and_x} holds as well. Moreover, when $h\to 0$ then the bound in Eq. \eqref{eq:true_grad_bound_backtrack} goes to $\varepsilon$, and the left-hand sides  of Eq. \eqref{eq:grad_cond_conc_backtrack0} and Eq. \eqref{eq:grad_cond_conc_backtrack} go to $0$.

Finally, if 
$$\varepsilon_{4} (\widetilde{\mathcal{M}}) > \frac{Q}{\left ( \min \left\{ \bar{\alpha}\ ,\ \frac{2\gamma \left(0.25 - \delta \right)}{\widetilde{L}} \right\} \right)} , $$ 
where $\widetilde{L}$ is the constant from Lemma \ref{lem:pullback_Lip}, then the bounds in Eq. \eqref{eq:grad_conc_bound_backtrack} and Eq. \eqref{eq:true_grad_bound_backtrack} are achieved in at most
\begin{equation}\label{eq:it_num7}
    \left \lceil \frac{(f(\x_0) - f^{\star})}{Q \delta } \cdot \frac{1}{\varepsilon_{4} (\widetilde{\mathcal{M}})} \right \rceil
\end{equation}
iterations. If 
$$\varepsilon_{4} (\widetilde{\mathcal{M}}) \leq \frac{Q}{\left ( \min \left\{ \bar{\alpha}\ ,\ \frac{2\gamma \left(0.25 - \delta \right)}{\widetilde{L}} \right\} \right)} , $$
then the bounds in Eq. \eqref{eq:grad_conc_bound_backtrack} and Eq. \eqref{eq:true_grad_bound_backtrack} are achieved in at most
\begin{equation}\label{eq:it_num8}
    \left \lceil \frac{f(\x_0) - f^{\star}}{\delta \min \left\{ \bar{\alpha}\ ,\ \frac{2\gamma \left(0.25 - \delta \right)}{\widetilde{L}} \right\}} \cdot \frac{1}{\varepsilon_{4}(\widetilde{\mathcal{M}})^{2}} \right \rceil
\end{equation}
iterations.
Each iteration, $i$, requires at most 
\begin{equation}\label{eq:calc_num_conc}
    \max \left\{1\ ,\ 2 + \log_{\gamma^{-1}}{\left(\frac{\widehat{\alpha}_{i} \widetilde{L}}{2\gamma\left(\frac{\beta_{i}}{2} - \delta \right)} \right)} \right\}
\end{equation}
retractions and cost function evaluations (assuming $f(\x_{0})$ and $\gradtil{f(\x_{0})}$ were already computed), where $\beta_{i}$ is defined in Eq. \eqref{eq:main_conc_fixed_step_beta}.
\end{thm}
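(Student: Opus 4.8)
The plan is to mirror the proof of Theorem~\ref{thm:conc_fixed_step_fknown}, reducing everything to the generic backtracking result of Corollary~\ref{cor:main_backtrack_step} by verifying that the MMLS-based components of Table~\ref{tab:riemannian-approx} instantiate Assumptions~\ref{assu:list_of_req} and~\ref{assu:Lipschitz-type-gradient}. First I would check Assumption~\ref{assu:list_of_req} item by item with $\widehat{\mathcal{M}}=\widetilde{\mathcal{M}}$: the approximating manifold $\widetilde{\mathcal{M}}$ is a smooth closed $d$-dimensional manifold (Subsection~\ref{subsec:Manifold-Moving-Least-Squares}); the tangent-space approximation $\widetilde{T}_{\rb}\widetilde{\mathcal{M}}$ satisfies the $O(\sqrt{D}h^{m})$ projection bound by Lemma~\ref{lem:our_proj_prop}; the approximate-retraction $\widetilde{R}$ is defined on $K$ (Assumption~\ref{assu:ret_well_defined}) and satisfies $\widetilde{R}_{\rb}(\0_{\rb})=\rb$ together with $\text{D}\widetilde{R}_{\rb}(\0_{\rb})[\xi]=\xi+\v'_{\rb,\xi}(0)$ with $\|\v'_{\rb,\xi}(0)\|\le L_{\v}$ and $L_{\v}\to0$ as $h\to0$, by Lemma~\ref{lem:retraction_2_property} and Remark~\ref{rem:v_bound}. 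The Riemannian-gradient bound (item~\ref{item:list_of_req_grad}) follows from Eq.~\eqref{eq:approx_Rim_grad1} of Lemma~\ref{lem:approx_Rim_grad}, which is exactly the required form with $c_{0}=0$ and $c_{1}=c_{\widetilde{\mathcal{M}}}$; this is precisely why the $(c_{0}+c_{1}\|\nabla f\|)$ terms in the generic statement collapse to $c_{\widetilde{\mathcal{M}}}\|\nabla f(\x_{i})\|$ throughout.

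Next I would invoke Lemma~\ref{lem:pullback_Lip} to supply Assumption~\ref{assu:Lipschitz-type-gradient} with constant $\widetilde{L}$, using that Assumption~\ref{assu:Lipschitz-gradient} holds for $f$. With Assumptions~\ref{assu:list_of_req}, \ref{assu:lower_bound}, and~\ref{assu:Lipschitz-type-gradient} verified, Corollary~\ref{cor:main_backtrack_step} applies verbatim to Algorithm~\ref{alg:MMLS-for-Riemannian} run with the backtracking line-search of Algorithm~\ref{alg:MMLS-RO_backtrack} and initial step-size~\eqref{eq:conc_step_size_backtrack}. Substituting $\widehat{L}\to\widetilde{L}$ and the constant identification above turns conditions~\eqref{eq:grad_backtrack_cond0}--\eqref{eq:grad_backtrack_cond} into~\eqref{eq:grad_cond_conc_backtrack0}--\eqref{eq:grad_cond_conc_backtrack}, the stationarity bound~\eqref{eq:grad_cor_bound} into~\eqref{eq:grad_conc_bound_backtrack} with $\varepsilon_{4}(\widetilde{\mathcal{M}})$, and the iteration counts~\eqref{eq:it_num3}--\eqref{eq:it_num4} together with the per-iteration retraction bound into~\eqref{eq:it_num7}, \eqref{eq:it_num8}, and~\eqref{eq:calc_num_conc}.

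Finally, to obtain the bound~\eqref{eq:true_grad_bound_backtrack} on the \emph{exact} Riemannian gradient at the corresponding point $\p\in\mathcal{M}$ with ${\mathcal P}_{m}^{h}(\p)=\x$, I would apply the triangle inequality exactly as in Theorem~\ref{thm:conc_fixed_step_fknown}:
\begin{equation*}
\left\|\gradM{f(\p)}\right\| \le \left\|\gradM{f(\p)}-\gradtil{f(\x)}\right\| + \left\|\gradtil{f(\x)}\right\|,
\end{equation*}
bounding the first term by Eq.~\eqref{eq:approx_Rim_grad2} of Lemma~\ref{lem:approx_Rim_grad} and the second by~\eqref{eq:grad_conc_bound_backtrack}; here $\|\nabla f(\x)\|\le G$ is finite since $\widetilde{\mathcal{M}}$ is compact and $\nabla f$ is Lipschitz, and Eq.~\eqref{eq:dist_p_and_x} is immediate from Eq.~\eqref{eq:distbetween_p_and_r}. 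The limiting behaviour as $h\to0$ then follows because $L_{\v}\to0$ (Remark~\ref{rem:v_bound}) and the $h^{m}$ factors vanish, so $\varepsilon_{4}(\widetilde{\mathcal{M}})\to\varepsilon$ and the right-hand side of~\eqref{eq:true_grad_bound_backtrack} tends to $\varepsilon$. Since the entire argument is a specialization of already-established results, I expect no genuine obstacle; the only point demanding care is the bookkeeping that the collapsed constant ($c_{0}=0$, $c_{1}=c_{\widetilde{\mathcal{M}}}$) and the substitution $\widehat{L}\to\widetilde{L}$ are propagated consistently through every inequality inherited from Corollary~\ref{cor:main_backtrack_step}, in particular in the definition of $\beta_{i}$ in~\eqref{eq:main_conc_fixed_step_beta} and the admissible range $\delta\in(0,0.25)$.
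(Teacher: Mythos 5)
Your proposal follows essentially the same route as the paper's own proof: verify Assumption \ref{assu:list_of_req} via Lemmas \ref{lem:our_proj_prop}, \ref{lem:approx_Rim_grad} (Eq.~\eqref{eq:approx_Rim_grad1} giving $c_{0}=0$, $c_{1}=c_{\widetilde{\mathcal{M}}}$), \ref{lem:retraction_2_property} and Remark \ref{rem:v_bound}, supply Assumption \ref{assu:Lipschitz-type-gradient} through Lemma \ref{lem:pullback_Lip}, invoke Corollary \ref{cor:main_backtrack_step}, and finish the exact-gradient bound by the triangle inequality with Eq.~\eqref{eq:approx_Rim_grad2}. The argument and the constant bookkeeping are correct and match the paper.
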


\begin{proof}
Assumptions \ref{assu:MMLS_Samples}, \ref{assu:Lipschitz-gradient}, and \ref{assu:ret_well_defined}, together with Lemma \ref{lem:our_proj_prop} (Appendix \ref{subsec:background_MMLS}), Lemma \ref{lem:approx_Rim_grad}, Lemma \ref{lem:retraction_2_property}, and Remark \ref{rem:v_bound}, imply that the components in Table \ref{tab:riemannian-approx} (and MMLS) satisfy Assumption \ref{assu:list_of_req}. Lemma \ref{lem:pullback_Lip} implies Assumption \ref{assu:Lipschitz-type-gradient}. Together with Assumption \ref{assu:lower_bound}, provided $h$ is small enough such that Eq. \eqref{eq:grad_cond_conc_backtrack0} holds and assuming Eq. \eqref{eq:grad_cond_conc_backtrack} holds for all the iterations, Corollary \ref{cor:main_backtrack_step} can be applied for Algorithm \ref{alg:MMLS-for-Riemannian} with the backtracking procedure from Algorithm \ref{alg:MMLS-RO_backtrack} to determine $\alpha_{i}$, with an initial step-size according to Eq. \eqref{eq:conc_step_size_backtrack} and parameters $\gamma\in (0,1)$, $\delta\in (0, 0.25)$, and some $\bar{\alpha} > 0$, using the components from Table \ref{tab:riemannian-approx}. Thus, we can conclude that Eq. \eqref{eq:grad_conc_bound_backtrack} and the bounds on the number of iterations, Eq. \eqref{eq:it_num7} and Eq. \eqref{eq:it_num8}, hold. 

Finally, to show that Eq. \eqref{eq:true_grad_bound_backtrack} holds, we use Eq. \eqref{eq:approx_Rim_grad2} from Lemma \ref{lem:approx_Rim_grad} and
\begin{eqnarray*}
 \left\| \gradM{f(\p)}  \right\| \leq \left\| \gradM{f(\p)} - \gradtil{f(\x)} \right\| + \left \|\gradtil{f(\x)} \right \| \leq \\
 \leq \left(c_{\mathcal{M}} \|\nabla  f(\x)\| + c_{\mmls}\right)\sqrt{D}h^{m} + \varepsilon_{4} (\widetilde{\mathcal{M}}) \leq \left(c_{\mathcal{M}} G + c_{\mmls}\right)\sqrt{D}h^{m} + \varepsilon_{4} (\widetilde{\mathcal{M}}),
\end{eqnarray*}
where $\|\nabla  f(\x)\|\leq G$  is finite and exists since $\widetilde{\mathcal{M}}$ is a compact manifold and $\nabla  f(\x)$ is assumed to be Lipschitz continuous. Eq. \eqref{eq:dist_p_and_x} holds due to Eq. \eqref{eq:distbetween_p_and_r}. Using Remark \ref{rem:v_bound} we get that when $h\to 0$ then $L_{\v} \to 0$, and we can conclude that the bound in Eq. \eqref{eq:true_grad_bound_backtrack} goes to $\varepsilon$, and the left-hand sides of Eq. \eqref{eq:grad_cond_conc_backtrack0} and Eq. \eqref{eq:grad_cond_conc_backtrack} go to $0$.
\end{proof}

\begin{thm}[Fixed-step MMLS-RO gradient-descent approximating $\nabla \widehat{f}$]\label{thm:conc_fixed_step_funknown}
Under Assumptions \ref{assu:MMLS_func_Samples}, \ref{assu:Lipschitz-gradient}, \ref{assu:ret_well_defined}, and \ref{assu:lower_bound}, $\nabla \widehat{f}$ is approximated for the Riemannian gradient approximation (Eq. \eqref{eq:approximate_f_Rgrad}), provided all the iterations are performed on points on $\widetilde{{\mathcal{M}}}$ where $\widetilde{{\mathcal{M}}}$ is a manifold, $h$ is small enough such that
\begin{equation}\label{eq:grad_cond_conc_funknown0}
    8 L_{\v} \leq Q,
\end{equation}
and
\begin{equation}\label{eq:grad_cond_conc_funknown}
    \max \left\{16\widetilde{L}L_{\v}\ ,\  2 \left( 2 c_{\mathcal{M}} \left\| \nabla  f(\p_{i}) \right\| + c_{\widetilde{\mathcal{M}}} \left\| \nabla  f(\p_{i}) \right\| + c_{f} + c_{\mmls} \right)\sqrt{D}h^{m} \right\} \leq \left\|\gradtil{f(\x_{i})}\right\|, 
\end{equation}
holds for all the iterations, where ${\mathcal P}_{m}^{h}(\p_{i}) = \x_{i}$, $\p_{i}\in\mathcal{M}$, $\widetilde{L}$ and $L_{\v}$ are the constants from Lemma \ref{lem:pullback_Lip} and Remark \ref{rem:v_bound} correspondingly, a Riemannian gradient algorithm, i.e., Algorithm \ref{alg:MMLS-for-Riemannian} with a fixed step-size of the form
\begin{equation}\label{eq:conc_step_funknown}
    \xi_{\x_{i}} \coloneqq - \gradtil{f(\x_{i})},
\end{equation}
and
\begin{equation}\label{eq:conc_step_size_funknown}
    \alpha_{i} \coloneqq \min \left\{\frac{Q}{\left\|\gradtil{f(\x_{i})} \right\|}\ ,\  \frac{1}{\widetilde{L}} \beta_{i} \right\} > 0,
\end{equation}
where
\begin{equation}\label{eq:main_conc_fixed_step_beta_funknown}
    \beta_{i} \coloneqq  \left(1 - \frac{\left( 2 c_{\mathcal{M}} \left\| \nabla  f(\p_{i}) \right\| + c_{\widetilde{\mathcal{M}}} \left\| \nabla  f(\p_{i}) \right\| + c_{f} + c_{\mmls} \right)\sqrt{D}h^{m}}{\left\|\gradtil{f(\x_{i})} \right\|} \right) > 0, \ {\mathcal P}_{m}^{h}(\p_{i}) = \x_{i},\ \p_{i}\in\mathcal{M},
\end{equation}
returns a point $\x\in\widetilde{\mathcal{M}}$ satisfying $f(\x)\leq f(\x_{0})$ and 
\begin{equation}\label{eq:grad_conc_bound_funknown}
   \left \|\gradtil{f(\x)} \right \| \leq \max \left\{16\widetilde{L}L_{\v}\ ,\  2 \left(2 c_{\mathcal{M}} \left\| \nabla  f(\p) \right\| + c_{\widetilde{\mathcal{M}}} \left\| \nabla  f(\p) \right\| + c_{f} + c_{\mmls} \right)\sqrt{D}h^{m} \right\} + \varepsilon \coloneqq \varepsilon_{5} (\widetilde{\mathcal{M}}),
\end{equation}
where $\p\in\mathcal{M}$ satisfies ${\mathcal P}_{m}^{h}(\p) = \x$, for any $\varepsilon>0$, provided enough iterations are performed.

In addition, the following bound holds for the exact Riemannian gradient at a point $\p\in\mathcal{M}$ such that ${\mathcal P}_{m}^{h}(\p) = \x$ is the returned point
\begin{equation}\label{eq:true_grad_bound_fixed_step_funknown}
    \left\| \gradM{f(\p)}  \right\| \leq \left( c_{f} + 2 c_{\mathcal{M}}  G \right)\sqrt{D}h^{m} + \varepsilon_{5} (\widetilde{\mathcal{M}}),
\end{equation}
where $\|\nabla  f(\p)\|\leq G$ and Eq. \eqref{eq:dist_p_and_x} holds as well. Moreover, when $h\to 0$ then the bound in Eq. \eqref{eq:true_grad_bound_fixed_step_funknown} goes to $\varepsilon$, and the left-hand sides of Eq. \eqref{eq:grad_cond_conc_funknown0} and Eq. \eqref{eq:grad_cond_conc_funknown} go to $0$.

Finally, if $\varepsilon_{5} (\widetilde{\mathcal{M}}) > 2 Q \widetilde{L} $, then the bounds in Eq. \eqref{eq:grad_conc_bound_funknown} and Eq. \eqref{eq:true_grad_bound_fixed_step_funknown} are achieved in at most
\begin{equation}\label{eq:it_num9}
    \left \lceil \frac{16(f(\x_0) - f^{\star})}{Q} \cdot \frac{1}{\varepsilon_{5} (\widetilde{\mathcal{M}})} \right \rceil
\end{equation}
iterations. If $\varepsilon_{5} (\widetilde{\mathcal{M}}) \leq 2 Q \widetilde{L}$, then the bounds in Eq. \eqref{eq:grad_conc_bound_funknown} and Eq. \eqref{eq:true_grad_bound_fixed_step_funknown} are achieved in at most
\begin{equation}\label{eq:it_num10}
    \left \lceil 32(f(\x_0) - f^{\star})\widetilde{L} \cdot \frac{1}{\varepsilon_{5}(\widetilde{\mathcal{M}})^{2}} \right \rceil
\end{equation}
iterations.
Each iteration requires one cost and approximate-Riemannian
gradient evaluation, and one approximate-retraction computation.
\end{thm}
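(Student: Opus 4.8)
The plan is to mirror the proof of Theorem \ref{thm:conc_fixed_step_fknown} almost verbatim, swapping the exact-gradient ingredients for their MMLS-approximated counterparts and reading off the corresponding constants. First I would verify that the approximate components of Table \ref{tab:riemannian-approx}, now with the approximate-Riemannian gradient defined through the approximated Euclidean gradient $\nabla p_{\rb}^{f}(\0)$ in Eq. \eqref{eq:approximate_f_Rgrad}, satisfy the generic Assumption \ref{assu:list_of_req}. Items 1--4 are unchanged from the $f$-known case: the approximating manifold $\widetilde{\mathcal{M}}$ is $d$-dimensional, smooth and closed; the approximate-tangent spaces are $O(\sqrt{D}h^{m})$ close to the true ones (Lemma \ref{lem:our_proj_prop}); the approximate-tangent bundle is defined by Eq. \eqref{eq:approx_tan_bun}; and the approximate-retraction satisfies $\widetilde{R}_{\rb}(\0_{\rb})=\rb$ together with $\text{D}\widetilde{R}_{\rb}(\0_{\rb})[\xi]=\xi+\v'_{\rb,\xi}(0)\in T_{\rb}\widetilde{\mathcal{M}}$ and $\|\v'_{\rb,\xi}(0)\|\leq L_{\v}$ with $L_{\v}\to0$ as $h\to0$ (Lemma \ref{lem:retraction_2_property} and Remark \ref{rem:v_bound}). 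The only substantive change relative to the $f$-known proof is item \ref{item:list_of_req_grad}, the gradient-approximation bound.

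For that item I would invoke Eq. \eqref{eq:approx_Rim_grad4} of Lemma \ref{lem:approx_Rim_grad} in place of Eq. \eqref{eq:approx_Rim_grad1}, which under Assumption \ref{assu:Lipschitz-gradient} and $h\leq L^{-1}$ bounds $\|\gradtil{f(\rb)}-\gradMtil{f(\rb)}\|$. This identifies the generic constants as $c_{0}=c_{f}+c_{\mmls}$ and $c_{1}=2c_{\mathcal{M}}+c_{\widetilde{\mathcal{M}}}$, with the residual measured against $\|\nabla f(\p)\|$ rather than $\|\nabla f(\rb)\|$. Since $\|\nabla f(\rb)-\nabla f(\p)\|=O(h^{m+1})$ by Assumption \ref{assu:Lipschitz-gradient} combined with Eq. \eqref{eq:distbetween_p_and_r}, the two are interchangeable up to higher order, matching the form $\beta_{i}$ takes in Eq. \eqref{eq:main_conc_fixed_step_beta_funknown}. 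Lemma \ref{lem:pullback_Lip} supplies Assumption \ref{assu:Lipschitz-type-gradient} with constant $\widetilde{L}$ exactly as before, and Assumption \ref{assu:lower_bound} gives the lower bound. Corollary \ref{cor:main_fixed_step} then applies with these constants, yielding the stopping bound Eq. \eqref{eq:grad_conc_bound_funknown} and the iteration counts Eqs. \eqref{eq:it_num9}--\eqref{eq:it_num10} after substituting $\varepsilon_{5}(\widetilde{\mathcal{M}})$ for $\varepsilon_{1}(\widehat{\mathcal{M}})$ and $\widetilde{L}$ for $\widehat{L}$.

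To obtain the bound on the exact Riemannian gradient, I would pass through the triangle inequality using Eq. \eqref{eq:approx_Rim_grad3} (the $\gradtil$-versus-$\gradM$ bound, which is the relevant one for this conclusion, as opposed to Eq. \eqref{eq:approx_Rim_grad4} used above):
\[
\|\gradM{f(\p)}\|\leq\|\gradM{f(\p)}-\gradtil{f(\x)}\|+\|\gradtil{f(\x)}\|\leq\left(c_{f}+2c_{\mathcal{M}}\|\nabla f(\p)\|\right)\sqrt{D}h^{m}+\varepsilon_{5}(\widetilde{\mathcal{M}}),
\]
and then bound $\|\nabla f(\p)\|\leq G$, which is finite because $f$ is gradient-Lipschitz on the compact set $\widetilde{\mathcal{M}}\cup\mathcal{M}$. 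This gives Eq. \eqref{eq:true_grad_bound_fixed_step_funknown}; Eq. \eqref{eq:dist_p_and_x} is immediate from Eq. \eqref{eq:distbetween_p_and_r}, and the $h\to0$ limiting statements follow from $L_{\v}\to0$ (Remark \ref{rem:v_bound}).

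The main obstacle I anticipate is bookkeeping rather than conceptual: carefully tracking which of the four bounds in Lemma \ref{lem:approx_Rim_grad} feeds which part of the argument (Eq. \eqref{eq:approx_Rim_grad4} for the generic Assumption \ref{assu:list_of_req} item \ref{item:list_of_req_grad}, but Eq. \eqref{eq:approx_Rim_grad3} for the exact-gradient conclusion), and reconciling the $\|\nabla f(\p)\|$-versus-$\|\nabla f(\rb)\|$ discrepancy cleanly enough that Corollary \ref{cor:main_fixed_step} can be applied without any modification to its statement.
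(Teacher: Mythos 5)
Your proposal follows the paper's proof essentially verbatim: verify Assumption \ref{assu:list_of_req} with Item \ref{item:list_of_req_grad} replaced by Eq. \eqref{eq:approx_Rim_grad4} of Lemma \ref{lem:approx_Rim_grad} (with $\rb$ replaced by $\p$), invoke Lemma \ref{lem:pullback_Lip} for Assumption \ref{assu:Lipschitz-type-gradient}, apply Corollary \ref{cor:main_fixed_step}, and then obtain the exact-gradient bound via the triangle inequality and Eq. \eqref{eq:approx_Rim_grad3}. The bookkeeping you flag --- which of the two bounds feeds which conclusion, and the $\|\nabla f(\p)\|$ versus $\|\nabla f(\rb)\|$ substitution --- is handled exactly the same way in the paper, so the proposal is correct and matches the paper's argument.
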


\begin{proof}
Assumptions \ref{assu:MMLS_func_Samples}, \ref{assu:Lipschitz-gradient}, and \ref{assu:ret_well_defined}, together with Lemma \ref{lem:our_proj_prop} (Appendix \ref{subsec:background_MMLS}), Lemma \ref{lem:approx_Rim_grad}, Lemma \ref{lem:retraction_2_property}, and Remark \ref{rem:v_bound}, imply that the components in Table \ref{tab:riemannian-approx} (and MMLS) satisfy Assumption \ref{assu:list_of_req}, where Item \ref{item:list_of_req_grad} is replaced with Eq. \eqref{eq:approx_Rim_grad4} and $\rb$ is replaced with $\p\in \mathcal{M}$ such that ${\mathcal P}_{m}^{h}(\p)=\rb$. Lemma \ref{lem:pullback_Lip} implies Assumption \ref{assu:Lipschitz-type-gradient}. Together with Assumption \ref{assu:lower_bound}, provided $h$ is small enough such that Eq. \eqref{eq:grad_cond_conc_funknown} holds and assuming Eq. \eqref{eq:grad_cond_conc_funknown} holds for all the iterations, Corollary \ref{cor:main_fixed_step} can be applied for Algorithm \ref{alg:MMLS-for-Riemannian} with a fixed step-size according to Eq. \eqref{eq:conc_step_size_funknown}, using the components from Table \ref{tab:riemannian-approx}. Thus, we can conclude that Eq. \eqref{eq:grad_conc_bound_funknown} and the bounds on the number of iterations, Eq. \eqref{eq:it_num9} and Eq. \eqref{eq:it_num10}, hold. 

Finally, to show that Eq. \eqref{eq:true_grad_bound_fixed_step_funknown} holds, we use Eq. \eqref{eq:approx_Rim_grad3} from Lemma \ref{lem:approx_Rim_grad} and
\begin{eqnarray*}
 \left\| \gradM{f(\p)}  \right\| \leq \left\| \gradM{f(\p)} - \gradtil{f(\x)} \right\| + \left \|\gradtil{f(\x)} \right \| \leq \\
 \leq \left( c_{f} + 2 c_{\mathcal{M}}  \left\| \nabla  f(\p) \right\| \right)\sqrt{D}h^{m} + \varepsilon_{5} (\widetilde{\mathcal{M}}) \leq \left( c_{f} + 2 c_{\mathcal{M}}  G \right)\sqrt{D}h^{m} + \varepsilon_{5} (\widetilde{\mathcal{M}}),
\end{eqnarray*}
where $\|\nabla  f(\p)\|\leq G$  is finite and exists since $\mathcal{M}$ is a compact manifold and $\nabla  f(\p)$ is assumed to be Lipschitz
continuous. Eq. \eqref{eq:dist_p_and_x} holds due to Eq. \eqref{eq:distbetween_p_and_r}. Using Remark \ref{rem:v_bound} we get that when $h\to 0$ then $L_{\v} \to 0$, and we can conclude that the bound in Eq. \eqref{eq:true_grad_bound_fixed_step_funknown} goes to $\varepsilon$, and that the left-hand sides of Eq. \eqref{eq:grad_cond_conc_funknown0} and Eq. \eqref{eq:grad_cond_conc_funknown} go to $0$. 
\end{proof}

\begin{thm}[Backtracking MMLS-RO gradient-descent approximating $\nabla \widehat{f}$]\label{thm:conc_backtrack_funknown}
Under Assumptions \ref{assu:MMLS_func_Samples}, \ref{assu:Lipschitz-gradient}, \ref{assu:ret_well_defined}, and \ref{assu:lower_bound}, $\nabla \widehat{f}$ is approximated for the Riemannian gradient approximation (Eq. \eqref{eq:approximate_f_Rgrad}), provided all the iterations are performed on points on $\widetilde{{\mathcal{M}}}$ where $\widetilde{{\mathcal{M}}}$ is a manifold, $h$ is small enough such that
\begin{equation}\label{eq:grad_cond_conc_backtrack_funknown0}
    6 L_{\v} \leq Q,
\end{equation}
and
\begin{equation}\label{eq:grad_cond_conc_backtrack_funknown}
    \max \left\{\frac{6L_{\v}}{\bar{\alpha}}\ ,\  2 \left(2 c_{\mathcal{M}} \left\| \nabla  f(\p_{i}) \right\| + c_{\widetilde{\mathcal{M}}} \left\| \nabla  f(\p_{i}) \right\| + c_{f} + c_{\mmls} \right)\sqrt{D}h^{m} \right\} \leq \left\|\gradtil{f(\x_{i})}\right\|
\end{equation}
holds for all the iterations, where ${\mathcal P}_{m}^{h}(\p_{i}) = \x_{i}$, $\p_{i}\in\mathcal{M}$, $L_{\v}$ is the constant from  Remark \ref{rem:v_bound}, a Riemannian gradient algorithm, i.e., Algorithm \ref{alg:MMLS-for-Riemannian} with the backtracking procedure from Algorithm \ref{alg:MMLS-RO_backtrack} to determine $\alpha_{i}$, with an initial step-size
\begin{equation}\label{eq:conc_step_size_backtrack_funknown}
    \widehat{\alpha}_{i} \coloneqq \min \left\{ \bar{\alpha}\ ,\ \frac{Q}{\left \| \gradtil{f(\x_{i})} \right \|}\right\}>0,
\end{equation}
and parameters $\gamma\in (0,1)$, $\delta\in (0, 0.25)$, and some $\bar{\alpha} > 0$, returns a point $\x\in\widetilde{\mathcal{M}}$ satisfying $f(\x)\leq f(\x_{0})$ and 
\begin{equation}\label{eq:grad_conc_bound_backtrack_funknown}
   \left \|\gradtil{f(\x)} \right \| \leq \max \left\{\frac{6L_{\v}}{\bar{\alpha}}\ ,\  2 \left( 2 c_{\mathcal{M}} \left\| \nabla  f(\p) \right\| + c_{\widetilde{\mathcal{M}}} \left\| \nabla  f(\p) \right\| + c_{f} + c_{\mmls} \right)\sqrt{D}h^{m} \right\} + \varepsilon \coloneqq \varepsilon_{6} (\widetilde{\mathcal{M}}),
\end{equation}
where $\p\in\mathcal{M}$ satisfies ${\mathcal P}_{m}^{h}(\p) = \x$, for any $\varepsilon>0$, provided enough iterations are performed.

In addition, the following bound holds for the exact Riemannian gradient at a point $\p\in\mathcal{M}$ such that ${\mathcal P}_{m}^{h}(\p) = \x$ is the returned point
\begin{equation}\label{eq:true_grad_bound_backtrack_funknown}
    \left\| \gradM{f(\p)}  \right\| \leq \left( c_{f} + 2 c_{\mathcal{M}}  G \right)\sqrt{D}h^{m} + \varepsilon_{6} (\widetilde{\mathcal{M}}),
\end{equation}
where $\|\nabla  f(\p)\|\leq G$ and Eq. \eqref{eq:dist_p_and_x} holds as well. Moreover, when $h\to 0$ then the bound in Eq. \eqref{eq:true_grad_bound_backtrack_funknown} goes to $\varepsilon$, and the left-hand sides of Eq. \eqref{eq:grad_cond_conc_backtrack_funknown0} and Eq. \eqref{eq:grad_cond_conc_backtrack_funknown} go to $0$.

Finally, if 
$$\varepsilon_{6} (\widetilde{\mathcal{M}}) > \frac{Q}{\left ( \min \left\{ \bar{\alpha}\ ,\ \frac{2\gamma \left(0.25 - \delta \right)}{\widetilde{L}} \right\} \right)} , $$ 
where $\widetilde{L}$ is the constant from Lemma \ref{lem:pullback_Lip}, then the bounds in Eq. \eqref{eq:grad_conc_bound_backtrack_funknown} and Eq. \eqref{eq:true_grad_bound_backtrack_funknown} are achieved in at most
\begin{equation}\label{eq:it_num11}
    \left \lceil \frac{(f(\x_0) - f^{\star})}{Q \delta } \cdot \frac{1}{\varepsilon_{6} (\widetilde{\mathcal{M}})} \right \rceil
\end{equation}
iterations. If 
$$\varepsilon_{6} (\widetilde{\mathcal{M}}) \leq \frac{Q}{\left ( \min \left\{ \bar{\alpha}\ ,\ \frac{2\gamma \left(0.25 - \delta \right)}{\widetilde{L}} \right\} \right)} , $$
then the bounds in Eq. \eqref{eq:grad_conc_bound_backtrack_funknown} and Eq. \eqref{eq:true_grad_bound_backtrack_funknown} are achieved in at most
\begin{equation}\label{eq:it_num12}
    \left \lceil \frac{f(\x_0) - f^{\star}}{\delta \min \left\{ \bar{\alpha}\ ,\ \frac{2\gamma \left(0.25 - \delta \right)}{\widetilde{L}} \right\}} \cdot \frac{1}{\varepsilon_{6}(\widetilde{\mathcal{M}})^{2}} \right \rceil
\end{equation}
iterations.
Each iteration, $i$, requires at most 
\begin{equation}\label{eq:calc_num_conc_funknown}
    \max \left\{1\ ,\ 2 + \log_{\gamma^{-1}}{\left(\frac{\widehat{\alpha}_{i} \widetilde{L}}{2\gamma\left(\frac{\beta_{i}}{2} - \delta \right)} \right)} \right\}
\end{equation}
retractions and cost function evaluations (assuming $f(\x_{0})$ and $\gradtil{f(\x_{0})}$ were already computed), where $\beta_{i}$ is defined in Eq. \eqref{eq:main_conc_fixed_step_beta_funknown}.
\end{thm}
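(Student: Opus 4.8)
The plan is to follow verbatim the template of the three preceding consequence-theorems: first verify that the MMLS components of Table \ref{tab:riemannian-approx} instantiate the generic Assumptions \ref{assu:list_of_req}, \ref{assu:lower_bound}, and \ref{assu:Lipschitz-type-gradient}, and then invoke the backtracking Corollary \ref{cor:main_backtrack_step}. Concretely, I would argue that Assumptions \ref{assu:MMLS_func_Samples}, \ref{assu:Lipschitz-gradient}, and \ref{assu:ret_well_defined}, combined with Lemma \ref{lem:our_proj_prop}, Lemma \ref{lem:approx_Rim_grad}, Lemma \ref{lem:retraction_2_property}, and Remark \ref{rem:v_bound}, show that MMLS satisfies Assumption \ref{assu:list_of_req}. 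The only departure from the proof of Theorem \ref{thm:conc_backtrack_fknown} is in Item \ref{item:list_of_req_grad}: since the Euclidean gradient of $\widehat{f}$ is now approximated via Eq. \eqref{eq:approximate_f_Rgrad}, the bound on $\|\gradtil{f(\rb)} - \gradMtil{f(\rb)}\|$ is supplied by Eq. \eqref{eq:approx_Rim_grad4} rather than by Eq. \eqref{eq:approx_Rim_grad1}. Lemma \ref{lem:pullback_Lip} then supplies Assumption \ref{assu:Lipschitz-type-gradient} with constant $\widetilde{L}$.

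The central bookkeeping step is matching the generic constants $c_{0}, c_{1}$ of Assumption \ref{assu:list_of_req} to the concrete bound Eq. \eqref{eq:approx_Rim_grad4}: reading that bound off gives the identification $c_{0}\sqrt{D} = (c_{f} + c_{\mmls})\sqrt{D}$ and $c_{1}\sqrt{D} = (2c_{\mathcal{M}} + c_{\widetilde{\mathcal{M}}})\sqrt{D}$, but with the crucial subtlety that the gradient norm appearing is $\|\nabla f(\p)\|$ at the preimage $\p\in\mathcal{M}$ with ${\mathcal P}_{m}^{h}(\p)=\x_{i}$, not $\|\nabla f(\x_{i})\|$. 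Exactly as in the proof of Theorem \ref{thm:conc_fixed_step_funknown}, I would note that in applying the generic corollary every occurrence of $\nabla f(\rb)$ is replaced by $\nabla f(\p)$; this is harmless because the abstract analysis behind Corollary \ref{cor:main_backtrack_step} treats the per-iteration gradient bound as an opaque quantity. Feeding the step-size rule Eq. \eqref{eq:conc_step_size_backtrack_funknown}, the parameters $\gamma\in(0,1)$, $\delta\in(0,0.25)$, and the smallness conditions Eqs. \eqref{eq:grad_cond_conc_backtrack_funknown0}--\eqref{eq:grad_cond_conc_backtrack_funknown} into Corollary \ref{cor:main_backtrack_step} then yields directly the stopping bound Eq. \eqref{eq:grad_conc_bound_backtrack_funknown} (with $\varepsilon_{6}(\widetilde{\mathcal{M}})$ matching $\varepsilon_{2}(\widehat{\mathcal{M}})$ under this identification), the iteration-count bounds Eqs. \eqref{eq:it_num11}--\eqref{eq:it_num12}, and the per-iteration retraction/evaluation count Eq. \eqref{eq:calc_num_conc_funknown}.

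To obtain the bound Eq. \eqref{eq:true_grad_bound_backtrack_funknown} on the exact Riemannian gradient at $\p$, I would combine the triangle inequality with Eq. \eqref{eq:approx_Rim_grad3} of Lemma \ref{lem:approx_Rim_grad}:
\begin{eqnarray*}
    \left\| \gradM{f(\p)} \right\| &\leq& \left\| \gradM{f(\p)} - \gradtil{f(\x)} \right\| + \left\| \gradtil{f(\x)} \right\| \\
    &\leq& \left( c_{f} + 2 c_{\mathcal{M}} \left\| \nabla f(\p) \right\| \right)\sqrt{D}h^{m} + \varepsilon_{6}(\widetilde{\mathcal{M}}) \\
    &\leq& \left( c_{f} + 2 c_{\mathcal{M}} G \right)\sqrt{D}h^{m} + \varepsilon_{6}(\widetilde{\mathcal{M}}),
\end{eqnarray*}
where $\|\nabla f(\p)\|\leq G$ is finite because $\mathcal{M}$ is compact and $\nabla f$ is Lipschitz, while Eq. \eqref{eq:dist_p_and_x} follows from Eq. \eqref{eq:distbetween_p_and_r}. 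Finally, Remark \ref{rem:v_bound} gives $L_{\v}\to 0$ as $h\to 0$, from which the limiting statements (the bound tending to $\varepsilon$ and the left-hand sides of Eqs. \eqref{eq:grad_cond_conc_backtrack_funknown0}--\eqref{eq:grad_cond_conc_backtrack_funknown} tending to $0$) follow immediately.

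I expect no genuinely hard step here, since the proof is a substitution into an already-proved abstract corollary. The one place demanding care is precisely the $\nabla f(\p)$-versus-$\nabla f(\x_{i})$ substitution in Item \ref{item:list_of_req_grad}: one must confirm that the abstract analysis in Corollary \ref{cor:main_backtrack_step} (and, beneath it, Theorem \ref{thm:main_backtrack_step}) never exploits that the gradient in the error bound is evaluated at the iterate itself, so that replacing $\nabla f(\x_{i})$ by $\nabla f(\p_{i})$ throughout leaves every inequality intact.
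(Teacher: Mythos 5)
Your proposal is correct and follows essentially the same route as the paper's proof: instantiate Assumption \ref{assu:list_of_req} via Lemmas \ref{lem:our_proj_prop}, \ref{lem:approx_Rim_grad}, \ref{lem:retraction_2_property} and Remark \ref{rem:v_bound} (with Item \ref{item:list_of_req_grad} supplied by Eq. \eqref{eq:approx_Rim_grad4} and $\nabla f(\rb)$ replaced by $\nabla f(\p)$), get Assumption \ref{assu:Lipschitz-type-gradient} from Lemma \ref{lem:pullback_Lip}, apply Corollary \ref{cor:main_backtrack_step}, and finish with the triangle inequality via Eq. \eqref{eq:approx_Rim_grad3} plus the $L_{\v}\to 0$ limit. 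The one subtlety you flag — that the abstract corollary treats the per-iteration gradient error bound as opaque, so evaluating it at the preimage $\p_{i}$ is harmless — is exactly the substitution the paper makes.
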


\begin{proof}
Assumptions \ref{assu:MMLS_func_Samples}, \ref{assu:Lipschitz-gradient}, and \ref{assu:ret_well_defined}, together with Lemma \ref{lem:our_proj_prop} (Appendix \ref{subsec:background_MMLS}), Lemma \ref{lem:approx_Rim_grad}, Lemma \ref{lem:retraction_2_property}, and Remark \ref{rem:v_bound}, imply that the components in Table \ref{tab:riemannian-approx} (and MMLS) satisfy Assumption \ref{assu:list_of_req}, where Item \ref{item:list_of_req_grad} is replaced with Eq. \eqref{eq:approx_Rim_grad4} and $\rb$ is replaced with $\p\in \mathcal{M}$ such that ${\mathcal P}_{m}^{h}(\p)=\rb$. Lemma \ref{lem:pullback_Lip} implies Assumption \ref{assu:Lipschitz-type-gradient}. Together with Assumption \ref{assu:lower_bound}, provided $h$ is small enough such that Eq. \eqref{eq:grad_cond_conc_backtrack_funknown0} holds and assuming Eq. \eqref{eq:grad_cond_conc_backtrack_funknown} holds for all the iterations, Corollary \ref{cor:main_backtrack_step} can be applied for Algorithm \ref{alg:MMLS-for-Riemannian} with the backtracking procedure from Algorithm \ref{alg:MMLS-RO_backtrack} to determine $\alpha_{i}$, with an initial step-size according to Eq. \eqref{eq:conc_step_size_backtrack_funknown} and parameters $\gamma\in (0,1)$, $\delta\in (0, 0.25)$, and some $\bar{\alpha} > 0$, using the components from Table \ref{tab:riemannian-approx}. Thus, we can conclude that Eq. \eqref{eq:grad_conc_bound_backtrack_funknown} and the bounds on the
number of iterations, Eq. \eqref{eq:it_num11} and Eq. \eqref{eq:it_num12}, hold. 

Finally, to show that Eq. \eqref{eq:true_grad_bound_backtrack_funknown} holds, we use Eq. \eqref{eq:approx_Rim_grad3} from Lemma \ref{lem:approx_Rim_grad} and
\begin{eqnarray*}
 \left\| \gradM{f(\p)}  \right\| \leq \left\| \gradM{f(\p)} - \gradtil{f(\x)} \right\| + \left \|\gradtil{f(\x)} \right \| \leq \\
 \leq \left( c_{f} + 2 c_{\mathcal{M}}  \left\| \nabla  f(\p) \right\| \right)\sqrt{D}h^{m} + \varepsilon_{6} (\widetilde{\mathcal{M}}) \leq \left( c_{f} + 2 c_{\mathcal{M}}  G \right)\sqrt{D}h^{m} + \varepsilon_{6} (\widetilde{\mathcal{M}}),
\end{eqnarray*}
where $\|\nabla  f(\p)\|\leq G$  is finite and exists since $\mathcal{M}$ is a compact manifold and $\nabla  f(\p)$ is assumed to be Lipschitz
continuous. Eq. \eqref{eq:dist_p_and_x} holds due to Eq. \eqref{eq:distbetween_p_and_r}. Using Remark \ref{rem:v_bound} we get that when $h\to 0$ then $L_{\v} \to 0$, and we can conclude that the bound in Eq. \eqref{eq:true_grad_bound_backtrack_funknown} goes to $\varepsilon$, and the left-hand sides of Eq. \eqref{eq:grad_cond_conc_backtrack_funknown0} Eq. \eqref{eq:grad_cond_conc_backtrack_funknown} go to $0$.
\end{proof}

\section{\label{sec:Numerical-experiments}Numerical Experiments}
In this section we discuss some implementation details and present our experimental results.  

\subsection{\label{subsec:implement_details}Practical Implementation Details}
In this subsection, we point out some practical implementation details for MMLS-RO, specifically for algorithms \ref{alg:MMLS-for-Riemannian}, \ref{alg:MMLS-RO_backtrack}, and \ref{alg:MMLS-RO_CG} (for implementation details of MMLS algorithm see \cite[Section 3.2]{sober2020manifold}). First, the main difficulty in applying the aforementioned algorithms is keeping the step-size such that it is smaller than $Q$ (Assumption \ref{assu:ret_well_defined}), ensuring that the approximate-retraction is applied on a point in $U_{\mathrm{unique}}$. Unfortunately, in practice the set $U_{\mathrm{unique}}$ is unknown in general. But, typically $U_{\mathrm{unique}}$ depends on the reach of the manifold $\mathcal{M}$ which can be estimated (e.g., \cite{aamari2019estimating} where upper bounds on the reach based on samples of $\mathcal{M}$ are proposed). Recall from Assumption \ref{assu:ret_well_defined}, that the goal of this step-size limitation is to ensure that the approximate-retraction is defined in a compact subset of the approximate-tangent bundle. Even though we cannot ensure this step-size limitation exactly, heuristically for $h$ small enough, taking $Q = O(h)$ and $\mu \approx \reach{\mathcal{M}}/2$ (where $\reach{\mathcal{M}}$ is estimated, and also allowing us to heuristically fulfill Constraint \ref{item:MMLS_step1_2} from Problem \eqref{eq:MMLS_step1}) would approximately satisfy the step-size limitation. To achieve that goal, we implemented in our experiments a line-search procedure until MMLS algorithm successfully returns an output. Explicitly, Algorithm \ref{alg:MMLS-RO_backtrack} is implemented with an initial step-size that respects the distance imposed by the weight functions, $\theta_{j}(\cdot)$, $j=1,2,3$ in problems \eqref{eq:MMLS_step1}, \eqref{eq:MMLS_step2}, and \eqref{eq:step2-func_approx}, giving us an estimation of $O(h)$ (see the implementation of the functions \emph{calculateSigma} and \emph{calculateSigmaFromPoint} in \href{https://github.com/aizeny/manapprox/blob/main/manapprox/ManApprox.py}{MMLS}), and the step size is reduced by a factor $\gamma\in (0,1)$ until MMLS is successful. 
Another practical issue is that in many problems, the intrinsic dimension of the constraining manifold $\mathcal{M}$ is unknown, and has to be estimated. One can use statistical methods for estimating the intrinsic dimension of a manifold $\mathcal{M}$  based on its samples, e.g., \cite{camastra2002estimating, kegl2002intrinsic, costa2004geodesic, levina2004maximum}. 

With regards to the components presented in Table \ref{tab:riemannian-approx}, we propose the following alternative: replacing the tangent space estimation, $\widetilde{T}_{\rb}\widetilde{M}$, by $H(\rb)$. On the one hand, this choice saves computational time in finding the orthogonal projection on the approximate-tangent space (and its related components, e.g., approximate-Riemannian gradient), since the first step of MMLS provides an orthogonal basis of $H(\rb)$. On the other hand, to the best of our knowledge, no theoretical analysis of the approximation order of the tangent spaces by $H(\cdot)$ is known. In our experiments, we found that using these alternative components still allowed our proposed algorithms to converge in most cases, though in general the results were inferior to the results we achieved using the components in Table \ref{tab:riemannian-approx}.

Another issue related to the numerical stability of computing the components in Table \ref{tab:riemannian-approx}, is inverting Gram matrices as required for example in Eq. \eqref{eq:f_Rgrad}. We performed the Cholesky decomposition on the Gram matrix (alternatively, performed a QR factorization of the pre-multiplied matrix) prior to inverting it, and then solved the corresponding linear equations. 

Finally, we note two relaxations for our assumptions which could widen the possible applications of our algorithms, First, even though we assume clean samples of the cost function and the constraining manifold, in practice oftentimes samples are noisy. As we previously mention in Remark \ref{rem:clean_samples}, MMLS algorithm works for noisy sample sets, but some of the theory is still incomplete. Thus, in some experiments we relax the clean samples requirement, and present experiments with noisy samples as well. Another assumption we can relax, is that the constraints in the optimization problem (Eq. \eqref{eq:general_problem}) define some manifold globally. Instead, it is enough that these constraints define a manifold locally, possibly a different manifold at each neighborhood, such that MMLS is still able to produce an approximation locally.    
\if0
With small step size perform only the second step of MMLS to reduce computations.
\fi

\subsection{\label{subsec:experiments}Experiments}
In this subsection, we present our experiments, which demonstrate the effectiveness of our proposed  components in Table \ref{tab:riemannian-approx} and Algorithms \ref{alg:MMLS-for-Riemannian}, \ref{alg:MMLS-RO_backtrack}, and \ref{alg:MMLS-RO_CG}, for both the scenarios where $f$ and its Euclidean gradient are explicitly known (first-order optimization with respect to the cost, zeroth-order with respect to the constraint, labeled by \emph{MMLS\_RO} in the figures via a blue line), and where $f$ is only accessed via samples (Assumption \ref{assu:MMLS_func_Samples}, zeroth-order optimization, labeled by \emph{ZO\_MMLS\_RO} in the figures via a green line). MMLS algorithms implementations for approximating manifolds and approximating functions on manifolds are based on the implementations in \url{https://github.com/aizeny/manapprox/tree/main/manapprox}. In particular, we use the parametric family of weight functions $\theta(k\ ;\ \cdot)$ implemented there, also used in the experiments in \cite[Section 4]{sober2021approximation}, for problems \eqref{eq:MMLS_step1}, \eqref{eq:MMLS_step2}, and \eqref{eq:step2-func_approx}. Explicitly,
\begin{equation*}
    \theta(k\ ;\ t) \coloneqq
    \begin{cases}
    e^{\frac{-t^2}{(t-kh)^{2}}} \cdot \chi_{kh}, & t\neq kh,\\
    0, & t=kh,
    \end{cases}
\end{equation*}
where $k$ is some parameter with the default value $k = 1.5$, $h$ is the fill distance, and $\chi_{kh}$ is an indicator function on the interval $[-kh, kh]$. Note that $\theta(k\ ;\ \cdot)$ is compactly supported and $C^{\infty}$. As explained in \cite[Section 4]{sober2021approximation}, the support size is chosen such that the local least-squares
matrix would be invertible.
Our experiments are performed both via our own implementations of the components in Table \ref{tab:riemannian-approx} and Algorithms \ref{alg:MMLS-for-Riemannian} and \ref{alg:MMLS-RO_backtrack}, and uses the framework of \textsc{PYMANOPT} \cite{townsend2016pymanopt}. 

Specifically, we implemented manifold classes which produce the components in Table \ref{tab:riemannian-approx}, in addition to approximations of $f$ in the case of zeroth-order optimization. Algorithms \ref{alg:MMLS-for-Riemannian} and \ref{alg:MMLS-RO_CG} are based on their corresponding implementations of Riemannain gradient-descent and Riemannian CG in \textsc{PYMANOPT}, with the following exceptions: the initial point is first projected on $\widetilde{\mathcal{M}}$ via an MMLS projection; the line-search procedure of \textsc{PYMANOPT} is modified based on Algorithm \ref{alg:MMLS-RO_backtrack}, such that the initial step-size respects the distance imposed by the weight function and the constraint on it to be smaller than $Q$ which is satisfied by manually searching the point at which MMLS algorithm is successful as explained in Subsection \ref{subsec:implement_details}. 

In the graphs, Riemannian gradient-descent is labeled by \emph{GD}, and Riemannian CG is labeled by \emph{CG}. The iterations based on noisy samples of the manifold (\emph{MMLS\_RO}) and noisy samples of the cost function (\emph{ZO\_MMLS\_RO}) are labeled by \emph{NGD} and \emph{NCG} for Riemannian gradient-descent and Riemannian CG correspondingly. In our experiments, we modeled noise in the samples according to the additive model presented in \cite{sober2020manifold,sober2021approximation}. Thus, in our experiments noise was added to the manifold samples, and independently, noise was added to the cost function samples after the clean samples of the manifold were given as an input to it.  

\subsubsection{\label{subsubsec:prelim_experiments}Preliminary Experiment}
In the first experiment, we demonstrate Algorithm \ref{alg:MMLS-for-Riemannian} with backtracking (Algorithm \ref{alg:MMLS-RO_backtrack}) using our own implementation on a zeroth-order optimization for solving the following problem:
\begin{equation}\label{eq:exp_prelim}
    \min_{\x\in \mathcal{M}} \sin{(2\pi x_{1})} + 4x_{2}^{2} + x_{1},
\end{equation}
where 
\begin{equation*}
    \mathcal{M} \coloneqq \left\{\x \in \R^{100}\ |\ x_{1},x_{2}\in \R,\ x_{3} = \sin{(2\pi (x_{1}^{2} + x_{2}^{2}))},\ x_{i} = 1\ \forall 4\leq i \leq 100 \right\}.
\end{equation*}
The ambient dimension is $D=100$ and the intrinsic dimension is $d=2$. We use $n=50000$ samples of $\mathcal{M}$, and polynomial approximations of degree $m=1$. The results are presented in Fig. \ref{fig:pre_zeroorder}, both for the case of clean samples and for the case of noisy samples. The noise is an additive Gaussian noise $\mathcal{N}(0, 10^{-3})$, added to the coordinates $x_{1}, x_{2}, x_{3}$ of the manifold samples, and to the cost function samples as well. The iterations (red) are illustrated both in the ambient space, where the manifold is illustrated with respect to $x_{1}, x_{2}, x_{3}$, and in the parametric space, where cost function values are plotted as a function of $\x_{1}$ and $\x_{2}$. Darker colors represent lower cost function values. 

For the clean samples, the algorithm terminated after $89$ iterations, starting from an initial approximated cost value of $2.27$ and an approximate-Riemannian gradient norm of $0.795$, and finishing with an approximated cost value of $-1.25$ and an approximate-Riemannian gradient norm of $0.004$. For the noisy samples, the algorithm terminated after $21$ iterations, starting from an initial approximated cost value of $2.24$ and an approximate-Riemannian gradient norm of $0.718$, and finishing with an approximated cost value of $-1.22$ and an approximate-Riemannian gradient norm of $0.004$.

\begin{figure}[tb]
\begin{centering}
\begin{tabular}{ccc}
\centering{}\includegraphics[scale=0.45]{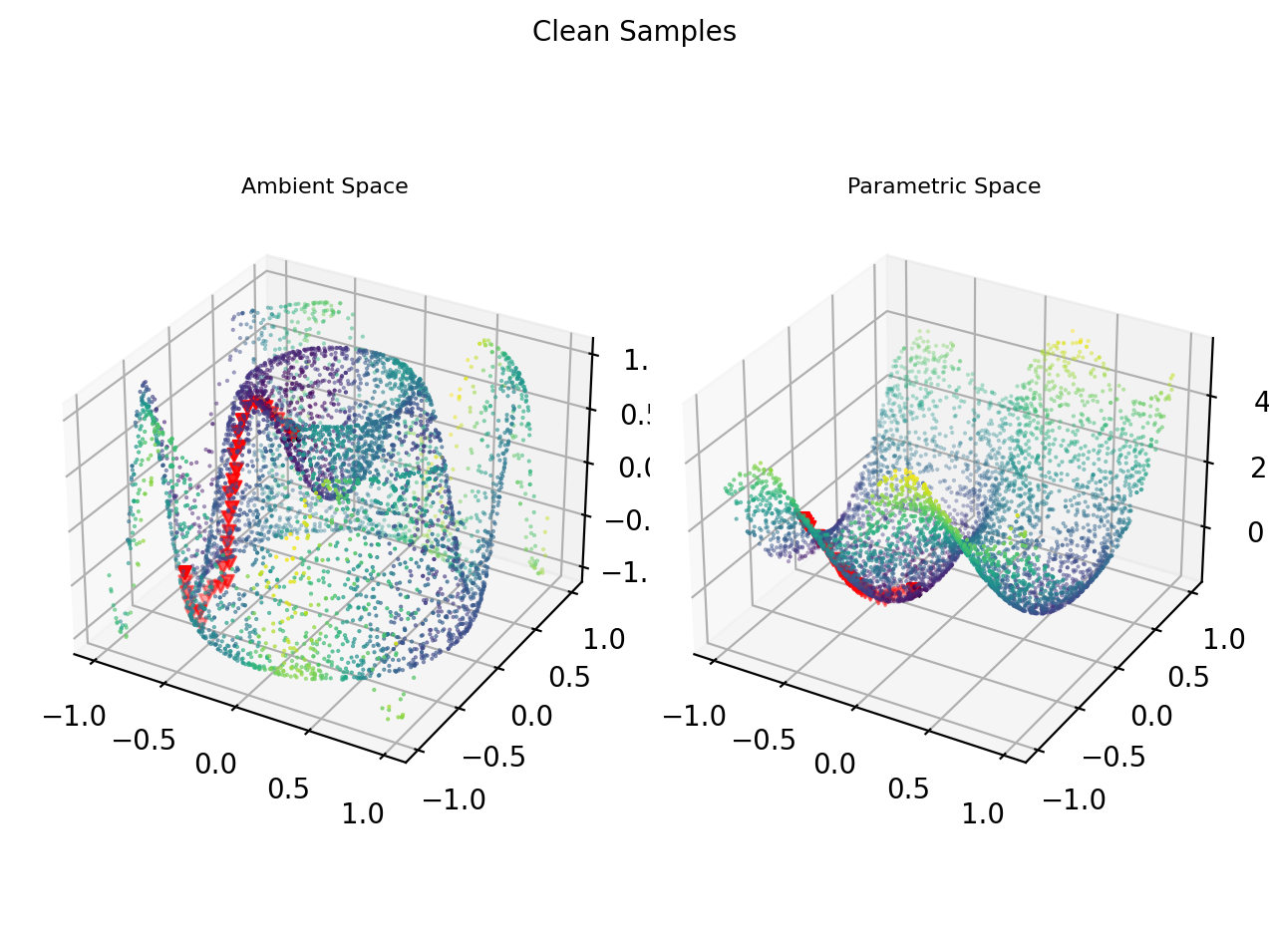} & ~ & \includegraphics[scale=0.45]{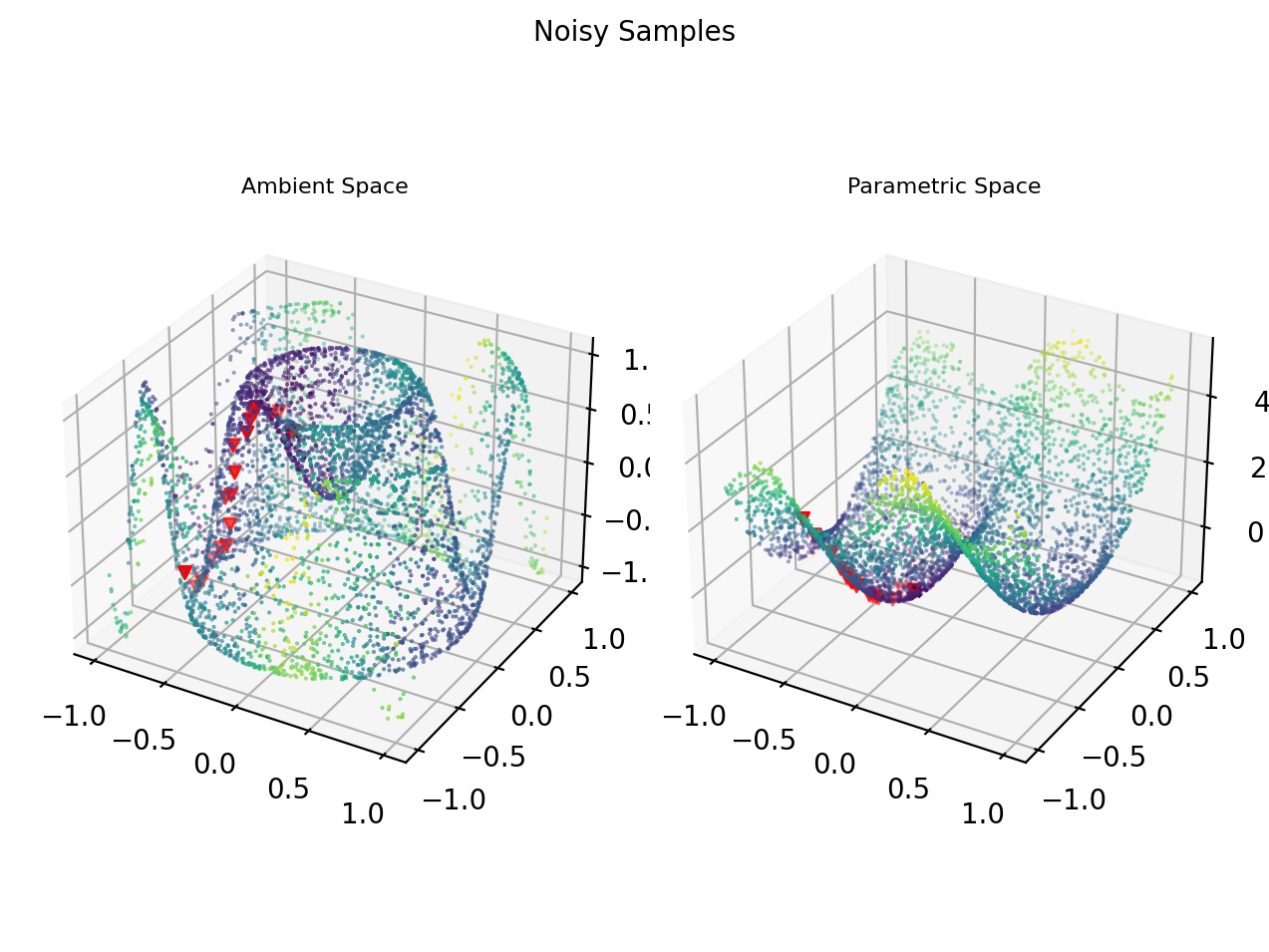}
\end{tabular}
\par\end{centering}
\caption{Iterations (red) of Algorithm \ref{alg:MMLS-for-Riemannian} with backtracking (Algorithm \ref{alg:MMLS-RO_backtrack}) for Problem \eqref{eq:exp_prelim} in the ambient space and the parametric space with noise (right) and without noise (left).\label{fig:pre_zeroorder}}
\end{figure}

\subsubsection{\label{subsubsec:manopt_experiments}Experiments with Matrix Manifolds}
In our main set of experiments we tested our algorithm on a few eigenvalue problems, a principal component analysis (PCA) problem, and a low-rank approximation of a matrix on the following manifolds: the sphere in $\R^{3}$, two Stiefel manifolds, and a fixed-rank manifold. We performed our experiments both with clean samples, and noisy samples of the constraining manifold and the cost function. We compered our results to the results obtained from \textsc{PYMANOPT} implementations of Riemannian gradient-descent and Riemannian CG for each of the problems and their corresponding manifolds, where all the information regarding the cost functions and the constraining manifolds is fully available to the solver. We label these results in the figures by \emph{Pymanopt} via a red line. To generate samples of each of the manifolds, we used the sampling method implemented for each of the manifolds in \textsc{PYMANOPT}. The initial point was chosen at random from the sampling set. We set the following stopping criteria (reaching one of them would stop the iterations) in addition to the default stopping in \textsc{PYMANOPT}:
\begin{itemize}
    \item A (approximate-)Riemannian gradient norm smaller than $0.005$.
    \item A step-size smaller than $10^{-10}$.
    \item Maximal number of $1000$ iterations.
\end{itemize}

% \begin{figure}[tb]
% \centering{}\includegraphics[scale=0.50]{}\caption{Eigenvalues: Sphere 2D.}
% \end{figure}

% \begin{figure}[tb]
% \centering{}\includegraphics[scale=0.50]{}\caption{Eigenvalues: Sphere 2D.}
% \end{figure}
The first problem we tackle is finding the top eigenvalue of a randomly generated SPD matrix, $\matA\in \R^{3\times 3}$, thus the constraining manifold is the sphere in $\R^{3}$. Explicitly,
\begin{equation}\label{eq:eig_sphere3D}
    \min_{\x\in \mathcal{S}^{3}} -\x^{\T} \matA \x,\ \matA \coloneqq \left(\begin{array}{ccc}
1.64 & 0.9 & 0.71\\
0.9 & 0.82 & 0.33\\
0.71 & 0.33 & 0.7
\end{array}\right),\ \mathcal{S}^{3} \coloneqq \{\x\in \R^{3}\ |\ \|\x\| = 1 \}.
\end{equation}
The ambient dimension is $D=3$ and the intrinsic dimension is $d=2$. We use $n=40000$ samples of $\mathcal{S}^{3}$, and polynomial approximations of degree $m=3$. The noise is an additive Gaussian noise $\mathcal{N}(0, 10^{-4})$, added to both the samples of $\mathcal{S}^{3}$, and the cost function samples in Problem \eqref{eq:eig_sphere3D}. The results are presented in Fig. \ref{fig:sphere3D}. The left plots, present suboplimality, i.e., $|\lambda_{1}(\matA) - \x^{\T} \matA \x| / \lambda_{1}(\matA)$ where $\lambda_{1}(\matA)$ is the largest eigenvalue of $\matA$, versus iteration count at the top plot, and versus time at the bottom plot. The right plots present the (approximate-)Riemannian gradient norms versus iteration count at the top plot, and versus time at the bottom plot.

\begin{figure}[tb]
\begin{centering}
\begin{tabular}{ccc}
\centering{}\includegraphics[scale=0.45]{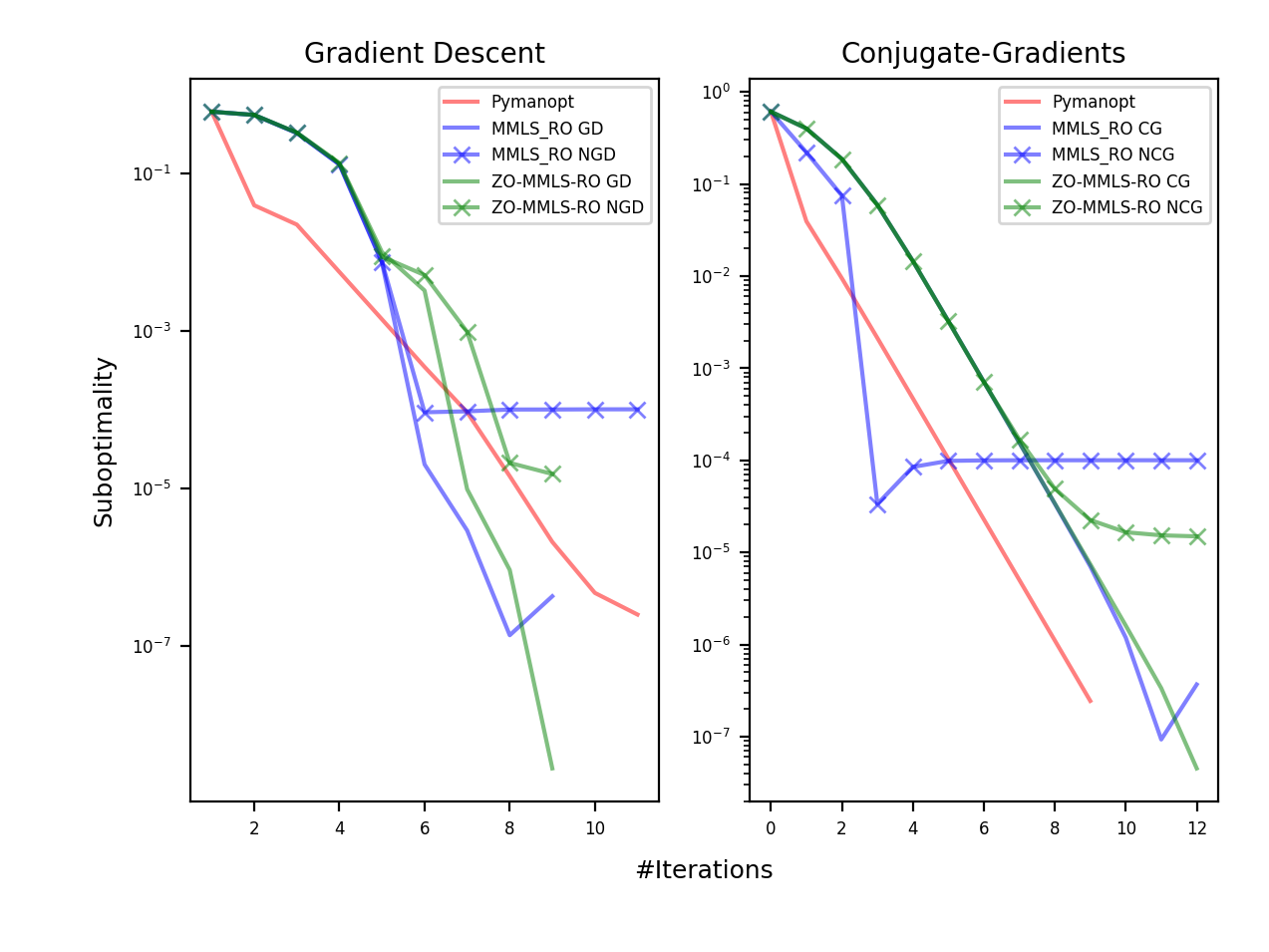} & ~ & \includegraphics[scale=0.45]{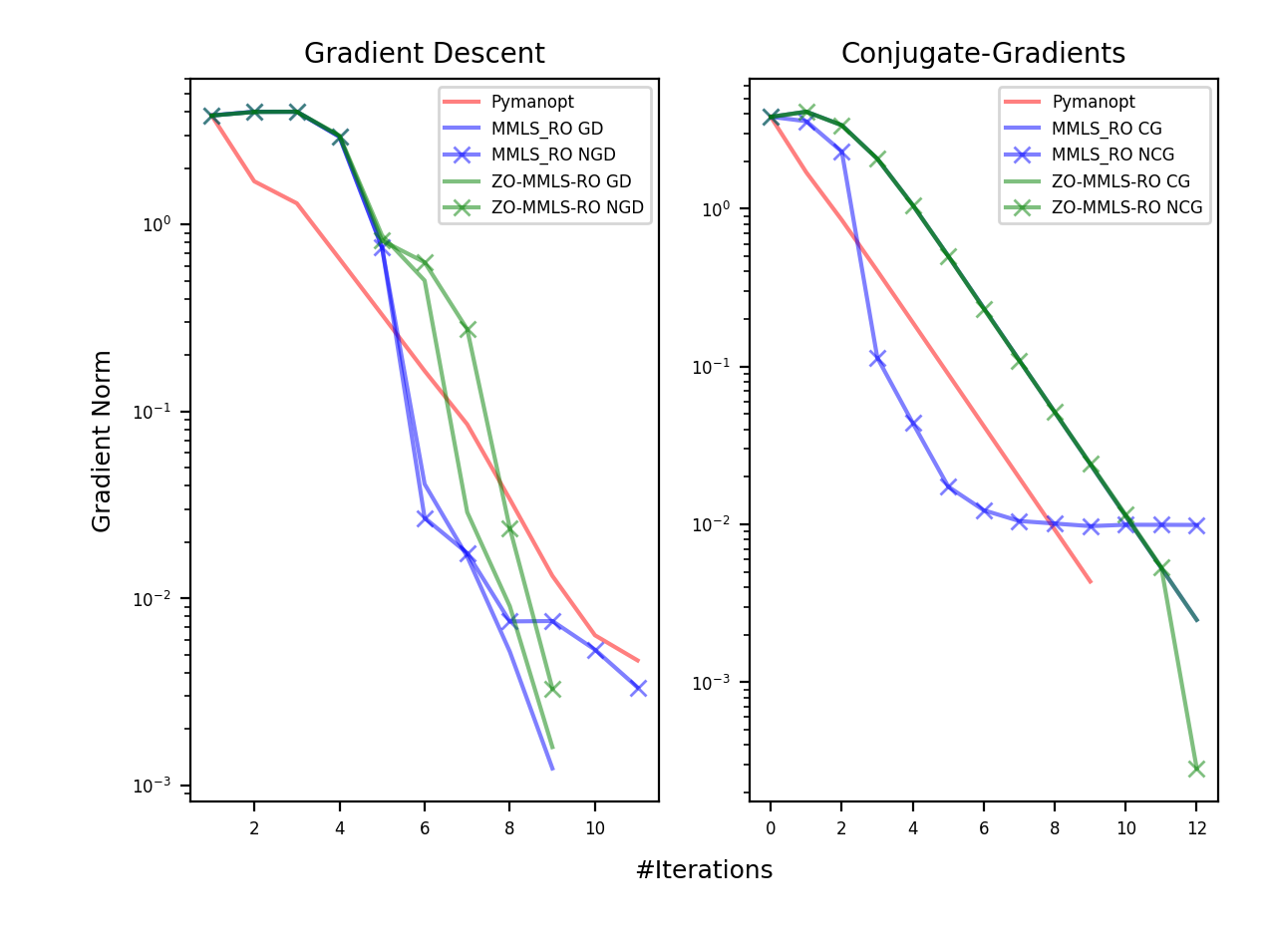} \\ \centering{}\includegraphics[scale=0.45]{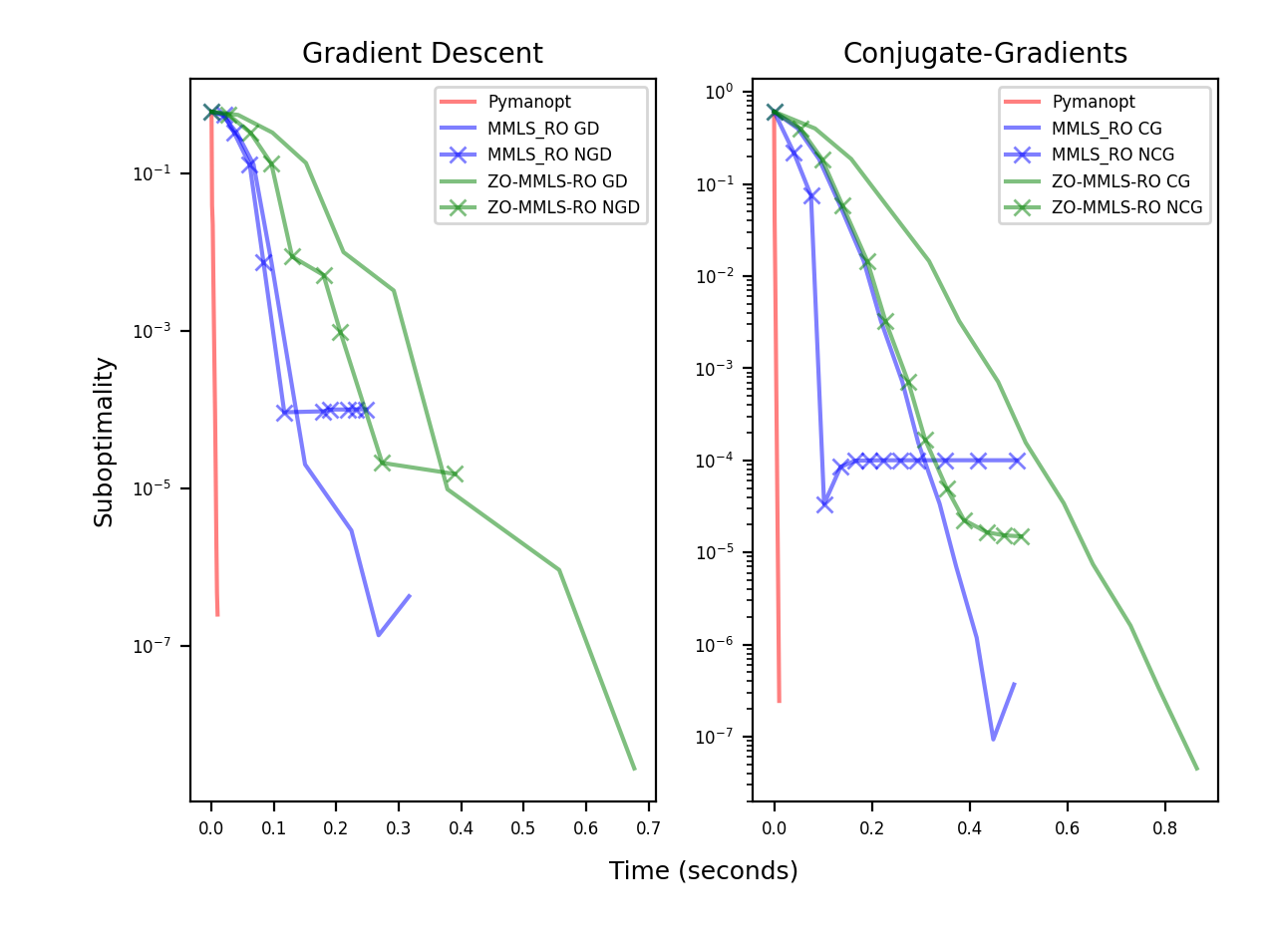} & ~ & \includegraphics[scale=0.45]{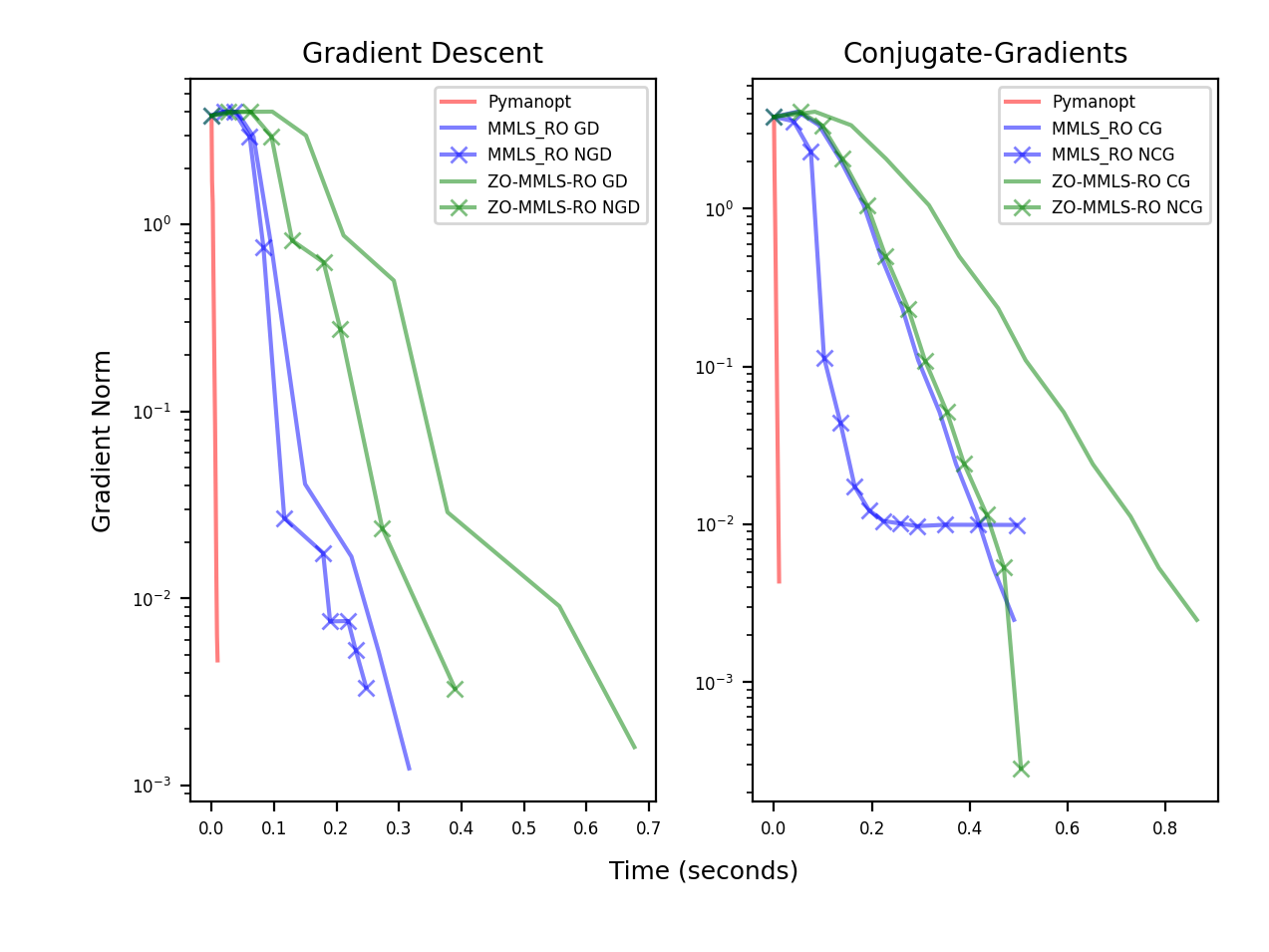}
\end{tabular}
\par\end{centering}
\caption{Suboptimality versus iteration count (top left) and versus time (bottom left), (Approximate-)Riemannian gradient norm versus iteration count (top right) and versus time (bottom right), for Problem \eqref{eq:eig_sphere3D}.\label{fig:sphere3D}}
\end{figure}

We solve two additional similar problems to Problem \eqref{eq:eig_sphere3D}, of finding the two largest eigenvalues of two randomly generated matrices, $\matA_{1}\in \R^{3\times 3}$ and $\matA_{2}\in \R^{4\times 4}$, thus the constraining manifolds are Stiefel manifolds. Explicitly,
\begin{equation}\label{eq:eig_Stiefel3D}
    \min_{\matX\in \stiefel(3,2)} -\Trace{\matX^{\T} \matA_{1} \matX},\ \matA_{1} \coloneqq
    \left(\begin{array}{ccc}
0.23 & 0.35 & 0.39\\
0.35 & 1.33 & 1.06\\
0.39 & 1.06 & 1.27
\end{array}\right),\ \stiefel(3,2) \coloneqq \{\matX\in \R^{3 \times 2}\ |\ \matX^{\T} \matX = \matI_{2} \},
\end{equation}
and 
\begin{equation}\label{eq:eig_Stiefel4D}
      \min_{\matX\in \stiefel(4,2)} -\Trace{\matX^{\T} \matA_{2} \matX},\ \matA_{2} \coloneqq
    \left(\begin{array}{cccc}
2.77 & 2.4 & 1.49 & 2.15\\
2.4 & 2.66 & 1.18 & 2.12\\
1.49 & 1.18 & 1.51 & 1.92\\
2.15 & 2.12 & 1.92 & 3.13
\end{array}\right),\ \stiefel(4,2) \coloneqq \{\matX\in \R^{4 \times 2}\ |\ \matX^{\T} \matX = \matI_{2} \}. 
\end{equation}
Note that since we are only interested in the two largest eigenvalues, we formulate Problem \eqref{eq:eig_Stiefel3D} and Problem \eqref{eq:eig_Stiefel4D} with cost functions designed to find a $2$-dimensional leading eigenspace. If we were also interested in the eigenvectors themselves, then the Brockett cost function \cite{brockett1991dynamical} should have replaced the current cost functions. 

In order to apply our algorithms, we flatten each of the matrices sampled from $\stiefel(3,2)$ and $\stiefel(4,2)$ to column-stack vectors in $\R^{6}$ and $\R^{8}$ correspondingly, while the inputs to the cost function were reshaped back into the corresponding matrix form. For Problem \eqref{eq:eig_Stiefel3D}, the ambient dimension is $D=6$ and the intrinsic dimension is $d=3$. We use $n=42875$ samples of $\stiefel(3,2)$, and polynomial approximations of degree $m=3$. The noise is an additive Gaussian noise $\mathcal{N}(0, 10^{-3})$, added to both the samples of $\stiefel(3,2)$, and the cost function samples in Problem \eqref{eq:eig_Stiefel3D}. For Problem \eqref{eq:eig_Stiefel4D}, the ambient dimension is $D=8$ and the intrinsic dimension is $d=5$. We use $n=100000$ samples of $\stiefel(4,2)$, and polynomial approximations of degree $m=4$. The noise is an additive Gaussian noise $\mathcal{N}(0, 10^{-4})$, added to both the samples of $\stiefel(4,2)$, and the cost function samples in Problem \eqref{eq:eig_Stiefel4D}. The results for Problem \eqref{eq:eig_Stiefel3D} are presented in Fig. \ref{fig:Stiefel3D}, and the results for Problem \eqref{eq:eig_Stiefel4D} are presented in Fig. \ref{fig:Stiefel4D}. In Fig. \ref{fig:Stiefel3D} and Fig. \ref{fig:Stiefel4D}, the left plots present suboplimality, i.e., $|(\lambda_{1}(\matA_{i}) + \lambda_{2}(\matA_{i})) - \Trace{\matX^{\T} \matA_{i} \matX}| / (\lambda_{1}(\matA_{i}) + \lambda_{2}(\matA_{i}))$ where $\lambda_{1}(\matA_{i})$ and $\lambda_{2}(\matA_{i})$ are the two largest eigenvalue of $\matA_{i}$, and $i=1,2$, versus iteration count (top) and time (bottom). The right plots in Fig. \ref{fig:Stiefel3D} and Fig. \ref{fig:Stiefel4D}, present (approximate-)Riemannian gradient norms versus iteration count (top) and versus time (bottom).

\begin{figure}[tb]
\begin{centering}
\begin{tabular}{ccc}
\centering{}\includegraphics[scale=0.45]{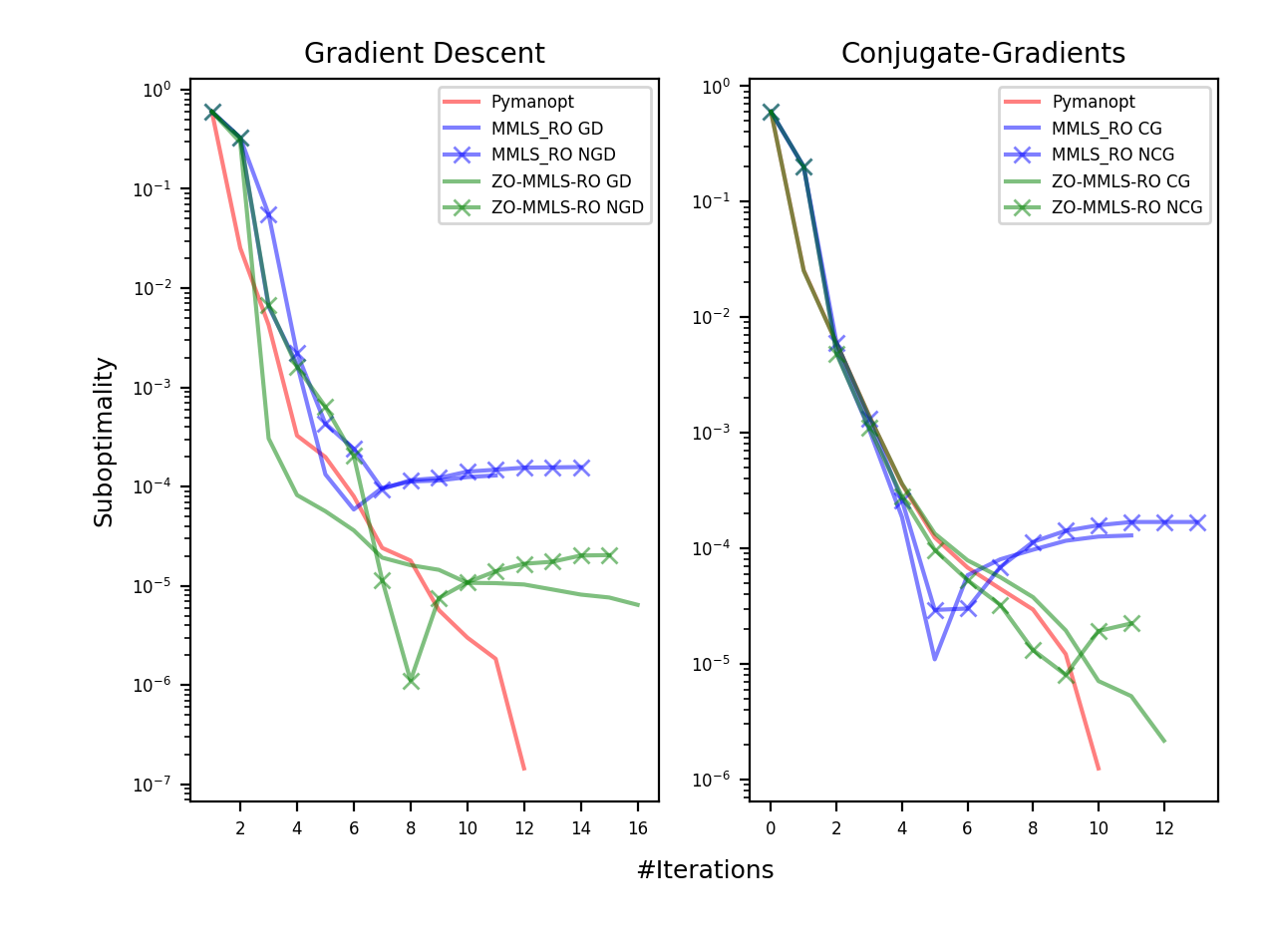} & ~ & \includegraphics[scale=0.45]{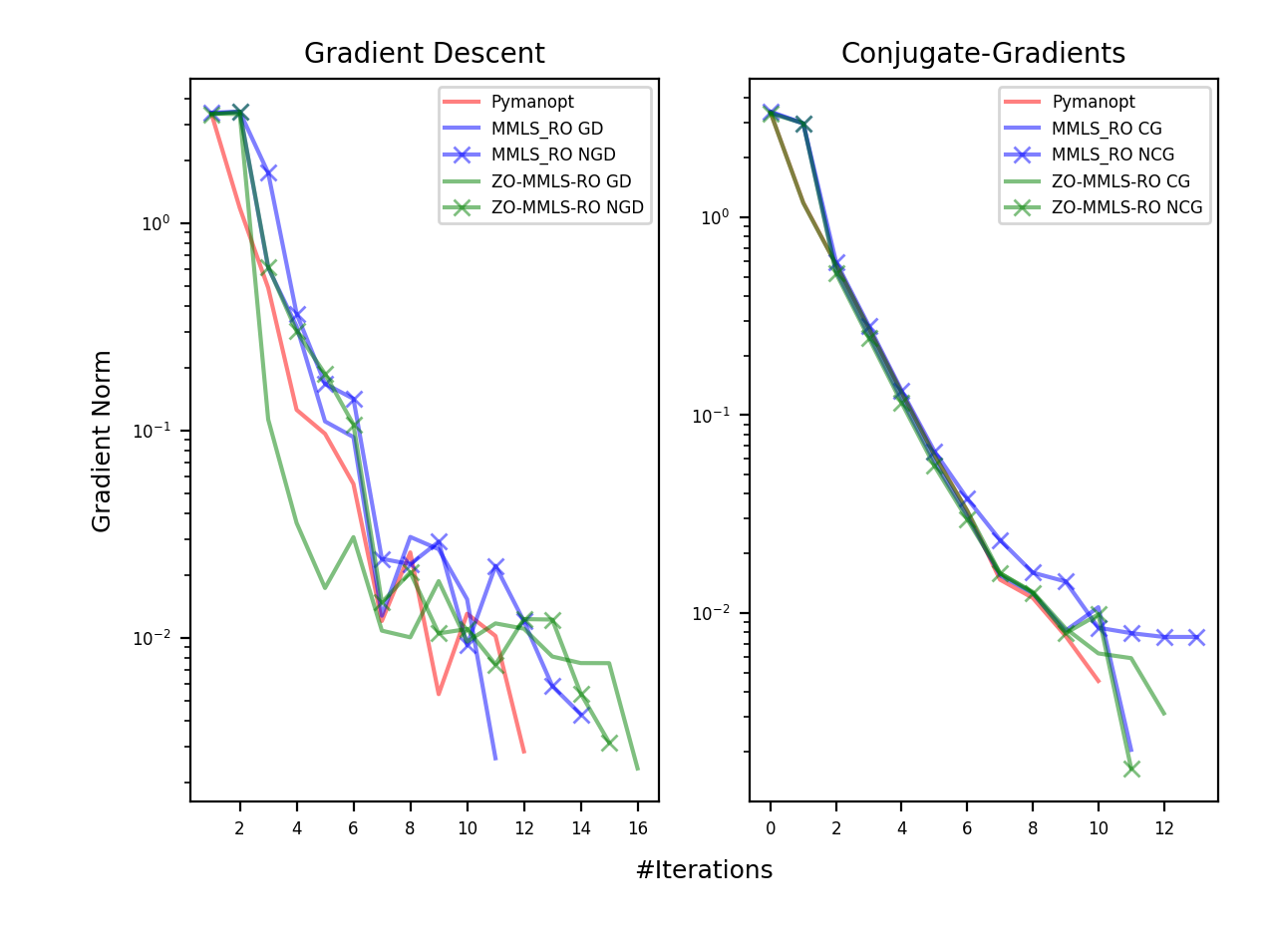} \\ \centering{}\includegraphics[scale=0.45]{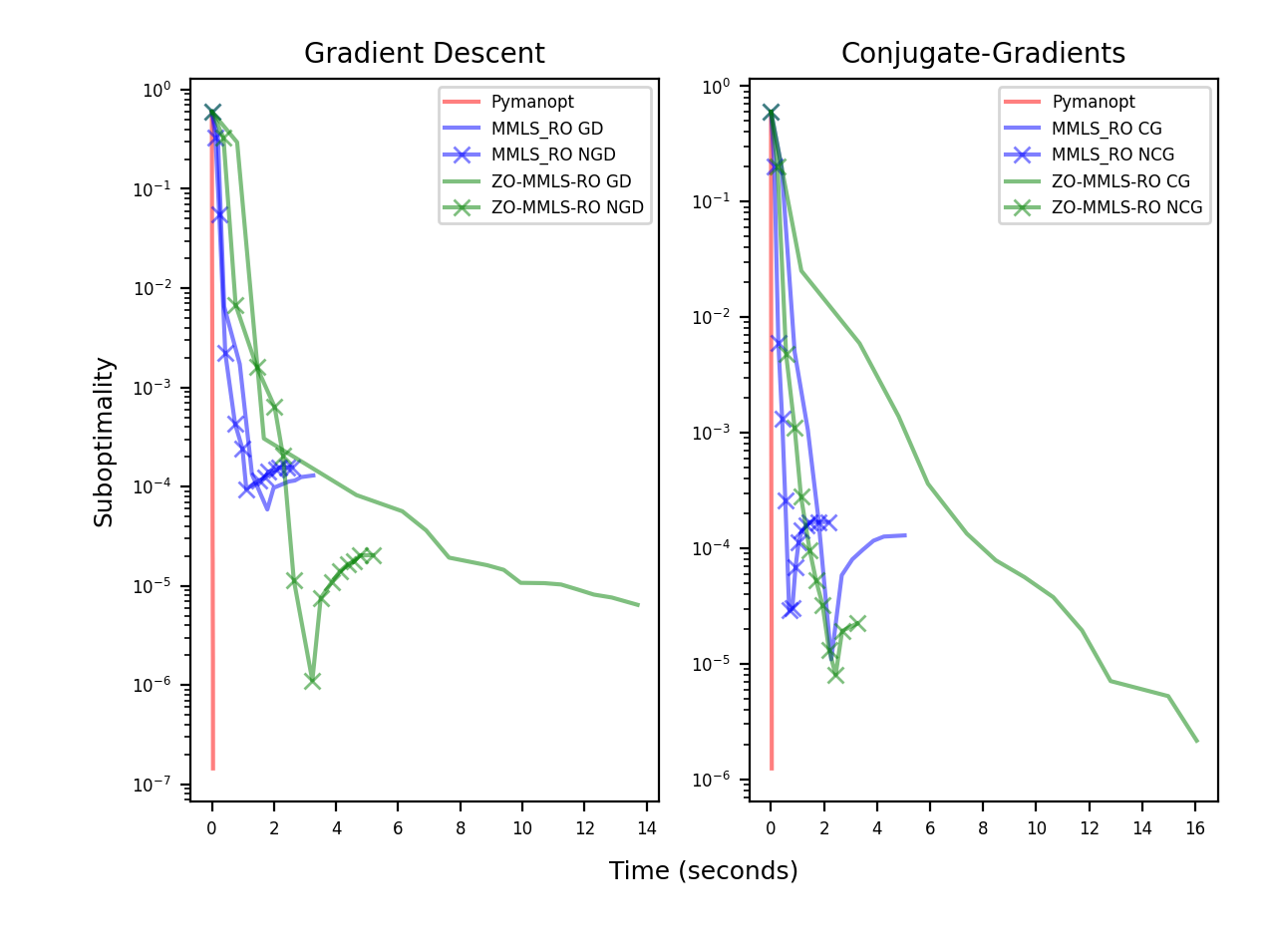} & ~ & \includegraphics[scale=0.45]{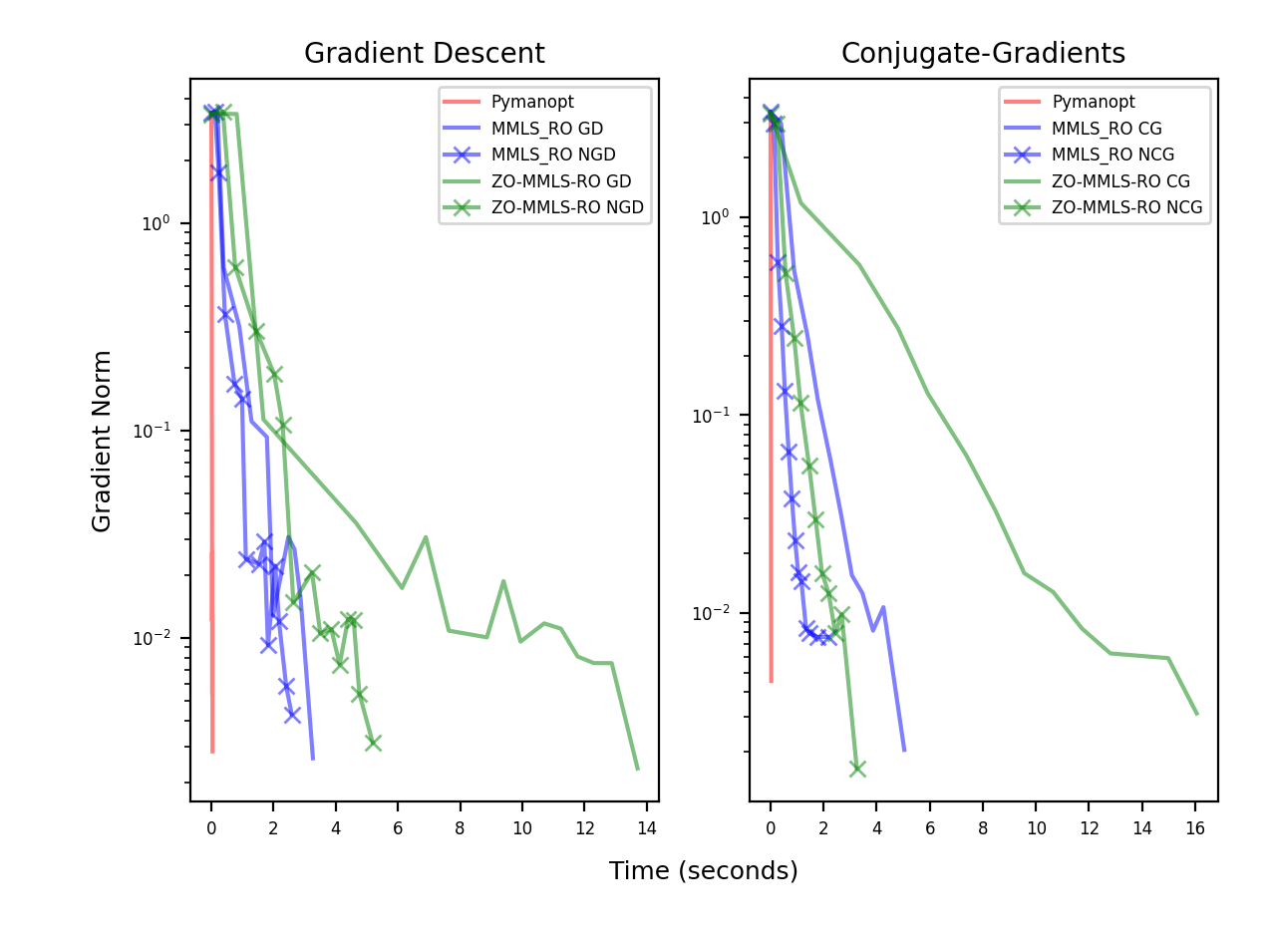}
\end{tabular}
\par\end{centering}
\caption{Suboptimality versus iteration count (top left) and versus time (bottom left), (Approximate-)Riemannian gradient norm versus iteration count (top right) and versus time (bottom right), for Problem \eqref{eq:eig_Stiefel3D}.\label{fig:Stiefel3D}}
\end{figure}

\begin{figure}[tb]
\begin{centering}
\begin{tabular}{ccc}
\centering{}\includegraphics[scale=0.45]{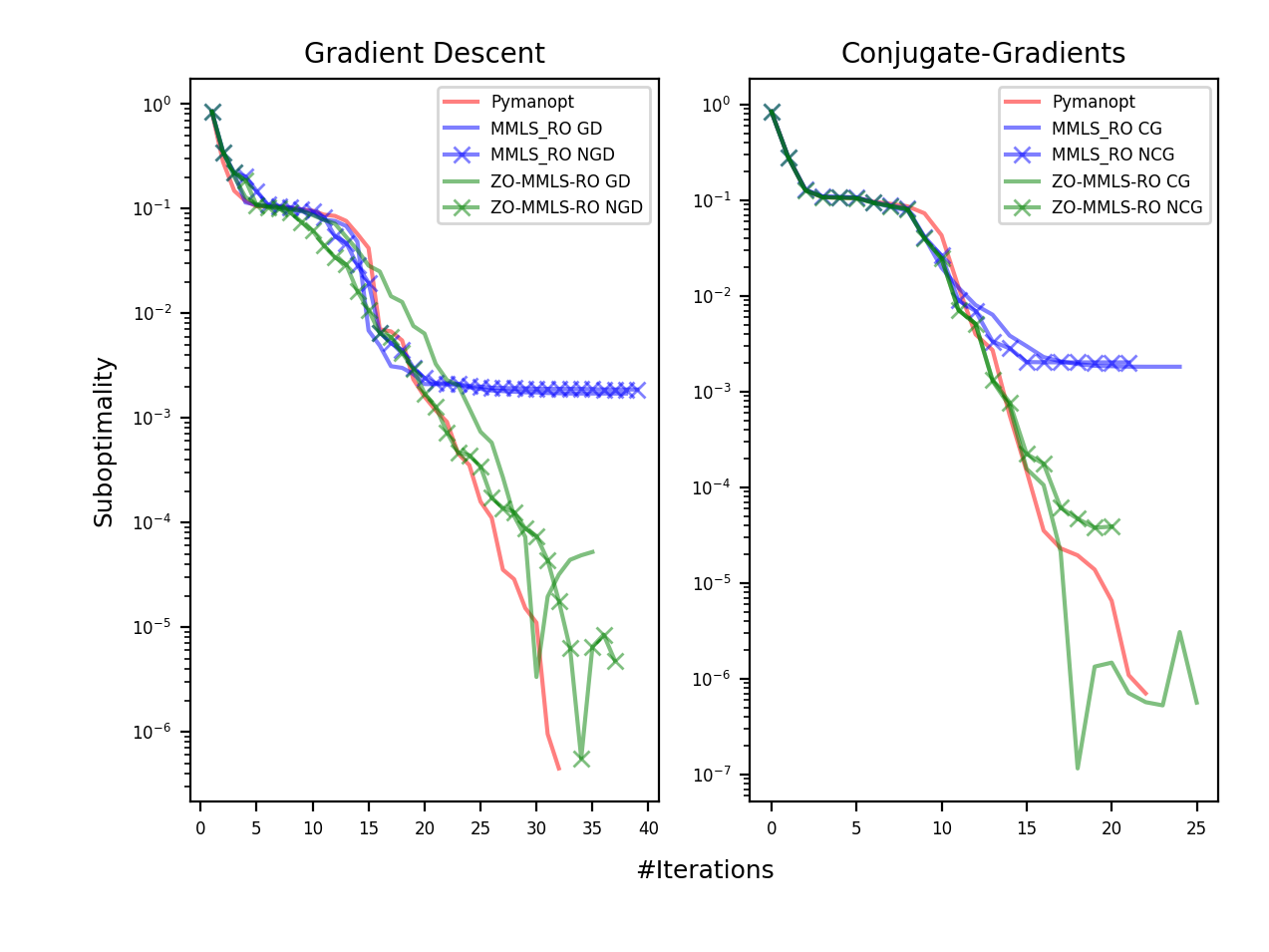} & ~ & \includegraphics[scale=0.45]{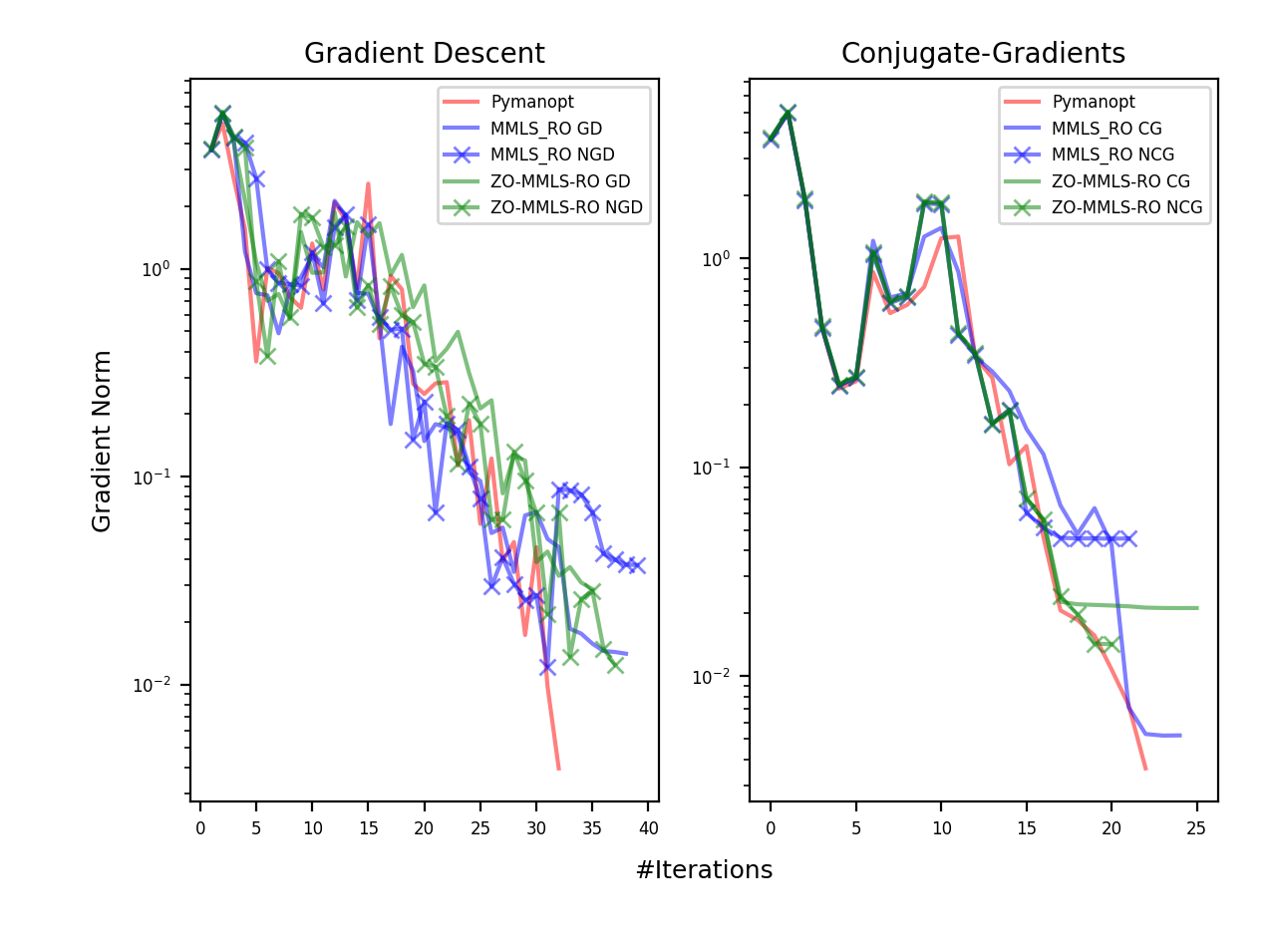} \\ \centering{}\includegraphics[scale=0.45]{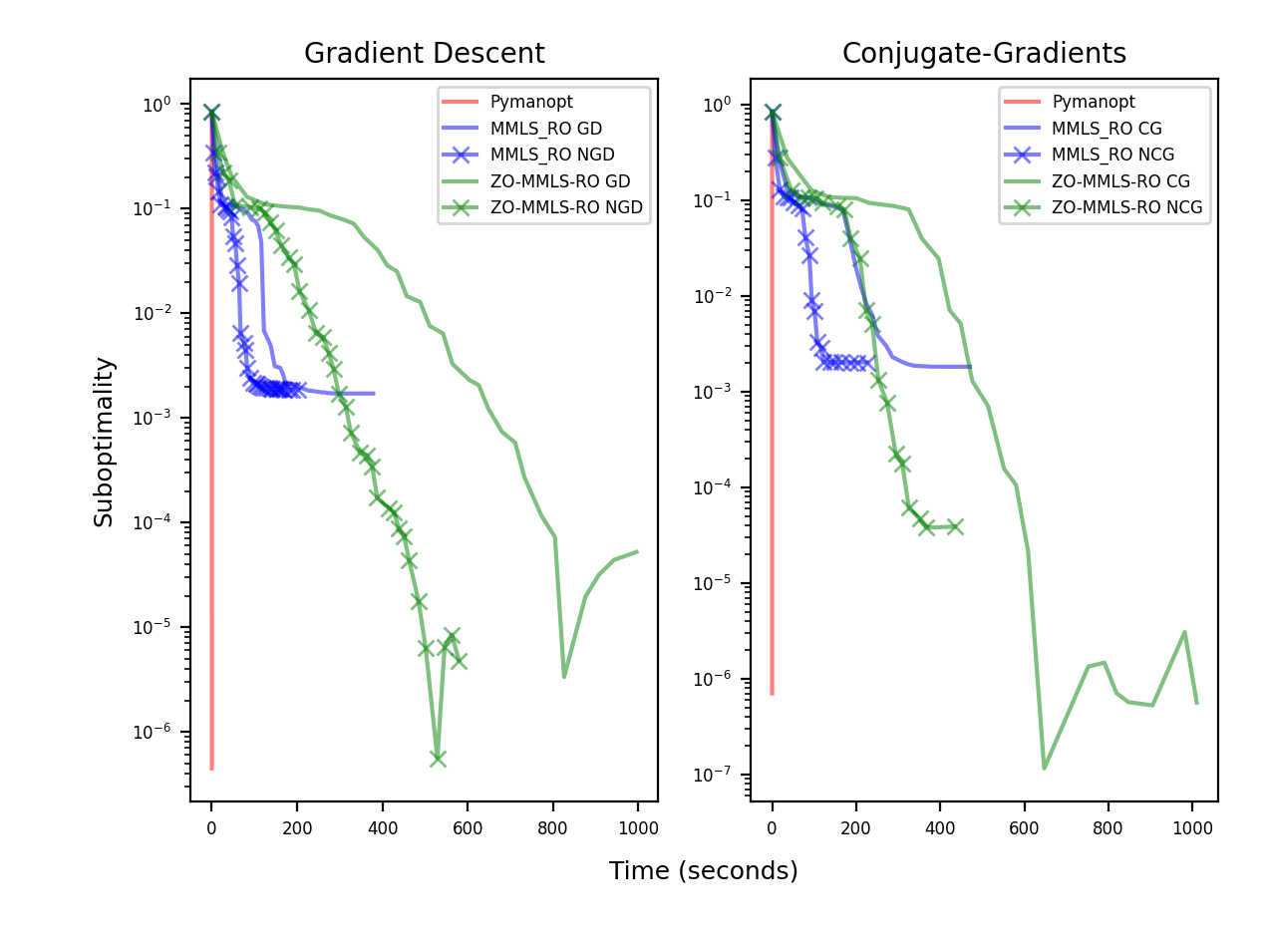} & ~ & \includegraphics[scale=0.45]{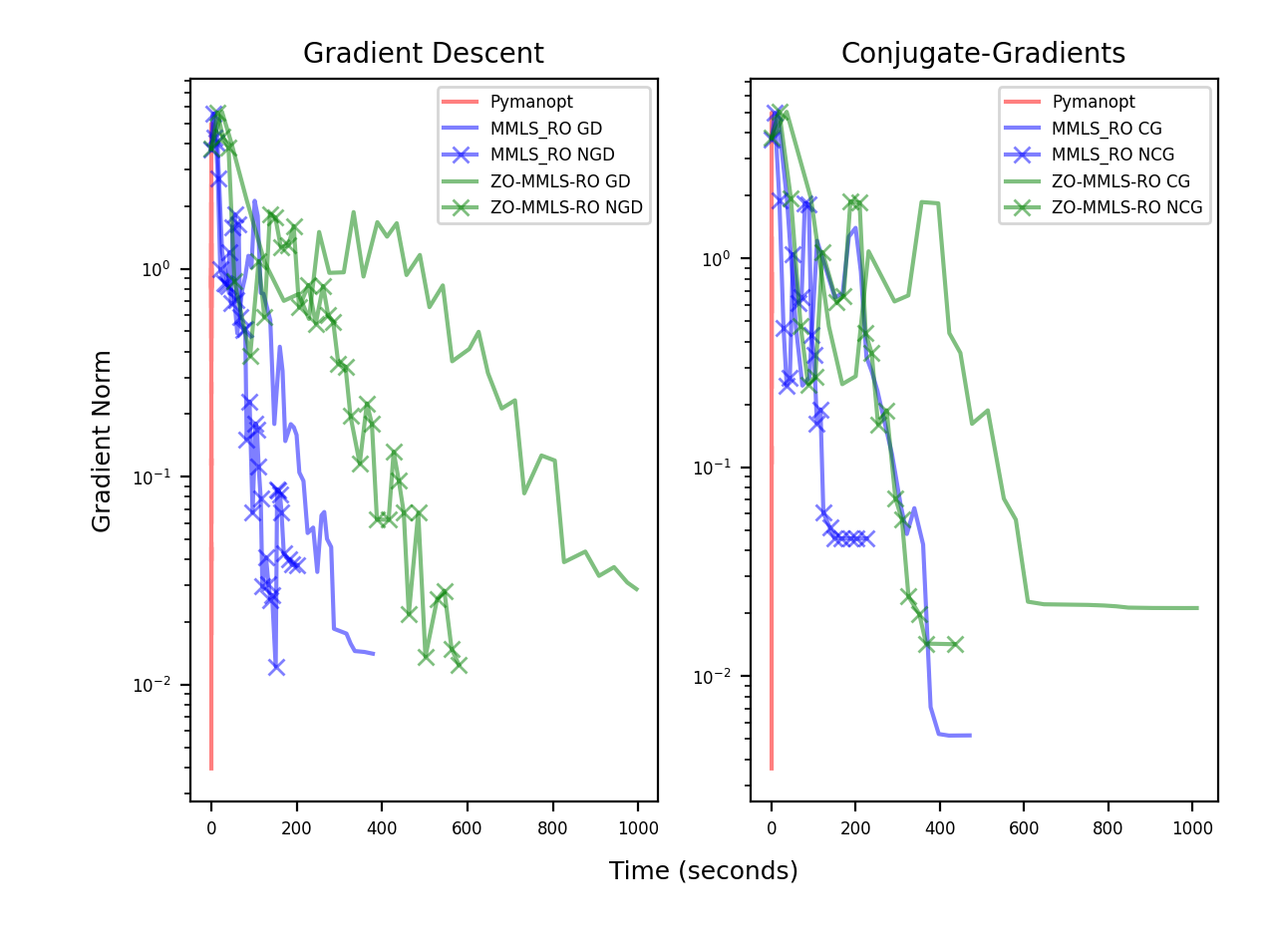}
\end{tabular}
\par\end{centering}
\caption{Suboptimality versus iteration count (top left) and versus time (bottom left), (Approximate-)Riemannian gradient norm versus iteration count (top right) and versus time (bottom right), for Problem \eqref{eq:eig_Stiefel4D}.\label{fig:Stiefel4D}}
\end{figure}

Next, we solve the following PCA problem of finding the two leading principal vectors of a randomly generated matrix ,$\matA_{2}\in \R^{200 \times 3}$, thus the constraining manifold is $\stiefel(3,2)$. Explicitly,
\begin{equation}\label{eq:PCA_Stiefel3D}
    \min_{\matX\in\stiefel(3,2)} \FNormS{\matA_{3} - \matA_{3} \matX \matX^{\T}},
\end{equation}
where $\FNorm{\cdot}$ denotes the Frobenius norm. As in the previous experiments on the Stiefel manifold, our algorithms received column stack vectors, while the inputs to the cost function were reshaped back into the corresponding matrix form. The ambient dimension in Problem \eqref{eq:PCA_Stiefel3D} is $D=6$ and the intrinsic dimension is $d=3$. We use $n=42875$ samples of $\stiefel(3,2)$, and polynomial approximations of degree $m=6$. The noise is an additive Gaussian noise $\mathcal{N}(0, 10^{-4})$, added to both the samples of $\stiefel(4,2)$, and the cost function samples in Problem \eqref{eq:PCA_Stiefel3D}. The results are presented in Fig. \ref{fig:Stiefel3D_PCA}. The left plots, present the relative error in Frobenius norm of the projection on the range spanned by the two leading principal vectors of $\matA_{3}$, i.e., $\FNorm{\matV \matV^{\T} - \matX \matX^{\T}} / \FNorm{\matV \matV^{\T}}$ where $\matV\in\R^{3 \times 2}$ is a matrix with the two leading principal vectors in its columns, versus iteration count (top) and versus time (bottom). The right plots present (approximate-)Riemannian gradient norms versus iteration count (top) and versus time (bottom).

\begin{figure}[tb]
\begin{centering}
\begin{tabular}{ccc}
\centering{}\includegraphics[scale=0.45]{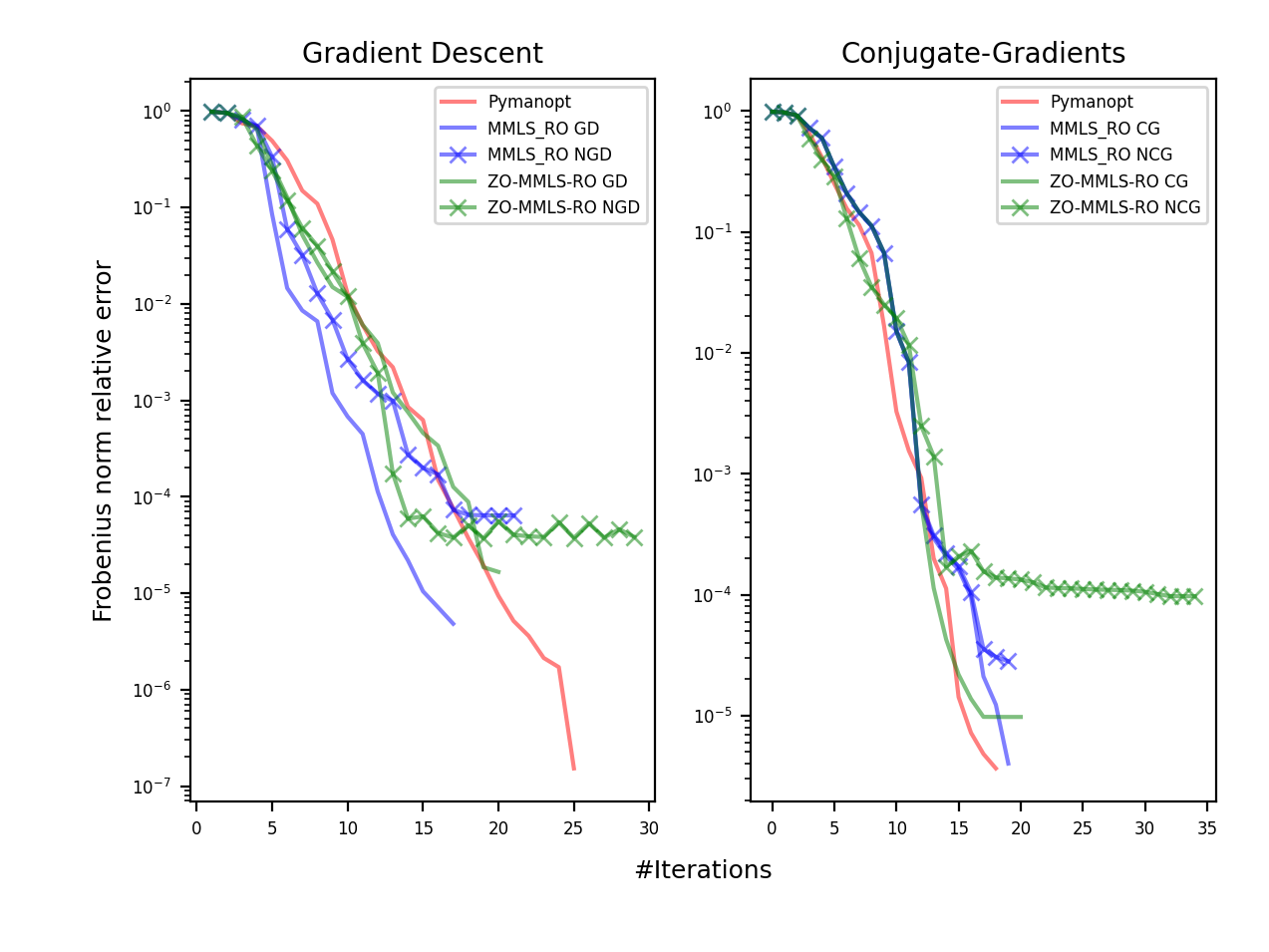} & ~ & \includegraphics[scale=0.45]{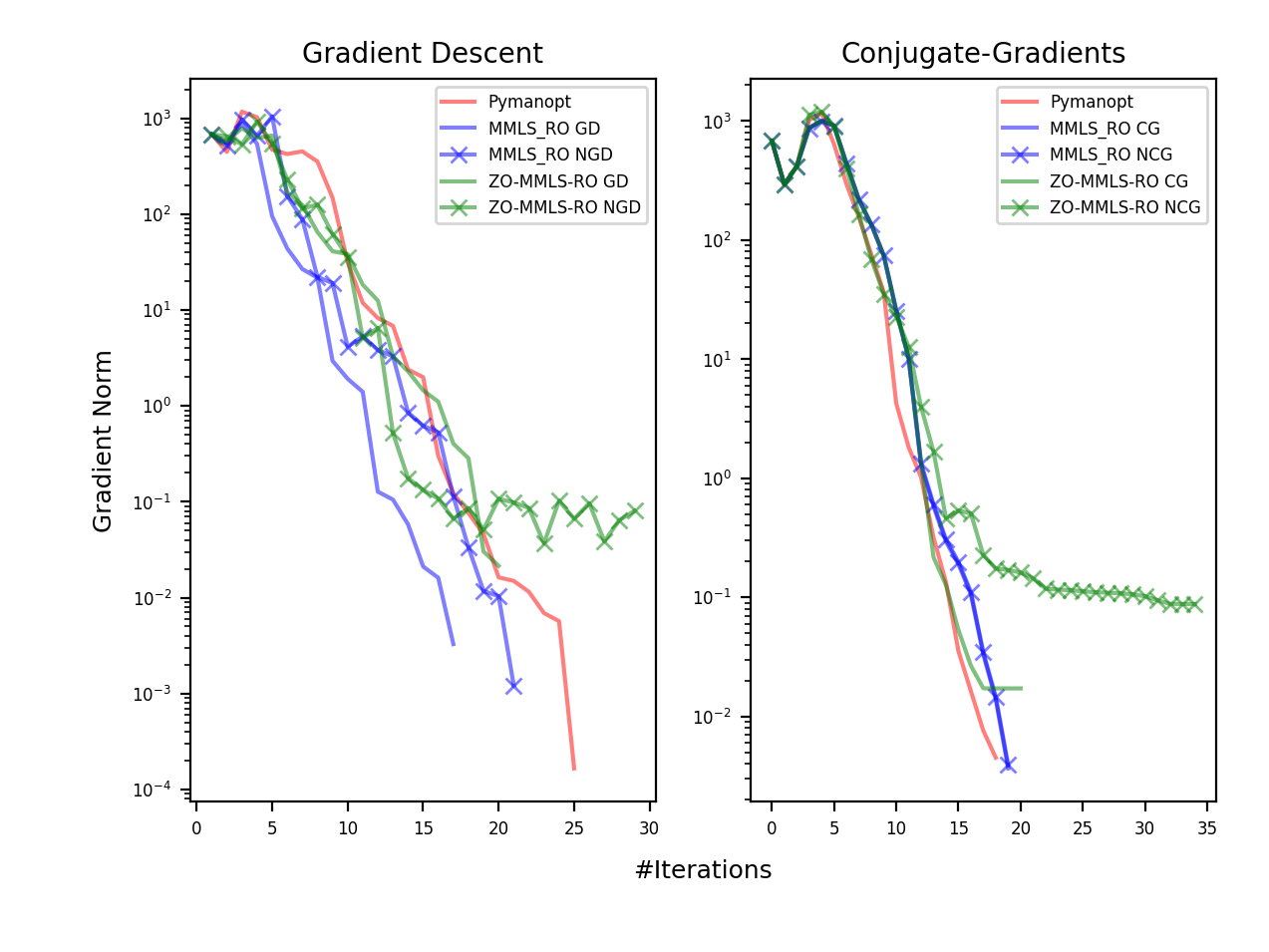} \\ \centering{}\includegraphics[scale=0.45]{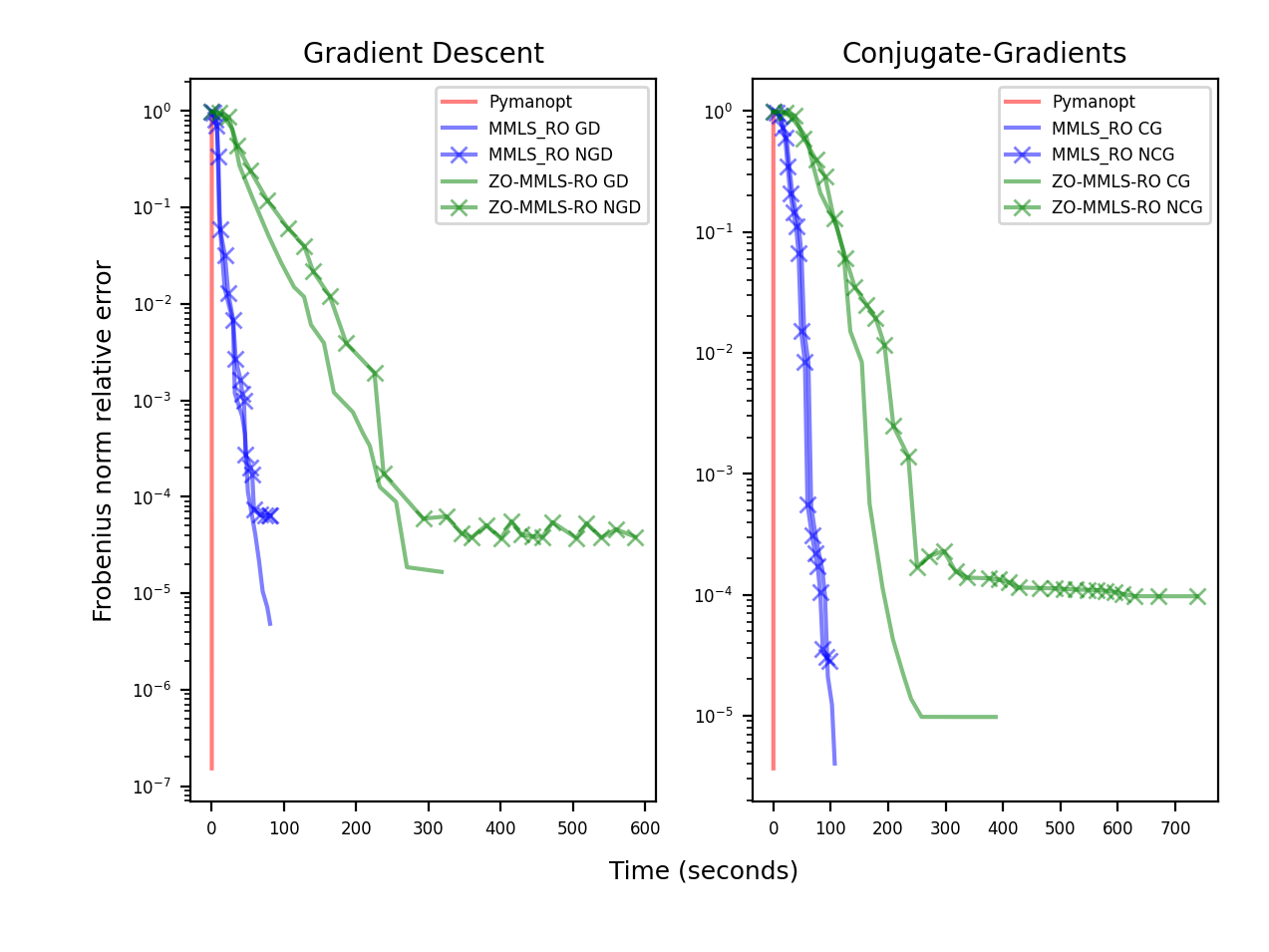} & ~ & \includegraphics[scale=0.45]{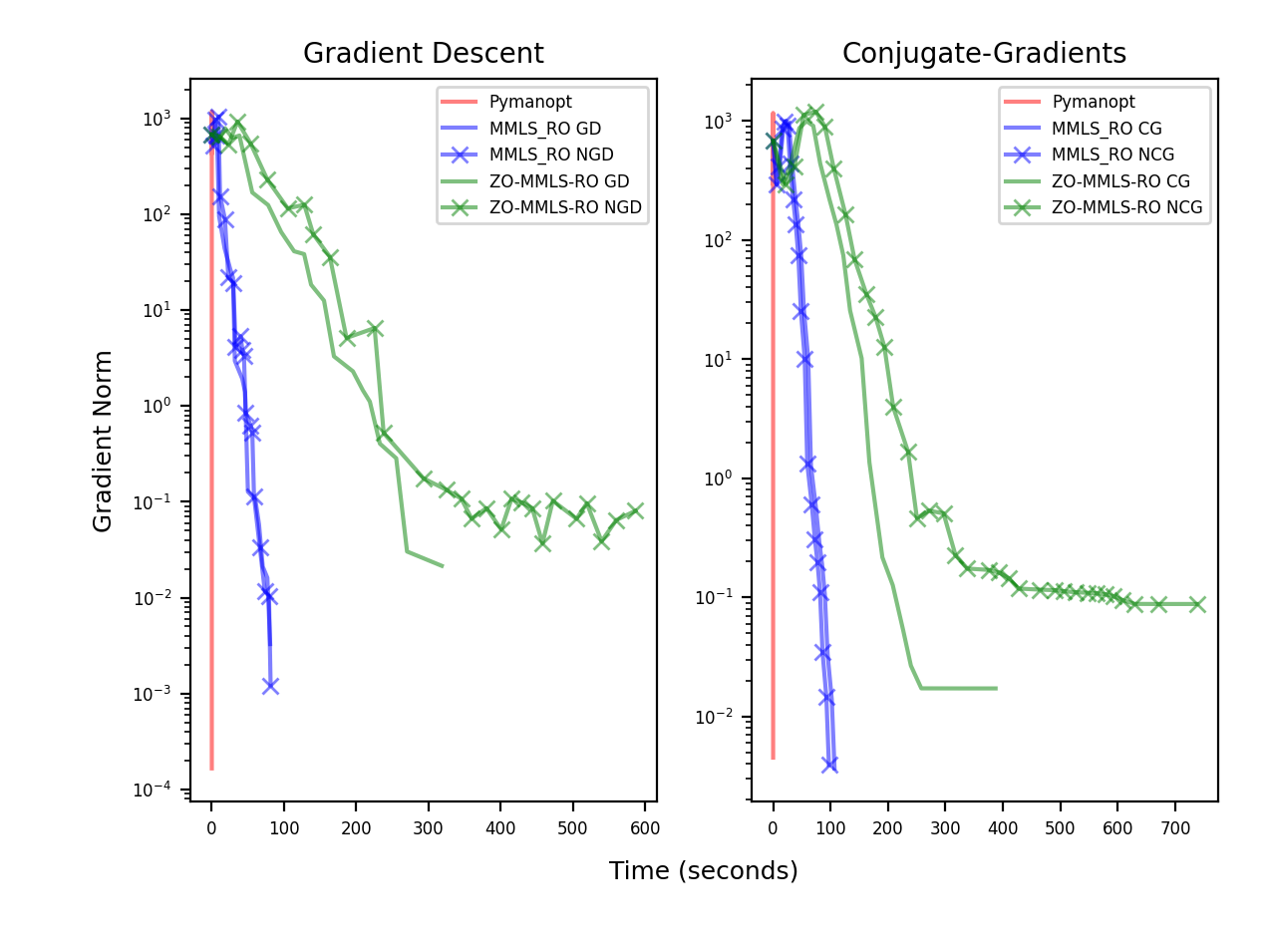}
\end{tabular}
\par\end{centering}
\caption{Relative error in Frobenious norm versus iteration count (top left) and versus time (bottom left), (Approximate-)Riemannian gradient norm versus iteration count (top right) and versus time (bottom right), for Problem \eqref{eq:PCA_Stiefel3D}.\label{fig:Stiefel3D_PCA}}
\end{figure}

The last problem we solve is a low-rank approximation of a randomly generated matrix, $\matA_{4}\in \R^{2 \times 2}$, thus the constraining manifold is a fixed-rank  manifold. Explicitly,
\begin{equation}\label{eq:low_rank_approx}
    \min_{\matX\in \R^{2 \times 2}_{1}} \FNormS{\matX - \matA_{4}},\ \matA_{4} \coloneqq \left(\begin{array}{cc}
-0.13 & -0.24\\
-0.49 & 0.11
\end{array}\right),\ \R^{2 \times 2}_{1} \coloneqq \{\matX\in \R^{2 \times 2}\ |\ \rank{\matX} = 1 \}.
\end{equation}
Similarly to the experiments on the Stiefel manifold, our algorithms received column stack vectors, while the inputs to the cost function were reshaped back into the corresponding matrix form. The ambient dimension in Problem \eqref{eq:low_rank_approx} is $D=4$ and the intrinsic dimension is $d=3$. We use $n=100000$ samples of $\R^{2 \times 2}_{1}$, and polynomial approximations of degree $m=12$. The noise is an additive Gaussian noise $\mathcal{N}(0, 10^{-4})$, added to both the samples of $\R^{2 \times 2}_{1}$, and the cost function samples in Problem \eqref{eq:low_rank_approx}. The results are presented in Fig. \ref{fig:low_rank}. The left plots, present the relative error in Frobenius norm of a rank-$1$ approximation of $\matA_{4}$, i.e., $\FNorm{\matX - \matU_{\matA_{4}}\mat{\Sigma}_{\matA_{4}}\matV_{\matA_{4}}^{\T}} / \FNorm{\matU_{\matA_{4}}\mat{\Sigma}_{\matA_{4}}\matV_{\matA_{4}}^{\T}}$ where $\matU_{\matA_{4}}\mat{\Sigma}_{\matA_{4}}\matV_{\matA_{4}}^{\T}$ is a rank-$1$ truncated SVD decomposition of $\matA_{4}$, versus iteration count (top) and versus time (bottom). The right plots present (approximate-)Riemannian gradient norms versus iteration count (top) and versus time (bottom).

\begin{figure}[tb]
\begin{centering}
\begin{tabular}{ccc}
\centering{}\includegraphics[scale=0.45]{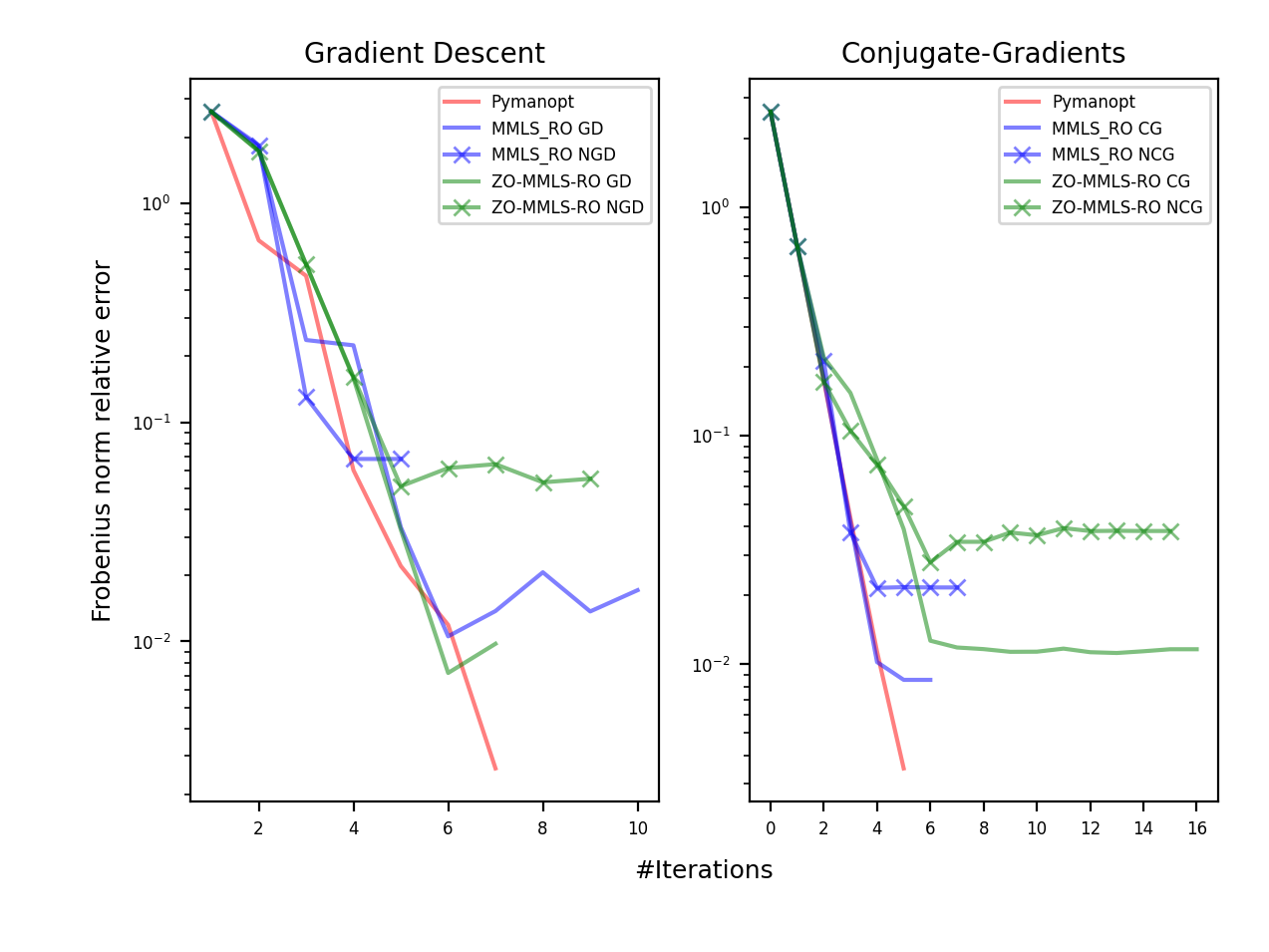} & ~ & \includegraphics[scale=0.45]{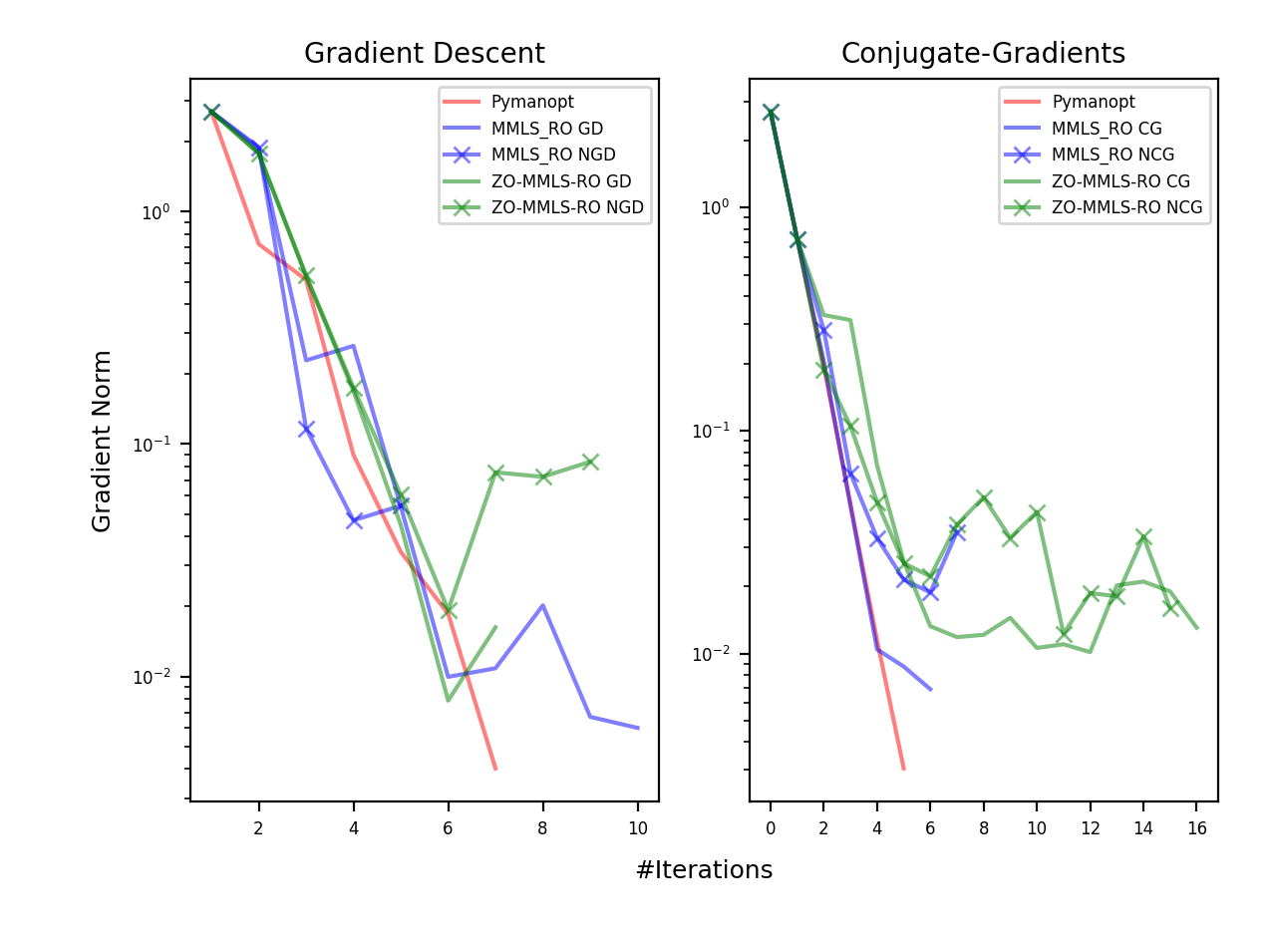} \\ \centering{}\includegraphics[scale=0.45]{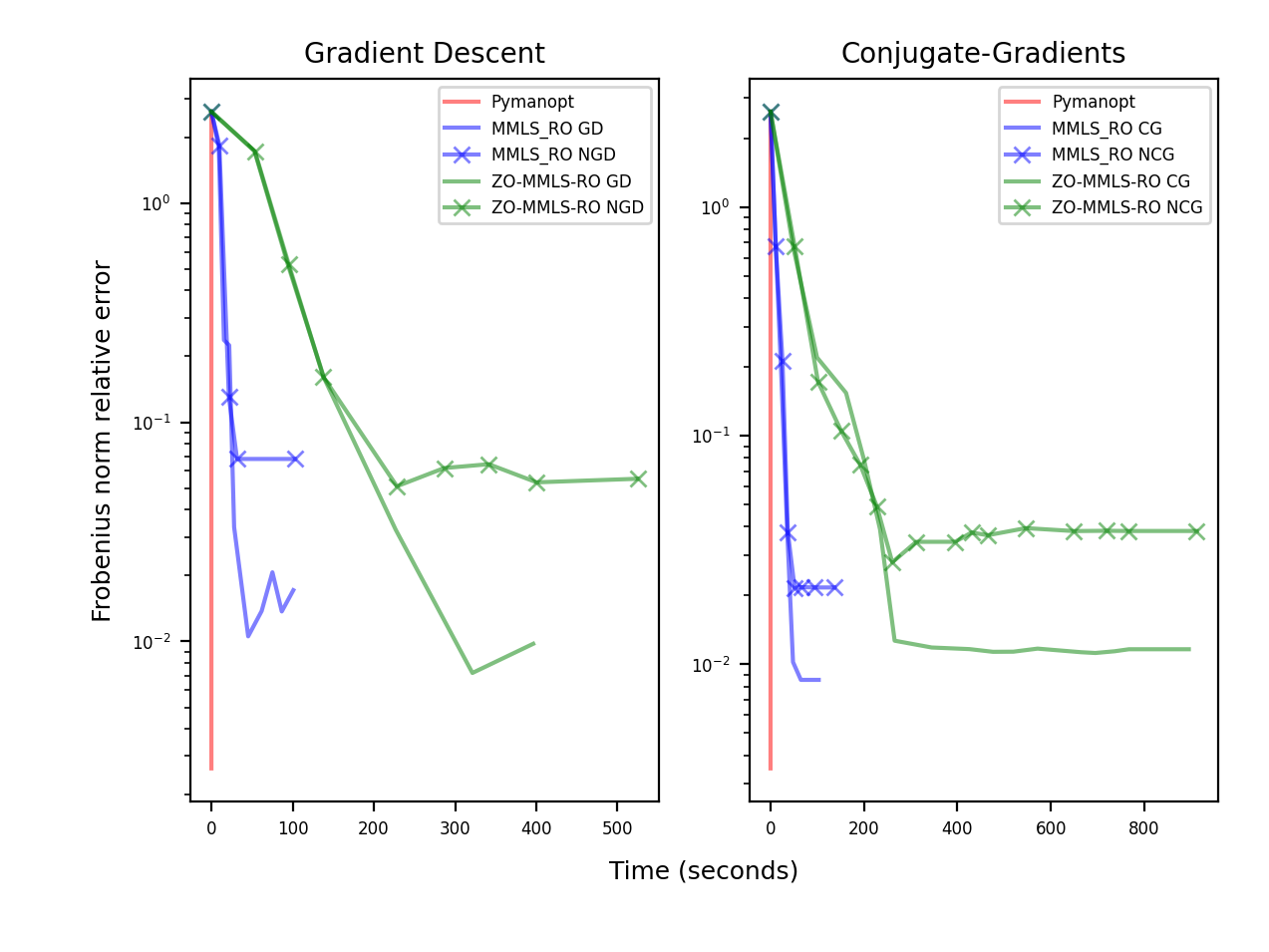} & ~ & \includegraphics[scale=0.45]{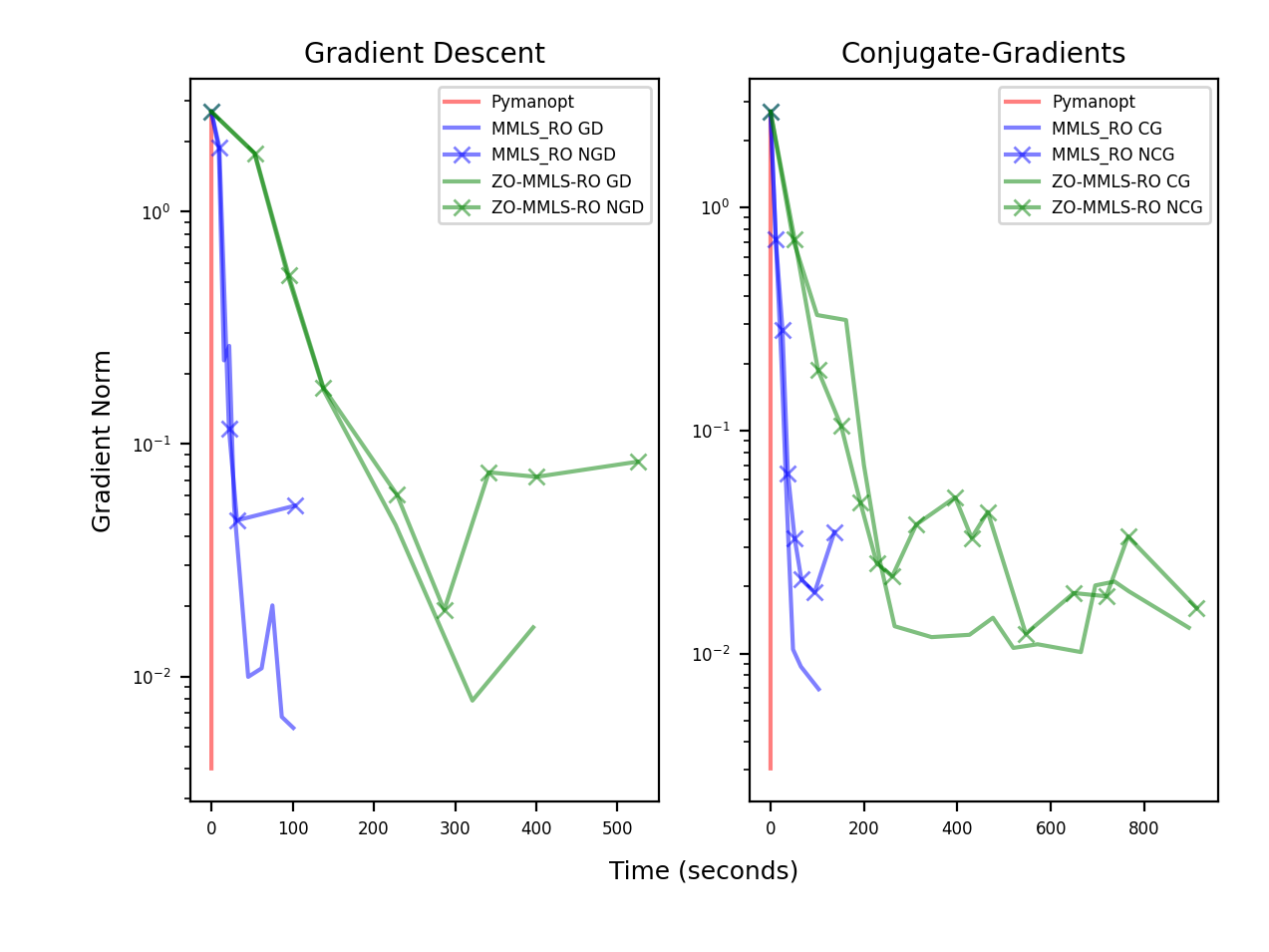}
\end{tabular}
\par\end{centering}
\caption{Relative error in Frobenious norm versus iteration count (top left) and versus time (bottom left), (Approximate-)Riemannian gradient norm versus iteration count (top right) and versus time (bottom right), for Problem \eqref{eq:low_rank_approx}.\label{fig:low_rank}}
\end{figure}

To conclude this subsection, we discuss our results. In all the experiments in this subsection, we demonstrate that our proposed algorithms obtain comparable results with respect to \textsc{PYMANOPT} solvers which have full knowledge of both the constraint and the cost function, even in the presence of noise in the samples of the corresponding constraining manifold, and the cost function. 

There are two phenomena we observe in our experiments which we would like to address. First, Note that in some cases our solvers achieve better errors than the ones obtained by \textsc{PYMANOPT}, but these cases may arise since our algorithms only approximately satisfy the constraints in the optimization, thus might reach a point with a "better" cost function value, but which does not satisfy the constraints. However, one should note that our algorithm does not have explicit access to the constraints, so it is unreasonable to expect it to uphold them exactly. The second phenomena we observe, is that in some of the experiments there are "bumps" in the suboptimality or the error values, especially when the values are small. This phenomena arise from the following reason. The specific implementation in \href{https://github.com/aizeny/manapprox/blob/main/manapprox/ManApprox.py}{MMLS}, begins MMLS algorithm with checking if the point (which is taken as the initial point in solving Problem \eqref{eq:MMLS_step1}, see \cite[Section 3.2]{sober2020manifold}) we wish to project using MMLS, has neighboring data points with respect to the weight function, to approximately make sure that Constraint \ref{item:MMLS_step1_3} from Problem \eqref{eq:MMLS_step1} is met for the initial point. If a point has no neighbors, then the nearest data point with respect to the weight function is taken as the initial point for MMLS algorithm. If such point does not exist, MMLS stops. Changing the point from which our optimization algorithm takes a step leads to unpredictable cost function behaviour. Adding more samples may aid in avoiding such occurrences. 

\section{\label{sec:conclusions}Conclusions}
In this paper, we propose a novel approach for solving optimization problems constrained on manifolds with limited information or accessibility to the constraining manifold and possibly the cost function itself. Our approach is based on approximating the missing geometric components required for Riemannian optimization using a manifold learning technique, MMLS \cite{sober2020manifold}, but can be extended to other techniques as well, e.g., our analysis in Subsection \ref{subsec:Convergence-Analysis-of1} is general and not bound to MMLS. Using these approximated components, we propose a variant of Riemannian gradient-descent algorithm and a variant of Riemannian CG algorithm both with a backtracking procedure. We analyze the guarantees and the costs of the proposed approximations, and also study the global convergence of our proposed Riemannian gradient-descent variants. Finally, we demonstrate numerically the effectiveness of our algorithms (even in the presence of noise), comparing their performance with respect to solvers having the exact components and full access to the cost function.

The aim of this paper is to introduce the potential of solving optimization problems constrained on manifolds where there is only limited information or accessibility to the constraining manifold using our approach. There are many possible future research directions following our work. One important aspect would be to apply this approach for solving real-world problems. For example solving optimizations problems that are constrained to satisfy a differential equation which is too costly to solve, or performing optimization on a manifold unknown explicitly such as a set of images of some object. Additional important aspect from the algorithmic point of view, is to develop an approximation of the Riemannian Hessian, enabling second-order algorithms (e.g., Newton and trust-regions) and their analysis. In addition, extending our proposed approach to other manifold learning techniques other than MMLS would allow to widen the scope of possible approximations and theory. From the theoretical standpoint, analysis for noisy sample sets of the cost function and
the constraining manifold, and analysis of additional variants of Riemannian optimization methods based on our proposed components (e.g., Riemannian CG) is required. Moreover, with a refined theory for the approximate-retraction a more precise analysis of the global convergence of the optimization methods would be possible.

\section*{Acknowledgements}
This research was supported by the Israel Science Foundation (grant no. 1272/17).

\bibliographystyle{amsplain}
\bibliography{ellipsoid}

\appendix
\section{\label{sec:miss_proof}Missing Proofs}
\subsection{\label{subsec:proof_lipschitz}Proof of Lemma \ref{lem:pullback_Lip}}
\begin{proof}
guarantees$\x,\y\in\text{Conv}(\widetilde{\mathcal{M}} \cup {\mathcal{M}})$ we have
\begin{equation*}
    | f(\y)-[ f(\x)+\dotprod{\nabla  f(\x)}{\y-\x}]|\leq\frac{L}{2}\|\y-\x\|^{2}.
\end{equation*}
In particular, take $\x = \widetilde{R}_{\rb}(\0_{\rb}) = \rb\in \widetilde{\mathcal{M}}$, $\y=\widetilde{R}_{\rb}(\xi)$ with $\xi\in \widetilde{T}_{\rb}\widetilde{\mathcal{M}}$, $\|\xi\|\leq Q$ to have
\begin{equation}\label{eq:lip_proof1}
    |{f}(\widetilde{R}_{\rb}(\xi))-[{f}(\rb)+\dotprod{\nabla  f(\rb)}{\widetilde{R}_{\rb}(\xi)-\rb}]|\leq\frac{L}{2}\|\widetilde{R}_{\rb}(\xi)-\rb\|^{2}.
\end{equation}

Next, we write the inner product above using $\v'_{\rb, \xi}(0)$ from Lemma \ref{lem:retraction_2_property}, and the definition of of the Riemannian gradient on $\widetilde{\mathcal{M}}$ as the orthogonal projection of the Euclidean gradient on its tangent space, in the following way
\begin{eqnarray}\label{eq:lip_proof2}
     \dotprod{\nabla  f(\rb)}{\widetilde{R}_{\rb}(\xi)-\rb} &=& \dotprod{\nabla  f(\rb)}{\xi+\v'_{\rb, \xi}(0)+\widetilde{R}_{\rb}(\xi)-\rb-\xi-\v'_{\rb, \xi}(0)} \nonumber\\
     &=& \dotprod{\gradMtil{f(\rb)}}{\xi+\v'_{\rb, \xi}(0)} + \nonumber\\ &+& \dotprod{\nabla  f(\rb)}{\widetilde{R}_{\rb}(\xi)-\rb-\xi-\v'_{\rb, \xi}(0)}.
\end{eqnarray}
Thus, using Eq. \eqref{eq:lip_proof1} and Eq. \eqref{eq:lip_proof2} yields
\begin{eqnarray*}
     |{f}(\widetilde{R}_{\rb}(\xi))-[{f}(\rb)+\dotprod{\gradMtil{f(\rb)}}{\xi+\v'_{\rb, \xi}(0)}]| &\leq& \frac{L}{2}\|\widetilde{R}_{\rb}(\xi)-\rb\|^{2} +\\
     &+& \|\nabla  f(\rb)\|\cdot\|\widetilde{R}_{\rb}(\xi)-\rb-\xi-\v'_{\rb, \xi}(0)\|.
\end{eqnarray*}

Since $\widetilde{\mathcal{M}}$ is a compact manifold and $\nabla  f(\rb)$ is assumed to be Lipschitz
continuous, there exists a finite $G>0$ such that $\|\nabla  f(\rb)\|\leq G$. In addition, $\widetilde{R}_{(\cdot)}(\cdot)$ is defined in a compact set of $K\subset \widetilde{T}\widetilde{\mathcal{M}}$ where $\rb\in\widetilde{\mathcal{M}}$ and $\xi\in \widetilde{T}_{\rb}\widetilde{\mathcal{M}}$ such that $\|\xi\|\leq Q$ (Assumption \ref{assu:ret_well_defined} and Eq. \eqref{eq:ret_def_in_tan_bun}). Thus, using 
$$\widetilde{R}_{\rb}(\xi) = \rb + \xi + \v'_{\rb, \xi}(0) + O(\|\xi\|^{2}),$$
we will show that 
\begin{equation}\label{eq:lip_proof3}
    \|\widetilde{R}_{\rb}(\xi)-\rb\| \leq \alpha \|\xi\|,
\end{equation}
and 
\begin{equation}\label{eq:lip_proof4}
    \|\widetilde{R}_{\rb}(\xi)-\rb-\xi-\v'_{\rb, \xi}(0)\| \leq \beta \|\xi\|^{2},
\end{equation}
where $\alpha, \beta \geq 0$. Eq. \eqref{eq:lip_proof3} and Eq. \eqref{eq:lip_proof4} prove the lemma with
$$\widetilde{L} \coloneqq 2\left(\frac{L}{2} \alpha ^{2} + G \beta\right) . $$

To show Eq. \eqref{eq:lip_proof3}, we have that for all $(\rb,\xi)\in K$
\begin{eqnarray*}
     \left \|\widetilde{R}_{\rb}(\xi)-\rb\right \| \leq \int _{0} ^{1} \left \| \frac{d}{dt} \widetilde{R}_{\rb}(t\xi) \right \| dt &=& \int _{0} ^{1} \left \| \text{D}\widetilde{R}_{\rb}(t\xi) [\xi] \right \| dt \\
     & \leq & \max _{(\rb,\eta) \in K} \left \| \text{D}\widetilde{R}_{\rb}(\eta) \right \| \left \| \xi \right \|,
\end{eqnarray*}
where the maximum above exists since $\text{D}\widetilde{R}_{(\cdot)}(\cdot)$ is smooth in the compact set $K$. Thus, 
$$\alpha \coloneqq  \max _{(\rb,\eta) \in K} \left \| \text{D}\widetilde{R}_{\rb}(\eta) \right \| \left \| \eta \right \|.$$

Eq. \eqref{eq:lip_proof4} is proved in a similar manner,
\begin{eqnarray*}
     \left \|\widetilde{R}_{\rb}(\xi)-\rb-\xi-\v'_{\rb, \xi}(0)\right \| &\leq& \int _{0} ^{1} \left \| \frac{d}{dt} \left( \widetilde{R}_{\rb}(t\xi) -\rb - t\xi -t \v'_{\rb, \xi}(0) \right) \right \| dt \\ &=& \int _{0} ^{1} \left \| \text{D}\widetilde{R}_{\rb}(t\xi) [\xi] - \xi-\v'_{\rb, \xi}(0) \right \| dt \\
     & \leq & \frac{1}{2} \max _{(\rb,\eta) \in K} \left \| \text{D}^{2}\widetilde{R}_{\rb}(\eta) \right \| \left \| \eta \right \|^{2},
\end{eqnarray*}
where the last inequality follows from Lemma \ref{lem:retraction_2_property} and
\begin{eqnarray*}
     \left \| \text{D}\widetilde{R}_{\rb}(t\xi) [\xi] - \xi-\v'_{\rb, \xi}(0) \right \| &\leq& \int _{0} ^{1} \left \| \frac{d}{ds}\text{D}\widetilde{R}_{\rb}(st\xi) [\xi] \right \| ds \\
     &\leq& \int _{0} ^{1} \left \| \text{D}^{2}\widetilde{R}_{\rb}(st\xi) [t\xi] \right \| ds \left \| \xi \right \|\\
     &\leq& \max _{(\rb,\eta) \in K} \left \| \text{D}^{2}\widetilde{R}_{\rb}(\eta) \right \| \left \| t \xi \right \| \left \| \xi \right \|.
\end{eqnarray*}
The maximum above exists since $\text{D}^{2}\widetilde{R}_{(\cdot)}(\cdot)$ is smooth in the compact set $K$. Thus, 
$$\beta \coloneqq  \frac{1}{2} \max _{(\rb,\eta) \in K} \left \| \text{D}^{2}\widetilde{R}_{\rb}(\eta) \right \|.$$

\end{proof}

\section{Background Materials and Additional Claims}
In this section we provide additional background material which is required for some of the proofs in this paper. Our aim is to make the paper  self contained.

\subsection{\label{subsec:background_MMLS}MMLS}
In this subsection we recall some useful claims on MMLS procedure \cite{sober2020manifold}, and in particular properties of its differential \cite[Section 2.1 and Section 3.1]{sober2020approximating}. First, we recall the order of approximation of the derivatives of an MLS approximation \cite[Lemma 1]{sober2020approximating} (we write here a simplified version since we only need first-order derivatives):
\begin{lemma}[Lemma 1 in \cite{sober2020approximating}]\label{lem:Sober_app_lemma1}
Let $f\in C^{k}(\R^{D})$ be a scalar valued function. Let $X=\{\x_{1},...,\x_{n}\}\subset \Omega \subset \R^{D}$ be a quasi-uniform unbounded sample set. Suppose $\theta_{h}(\cdot)\in C^{k}$ in 
\begin{equation}\label{eq:lemma1_prob}
    \pi^{\star}(\x\ |\ \xi) = \arg \min_{\pi \in \Pi_{m}^{d}} \sum_{i=1}^{n} \left|f(\x_i)-\pi(\x_i)\right|^2 \theta_{h} (\left\| \xi-\x_i \right\|),
\end{equation}
is compactly supported on $[0,sh]$ and consistent across scales, i.e., $\theta_{h}(th) = \Phi(t)$. Then, if for a fixed but arbitrary $\widehat{\x}\in \Omega$ the problem in Eq. \eqref{eq:lemma1_prob} has a unique solution (i.e., the least-squares matrix is invertible). We get for all $\x\in B_{sh}(\widehat{\x})$ 
\begin{equation*}
    \forall 1 \leq i \leq d,\ \left|\partial_{i}s_{f,X}(\x) - \partial_{i} f(\x) \right| \leq C \max_{1 \leq j \leq d,\ \x \in B_{sh}(\widehat{\x})} \left| \partial_{j} f(\x) \right| h^{m},
\end{equation*}
where $\partial_{i}$ is the partial derivative with respect to the $i$-th variable, $C$ is some constant independent of $f$ and $h$, and $s_{f,X}(\widehat{\x}) \coloneqq \pi^{\star}(\0\ |\ \widehat{\x})$.
\end{lemma}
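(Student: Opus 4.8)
The plan is to prove this through the classical Moving Least-Squares error analysis: represent the MLS approximant by its shape functions, exploit that these shape functions reproduce $\Pi_m^d$ exactly (and that the reproduction survives differentiation), and then feed in a local Taylor expansion of $f$ so that only the Taylor remainder must be estimated. The degree-$m$ reproduction is what produces the $h^m$ rate, while the scaling hypotheses on the weight $\theta_h$ supply the $h$-independent constants. Since this is precisely Lemma 1 of \cite{sober2020approximating}, I would ultimately cite that reference for the routine details, but the structure of the argument is as follows.

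First I would put the solution of \eqref{eq:lemma1_prob} into Backus--Gilbert (shape-function) form. Since the weighted least-squares (moment) matrix is assumed invertible at $\widehat{\x}$, it remains invertible on a neighborhood, and for fixed data locations the solution depends linearly on the samples $f(\x_i)$; hence there are smooth coefficient functions $a_i(\cdot)$, supported in $\{\, \x : \theta_h(\|\x-\x_i\|)\neq 0 \,\}$, with $s_{f,X}(\x)=\sum_{i=1}^{n} a_i(\x)\,f(\x_i)$ and the reproduction identity $\sum_{i} a_i(\x)\,q(\x_i)=q(\x)$ for every $q\in\Pi_m^d$. Differentiating this identity in $\x$ gives the companion relation $\sum_i \partial_j a_i(\x)\,q(\x_i)=\partial_j q(\x)$ for all $q\in\Pi_m^d$, which is exactly the tool needed to annihilate the polynomial part of a Taylor expansion after differentiation.

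Second, I would use the consistency-across-scales property $\theta_h(th)=\Phi(t)$ together with the quasi-uniformity of $X$ to obtain scale-invariant bounds on the shape functions and their derivatives. Rescaling the local problem on $B_{sh}(\widehat{\x})$ to a unit ball turns the moment matrix into a fixed-size Gram matrix built from the rescaled samples; quasi-uniformity (via a norming-set / Marcinkiewicz--Zygmund argument, using that $X$ is unbounded so every support ball is interior and carries enough well-separated points to be $\Pi_m^d$-determining) bounds its smallest singular value below uniformly in $h$. Unwinding the scaling then yields $|\partial_j a_i(\x)|\le C h^{-1}$ on the support together with $\sum_i |\partial_j a_i(\x)| \le C h^{-1}$, with $C$ independent of $h$ and of the sample configuration.

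Finally I would combine the two ingredients. Fixing $\x\in B_{sh}(\widehat{\x})$, let $p$ be the degree-$m$ Taylor polynomial of $f$ about $\x$ and write $f=p+R$, so that $\partial_j R(\x)=0$ and $R$ is $O(h^{m+1})$ on the support ball with constant governed by the derivatives of $f$ over $B_{sh}(\widehat{\x})$. By linearity, $s_{f,X}=s_{p,X}+s_{R,X}=p+s_{R,X}$ from reproduction, and the differentiated reproduction identity gives $\partial_j s_{f,X}(\x)-\partial_j f(\x)=\partial_j s_{R,X}(\x)-\partial_j R(\x)=\sum_i \partial_j a_i(\x)\,R(\x_i)$. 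Bounding this sum by $\left(\sum_i |\partial_j a_i(\x)|\right)\max_i |R(\x_i)| \le (C h^{-1})\cdot O(h^{m+1})$ then delivers the $h^m$ rate, with the constant absorbing the derivative bound on $f$ that appears on the right-hand side. The main obstacle is the uniform stability in the second step: extracting the $h$-independent lower bound on the smallest singular value of the rescaled moment matrix purely from quasi-uniformity is the technical heart of the estimate; once that is in hand, the Taylor-plus-reproduction combination is essentially bookkeeping.
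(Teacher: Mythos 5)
The paper does not prove this statement at all: it is imported verbatim (in slightly simplified form) from the cited reference \cite{sober2020approximating}, and Appendix B simply restates it as background. So there is no in-paper proof to compare against; what you have written is a reconstruction of the argument in the external source. Your reconstruction is the standard and correct MLS derivative-approximation argument: the Backus--Gilbert shape-function representation $s_{f,X}(\x)=\sum_i a_i(\x)f(\x_i)$, polynomial reproduction of $\Pi_m^d$ and its differentiated form, the scale-invariant bound $\sum_i|\partial_j a_i(\x)|\le Ch^{-1}$ from quasi-uniformity and the scaling $\theta_h(th)=\Phi(t)$, and finally Taylor expansion about $\x$ so that only $\sum_i \partial_j a_i(\x)R(\x_i)=O(h^{-1})\cdot O(h^{m+1})$ survives. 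This is essentially the route taken in the cited reference, and you correctly identify the one genuinely technical step, namely upgrading invertibility of the moment matrix at a single $\widehat{\x}$ to an $h$-independent lower bound on its smallest singular value via a norming-set argument. Two small remarks: first, your Taylor-remainder bound naturally produces a constant depending on derivatives of $f$ up to order $m+1$, whereas the statement as restated here displays only first derivatives of $f$ on the right-hand side --- this is an artifact of the paper's ``simplified version'' and your phrasing (letting $C$ absorb the relevant derivative bounds) is the honest form; second, note that $s_{f,X}$ is the \emph{moving} approximant, so $\partial_j s_{f,X}$ includes the contribution from the weight centre moving with $\x$, which your shape-function formulation handles correctly since $a_i(\x)$ carries that dependence.
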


Lemma \ref{lem:Sober_app_lemma1} can be used to formulate a similar result as \cite[Lemma 4]{sober2020approximating} for the derivatives of function approximation using MMLS \cite{sober2021approximation}. Explicitly, we formulate Lemma \ref{lem:fderiv_approx}. Before formulating it, we recall the \emph{Injectivity Conditions} required for proving properties of the derivatives of MMLS approximation:
\begin{enumerate}
    \item The functions $\theta_{1}(\cdot)$ and $\theta_{h}(\cdot)$ are monotonically decaying and supported on $[0,c_{1}h]$ where $c_{1}>3$.
    \item Suppose that $\theta_{1}(c_{2}h)>c_{3}>0$ and $\theta_{h}(c_{2}h)>c_{3}>0$ for some constants $c_{2}<c_{1}$.
    \item Set $\mu = \reach{\mathcal{M}}/2$ in the second constraint of the optimization in Eq. \eqref{eq:MMLS_step1}.
\end{enumerate}

\begin{lemma}[Euclidean gradient approximation order]\label{lem:fderiv_approx}
Let Assumption \ref{assu:MMLS_func_Samples} and the Injectivity Conditions from this section hold. Also, let the weight function from Problem \eqref{eq:step2-func_approx} satisfy $\theta_{3}(\cdot)\in C^{\infty}$ and $\lim _{t\to 0} \theta_{3} (t) = \infty$. Then, there exists a constant $h_{0}$ such that for all $h\leq h_{0}$, all $\p \in \mathcal{M}$, and for all $\x\in B_{c_{1}h}(\0)\subset H(\p)$
\begin{equation}\label{eq:fderiv_approx}
    \frac{1}{\sqrt{D}}\|\nabla p_{\p}^{f}(\x)-\nabla\widehat{f}(\x)\|\leq
    \|\nabla p_{\p}^{f}(\x)-\nabla\widehat{f}(\x)\|_{\infty} \leq c_{f} h^{m},
\end{equation}
where $p_{\p}^{f}(\x)$ is an approximation of $f(\p)$, i.e., the solution of Problem \eqref{eq:step2-func_approx}, $\widehat{f}=f \circ \varphi$, and $c_{f}$ is a constant independent of $\p$.
\end{lemma}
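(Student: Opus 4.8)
The plan is to recognize the function-approximation step of MMLS (Problem \eqref{eq:step2-func_approx}) as an instance of the scalar MLS problem \eqref{eq:lemma1_prob} to which Lemma \ref{lem:Sober_app_lemma1} applies, and then to invoke that lemma coordinate-wise. Fix $\p\in\mathcal{M}$ and let $\varphi:H(\p)\to\mathcal{M}$ be the local parametrization from \cite[Lemma 3]{sober2020approximating} with $\varphi(\0)=\p$. Then $\widehat{f}=f\circ\varphi$ is a scalar function on $H(\p)\simeq\R^{d}$, and the minimizer $p_{\p}^{f}$ of \eqref{eq:step2-func_approx} is exactly the degree-$m$ MLS approximant of $\widehat{f}$ built from the data sites $\q_{i}$ (the orthogonal projections of $\rb_{i}-\q(\p)$ onto $H(\p)$) with weight $\theta_{3}=\theta_{h}$. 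First I would verify the hypotheses of Lemma \ref{lem:Sober_app_lemma1} in this $d$-dimensional chart: (i) by the smoothness of $f$ and of $\varphi$ (the latter guaranteed by the MMLS construction, \cite[Theorem 4.12]{sober2020manifold} together with \cite[Lemma 3]{sober2020approximating}), $\widehat{f}\in C^{k}$ with $k$ large enough for the degree-$m$ estimate, a regularity inherited from that of $\mathcal{M}$ and $f$; (ii) the weight $\theta_{3}$ is $C^{\infty}$, compactly supported on $[0,c_{1}h]$ with $c_{1}>3$, and consistent across scales, which matches the requirements on $\theta_{h}$; and (iii) invertibility of the local least-squares Gram matrix, which is precisely what the Injectivity Conditions (together with the choice $\mu=\reach{\mathcal{M}}/2$) guarantee.

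The one genuinely geometric step is to show that the projected data $\{\q_{i}\}$ form a quasi-uniform set in $H(\p)\simeq\R^{d}$ with fill distance $O(h)$. I would argue this as in the manifold analysis: since $\{\rb_{i}\}$ is quasi-uniform on $\mathcal{M}$ with fill distance $h$ (Assumption \ref{assu:MMLS_func_Samples}) and, for $h$ small, $H(\p)$ is $O(h^{m})$-close to $T_{\p}\mathcal{M}$, the orthogonal projection onto $H(\p)$ restricted to a reach-sized neighbourhood is bi-Lipschitz with constants arbitrarily close to $1$; hence quasi-uniformity of $\{\rb_{i}\}$ transfers to that of $\{\q_{i}\}$ with only a constant-factor change in separation radius and fill distance. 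This is exactly the setting of \cite[Lemma 4]{sober2020approximating}, whose argument I would follow, replacing the vector-valued parametrization $\varphi$ treated there by the scalar $\widehat{f}$ here.

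With the hypotheses in place, Lemma \ref{lem:Sober_app_lemma1} gives, for each $1\leq i\leq d$ and every $\x\in B_{c_{1}h}(\0)$,
\[
\left|\partial_{i}p_{\p}^{f}(\x)-\partial_{i}\widehat{f}(\x)\right|\leq C\max_{1\leq j\leq d,\ \x'\in B_{c_{1}h}(\0)}\left|\partial_{j}\widehat{f}(\x')\right|h^{m},
\]
with $C$ independent of $\widehat{f}$ and $h$. Taking the maximum over $i$ yields the infinity-norm bound, while the left inequality $\tfrac{1}{\sqrt{D}}\|\cdot\|\leq\|\cdot\|_{\infty}$ is the elementary norm comparison on $\R^{d}$ (using $\|v\|\leq\sqrt{d}\,\|v\|_{\infty}$ and $d\leq D$). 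The final point is uniformity of the constant in $\p$: since $\mathcal{M}$ is compact and $f$ is smooth, $\sup_{\p\in\mathcal{M}}\max_{j}\|\partial_{j}\widehat{f}\|_{\infty}$ is finite, so the product of $C$ with this supremum can be absorbed into a single constant $c_{f}$ independent of $\p$, giving \eqref{eq:fderiv_approx}.

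I expect the main obstacle to be precisely this uniformity over $\p\in\mathcal{M}$, which requires that all constants entering Lemma \ref{lem:Sober_app_lemma1} --- the bi-Lipschitz constants of the projection onto $H(\p)$, the invertibility (conditioning) bound of the Gram matrix, and the bound on the derivatives of $\widehat{f}$ --- be controlled simultaneously for every chart. This is where compactness of $\mathcal{M}$, positivity of $\reach{\mathcal{M}}$, and the threshold $h_{0}$ are used in concert, and it is the step that ultimately determines $h_{0}$.
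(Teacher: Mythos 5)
Your proposal is correct and follows essentially the same route as the paper: recognize $p_{\p}^{f}$ as the scalar MLS approximant of $\widehat{f}=f\circ\varphi$ in the chart $H(\p)$, invoke Lemma \ref{lem:Sober_app_lemma1} coordinate-wise to get the $O(h^{m})$ derivative bound, and use compactness of $\mathcal{M}$ together with the smooth dependence of $H(\p)$ on $\p$ to make the constant uniform. The extra care you take in verifying quasi-uniformity of the projected data sites and the explicit $\|v\|\leq\sqrt{d}\,\|v\|_{\infty}\leq\sqrt{D}\,\|v\|_{\infty}$ comparison are details the paper leaves implicit, but they do not change the argument.
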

\begin{proof}
The proof of this lemma is similar to the proof of \cite[Lemma 4]{sober2020approximating}. Recall that MMLS algorithm and its extension for function approximation differ only in the second step of the algorithms, i.e., Eq. \eqref{eq:step2-func_approx} and Eq. \eqref{eq:MMLS_step2} correspondingly. Explicitly, solving the problem in Eq. \eqref{eq:MMLS_step2} provides a polynomial $p_{\p}^{f}(\x)$ of total degree $m$ which approximates $\widehat{f} = f \circ \varphi$, where $f:\mathcal{M}\to\R$ is the cost function and $\varphi:H(\rb)\to\mathcal{M}$ is a parametrization of $\mathcal{M}$ such that $\varphi(\x_i) = \rb_i$ for $1\leq i \leq n$. 

In addition, note that the polynomial $p_{\p}^{f}(\x)$ coincides with the
polynomial $\pi^{\star}(\x\ |\ \xi)$ which minimizes the problem in Eq. \eqref{eq:lemma1_prob} with respect to the domain $H(\p)$. Moreover, the MLS approximation is exact for polynomials (see \cite[Section 2.1]{sober2020approximating}), and in particular it reproduces the Taylor polynomial of degree $m$ of $\widehat{f}$. Thus, from Lemma \ref{lem:Sober_app_lemma1} and the triangle inequality (using the Taylor expansion), we get that for all $1 \leq i \leq d$ and for all $\x\in B_{c_{1}h}(\0)\subset H(\p)$
\begin{equation*}
    \left|\partial_{i}p_{\p}^{f}(\x) - \partial_{i} \widehat{f}(\x) \right| \leq C \max_{1 \leq j \leq d,\ \x \in B_{sh}(0)} \left| \partial_{j} \widehat{f}(\x) \right| h^{m},
\end{equation*}
leading to the desired bound in Eq. \eqref{eq:fderiv_approx} with
\begin{equation*}
    c_{f} =  C \max_{1 \leq j \leq d,\ \x \in B_{sh}(0)} \left| \partial_{j} \widehat{f}(\x) \right|,\ 
\end{equation*}
where 
$$\max_{1 \leq j \leq d,\ \x \in B_{sh}(0)} \left| \partial_{j} \widehat{f}(\x) \right|,$$
exists since $\widehat{f}$ varies smoothly with $\p$ (since $H(\p)$ depends smoothly on $\p$ \cite[Theorem 4.12]{sober2020manifold}), and since we assume that $\mathcal{M}$ is compact.
\end{proof}

Next, we recall \cite[Lemma 3]{sober2020approximating} and \cite[Lemma 4]{sober2020approximating}:

\begin{lemma}[Lemma 3 from \cite{sober2020approximating}]\label{lem:Sober_app_lemma3}
Let Assumption \ref{assu:MMLS_Samples} and the Injectivity Conditions from this section hold. Then, for all $h\leq h_{0}$ and all $\p\in \mathcal{M}$ we have:
\begin{enumerate}
    \item There exists a unique ${\mathcal P}_{m}^{h}(\p)=\widetilde{\p}\in \widetilde{\mathcal{M}}$ and a unique coordinate domain $(\q(\p),H(\p))$.
    \item There exist neighborhoods $W_{\p}\subset\mathcal{M}$, $U\subset H(\p)$ of $\p$, $\q(\p)$ correspondingly such that 
    $$\varphi:U\to\mathcal{M}\subset\R^{D},\ \widetilde{\varphi}:U\to \widetilde{\mathcal{M}}\subset \R^{D}, $$
    and
    $$\varphi[U]=W_{\p} \subset \mathcal{M},\ \widetilde{\varphi}[U]=W \subset \widetilde{\mathcal{M}}. $$
    \item Furthermore,
    $$\varphi(\0)=\p,\ \widetilde{\varphi}(\0)=g^{\star}(\0\ |\ \p)=\widetilde{\p}. $$
\end{enumerate}
\end{lemma}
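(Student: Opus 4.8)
The statement is recalled verbatim from \cite[Lemma 3]{sober2020approximating}, so the cleanest route is to assemble it from the MMLS existence–uniqueness machinery of \cite{sober2020manifold} together with the local graph representation of \cite[Lemma A.12]{aizenbud2021non}, both already invoked in Subsection~\ref{subsec:Manifold-Moving-Least-Squares}. The plan is to treat the three assertions in order, since each feeds the next.

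For assertion~1 I would first fix $h$ small enough that the uniqueness domain $U_{\mathrm{unique}}$ of Assumption~\ref{assu:Uniqueness-domain} exists with $\mu = \reach{\mathcal{M}}/2$, as guaranteed in the limit regime by \cite[Lemma 4.4]{sober2020manifold}. Since every $\p\in\mathcal{M}$ satisfies $\dist{\p,\mathcal{M}}=0$, it lies in $U_{\mathrm{unique}}$, so Problem~\eqref{eq:MMLS_step1} has a unique local minimizer $\q(\p)\in B_{\mu}(\p)$ and a unique subspace $H(\p)$; these vary smoothly in $\p$ by \cite[Theorem 4.12]{sober2020manifold} once $\theta_{1}\in C^{\infty}$. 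The Injectivity Conditions of this section ensure the weighted least-squares matrix in Problem~\eqref{eq:MMLS_step2} is invertible, so $g^{\star}(\cdot\ |\ \p)$, and hence $\widetilde{\p}\coloneqq{\mathcal P}_{m}^{h}(\p)=g^{\star}(\0\ |\ \p)\in\widetilde{\mathcal{M}}$, is uniquely determined.

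For assertion~2 I would produce the two parametrizations as graphs over $H(\p)$. For $\mathcal{M}$ itself, positive reach together with the fact that $H(\p)$ is $O(h^{m})$-close to $T_{\p}\mathcal{M}$ lets me invoke \cite[Lemma A.12]{aizenbud2021non} to get a neighborhood $U\subset H(\p)$ of $\q(\p)$ over which $\mathcal{M}$ is a graph, yielding $\varphi:U\to\mathcal{M}$ with $\varphi[U]=W_{\p}$. For $\widetilde{\mathcal{M}}$, the degree-$m$ polynomial $g^{\star}(\cdot\ |\ \p)$, viewed as a map $H(\p)\to H^{\perp}(\p)$ (see Subsection~\ref{subsec:MMLS_the_algorithm}), is itself a smooth graph over the same $U$, and since $\widetilde{\mathcal{M}}$ is almost everywhere the image of these projections by \cite[Theorem 4.21]{sober2020manifold}, it supplies $\widetilde{\varphi}:U\to\widetilde{\mathcal{M}}$ with $\widetilde{\varphi}[U]=W$; I would shrink $U$ if necessary so both representations hold simultaneously. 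Assertion~3 is then immediate from the construction: the constraint $\p-\q(\p)\perp H(\p)$ means $\p$ projects to $\0\in H(\p)$, so $\varphi(\0)=\p$, while by definition $\widetilde{\varphi}(\0)=g^{\star}(\0\ |\ \p)=\widetilde{\p}$.

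The main obstacle I anticipate is not any single step but the \emph{uniformity} implicit in the quantifier ``for all $\p\in\mathcal{M}$'': I must ensure a single threshold $h_{0}$ and a neighborhood size that work simultaneously across the manifold. This is where I would lean on compactness of $\mathcal{M}$ (to take a finite cover and a uniform lower bound on the reach and on the radius of validity of the graph representation) and on the $h$-independent approximation order of the first MMLS step, so that the closeness of $H(\p)$ to $T_{\p}\mathcal{M}$ needed for \cite[Lemma A.12]{aizenbud2021non} holds with constants independent of $\p$. Everything else reduces to bookkeeping with the cited MMLS results.
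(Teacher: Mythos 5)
The paper does not prove this lemma at all: it is recalled verbatim from \cite[Lemma 3]{sober2020approximating} as background material, so there is no internal proof to compare against. Judged on its own terms, your reconstruction of assertion~1 (every $\p\in\mathcal{M}$ lies in $U_{\mathrm{unique}}$, hence a unique $(\q(\p),H(\p))$ by Assumption~\ref{assu:Uniqueness-domain} and \cite[Lemma 4.4]{sober2020manifold}, and a unique $g^{\star}$ by invertibility of the weighted least-squares system under the Injectivity Conditions) and of the $\varphi$ half of assertion~2 (graph representation over $H(\p)$ via positive reach and \cite[Lemma A.12]{aizenbud2021non}, with compactness of $\mathcal{M}$ supplying a uniform $h_{0}$) is sound and matches how these facts are used elsewhere in the paper.

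There is, however, a genuine gap in your construction of $\widetilde{\varphi}$. You propose to take $\widetilde{\varphi}$ to be the polynomial $g^{\star}(\cdot\ |\ \p)$ viewed as a graph $H(\p)\to H^{\perp}(\p)$ over $U$. But the graph of $g^{\star}(\cdot\ |\ \p)$ is \emph{not} contained in $\widetilde{\mathcal{M}}$: by Eq.~\eqref{eq:deftildeM}, $\widetilde{\mathcal{M}}=\{{\mathcal P}_{m}^{h}(\s)\ |\ \s\in\mathcal{M}\}$, and for $\x\neq\0$ the point $g^{\star}(\x\ |\ \p)$ is merely the value of the local polynomial fit anchored at $\p$, not the MMLS projection of any point (the projection of a nearby point uses a \emph{different} coordinate system $(\q(\cdot),H(\cdot))$). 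So the map you define does not land in $\widetilde{\mathcal{M}}$ and cannot satisfy $\widetilde{\varphi}[U]=W\subset\widetilde{\mathcal{M}}$. The tell is Eq.~\eqref{eq:MMLS_der1}: if $\widetilde{\varphi}$ were literally $g^{\star}(\cdot\ |\ \p)$, the bound $\|\text{D}g^{\star}-\text{D}\widetilde{\varphi}\|_{\infty}\leq c_{\widetilde{\mathcal{M}}}h^{m}$ would be identically zero, which it is not. The correct construction is $\widetilde{\varphi}\coloneqq{\mathcal P}_{m}^{h}\circ\varphi$ on $U$, using the smoothness and injectivity of the MMLS projection from $\mathcal{M}$ to $\widetilde{\mathcal{M}}$ (\cite[Theorems 4.19 and 4.21]{sober2020manifold}, as discussed after Eq.~\eqref{eq:distbetween_p_and_r}) to conclude that ${\mathcal P}_{m}^{h}[W_{\p}]=W$ is an open neighborhood in $\widetilde{\mathcal{M}}$ and that $\widetilde{\varphi}$ is a genuine parametrization; assertion~3 then reads $\widetilde{\varphi}(\0)={\mathcal P}_{m}^{h}(\varphi(\0))={\mathcal P}_{m}^{h}(\p)=g^{\star}(\0\ |\ \p)=\widetilde{\p}$, recovering your conclusion by the right route.
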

\begin{lemma}[Lemma 4 from \cite{sober2020approximating}]\label{lem:Sober_app_lemma4}
Let Assumption \ref{assu:MMLS_Samples} and the Injectivity Conditions from this section hold. Also, let $\lim _{t\to 0} \theta_{h} (t) = \infty$, i.e., $\widetilde{\mathcal{M}}$ interpolates $\mathcal{M}$ at the sample points $\rb_{i},\ 1\leq i\leq n$. Then, there exists a constant $h_{0}$ such that for all $h\leq h_{0}$, all $\p \in \mathcal{M}$, any direction $\v\in\R^{d}$, and all $\x\in B_{c_{1}h}(\0)\subset H(\p)$, we have
\begin{eqnarray*}
     \frac{1}{\sqrt{D}}\|\text{D}\varphi(\x)[\v]-\text{D}\widetilde{\varphi}(\x)[\v]\|\leq
    \|\text{D}\varphi(\x)[\v]-\text{D}\widetilde{\varphi}(\x)[\v]\|_{\infty}&\leq& c_{\mathcal{M}, \widetilde{\mathcal{M}}} h^{m},\nonumber\\
    \frac{1}{\sqrt{D}}\|\text{D}g^{\star}(\x\ |\ \p)[\v]-\text{D}\varphi(\x)[\v]\|\leq
    \|\text{D}g^{\star}(\x\ |\ \p)[\v]-\text{D}\varphi(\x)[\v]\|_{\infty}&\leq& c_{\mathcal{M}} h^{m},\\
    \frac{1}{\sqrt{D}}\|\text{D}g^{\star}(\x\ |\ \p)[\v]-\text{D}\widetilde{\varphi}(\x)[\v]\|\leq
    \|\text{D}g^{\star}(\x\ |\ \p)[\v]-\text{D}\widetilde{\varphi}(\x)[\v]\|_{\infty}&\leq& c_{\widetilde{\mathcal{M}}} h^{m},\nonumber
\end{eqnarray*}
where $c_{\mathcal{M}, \widetilde{\mathcal{M}}}, c_{\mathcal{M}}, c_{\widetilde{\mathcal{M}}}$ are constants independent of $\v$ or $\p$.
\end{lemma}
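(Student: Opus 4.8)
The plan is to reduce all three estimates to a single tool, the MLS derivative bound of Lemma \ref{lem:Sober_app_lemma1}, by exploiting the fact that the one polynomial $g^{\star}(\cdot\ |\ \p)$ is simultaneously the MLS approximant of \emph{both} parametrizations. First I would fix $\p\in\mathcal{M}$ and invoke Lemma \ref{lem:Sober_app_lemma3} (together with the graph representation \cite[Lemma A.12]{aizenbud2021non}) to write $\mathcal{M}$ and $\widetilde{\mathcal{M}}$, over the common coordinate domain $U\subset H(\p)$ with origin $\q(\p)$, as graphs of smooth vector-valued functions $\varphi(\x)=\q(\p)+\x+F(\x)$ and $\widetilde{\varphi}(\x)=\q(\p)+\x+\widetilde{F}(\x)$, with $F,\widetilde{F}$ valued in $H^{\perp}(\p)$. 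Since $\mathcal{M}\in C^{2}$ and, by \cite[Theorem 4.21]{sober2020manifold}, $\widetilde{\mathcal{M}}$ is (almost everywhere) $C^{\infty}$, each scalar component of $\varphi$ and of $\widetilde{\varphi}$ lies in the regularity class required by Lemma \ref{lem:Sober_app_lemma1}.

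Next I would establish the middle inequality, between $\text{D}g^{\star}$ and $\text{D}\varphi$. For each of the $D$ coordinates $k$, the $k$-th component of $g^{\star}(\cdot\ |\ \p)$ is exactly the scalar MLS polynomial solving Problem \eqref{eq:lemma1_prob} with data sites $\q_{i}\in H(\p)$, values $(\rb_{i})_{k}=\varphi_{k}(\q_{i})$, and weight $\theta_{h}$ (the MMLS weight $\theta_{2}(\rb_{i}-\q(\p))$ being expressible as a function of $\q_{i}$ through the graph). The Injectivity Conditions guarantee the least-squares matrix is invertible, so Lemma \ref{lem:Sober_app_lemma1} gives $|\partial_{i}g^{\star}_{k}(\x\ |\ \p)-\partial_{i}\varphi_{k}(\x)|\le C\max_{j}|\partial_{j}\varphi_{k}(\x)|\,h^{m}$ for all $\x\in B_{c_{1}h}(\0)$. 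Combining the $k$ component errors into a directional derivative (absorbing the dimensional and $\|\v\|$ factors into the constant) and using $\tfrac{1}{\sqrt{D}}\|\cdot\|\le\|\cdot\|_{\infty}$ on $\R^{D}$ yields the $c_{\mathcal{M}}h^{m}$ bound, where $c_{\mathcal{M}}$ is $\p$-independent because $\mathcal{M}$ is compact and $H(\cdot)$ varies smoothly, so the relevant derivatives of $\varphi$ are uniformly bounded.

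The key step, which I expect to be the main obstacle, is the inequality between $\text{D}g^{\star}$ and $\text{D}\widetilde{\varphi}$. Here the interpolation hypothesis $\lim_{t\to 0}\theta_{h}(t)=\infty$ is essential: it forces $\widetilde{\mathcal{M}}$ to pass through every sample $\rb_{i}$, so that $\widetilde{\varphi}(\q_{i})=\rb_{i}=\varphi(\q_{i})$. Because an MLS polynomial depends only on the sampled values and weights, the \emph{same} $g^{\star}(\cdot\ |\ \p)$ is also the MLS approximant of $\widetilde{\varphi}$; applying Lemma \ref{lem:Sober_app_lemma1} a second time, now to the smooth $\widetilde{\varphi}$, delivers $\|\text{D}g^{\star}(\x\ |\ \p)[\v]-\text{D}\widetilde{\varphi}(\x)[\v]\|_{\infty}\le c_{\widetilde{\mathcal{M}}}h^{m}$. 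The delicate points to discharge are that $\widetilde{\varphi}$ is genuinely $C^{k}$ with derivatives bounded uniformly in $\p$ (so $c_{\widetilde{\mathcal{M}}}$ is $\p$-independent), and that the two graph representations share the same basepoint $\q(\p)$ and the same sample abscissae $\q_{i}$; both I would extract from Lemma \ref{lem:Sober_app_lemma3} and the uniqueness statements \cite[Lemma 4.7, Theorem 4.21]{sober2020manifold}.

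Finally, the first inequality is immediate from the triangle inequality $\text{D}\varphi-\text{D}\widetilde{\varphi}=(\text{D}\varphi-\text{D}g^{\star})+(\text{D}g^{\star}-\text{D}\widetilde{\varphi})$, giving $c_{\mathcal{M},\widetilde{\mathcal{M}}}=c_{\mathcal{M}}+c_{\widetilde{\mathcal{M}}}$, and the $\tfrac{1}{\sqrt{D}}$ lower bounds in every line are again just the norm comparison $\tfrac{1}{\sqrt{D}}\|\cdot\|\le\|\cdot\|_{\infty}$ on $\R^{D}$.
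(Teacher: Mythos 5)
First, a point of reference: the paper does not prove this statement itself — Lemma \ref{lem:Sober_app_lemma4} is quoted as an imported fact from \cite[Lemma 4]{sober2020approximating} — so the only in-paper comparison is the analogous argument for Lemma \ref{lem:fderiv_approx}, which proceeds exactly as your second paragraph does: identify each coordinate of the MMLS polynomial as a scalar MLS approximant of the corresponding coordinate of the sampled function and invoke Lemma \ref{lem:Sober_app_lemma1} component-wise. Your middle inequality and the final triangle-inequality assembly are therefore on the intended route, modulo one technicality you should not gloss over: the weights in Problem \eqref{eq:MMLS_step2} are $\theta_{h}(\|\rb_{i}-\q(\p)\|)$, measured in the ambient metric, whereas Lemma \ref{lem:Sober_app_lemma1} is stated for weights $\theta_{h}(\|\xi-\x_{i}\|)$ measured in the domain. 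Saying the former is ``expressible as a function of $\q_{i}$ through the graph'' does not show that it satisfies the support and scale-consistency hypotheses of Lemma \ref{lem:Sober_app_lemma1}; you need to argue that $\|\rb_{i}-\q(\p)\|$ and $\|\q_{i}\|$ are comparable at scale $h$ (via the graph representation and the reach bound) and that the MLS estimate tolerates this perturbation of the weight.

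The genuine gap is in your key step, the bound between $\text{D}g^{\star}$ and $\text{D}\widetilde{\varphi}$. The observation that interpolation forces $\widetilde{\varphi}(\q_{i})=\rb_{i}$, so that $g^{\star}(\cdot\ |\ \p)$ is simultaneously the MLS approximant of $\widetilde{\varphi}$, is correct as far as it goes. But the constant produced by Lemma \ref{lem:Sober_app_lemma1} is proportional to $\max_{j}|\partial_{j}\widetilde{\varphi}_{k}|$ over the relevant ball, and $\widetilde{\varphi}$ is not a fixed function: it parametrizes $\widetilde{\mathcal{M}}$, an object that itself depends on $h$ and on the sample set. To obtain a bound $c_{\widetilde{\mathcal{M}}}h^{m}$ with $c_{\widetilde{\mathcal{M}}}$ independent of $\p$ — and, for the estimate to have any content, bounded as $h\to0$ — you must show that the first derivatives of $\widetilde{\varphi}$ are bounded uniformly in $h$ and $\p$. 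The natural way to do that is to compare $\text{D}\widetilde{\varphi}$ with $\text{D}\varphi$, but that comparison is the first inequality of the lemma, which you derive \emph{from} the third; as written, the argument is circular. Neither Lemma \ref{lem:Sober_app_lemma3} nor the uniqueness statements you cite supply a quantitative, $h$-uniform derivative bound for $\widetilde{\varphi}$. You would need either the smooth-dependence estimates behind \cite[Theorem 4.21]{sober2020manifold} (differentiating $\s\mapsto g^{\star}(\0\ |\ \s)$ through the moving coordinate system and bounding that derivative uniformly), or to first bound $\text{D}g^{\star}$ via the already-established middle inequality and then control $\text{D}\widetilde{\varphi}$ by an implicit-function argument on the graph representation of $\widetilde{\mathcal{M}}$ over $H(\p)$. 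Until one of these is carried out, the third estimate — and hence the first — is not proved.
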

As explained in Subsection \ref{subsec:MMLS_the_algorithm}, Lemma \ref{lem:Sober_app_lemma3} and Lemma \ref{lem:Sober_app_lemma4} can be extended for any $\rb\in\widetilde{\mathcal{M}}$ such that ${\mathcal P}_{m}^{h}(\p)=\rb$. The same extension works for Lemma \ref{lem:fderiv_approx}. 

A key outcome from Lemma \ref{lem:Sober_app_lemma4} is related to the order of approximation of our proposed Riemannian gradients, Eq. \eqref{eq:f_Rgrad} and Eq. \eqref{eq:approximate_f_Rgrad}, which we prove in Lemma \ref{lem:approx_Rim_grad}. In the following, we show that for $\rb\in\widetilde{\mathcal{M}}$ such that ${\mathcal P}_{m}^{h}(\p)=\rb$, the orthogonal projections with respect to the Riemannian metric (the standard inner product in this paper) on $\Range{\text{D}g^{\star}(\x\ |\ \rb)}$, $\Range{\text{D}\varphi(\x)}$, and $\Range{\text{D}\widetilde{\varphi}(\x)}$ ($\widetilde{T}_{\rb}\widetilde{\mathcal{M}}$, $T_{\p}{\mathcal{M}}$, and $T_{\rb}\widetilde{{\mathcal{M}}}$ when $\x = \0$ correspondingly) where $g^{\star}$, $\varphi$, and $\widetilde{\varphi}$ are the functions from Lemma \ref{lem:Sober_app_lemma4}, approximate each other in the order of $O(\sqrt{D} h^{m})$ in $L_{2}$ norm. To that end, we first recall the following result from \cite[Theorem 2.5]{chen2016perturbation}.
\if0
given some $\u\in\R^{D}$, and $\rb\in\widetilde{\mathcal{M}}$ such that ${\mathcal P}_{m}^{h}(\p)=\rb$, the orthogonal projections with respect to the Riemannian metric (the standard inner product in this paper) on $\Range{\text{D}g^{\star}(\x\ |\ \rb)}$, $\Range{\text{D}\varphi(\x)}$, and $\Range{\text{D}\widetilde{\varphi}(\x)}$ ($\widetilde{T}_{\rb}\widetilde{\mathcal{M}}$, $T_{\p}{\mathcal{M}}$, and $T_{\rb}\widetilde{{\mathcal{M}}}$ when $\x = \0$ correspondingly) where $g^{\star}$, $\varphi$, and $\widetilde{\varphi}$ are the functions from Lemma \ref{lem:Sober_app_lemma4}, approximate each other in the order of $O(\sqrt{D}\left\|\u \right\| h^{m})$ in $L_{2}$ norm.  
\fi
\begin{lemma}[Based on Theorem 2.5 from \cite{chen2016perturbation}]\label{lem:chen2.5}
Let $\matB, \widetilde{\matB} \in \R^{D\times d}$, and denote $\mat{E} \coloneqq \matB - \widetilde{\matB}$. If $\rank{\matB} = \rank{\widetilde{\matB}}$, then
\begin{equation}\label{eq:chen2.21}
    \left\|\Pi_{\Range{\matB}}(\cdot) - \Pi_{\Range{\widetilde{\matB}}}(\cdot) \right\| = \left\| \matB \matB^\pinv - \widetilde{\matB} \widetilde{\matB}^\pinv \right\| \leq \min \left\{ \left\| \mat{E} \matB^\pinv \right\|,\ \left\| \mat{E} \widetilde{\matB}^\pinv \right\| \right\},
\end{equation}
where $\Pi_{\Range{\matB}}(\cdot) = \matB \matB^\pinv$ and $\Pi_{\Range{\widetilde{\matB}}}(\cdot) = \widetilde{\matB} \widetilde{\matB}^\pinv$ are the orthogonal projection matrices on the column spaces of $\matB$ and $\widetilde{\matB}$ correspondingly in their explicit matrix form (see \cite[Chapter 5.5.2]{golub2013matrix}).
\end{lemma}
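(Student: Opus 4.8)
The plan is to reduce the bound to a statement about the two ``cross-term'' operators $(\matI - \Pi_{\Range{\matB}})\Pi_{\Range{\widetilde{\matB}}}$ and $(\matI - \Pi_{\Range{\widetilde{\matB}}})\Pi_{\Range{\matB}}$, and to exploit that $\mat E \coloneqq \matB - \widetilde{\matB}$ is annihilated by the complementary projectors. Write $\matP \coloneqq \Pi_{\Range{\matB}} = \matB\matB^{\pinv}$ and $\widetilde{\matP}\coloneqq \Pi_{\Range{\widetilde{\matB}}} = \widetilde{\matB}\widetilde{\matB}^{\pinv}$, both orthogonal projectors, so that $\|\matI - \matP\| \le 1$ and $\|\matI - \widetilde{\matP}\|\le 1$. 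First I would record the elementary identity for orthogonal projectors
\begin{equation*}
    \| \matP - \widetilde{\matP} \| = \max\left\{ \|(\matI - \matP)\widetilde{\matP}\|,\ \|(\matI - \widetilde{\matP})\matP\| \right\},
\end{equation*}
which follows from the block structure of $\matP - \widetilde{\matP}$ with respect to the orthogonal decomposition $\R^{D} = \Range{\matB} \oplus \Range{\matB}^{\perp}$ (equivalently, from the CS decomposition of the pair of subspaces).

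Next I would bound each cross term using the perturbation $\mat E$. Since $\matP\matB = \matB$, we have $(\matI - \matP)\matB = \matzero$, and writing $\widetilde{\matB} = \matB - \mat E$ gives $(\matI - \matP)\widetilde{\matB} = -(\matI - \matP)\mat E$. Consequently
\begin{equation*}
    (\matI - \matP)\widetilde{\matP} = (\matI - \matP)\widetilde{\matB}\widetilde{\matB}^{\pinv} = -(\matI - \matP)\mat E\,\widetilde{\matB}^{\pinv},
\end{equation*}
so that $\|(\matI - \matP)\widetilde{\matP}\| \le \|\matI - \matP\|\,\|\mat E\,\widetilde{\matB}^{\pinv}\| \le \|\mat E\,\widetilde{\matB}^{\pinv}\|$. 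The symmetric computation, using $(\matI - \widetilde{\matP})\widetilde{\matB} = \matzero$ and $\matB = \widetilde{\matB} + \mat E$, yields $\|(\matI - \widetilde{\matP})\matP\| \le \|\mat E\,\matB^{\pinv}\|$.

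The main step --- and the only place the rank hypothesis enters --- is to upgrade the $\max$ in the projector identity to the equality of the two cross-term norms: when $\rank{\matB} = \rank{\widetilde{\matB}}$, the two ranges are subspaces of equal dimension, their principal angles are symmetric, and hence $\|(\matI - \matP)\widetilde{\matP}\| = \|(\matI - \widetilde{\matP})\matP\|$; this is precisely the content of \cite[Theorem 2.5]{chen2016perturbation}. Combining this equality with the two one-sided bounds shows that the common value $\|\matP - \widetilde{\matP}\|$ is simultaneously at most $\|\mat E\,\widetilde{\matB}^{\pinv}\|$ and at most $\|\mat E\,\matB^{\pinv}\|$, which yields the claimed minimum. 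The expected obstacle is exactly justifying the equal-rank symmetry of the cross-term norms; I would either invoke the perturbation theorem directly or supply the short CS-decomposition argument showing that $(\matI - \matP)\widetilde{\matP}$ and $(\matI - \widetilde{\matP})\matP$ share their nonzero singular values whenever the two subspaces have the same dimension.
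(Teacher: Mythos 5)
Your proof is correct, but note that the paper does not actually prove this lemma at all: it is imported verbatim as \cite[Theorem 2.5]{chen2016perturbation}, so you have supplied a self-contained derivation where the paper only cites. Your argument is sound at every step: $\matP\matB=\matB$ is the Moore--Penrose identity $\matB\matB^{\pinv}\matB=\matB$, so $(\matI-\matP)\widetilde{\matB}=-(\matI-\matP)\mat{E}$ and the two one-sided bounds $\|(\matI-\matP)\widetilde{\matP}\|\le\|\mat{E}\widetilde{\matB}^{\pinv}\|$ and $\|(\matI-\widetilde{\matP})\matP\|\le\|\mat{E}\matB^{\pinv}\|$ follow from submultiplicativity and $\|\matI-\matP\|\le 1$; the identity $\|\matP-\widetilde{\matP}\|=\max\{\|(\matI-\matP)\widetilde{\matP}\|,\|(\matI-\widetilde{\matP})\matP\|\}$ is the classical gap formula for orthogonal projectors (Kato); and the equal-rank hypothesis enters exactly where you say it must, since for subspaces of equal dimension the nonzero singular values of $(\matI-\matP)\widetilde{\matP}$ and $(\matI-\widetilde{\matP})\matP$ are both the sines of the (symmetric) principal angles, turning the max into a common value bounded by both estimates. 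That the rank condition is genuinely needed is confirmed by $\matB=\mathbf{e}_1$, $\widetilde{\matB}=\matzero$, where the right-hand side would otherwise be $0$ while $\|\matP-\widetilde{\matP}\|=1$. The one thing to tighten: for the symmetry step, "invoke the perturbation theorem directly" would be circular if that theorem is the statement being proved, so you should commit to the CS-decomposition/principal-angle argument you sketch (equivalently, that $\matP\widetilde{\matP}\matP$ and $\widetilde{\matP}\matP\widetilde{\matP}$ have the same nonzero spectrum); with that in place the proof stands on its own and is arguably a useful addition, since the paper leaves the reader to chase the reference.
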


Now, using Lemma \ref{lem:chen2.5} we can prove our result regarding the orthogonal projections in Lemma \ref{lem:our_proj_prop}.

\begin{lemma}[Orthogonal projection approximation order]\label{lem:our_proj_prop}
Given $\rb\in\widetilde{\mathcal{M}}$ such that ${\mathcal P}_{m}^{h}(\p)=\rb$, let Assumption \ref{assu:MMLS_Samples}, let the Injectivity Conditions from this section hold, and let $\lim _{t\to 0} \theta_{h} (t) = \infty$. Then, for all $\x\in B_{c_{1}h}(\0)\subset H(\rb)$, we have
\begin{eqnarray}\label{eq:our_proj_prop1}
    \left\|\Pi_{\Range{\text{D}\varphi(\x)}}(\cdot) - \Pi_{\Range{\text{D}\widetilde{\varphi}(\x)}}(\cdot) \right\| &\leq& c_{\mathcal{M}, \widetilde{\mathcal{M}}} \sqrt{D} h^{m},\\
    \left\|\Pi_{\Range{\text{D}g^{\star}(\x\ |\ \rb)}}(\cdot) - \Pi_{\Range{\text{D}\varphi(\x)}}(\cdot) \right\| &\leq& c_{\mathcal{M}} \sqrt{D}  h^{m},\nonumber\\
    \left\|\Pi_{\Range{\text{D}g^{\star}(\x\ |\ \rb)}}(\cdot) - \Pi_{\Range{\text{D}\widetilde{\varphi}(\x)}}(\cdot) \right\| &\leq& c_{\widetilde{\mathcal{M}}} \sqrt{D} h^{m},\nonumber
\end{eqnarray}
and
\begin{eqnarray}\label{eq:our_proj_prop2}
    \left\|\Pi_{T_{\p}{\mathcal{M}}}(\cdot) - \Pi_{T_{\rb}\widetilde{{\mathcal{M}}}}(\cdot) \right\| &\leq& c_{\mathcal{M}, \widetilde{\mathcal{M}}} \sqrt{D} h^{m},\\
    \left\|\Pi_{\widetilde{T}_{\rb}\widetilde{\mathcal{M}}}(\cdot) - \Pi_{T_{\p}{\mathcal{M}}}(\cdot) \right\| &\leq& c_{\mathcal{M}} \sqrt{D} h^{m},\nonumber\\
    \left\|\Pi_{\widetilde{T}_{\rb}\widetilde{\mathcal{M}}}(\cdot) - \Pi_{T_{\rb}\widetilde{{\mathcal{M}}}}(\cdot) \right\| &\leq& c_{\widetilde{\mathcal{M}}} \sqrt{D} h^{m},\nonumber
\end{eqnarray}
where $\Pi_{\Range{\text{D}g^{\star}(\x\ |\ \rb)}}(\cdot)$, $\Pi_{\Range{\text{D}\varphi(\x)}}(\cdot)$, $\Pi_{\Range{\text{D}\widetilde{\varphi}(\x)}}(\cdot)$, $\Pi_{\widetilde{T}_{\rb}\widetilde{\mathcal{M}}}(\cdot)$, $\Pi_{T_{\p}{\mathcal{M}}}(\cdot)$, and $\Pi_{T_{\rb}\widetilde{{\mathcal{M}}}}(\cdot)$ are the orthogonal projection operators on $\Range{\text{D}g^{\star}(\x\ |\ \rb)}$, $\Range{\text{D}\varphi(\x)}$, $\Range{\text{D}\widetilde{\varphi}(\x)}$, $\widetilde{T}_{\rb}\widetilde{\mathcal{M}}$, $T_{\p}{\mathcal{M}}$, and $T_{\rb}\widetilde{{\mathcal{M}}}$ correspondingly.
\end{lemma}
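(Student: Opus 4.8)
The plan is to reduce each of the six projection bounds to a single application of Lemma \ref{lem:chen2.5}, feeding it the spectral-norm estimate of the difference of differentials supplied by Lemma \ref{lem:Sober_app_lemma4}. I would treat $\text{D}g^{\star}(\x\ |\ \rb)$, $\text{D}\varphi(\x)$, and $\text{D}\widetilde{\varphi}(\x)$ as matrices in $\R^{D\times d}$ and handle the three pairs in Eq. \eqref{eq:our_proj_prop1} identically, differing only in which constant from Lemma \ref{lem:Sober_app_lemma4} is invoked. Eq. \eqref{eq:our_proj_prop2} then follows by specializing $\x=\0$, since $\Range{\text{D}\varphi(\0)}=T_{\p}\mathcal{M}$, $\Range{\text{D}\widetilde{\varphi}(\0)}=T_{\rb}\widetilde{\mathcal{M}}$, and $\Range{\text{D}g^{\star}(\0\ |\ \rb)}=\widetilde{T}_{\rb}\widetilde{\mathcal{M}}$ by Eq. \eqref{eq:aprox_tan_def}.

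First I would fix the orthonormal basis of $\R^{D}$ adapted to the splitting $H(\rb)\oplus H^{\perp}(\rb)$. Since $\mathcal{M}$ and $\widetilde{\mathcal{M}}$ are, near the relevant points, graphs of functions from $H(\rb)$ to $H^{\perp}(\rb)$, the three parametrizations $\varphi$, $\widetilde{\varphi}$, $g^{\star}$ are all graph parametrizations over the common space $H(\rb)$; hence in this basis each differential has the block form of Eq. \eqref{eq:our_proj_prop_mat}, namely an $\matI$ top block of size $d$ stacked on the derivative of the corresponding graph function. Two consequences follow immediately and are exactly what Lemma \ref{lem:chen2.5} requires. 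Each matrix has full column rank $d$, so for every pair the rank hypothesis $\rank{\matB}=\rank{\widetilde{\matB}}=d$ holds; and each Gram matrix equals $\matI$ plus a positive semidefinite term, so its eigenvalues are at least $1$, giving $\|\matB^{\pinv}\|\le 1$ (equivalently $\|\matG_{\matB}^{-\nicehalf}\|\le 1$) for each of the three matrices.

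Next I would bound the spectral norm of the difference $\mat{E}\coloneqq\matB-\widetilde{\matB}$ of any two of the differentials. For a unit vector $\v\in\R^{d}$, the quantity $\|\mat{E}\v\|$ is precisely the $L_{2}$ norm of the difference of directional derivatives controlled by Lemma \ref{lem:Sober_app_lemma4}, so $\|\mat{E}\v\|\le c_{\bullet}\sqrt{D}h^{m}$ with $c_{\bullet}$ the corresponding constant ($c_{\mathcal{M},\widetilde{\mathcal{M}}}$, $c_{\mathcal{M}}$, or $c_{\widetilde{\mathcal{M}}}$); maximizing over unit $\v$ yields $\|\mat{E}\|\le c_{\bullet}\sqrt{D}h^{m}$. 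Then Lemma \ref{lem:chen2.5} gives
\begin{equation*}
\left\|\Pi_{\Range{\matB}}(\cdot)-\Pi_{\Range{\widetilde{\matB}}}(\cdot)\right\|\le\left\|\mat{E}\widetilde{\matB}^{\pinv}\right\|\le\|\mat{E}\|\cdot\|\widetilde{\matB}^{\pinv}\|\le c_{\bullet}\sqrt{D}h^{m},
\end{equation*}
using $\|\widetilde{\matB}^{\pinv}\|\le 1$ from the previous step. Applying this to the three pairs produces Eq. \eqref{eq:our_proj_prop1}, and the $\x=\0$ specialization produces Eq. \eqref{eq:our_proj_prop2}.

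The main obstacle I anticipate is the justification of the block form of Eq. \eqref{eq:our_proj_prop_mat} uniformly for all $\x\in B_{c_{1}h}(\0)\subset H(\rb)$: one must verify that $\varphi$, $\widetilde{\varphi}$, and $g^{\star}$ are genuinely graph parametrizations over the common space $H(\rb)$ (so that the top block is exactly $\matI$ of size $d$, not merely full rank), which rests on the local graph representation guaranteed by \cite[Lemma A.12]{aizenbud2021non} and the uniqueness of the coordinate domain from Lemma \ref{lem:Sober_app_lemma3}. Once the identity top block is secured, the uniform bound $\|(\cdot)^{\pinv}\|\le 1$ --- which is what removes any spurious conditioning factor and delivers the clean constant $c_{\bullet}\sqrt{D}h^{m}$ --- is automatic, and the rest is the routine chain above.
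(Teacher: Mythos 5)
Your proposal is correct and follows essentially the same route as the paper's proof: invoke Lemma \ref{lem:chen2.5} for the projector difference, obtain $\|\mat{E}\|\le c_{\bullet}\sqrt{D}h^{m}$ from Lemma \ref{lem:Sober_app_lemma4} by maximizing over unit directions, and use the graph-of-function block form of Eq. \eqref{eq:our_proj_prop_mat} to get $\|\matB^{\pinv}\|\le 1$ via the Gram-matrix eigenvalue bound, then specialize to $\x=\0$ for Eq. \eqref{eq:our_proj_prop2}. The obstacle you flag (justifying the identity top block uniformly over $B_{c_{1}h}(\0)$) is handled in the paper exactly as you suggest, by viewing all three maps as graphs over the common coordinate system $H(\rb)$.
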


\begin{proof}
We want to show Eq. \eqref{eq:our_proj_prop1}, and it will also prove Eq. \eqref{eq:our_proj_prop2}, since the orthogonal projections on $\widetilde{T}_{\rb}\widetilde{\mathcal{M}}$, $T_{\p}{\mathcal{M}}$, and $T_{\rb}\widetilde{{\mathcal{M}}}$ are equivalent to the orthogonal projections on $\Range{\text{D}g^{\star}(\0\ |\ \rb)}$, $\Range{\text{D}\varphi(\0)}$, and $\Range{\text{D}\widetilde{\varphi}(\0)}$ correspondingly. 

For simplicity, we will use the matrix form of the operators, and denote each possible pair of matrices from: $\text{D}g^{\star}(\x\ |\ \rb)$, $\text{D}\varphi(\x)$, and $\text{D}\widetilde{\varphi}(\x)$, by $\matB$ and $\widetilde{\matB}$. Using Lemma \ref{lem:chen2.5}, Eq. \eqref{eq:chen2.21} holds for $\matB$ and $\widetilde{\matB}$. Thus, it is sufficient to bound
\begin{equation}\label{eq:our_chen2.21}
    \min \left\{ \left\| \mat{E} \matB^\pinv \right\|,\ \left\| \mat{E} \widetilde{\matB}^\pinv \right\| \right\}.
\end{equation}
Recall that Lemma \ref{lem:Sober_app_lemma4} ensures that 
\begin{equation}\label{eq:our_proj_prop_given}
    \left\| \matB \v - \widetilde{\matB} \v \right\| \leq c \sqrt{D}h^{m},
\end{equation}
for any $\v\in \R^{d}$ and some constant $c>0$ independent of $\v$. The right-hand side of Eq. \eqref{eq:our_proj_prop_given} does not depend on $\v$, thus we can take the maximum over $\left\| \v \right\| = 1$ and get the spectral matrix norm of $\matB - \widetilde{\matB}$, i.e.,
\begin{equation}\label{eq:our_proj_prop_given2}
   \left\| \mat{E} \right\| \coloneqq \left\| \matB - \widetilde{\matB} \right\| \leq c \sqrt{D}h^{m}.
\end{equation}

Next, recall that the output of the second step of MMLS, i.e., $g^{\star}(\x\ |\ \rb):H(\rb)\to\R^{D}$, can be equivalently viewed as $g^{\star}(\x\ |\ \rb):H(\rb)\to H^{\perp}(\rb)$, i.e., an approximation of $\mathcal{M}$ as a graph of a function (see Subsection \ref{subsec:MMLS_the_algorithm}). Correspondingly, $\varphi:H(\rb)\to H^{\perp}(\rb)$  and $\widetilde{\varphi}:H(\rb)\to H^{\perp}(\rb)$ are representations of $\mathcal{M}$ and $\widetilde{\mathcal{M}}$ as graphs of functions. Now, take a basis of $\R^{D}$ to be a union of some orthogonal bases of $H(\rb)$ and $H^{\perp}(\rb)$, then the differentials of $g^{\star}(\cdot\ |\ \rb)$, $\varphi(\cdot)$, and $\widetilde{\varphi}(\cdot)$ are of the form (see also \cite[Subsection 2.2.2]{aizenbud2021non}):
\begin{equation}\label{eq:our_proj_prop_mat}
    \matB \coloneqq \left [ \begin{array}{c} \matI_{d}\\
\matA \\
\end{array}\right]\in\R^{D\times d}, 
\end{equation}
where $\matA\in \R^{(D-d)\times d}$. Note that $\matB^\pinv = \matG_{\matB}^{-1} \matB^{\T}$ where 
\begin{equation}\label{eq:our_proj_prop_Gram}
    \matG_{\matB} \coloneqq \matB^{\T} \matB = \matI_{d} + \matA^{\T} \matA,
\end{equation}
is the Gram matrix of the matrix $\matB$. 

From Eq. \eqref{eq:our_proj_prop_Gram}, the eigenvalues of $\matG_{\matB}$ (symmetric positive semi-definite matrix by definition) are larger or equal to $1$ making the matrix SPD. Thus, the eigenvalues of $\matG_{\matB}^{-1}$, which is also an SPD matrix, are in the range $(0,1]$. Moreover, we have that
\begin{equation}\label{eq:our_proj_prop_norm}
    \left\|\matG_{\matB}^{-1} \right\|\leq 1,\ \left\|\matG_{\matB}^{-\nicehalf} \right\|\leq 1,
\end{equation}
where $\matG_{\matB}^{-\nicehalf}$ is the unique SPD matrix such that $\matG_{\matB}^{-1} = \matG_{\matB}^{-\nicehalf}\matG_{\matB}^{-\nicehalf}$. Eq. \eqref{eq:our_proj_prop_norm} holds since any SPD matrix $\matM\in\R^{d\times d}$ with eigenvalues in the range $(0,1]$, has a spectral norm $\left\| \matM \right\| \leq 1$, since the spectral norm of any matrix $\matU$ equals its largest singular value \cite[Example 5.6.6]{horn2012matrix}, which is also equal to the largest eigenvalue of $\matU\matU^{\T}$ (or $\matU^{\T}\matU$) \cite[Theorem 2.6.3]{horn2012matrix}.

Next, since $\matB \matG_{\matB}^{-1} \matB^{\T}$ is an orthogonal projection matrix, its eigenvalues are either $0$ or $1$, making the spectral norm of the following matrix be bounded by $1$:
\begin{equation}\label{eq:our_proj_prop_given3}
    \left\| \matG_{\matB}^{-\nicehalf} \matB^{\T} \right\| \leq 1.
\end{equation}

Finally, to conclude the proof we can bound Eq. \eqref{eq:our_chen2.21} using Eqs. \eqref{eq:our_proj_prop_given2}, \eqref{eq:our_proj_prop_norm}, and \eqref{eq:our_proj_prop_given3}, by
\begin{equation}\label{eq:our_final_bound}
\left\| \mat{E} \matB^\pinv \right\| \leq \left\| \mat{E} \right\| \cdot \left\| \matB^\pinv \right\| \leq \left\| \mat{E} \right\| \cdot \left\| \matG_{\matB}^{-\nicehalf} \right\| \cdot \left\| \matG_{\matB}^{-\nicehalf} \matB^{\T} \right\| \leq c \sqrt{D}h^{m},
\end{equation}
where the bound in Eq. \eqref{eq:our_final_bound} is also true for $\left\| \mat{E} \widetilde{\matB}^\pinv \right\|$.
\end{proof}

\if0
show a technical result in Lemma \ref{lem:proj_prop}, and then we conclude with Corollary \ref{cor:proj_prop}.

\begin{lemma}[Orthogonal projection on range of close matrices]\label{lem:proj_prop}
Let $\matA\in\R^{(D-d)\times d}$ and $\widetilde{\matA}\in\R^{(D-d)\times d}$ be two matrices such that for any $\v\in\R^{d}$
\begin{equation}\label{eq:proj_prop_given}
    \left\| \matA \v - \widetilde{\matA} \v \right\| \leq c \sqrt{D}h^{m},
\end{equation}
where $c>0$ is some constant independent of $\v$. Define the matrices 
\begin{equation}\label{eq:proj_prop_mat}
    \matB \coloneqq \left [ \begin{array}{c} \matI_{d}\\
\matA \\
\end{array}\right]\in\R^{D\times d},\ \widetilde{\matB} \coloneqq \left [ \begin{array}{c} \matI_{d}\\
\widetilde{\matA} \\
\end{array}\right]\in\R^{D\times d}.
\end{equation}
Let $\u\in\R^{D}$, then
\begin{equation}\label{eq:proj_prop0}
    \left\|\Pi_{\Range{\matB}}(\u) - \Pi_{\Range{\widetilde{\matB}}}(\u) \right\| \leq 4 c \sqrt{D} \left\| \u \right\| h^{m},
\end{equation}
where $\Pi_{\Range{\matB}}(\cdot)$ and $\Pi_{\Range{\widetilde{\matB}}}$ denote the orthogonal projection operators on $\Range{\matB}$ and $\Range{\widetilde{\matB}}$ correspondingly.
\end{lemma}
\begin{proof}
Recall that an orthogonal projection on the range of a full (column) rank matrix can be explicitly defined via the Moore-Penrose inverse \cite[Chapter 5.5.2]{golub2013matrix} in the following way

\begin{eqnarray*}
\b \coloneqq \Pi_{\Range{\matB}}(\u) = \matB \matB^\pinv \u &=& \matB \matG_{\matB}^{-1} \matB^{\T} \u,\\
\widetilde{\b} \coloneqq \Pi_{\Range{\widetilde{\matB}}}(\u) = \widetilde{\matB} \widetilde{\matB}^\pinv \u &=& \widetilde{\matB} \matG_{\widetilde{\matB}}^{-1} \widetilde{\matB}^{\T} \u,
\end{eqnarray*}
where the Gram matrices can be rewritten
\begin{equation}\label{eq:proj_prop_Gram}
    \matG_{\matB} \coloneqq \matB^{\T} \matB = \matI_{d} + \matA^{\T} \matA,\ \matG_{\widetilde{\matB}} \coloneqq \widetilde{\matB}^{\T} \widetilde{\matB} = \matI_{d} + \widetilde{\matA}^{\T} \widetilde{\matA}.
\end{equation}

From Eq. \eqref{eq:proj_prop_Gram}, the eigenvalues of $\matG_{\matB}$ and $\matG_{\widetilde{B}}$ (symmetric positive semi-definite matrices by definition) are larger or equal to $1$ making the matrices SPD. Thus, the eigenvalues of $\matG_{\matB}^{-1}$ and $\matG_{\widetilde{B}}^{-1}$, which are also SPD matrices, are in the range $(0,1]$. Moreover, we have that
\begin{equation}\label{eq:proj_prop_norm}
    \left\|\matG_{\matB}^{-1} \right\|\leq 1,\ \left\|\matG_{\matB}^{-\nicehalf} \right\|\leq 1,\ \left\|\matG_{\widetilde{B}}^{-1} \right\|\leq 1,\ \left\|\matG_{\widetilde{B}}^{-\nicehalf} \right\|\leq 1,
\end{equation}
where $\matG_{\matB}^{-\nicehalf}$ and $\matG_{\widetilde{B}}^{-\nicehalf}$ are the unique SPD matrices such that $\matG_{\matB}^{-1} = \matG_{\matB}^{-\nicehalf}\matG_{\matB}^{-\nicehalf}$ and $\matG_{\widetilde{B}}^{-1} = \matG_{\widetilde{B}}^{-\nicehalf}\matG_{\widetilde{B}}^{-\nicehalf}$. Note that Eq. \eqref{eq:proj_prop_norm} holds since any SPD matrix $\matM\in\R^{d\times d}$ with eigenvalues in the range $(0,1]$, has a spectral norm $\left\| \matM \right\| \leq 1$, since the spectral norm of any matrix $\matU$ equals its largest singular value \cite[Example 5.6.6]{horn2012matrix}, which is also equal to the largest eigenvalue of $\matU\matU^{\T}$ (or $\matU^{\T}\matU$) \cite[Theorem 2.6.3]{horn2012matrix}.  

Now, we have
\begin{eqnarray}\label{eq:proj_prop_ineq_norm}
\left\|\b - \widetilde{\b} \right\| = \left\|\left[\matB \matG_{\matB}^{-1} \matB^{\T} - \widetilde{\matB} \matG_{\widetilde{\matB}}^{-1} \widetilde{\matB}^{\T} \right]\u \right\|  \leq  \left\|\matB \matG_{\matB}^{-1} \matB^{\T} - \widetilde{\matB} \matG_{\widetilde{\matB}}^{-1} \widetilde{\matB}^{\T} \right\| \cdot \left\| \u \right\|\nonumber\\
 \leq  \left[ \left\| \matB \matG_{\matB}^{-1} \matB^{\T} - \widetilde{\matB} \matG_{\matB}^{-1} \matB^{\T} \right\|
 +  \left\| \widetilde{\matB} \matG_{\matB}^{-1} \matB^{\T} - \widetilde{\matB} \matG_{\widetilde{\matB}}^{-1} \matB^{\T} \right\|
 +   \left\| \widetilde{\matB} \matG_{\widetilde{\matB}}^{-1} \matB^{\T} - \widetilde{\matB} \matG_{\widetilde{\matB}}^{-1} \widetilde{\matB}^{\T} \right\| \right] \left\| \u \right\|.
\end{eqnarray}
To bound Eq. \eqref{eq:proj_prop_ineq_norm}, we will show the following bounds. First, the right-hand side of Eq. \eqref{eq:proj_prop_given} does not depend on $\v$, thus we can take the maximum over $\left\| \v \right\| = 1$ and get the spectral matrix norm of $\matA - \widetilde{\matA}$, i.e.,
\begin{equation}\label{eq:proj_prop_given1}
   \left\| \matA^{\T} - \widetilde{\matA}^{\T} \right\|  = \left\| \matA - \widetilde{\matA} \right\| \leq c \sqrt{D}h^{m}.
\end{equation}
In addition, note that the spectral norm of $\matA - \widetilde{\matA}$ equals to the spectral norm of $\matB - \widetilde{\matB}$, since their singular values are the same by the definition of $\matB$ and $\widetilde{\matB}$ (the largest singular value of $\matB - \widetilde{\matB}$ being the largest eigenvalue of $\left( \matA - \widetilde{\matA} \right)^{\T} \left( \matA - \widetilde{\matA} \right)$), giving us the following bound
\begin{equation}\label{eq:proj_prop_given2}
   \left\| \matB^{\T} - \widetilde{\matB}^{\T} \right\|  = \left\| \matB - \widetilde{\matB} \right\| \leq c \sqrt{D}h^{m}.
\end{equation}

Next, since $\matB \matG_{\matB}^{-1} \matB^{\T}$ and $\widetilde{\matB} \matG_{\widetilde{\matB}}^{-1} \widetilde{\matB}^{\T}$ are both orthogonal projection matrices, their eigenvalues are either $0$ or $1$, making the spectral norms of the following matrices being bounded by $1$:
\begin{equation}\label{eq:proj_prop_given3}
    \left\| \matB \matG_{\matB}^{-\nicehalf} \right\| \leq 1,\ \left\| \widetilde{\matB} \matG_{\widetilde{\matB}}^{-\nicehalf} \right\| \leq 1,\ \left\| \matG_{\matB}^{-\nicehalf} \matB^{\T} \right\| \leq 1,\ \left\| \matG_{\widetilde{\matB}}^{-\nicehalf} \widetilde{\matB}^{\T} \right\| \leq 1.
\end{equation}

Finally, the following bounds hold
\begin{equation}\label{eq:proj_prop_given4}
    \left\| \matA \matG_{\matB}^{-\nicehalf} \right\| \leq 1,\ \left\| \widetilde{\matA} \matG_{\widetilde{\matB}}^{-\nicehalf} \right\| \leq 1,\ \left\| \matG_{\matB}^{-\nicehalf} \matA^{\T} \right\| \leq 1,\ \left\| \matG_{\widetilde{\matB}}^{-\nicehalf} \widetilde{\matA}^{\T} \right\| \leq 1.
\end{equation}
Indeed, using Eq. \eqref{eq:proj_prop_given3} we have
\begin{eqnarray*}
\left\| \matA \matG_{\matB}^{-\nicehalf} \right\| = \left\| \left[ \mat{0}_{d},\ \matI_{D-d} \right] \matB \matG_{\matB}^{-\nicehalf} \right\| &\leq& \left\| \left[ \mat{0}_{d},\ \matI_{D-d} \right] \right\| \cdot \left\| \matB \matG_{\matB}^{-\nicehalf} \right\|\\
&=& \left\| \matB \matG_{\matB}^{-\nicehalf} \right\| \leq 1.
\end{eqnarray*}
Repeating the above argument using Eq. \eqref{eq:proj_prop_given3} for each of the matrices in Eq. \eqref{eq:proj_prop_given4} yields Eq. \eqref{eq:proj_prop_given4}.

To conclude the proof, we bound each of the three terms in Eq. \eqref{eq:proj_prop_ineq_norm}. For the first term, using Eq. \eqref{eq:proj_prop_norm}, Eq. \eqref{eq:proj_prop_given2}, and Eq. \eqref{eq:proj_prop_given4} we have
\begin{equation}\label{eq:proj_prop_ineq_norm1}
    \left\| \matB \matG_{\matB}^{-1} \matB^{\T} - \widetilde{\matB} \matG_{\matB}^{-1} \matB^{\T} \right\| \leq \left\| \matB - \widetilde{\matB} \right\|\cdot \left\|\matG_{\matB}^{-\nicehalf} \right\| \cdot \left\| \matG_{\matB}^{-\nicehalf} \matB^{\T} \right\| \leq c \sqrt{D}h^{m}.
\end{equation}
For the second term, note that 
\begin{equation}\label{eq:proj_prop_ineq_norm2.1}
    \matG_{\matB}^{-1} - \matG_{\widetilde{\matB}}^{-1} = \matG_{\widetilde{\matB}}^{-1}(\matG_{\widetilde{\matB}} - \matG_{\matB}) \matG_{\matB}^{-1}.
\end{equation}
In Addition, we have
\begin{eqnarray}\label{eq:proj_prop_ineq_norm2.2}
\left\| \matG_{\widetilde{\matB}}^{-\nicehalf}(\matG_{\widetilde{\matB}} - \matG_{\matB}) \matG_{\matB}^{-\nicehalf} \right\| = \left\| \matG_{\widetilde{\matB}}^{-\nicehalf}(\widetilde{\matA}^{\T} \widetilde{\matA} - \matA^{\T} \matA) \matG_{\matB}^{-\nicehalf} \right\|\nonumber\\
\left\| \matG_{\widetilde{\matB}}^{-\nicehalf} \widetilde{\matA}^{\T} (\widetilde{\matA} - \matA) \matG_{\matB}^{-\nicehalf} + \matG_{\widetilde{\matB}}^{-\nicehalf} (\widetilde{\matA}^{\T} - \matA^{\T}) \matA \matG_{\matB}^{-\nicehalf} \right\|\nonumber\\
\leq \left\| \matG_{\widetilde{\matB}}^{-\nicehalf} \widetilde{\matA}^{\T} (\widetilde{\matA} - \matA) \matG_{\matB}^{-\nicehalf} \right\| + \left\| \matG_{\widetilde{\matB}}^{-\nicehalf} (\widetilde{\matA}^{\T} - \matA^{\T}) \matA \matG_{\matB}^{-\nicehalf} \right\|\nonumber\\
\left\| \matG_{\widetilde{\matB}}^{-\nicehalf} \widetilde{\matA}^{\T} \right\| \cdot \left\| \widetilde{\matA} - \matA \right\| \cdot \left\| \matG_{\matB}^{-\nicehalf} \right\| + \left\| \matG_{\widetilde{\matB}}^{-\nicehalf} \right\| \cdot \left\| \widetilde{\matA}^{\T} - \matA^{\T} \right\| \cdot \left\| \matA \matG_{\matB}^{-\nicehalf} \right\| \leq 2 c \sqrt{D}h^{m},
\end{eqnarray}
where the last inequality is due to Eq. \eqref{eq:proj_prop_norm}, Eq. \eqref{eq:proj_prop_given1}, and Eq. \eqref{eq:proj_prop_given4}. Thus, using Eq. \eqref{eq:proj_prop_ineq_norm2.1} and Eq. \eqref{eq:proj_prop_ineq_norm2.2} we get that the second term in Eq. \eqref{eq:proj_prop_ineq_norm} is bounded by
\begin{eqnarray}\label{eq:proj_prop_ineq_norm2}
\left\| \widetilde{\matB} \matG_{\matB}^{-1} \matB^{\T} - \widetilde{\matB} \matG_{\widetilde{\matB}}^{-1} \matB^{\T} \right\| = \left\| \widetilde{\matB} (\matG_{\matB}^{-1} - \matG_{\widetilde{\matB}}^{-1}) \matB^{\T} \right\| = \left\| \widetilde{\matB} \matG_{\widetilde{\matB}}^{-\nicehalf} (\matG_{\widetilde{\matB}}^{-\nicehalf}(\matG_{\widetilde{\matB}} - \matG_{\matB}) \matG_{\matB}^{-\nicehalf}) \matG_{\matB}^{-\nicehalf} \matB^{\T} \right\| \nonumber\\
\left\| \widetilde{\matB} \matG_{\widetilde{\matB}}^{-\nicehalf} \right\| \cdot \left\|\matG_{\widetilde{\matB}}^{-\nicehalf}(\matG_{\widetilde{\matB}} - \matG_{\matB} \right\| \cdot \left\| \matG_{\matB}^{-\nicehalf}) \matG_{\matB}^{-\nicehalf} \matB^{\T} \right\| \leq 2 c \sqrt{D}h^{m}.
\end{eqnarray}
Finally, for the third term we have
\begin{equation}\label{eq:proj_prop_ineq_norm3}
    \left\| \widetilde{\matB} \matG_{\widetilde{\matB}}^{-1} \matB^{\T} - \widetilde{\matB} \matG_{\widetilde{\matB}}^{-1} \widetilde{\matB}^{\T} \right\| \leq \left\| \widetilde{\matB} \matG_{\widetilde{\matB}}^{-\nicehalf} \right\| \cdot \left\| \matG_{\widetilde{\matB}}^{-\nicehalf} \right\| \cdot \left\| \matB^{\T} - \widetilde{\matB}^{\T} \right\| \leq c \sqrt{D}h^{m},
\end{equation}
where the last inequality is due to Eq. \eqref{eq:proj_prop_norm}, Eq. \eqref{eq:proj_prop_given2}, and Eq. \eqref{eq:proj_prop_given3}.

Combining Eq. \eqref{eq:proj_prop_ineq_norm1}, Eq. \eqref{eq:proj_prop_ineq_norm2}, and Eq. \eqref{eq:proj_prop_ineq_norm3} into Eq. \eqref{eq:proj_prop_ineq_norm} yields Eq. \eqref{eq:proj_prop0} and the proof is complete.
\end{proof}

\begin{corollary}[Orthogonal projection approximation order]\label{cor:proj_prop}
Given $\rb\in\widetilde{\mathcal{M}}$ such that ${\mathcal P}_{m}^{h}(\p)=\rb$, and let Assumption \ref{assu:MMLS_Samples} and the Injectivity Conditions from this section hold. Also, let $\lim _{t\to 0} \theta_{h} (t) = \infty$. Then, for all $\u\in\R^{D}$, , and all $\x\in B_{c_{1}h}(\0)\subset H(\rb)$, we have
\begin{eqnarray}\label{eq:proj_prop1}
    \left\|\Pi_{\Range{\text{D}\varphi(\x)}}(\u) - \Pi_{\Range{\text{D}\widetilde{\varphi}(\x)}}(\u) \right\| &\leq& 4 c_{\mathcal{M}, \widetilde{\mathcal{M}}} \sqrt{D} \left\| \u \right\| h^{m},\\
    \left\|\Pi_{\Range{\text{D}g^{\star}(\x\ |\ \rb)}}(\u) - \Pi_{\Range{\text{D}\varphi(\x)}}(\u) \right\| &\leq& 4 c_{\mathcal{M}} \sqrt{D} \left\| \u \right\| h^{m},\nonumber\\
    \left\|\Pi_{\Range{\text{D}g^{\star}(\x\ |\ \rb)}}(\u) - \Pi_{\Range{\text{D}\widetilde{\varphi}(\x)}}(\u) \right\| &\leq& 4 c_{\widetilde{\mathcal{M}}} \sqrt{D} \left\| \u \right\| h^{m},\nonumber
\end{eqnarray}
and
\begin{eqnarray}\label{eq:proj_prop2}
    \left\|\Pi_{T_{\p}{\mathcal{M}}}(\u) - \Pi_{T_{\rb}\widetilde{{\mathcal{M}}}}(\u) \right\| &\leq& 4 c_{\mathcal{M}, \widetilde{\mathcal{M}}} \sqrt{D} \left\| \u \right\| h^{m},\\
    \left\|\Pi_{\widetilde{T}_{\rb}\widetilde{\mathcal{M}}}(\u) - \Pi_{T_{\p}{\mathcal{M}}}(\u) \right\| &\leq& 4 c_{\mathcal{M}} \sqrt{D} \left\| \u \right\| h^{m},\nonumber\\
    \left\|\Pi_{\widetilde{T}_{\rb}\widetilde{\mathcal{M}}}(\u) - \Pi_{T_{\rb}\widetilde{{\mathcal{M}}}}(\u) \right\| &\leq& 4 c_{\widetilde{\mathcal{M}}} \sqrt{D} \left\| \u \right\| h^{m},\nonumber
\end{eqnarray}
where $\Pi_{\Range{\text{D}g^{\star}(\x\ |\ \rb)}}$, $\Pi_{\Range{\text{D}\varphi(\x)}}$, $\Pi_{\Range{\text{D}\widetilde{\varphi}(\x)}}$, $\Pi_{\widetilde{T}_{\rb}\widetilde{\mathcal{M}}}(\cdot)$, $\Pi_{T_{\p}{\mathcal{M}}}(\cdot)$, and $\Pi_{T_{\rb}\widetilde{{\mathcal{M}}}}(\cdot)$ are the orthogonal projection operators on $\Range{\text{D}g^{\star}(\x\ |\ \rb)}$, $\Range{\text{D}\varphi(\x)}$, $\Range{\text{D}\widetilde{\varphi}(\x)}$, $\widetilde{T}_{\rb}\widetilde{\mathcal{M}}$, $T_{\p}{\mathcal{M}}$, and $T_{\rb}\widetilde{{\mathcal{M}}}$ correspondingly.
\end{corollary}

\begin{proof}
First, recall that the output of the second step of MMLS, i.e., $g^{\star}(\x\ |\ \rb):H(\rb)\to\R^{D}$, can be equivalently viewed as $g^{\star}(\x\ |\ \rb):H(\rb)\to H^{\perp}(\rb)$, i.e., an approximation of $\mathcal{M}$ as a graph of a function (see Subsection \ref{subsec:MMLS_the_algorithm}). Correspondingly, $\varphi:H(\rb)\to H^{\perp}(\rb)$  and $\widetilde{\varphi}:H(\rb)\to H^{\perp}(\rb)$ are representations of $\mathcal{M}$ and $\widetilde{\mathcal{M}}$ as graphs of functions. Now, take a basis of $\R^{D}$ to be a union of some orthogonal bases of $H(\rb)$ and $H^{\perp}(\rb)$, then the differentials of $g^{\star}(\cdot\ |\ \rb)$, $\varphi(\cdot)$, and $\widetilde{\varphi}(\cdot)$ are of the form of Eq. \eqref{eq:proj_prop_mat} (see also \cite[Subsection 2.2.2]{aizenbud2021non}). 

Lemma \ref{lem:Sober_app_lemma4} ensures that all possible pairs from: $\text{D}g^{\star}(\x\ |\ \rb)$, $\text{D}\varphi(\x)$, and $\text{D}\widetilde{\varphi}(\x)$, for $\x\in B_{c_{1}h}(\0)$, satisfy the conditions of Lemma \ref{lem:proj_prop} proving Eq. \eqref{eq:proj_prop1}. To conclude the proof, recall that the orthogonal projections on $\widetilde{T}_{\rb}\widetilde{\mathcal{M}}$, $T_{\p}{\mathcal{M}}$, and $T_{\rb}\widetilde{{\mathcal{M}}}$ are equivalent to the orthogonal projections on $\Range{\text{D}g^{\star}(\0\ |\ \rb)}$, $\Range{\text{D}\varphi(\0)}$, and $\Range{\text{D}\widetilde{\varphi}(\0)}$ correspondingly, thus proving Eq. \eqref{eq:proj_prop2}.
\fi
\if0
\begin{eqnarray}\label{eq"proj_prop1}
\frac{1}{\sqrt{D}}\|\text{D}\varphi(\x)\text{D}\varphi(\x)^{\pinv}[\u]-\text{D}\widetilde{\varphi(\x)}\text{D}\widetilde{\varphi(\x)}^{\pinv}[\u]\|&\leq& 4 c_{\widetilde{\varphi}, \varphi} \left\|\u \right\|\ h^{m},
\end{eqnarray}
\begin{eqnarray}\label{eq"proj_prop2}
\frac{1}{\sqrt{D}}\|\text{D}g^{\star}(\x\ |\ \rb)\text{D}g^{\star}(\x\ |\ \rb)^{\pinv}[\u]-\text{D}\varphi(\x)\text{D}\varphi(\x)^{\pinv}[\u]\|&\leq& 4 c_{\mathcal{M}} \left\|\u \right\|\ h^{m},
\end{eqnarray}
and 
\begin{eqnarray}\label{eq"proj_prop3}
\frac{1}{\sqrt{D}}\|\text{D}g^{\star}(\x\ |\ \rb)\text{D}g^{\star}(\x\ |\ \rb)^{\pinv}[\u]-\text{D}\widetilde{\varphi(\x)}\text{D}\widetilde{\varphi(\x)}^{\pinv}[\u]\|&\leq& 4 c_{\widetilde{\mathcal{M}}} \left\|\u \right\|\ h^{m}.
\end{eqnarray}

\end{proof}
\fi
\if0

\begin{lemma}\label{lem:orth_ptoj_proper}
Given 
\begin{equation*}
\frac{1}{\sqrt{D}}\|\text{D}g^{\star}(\x\ |\ p)[\v]-\text{D}\varphi(\x)[\v]\|\leq c_{\mathcal{M}} h^{m},
\end{equation*}
and 
\begin{equation*}
\frac{1}{\sqrt{D}}\|\text{D}g^{\star}(\x\ |\ p)[\v]-\text{D}\widetilde{\varphi}(\x)[\v]\|\leq c_{\widetilde{\mathcal{M}}} h^{m},
\end{equation*}
such that $\text{D}g^{\star}(\x\ |\ p)$, $\text{D}\varphi(\x)$, and $\text{D}\widetilde{\varphi}(\x)$ are full-rank, then the orthogonal projections on the corresponding ranges can be computed via the corresponding Moore-Penrose inverse . Moreover:
\begin{equation*}
\frac{1}{\sqrt{D}}\|\text{D}g^{\star}(\x\ |\ p)\text{D}g^{\star}(\x\ |\ p)^{\pinv}[\v]-\text{D}\varphi(\x)\text{D}\varphi(\x)^{\pinv}[\v]\|\leq 4 c_{\mathcal{M}} \left\|\v \right\|\ h^{m},
\end{equation*}
and 
\begin{equation*}
\frac{1}{\sqrt{D}}\|\text{D}g^{\star}(\x\ |\ p)\text{D}g^{\star}(\x\ |\ p)^{\pinv}[\v]-\text{D}\widetilde{\varphi}(\x)\text{D}\widetilde{\varphi}(\x)^{\pinv}[\v]\|\leq 4c_{\widetilde{\mathcal{M}}} \left\|\v \right\|\ h^{m},
\end{equation*}
\end{lemma}

\begin{proof}
We will show that if we have two matrices of the form
\begin{equation*}
    \left[ \matI_{d},\ \matA^{\T}  \right]\in \R^{d\times(D)},\ \left[ \matI_{d},\ \widehat{\matA}^{\T}  \right]\in \R^{d\times(D)},
\end{equation*}
such that for all $\v\in\R^{d}$
\begin{equation*}
\frac{1}{\sqrt{D}}\|\left[ \matI_{d},\ \matA^{\T}  \right]^{\T}[\v]-\left[ \matI_{d},\ \widehat{\matA}^{\T}  \right]^{\T}[\v]\|\leq c h^{m},
\end{equation*}
The claim of the orthogonal projection holds $\leq 4c\left\|\v \right\|\ h^{m}$.

\end{proof}
\fi

\subsection{\label{subsec:background_proj-like}Projection-Like Retractions}
In this section we recall some useful claims on projection-like retractions from Section 4 in \cite{absil2012projection}. We begin with Definition 14 from \cite{absil2012projection} of a \emph{retractor}:
\begin{defn}[Definition 14 from \cite{absil2012projection}]\label{def:retractor}
Let $\mathcal{M}$ be a $d$-dimensional submanifold of class $C^{k}$, where $k\geq 2$, of $\R^{D}$. A \textbf{retractor} on $\mathcal{M}$ is a $C^{k-1}$ mapping $A$ from the tangent bundle $T\mathcal{M}$ into the Grassmann manifold $\text{Gr}(D-d)$ of $\R^{D}$,
whose domain contains a neighborhood of the zero section of $T\mathcal{M}$ (submanifold of the bundle that consists of all the zero vectors), and such that, for all $\x\in\mathcal{M}$, the intersection of $A(\x, \0_{\x})$ and $T_{x}\mathcal{M}$ is trivial.
\end{defn}
Next, using the definition of a retractor it is possible to define a retraction following \cite[Theorem 15]{absil2012projection}:
\begin{thm}[Theorem 15 from \cite{absil2012projection}]\label{thm:retractorretraction}
Let D be a retractor (Definition \ref{def:retractor}) and, for all $(\x,\u)\in \text{dom}(A)$, define the affine space $\mathcal{A}(x,\u) = \x+\u + A(\x, \u)$.
Consider the point-to-set function $R:\text{dom}(A)\to \mathcal{M}$ such that $R(\x, \u)$ is the set of points of ${\mathcal{M}}\cap \mathcal{A} (\x, \v) $ nearest to $\x+\u$ (for a small neighborhood of $(\x, \0_{x})$ in $T\mathcal{M}$ the map $R$ maps to a singleton). Then $R$ is a retraction on $\mathcal{M}$.
The retraction $R$ thus defined is called the retraction \textbf{induced} by the retractor $A$.

\end{thm}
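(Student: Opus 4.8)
The plan is to verify the three defining properties of a retraction for the point-to-set map $R$: that it is (locally) single-valued and $C^{k-1}$ near the zero section of $T\mathcal{M}$, that it satisfies $R(\x, \0_{\x}) = \x$, and that $\text{D}R_{\x}(\0_{\x}) = \text{Id}_{T_{\x}\mathcal{M}}$. Throughout I would fix an arbitrary base point $\x_{0}\in\mathcal{M}$ and work in a neighborhood of $(\x_{0}, \0_{\x_{0}})$ in $T\mathcal{M}$, with the trivial-intersection hypothesis of Definition~\ref{def:retractor} as the engine.

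First I would recast the intersection $\mathcal{M}\cap\mathcal{A}(\x,\u)$ as a square root-finding problem and apply the implicit function theorem. Choose a $C^{k}$ local parametrization $\psi:V\subseteq\R^{d}\to\mathcal{M}$ with $\psi(\0)=\x_{0}$. Since $A(\x,\u)\in\text{Gr}(D-d)$ depends $C^{k-1}$-smoothly on $(\x,\u)\in T\mathcal{M}$, its orthogonal complement does too, so there is a $C^{k-1}$ family of linear maps $N(\x,\u):\R^{D}\to\R^{d}$ with $\ker N(\x,\u)=A(\x,\u)$; a point $\y$ lies in $\mathcal{A}(\x,\u)=\x+\u+A(\x,\u)$ if and only if $N(\x,\u)(\y-\x-\u)=\0$. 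Setting $F(\s;\x,\u)\coloneqq N(\x,\u)(\psi(\s)-\x-\u)$, we have $F(\0;\x_{0},\0_{\x_{0}})=\0$, while $\partial_{\s}F(\0;\x_{0},\0_{\x_{0}})$ is the restriction of $N(\x_{0},\0_{\x_{0}})$ to $T_{\x_{0}}\mathcal{M}$ (read off in the basis $\text{D}\psi(\0)$). This $d\times d$ matrix is invertible exactly because $T_{\x_{0}}\mathcal{M}\cap\ker N(\x_{0},\0_{\x_{0}})=T_{\x_{0}}\mathcal{M}\cap A(\x_{0},\0_{\x_{0}})=\{\0\}$, which is the retractor's trivial-intersection condition. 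The implicit function theorem then produces a unique $C^{k-1}$ solution $\s(\x,\u)$ near $\0$, and hence a $C^{k-1}$ intersection point $\y(\x,\u)=\psi(\s(\x,\u))\in\mathcal{M}\cap\mathcal{A}(\x,\u)$ defined on a neighborhood of the zero section.

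Next I would show that $\y(\x,\u)$ is the \emph{nearest} intersection point, so that $R$ is single-valued and equals $\y$ there. As $(\x,\u)\to(\x_{0},\0_{\x_{0}})$, both $\x+\u$ and $\y(\x,\u)$ tend to $\x_{0}$, whence $\|\y(\x,\u)-(\x+\u)\|\to0$; meanwhile any competing point of $\mathcal{M}\cap\mathcal{A}(\x,\u)$ lying outside a fixed neighborhood $W$ of $\x_{0}$ remains at distance bounded away from $\x_{0}$, hence from $\x+\u$. Shrinking the neighborhood of the zero section makes $\y(\x,\u)$ strictly closest, so $R(\x,\u)=\{\y(\x,\u)\}$ and inherits the $C^{k-1}$ regularity. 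The centering property is then immediate: $\x_{0}\in\mathcal{M}\cap\mathcal{A}(\x_{0},\0_{\x_{0}})$ is the unique nearby intersection point and is nearest to $\x_{0}+\0=\x_{0}$, so $R(\x_{0},\0_{\x_{0}})=\x_{0}$.

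Finally, for the rigidity condition I would differentiate along rays. For $\xi\in T_{\x_{0}}\mathcal{M}$ write $R(\x_{0},t\xi)=\x_{0}+t\xi+\w(t)$ with $\w(t)\in A(\x_{0},t\xi)$ and $\w(0)=\0$. Letting $\Pi(t)$ be the orthogonal projector onto $A(\x_{0},t\xi)$, from $\w(t)=\Pi(t)\w(t)$ and $\w(0)=\0$ we get $\w'(0)=\Pi(0)\w'(0)\in A(\x_{0},\0_{\x_{0}})$. Since $t\mapsto R(\x_{0},t\xi)$ is a curve in $\mathcal{M}$ through $\x_{0}$, its velocity $\text{D}R_{\x_{0}}(\0_{\x_{0}})[\xi]=\xi+\w'(0)$ lies in $T_{\x_{0}}\mathcal{M}$, forcing $\w'(0)=(\xi+\w'(0))-\xi\in T_{\x_{0}}\mathcal{M}$ as well; trivial intersection $T_{\x_{0}}\mathcal{M}\cap A(\x_{0},\0_{\x_{0}})=\{\0\}$ then gives $\w'(0)=\0$, i.e. $\text{D}R_{\x_{0}}(\0_{\x_{0}})[\xi]=\xi$. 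I expect the main obstacle to be the nearest-point step: upgrading the purely local IFT solution to a bona fide global nearest-point selection requires choosing the zero-section neighborhood uniformly in the base point and ruling out far-away intersection branches by a distance estimate.
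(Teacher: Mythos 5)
This theorem is imported verbatim from \cite{absil2012projection}; the paper under review gives no proof of its own, so the fair comparison is with the source's proof, which your argument essentially reproduces. Both proofs run the implicit function theorem at the zero section, with invertibility of the linearized system supplied exactly by the trivial-intersection condition $T_{\x_{0}}\mathcal{M}\cap A(\x_{0},\0_{\x_{0}})=\{\0\}$ together with the dimension count $d+(D-d)=D$; both then identify the locally unique IFT solution with the nearest intersection point by the same two-scale distance estimate (the IFT point converges to $\x_{0}$ while intersection points outside a fixed neighborhood stay at distance bounded below); and both prove rigidity by showing the first-order correction lies simultaneously in $T_{\x_{0}}\mathcal{M}$ and in $A(\x_{0},\0_{\x_{0}})$, hence vanishes. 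The one real difference is dual bookkeeping in the IFT step: Absil and Malick describe $\mathcal{M}$ by a local defining function $\Phi$ with $\mathcal{M}=\Phi^{-1}(\0)$ and parametrize the moving subspace by a smooth basis $B(\x,\u)$ of $A(\x,\u)$, solving $\Phi(\x+\u+B(\x,\u)\s)=\0$ for $\s\in\R^{D-d}$, whereas you parametrize $\mathcal{M}$ by $\psi$ and encode membership in $\mathcal{A}(\x,\u)$ via an annihilator $N(\x,\u)$ with $\ker N(\x,\u)=A(\x,\u)$, solving in the $d$ manifold coordinates. The formulations are interchangeable: yours keeps the unknown on $\mathcal{M}$, so the solution is manifestly an intersection point and the retraction is automatically manifold-valued; theirs makes $R(\x,\u)=\x+\u+B(\x,\u)\s(\x,\u)$ explicit, so the rigidity computation is mechanical, with the term $\text{D}B\cdot\s$ killed by $\s(\x,\0_{\x})=\0$ playing precisely the role of your projector identity $\w'(0)=\Pi(0)\w'(0)$, which is killed by $\w(0)=\0$. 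Your closing caveat about making the IFT neighborhoods uniform over base points is the right one to flag, and it is resolved exactly as you suggest: a retraction need only be defined on \emph{some} neighborhood of the zero section, so the locally uniform neighborhoods produced around each $\x_{0}$ suffice; note also that the regularity you obtain, $C^{k-1}$ rather than $C^{k}$, matches the source's statement of the induced retraction's smoothness class.
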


\end{document}